\documentclass[1p,times,nopreprintline]{elsarticle}
\biboptions{sort&compress}

\usepackage[T1]{fontenc}
\usepackage{amsmath,amssymb,amsthm,mathtools,mathrsfs,array}
\usepackage{etoolbox}
\usepackage{microtype}
\usepackage{xcolor}
\usepackage{float}
\usepackage[colorlinks=true,linkcolor=blue!55!black,
            citecolor=green!35!black,urlcolor=blue!60!black]{hyperref}
\allowdisplaybreaks

\newtheorem{theorem}{Theorem}[section]
\newtheorem{lemma}[theorem]{Lemma}
\newtheorem{proposition}[theorem]{Proposition}
\newtheorem{corollary}[theorem]{Corollary}
\newtheorem{problem}{Problem}

\theoremstyle{remark}

\numberwithin{equation}{section}
\mathtoolsset{showonlyrefs}
\newcommand{\dd}{\mathop{}\!\mathrm{d}}
\makeatletter
\renewcommand{\subsection}{\@startsection{subsection}{2}{\z@}%
  {-3.25ex \@plus -1ex \@minus -.2ex}%
  {1.5ex \@plus .2ex}%
  {\normalfont\bfseries}}
\makeatother

\AtBeginDocument{%
  \setlength{\abovedisplayskip}{6pt plus 2pt minus 2pt}%
  \setlength{\belowdisplayskip}{6pt plus 2pt minus 2pt}%
  \setlength{\abovedisplayshortskip}{3pt plus 2pt minus 1pt}%
  \setlength{\belowdisplayshortskip}{4pt plus 2pt minus 1pt}%
  \setlength{\jot}{2pt}%
}

\makeatletter
\patchcmd{\pprintMaketitle}{\hrule\vskip12pt}{}{}{}
\patchcmd{\pprintMaketitle}{\hrule\vskip12pt}{}{}{}
\patchcmd{\MaketitleBox}{\hrule\vskip12pt}{}{}{}
\patchcmd{\MaketitleBox}{\hrule\vskip12pt}{}{}{}
\patchcmd{\pprintMaketitle}{\footnotesize\itshape\elsaddress\par\vskip36pt}{\footnotesize\itshape\elsaddress\par\vskip6pt}{}{}
\patchcmd{\MaketitleBox}{\footnotesize\itshape\elsaddress\par\vskip36pt}{\footnotesize\itshape\elsaddress\par\vskip6pt}{}{}
\def\ps@pprintTitle{%
  \let\@oddhead\@empty
  \let\@evenhead\@empty
  \let\@oddfoot\@empty
  \let\@evenfoot\@empty}
\makeatother

\begin{document}
\begin{frontmatter}

\title{\textbf{The Critical Semilinear Elliptic Equation with Isolated Boundary
Singularities II}}
\author{Hua-Yang Wang,\quad Jingang Xiong\textsuperscript{*}}
\nonumnote{$\ast$ J. Xiong was partially supported by NSFC grant 12325104.}

\begin{abstract}
Continuing the work of the second author (2017), we study the Sobolev
critical semilinear elliptic equation in the half-space with an isolated
boundary singularity and zero Dirichlet boundary condition.
This paper addresses two open questions in this setting: the existence
of Delaunay-type log-periodic solutions posed by del~Pino--Musso--Pacard
(2007), and the asymptotic classification of singular solutions posed by
Bidaut-V\'eron--Ponce--V\'eron (2007).
We establish a global branch of positive log-periodic solutions along which blow-up occurs at a uniquely determined period. 
We also construct the corresponding concentrating family and prove its local uniqueness.
Consequently, the expected stationary asymptotic
classification fails, and no universal critical scale-invariant upper
bound can hold throughout the half-space.
This behavior contrasts sharply with the classical interior singularity
theory of Caffarelli--Gidas--Spruck (1989).
\end{abstract}
\begin{keyword}
Sobolev critical equation \sep
isolated boundary singularities \sep
log-periodic solutions \sep
global bifurcation \sep
blow-up analysis \sep
Lyapunov--Schmidt reduction

\MSC[2020] 35J61 \sep 35B10 \sep 35B32 \sep 35B44
\end{keyword}
\end{frontmatter}

\section{Introduction}

Let $N\ge3$ and $q>1$, and let $\mathbb R^N_+:=\{x=(x',x_N)\in\mathbb R^N\!:\!x_N>0\}$.  
We study positive solutions $u\in C^2(\mathbb R^N_+)\cap
C(\overline{\mathbb R^N_+}\setminus\{0\})$ of the following Dirichlet problem in the
half-space with an isolated boundary singularity at the origin:
\begin{equation}\label{eq:half-space}
 \begin{cases}
  -\Delta u=u^q & \text{in}~~\mathbb R^N_+,\\
  u=0 & \text{on}~~\partial\mathbb R^N_+\setminus\{0\}.
 \end{cases}
\end{equation}

Interior isolated singularities of the equation $-\Delta u=u^q$ in punctured domains have been studied extensively.
Below the Sobolev critical exponent, we refer, for example, to Lions \cite{Lions1980} for $1<q<N/(N-2)$, Aviles \cite{Aviles1987} at $q=N/(N-2)$, and Gidas--Spruck \cite{GidasSpruck1981} for $N/(N-2)<q<(N+2)/(N-2)$.  
At the Sobolev critical exponent, Caffarelli--Gidas--Spruck \cite[Theorem~1.2]{CGS1989} proved the Fowler asymptotics, and Korevaar--Mazzeo--Pacard--Schoen
\cite[Theorem~1]{KMPS1999} later refined this to an expansion by deformed Fowler solutions.
Thus any positive solution with a non-removable isolated singularity satisfies
\[
 u(x)=|x|^{-(N-2)/2}V_\varepsilon(-\log|x|+\tau)\bigl(1+o(1)\bigr),
\]
where $V_\varepsilon$ is either constant or $T_\varepsilon$-periodic.  
In the nonconstant case, the phase may be chosen so that
$V_\varepsilon(-t)=V_\varepsilon(t)$ and
$\partial_tV_\varepsilon(t)<0$ for $0<t<T_\varepsilon/2$; we call this
the \emph{bumpy} property.  
Geometrically, these profiles model conformally flat metrics of constant positive scalar curvature near an
isolated singularity, and their oscillation motivates the boundary
problem below.
The same phenomena suggest a broader context for boundary singularities.

Many problems admit formulations as nonlinear elliptic equations with boundary singularities.  
For semilinear absorption equations, boundary trace theory was developed analytically
by Gmira--V\'eron \cite{GmiraVeron1991} and Marcus--V\'eron \cite{MarcusVeron1998}, 
and probabilistically by Dynkin--Kuznetsov \cite{DynkinKuznetsov1996} and Le Gall \cite{LeGall1995}.  
By extension formulations, singularity questions also arise in critical fractional and higher-order boundary problems.  
For the boundary problem \eqref{eq:half-space}, Bidaut-V\'eron--Vivier
\cite{BidautVeronVivier2000} and Bidaut-V\'eron--Ponce--V\'eron
\cite{BVPV2007,BVPV2011} developed the noncritical theory, while del
Pino--Musso--Pacard \cite{DMP2007} constructed solutions with prescribed boundary singularities.  
The Sobolev critical case is the unresolved borderline case in this setting and is the focus of this paper.

We introduce the Emden--Fowler variables
\begin{equation}\label{eq:EF-transform}
 t=-\log|x|,\qquad z=\frac{x}{|x|},\qquad
 u(x)=|x|^{-2/(q-1)}v(t,z).
\end{equation}
Set
\begin{equation*}
 a_{N,q}:=N-2-\frac{4}{q-1},\qquad
 b_{N,q}:=\frac{2}{q-1}\Bigl(N-2-\frac{2}{q-1}\Bigr).
\end{equation*}
Then \eqref{eq:half-space} becomes
\begin{equation}\label{eq:EF}
\begin{cases}
 \partial_{tt}v+\Delta_{\mathbb S^{N-1}}v
 -a_{N,q}\,\partial_t v-b_{N,q}v+v^q=0
 & \text{in } \mathbb R\times\mathbb S^{N-1}_+,
 \\
 v=0
 & \text{on } \mathbb R\times\partial\mathbb S^{N-1}_+.
\end{cases}
\end{equation}
Here $\mathbb S^{N-1}$ is the unit sphere in $\mathbb R^N$,
$\mathbb S^{N-1}_+:= \{\,z\in\mathbb S^{N-1}:z_N>0\,\}$ is the upper
half-sphere, and $\Delta_{\mathbb S^{N-1}}$ denotes the Laplace--Beltrami operator.
We denote by $q_{BT}(m)$ the Br\'ezis--Turner exponent
\cite{BrezisTurner1977} and by $q_S(m)$ the Sobolev critical exponent
in dimension $m$:
\[
 q_{BT}(m)=\frac{m+1}{m-1},\qquad
 q_S(m)=
 \begin{cases}
  \dfrac{m+2}{m-2},&m\geq3,\\
  +\infty,&m=2.
 \end{cases}
\]
Thus $q_{BT}(N)<q_S(N)<q_S(N-1)$.
If $v(t,z) \equiv \phi(z)$ is $t$-independent, \eqref{eq:EF} reduces to
\begin{equation}\label{eq:stationary-q}
 \begin{cases}
 \Delta_{\mathbb S^{N-1}}\phi-b_{N,q}\phi+\phi^q=0
 &\text{in }\mathbb S^{N-1}_+,\\
 \phi=0&\text{on }\partial\mathbb S^{N-1}_+.
 \end{cases}
\end{equation}
Bidaut-V\'eron--Ponce--V\'eron \cite{BVPV2007,BVPV2011} proved that
\eqref{eq:stationary-q} has a unique positive solution $\phi_q$ if and only if
$q_{BT}(N)<q<q_S(N-1)$. 
They further classified isolated boundary singularities in this range, except at $q=q_S(N)$, under the
scale-invariant estimate $ u(x)\leq C|x|^{-2/(q-1)}$ near the boundary singularity $0$.
This estimate is automatic for $q<q_S(N)$ \cite{BVPV2011} and is an additional
assumption for $q_S(N)<q<q_S(N-1)$.  
More precisely, either the singularity is removable or
\[
 |x|^{2/(q-1)}u(x)-\phi_q \bigl({x}/{|x|}\,\bigr)
 \longrightarrow0
 \quad\text{uniformly as }x\to0.
\]
However, the Sobolev critical case $q=q_S(N)$ remains open.
Henceforth, we set
\[
 p:=q_S(N)=\frac{N+2}{N-2}=2^*-1,
 \qquad 2^*:=\frac{2N}{N-2},
 \qquad a:=\frac{N-2}{2}.
\]
Then $a_{N,p}=0$ and $b_{N,p}=a^2$.  
The conformal Laplacian $L_g$ associated with a metric $g$ is given by
\[
 L_g:=\Delta_g-\frac{N-2}{4(N-1)}R_g,
\]
where $\Delta_g$ is the Laplace--Beltrami operator and $R_g$ is the scalar curvature. 
For the product cylinder metric
$
 g_{\mathrm{cyl}}:=\dd t^2+g_{\mathbb S^{N-1}},
$
one has $L_{g_{\mathrm{cyl}}}=\partial_{tt}+\Delta_{\mathbb S^{N-1}}-a^2$.
Thus \eqref{eq:EF} reduces to
\begin{equation}\label{eq:EF-critical}
\begin{cases}
 -L_{g_{\mathrm{cyl}}}v=v^p
 & \text{in } \mathbb R\times\mathbb S^{N-1}_+,
 \\
 v=0
 & \text{on } \mathbb R\times\partial\mathbb S^{N-1}_+.
\end{cases}
\end{equation}
By the half-space Liouville theorem of Gidas--Spruck
\cite[Theorem~1.3]{GidasSpruck1981Apriori}, any nonnegative
solution of \eqref{eq:half-space} with $q=p$ and a removable singularity
at the origin vanishes identically.  Thus the nontrivial classification
problem left open by Bidaut-V\'eron--Ponce--V\'eron
\cite[Remark~1]{BVPV2007} takes the following form in our setting.

\begin{problem}\label{problem:BVPV}
Let $v$ be a bounded nonnegative solution of \eqref{eq:EF-critical} in
$(T_0,\infty)\times\mathbb S^{N-1}_+$ for some $T_0\in\mathbb R$, such
that the function $u$ defined by \eqref{eq:EF-transform} solves
\eqref{eq:half-space} near $0$ and has a non-removable singularity there.
Is it true that
\[
 \|v(t,\cdot)-\phi\|_{C^0(\overline{\mathbb S^{N-1}_+})}
 \longrightarrow 0 \quad \text{as}~t\rightarrow\infty ?
\]
\end{problem}

For $q>q_{BT}(N)$ sufficiently close to $q_{BT}(N)$,
del Pino--Musso--Pacard \cite{DMP2007} constructed a positive
half-space solution asymptotic to the stationary singular profile at
the origin and to the Poisson kernel at infinity.
At the Sobolev critical exponent, their Open Problem~2 can be stated equivalently as follows
\cite{DMP2007}.
\begin{problem}\label{problem:DMP}
At $q=q_S(N)$, does \eqref{eq:EF-critical} admit a one-parameter family of positive
nonstationary solutions that are periodic in $t$?
\end{problem}

Compared with the noncritical equation \eqref{eq:EF}, the critical equation
\eqref{eq:EF-critical} has no first-order $t$-derivative.  For $q\ne p$,
the corresponding cylindrical energy is monotone, whereas for $q=p$ it
is conserved; hence this argument no longer excludes nonstationary
periodic solutions.  In the interior problem, radial symmetry reduces
the critical equation to an autonomous ODE, whose Fowler solutions are
classified by phase-plane analysis.  Here axial symmetry still leaves a
spherical variable, so no analogous finite-dimensional phase-plane
reduction is available.

The following theorem constructs the periodic branch sought in
Problem~\ref{problem:DMP} and, in particular, disproves the asymptotic
dichotomy in Problem~\ref{problem:BVPV}: a nonstationary periodic
solution does not converge to $\phi$ near $0$.

\begin{theorem}\label{thm:local-bifurcation}
Fix $0<\alpha<\min\{1,p-1\}$.
There exist $\varepsilon_0>0$ and a smooth curve
\[
 (-\varepsilon_0,\varepsilon_0)\ni\varepsilon
 \longmapsto(k_\varepsilon,w_\varepsilon)
 \in(0,\infty)\times
 C^{2,\alpha}(\mathbb T\times\overline{\mathbb S^{N-1}_+})
\]
of positive solutions of \eqref{eq:fixed-cylinder}.  With
$k_*=\sqrt{-\lambda_1}$, it satisfies
\begin{equation}\label{eq:local-expansion}
 (k_0,w_0)=(k_*,\phi),\qquad
 w_\varepsilon(s,z)=\phi(z)+\varepsilon\psi_1(z)\cos s
 +O(\varepsilon^2)\quad\text{in }C^{2,\alpha}.
\end{equation}
For $0<|\varepsilon|<\varepsilon_0$, $w_\varepsilon$ is nonstationary
and has least period $2\pi$.  In the axial-even class, the local
positive solution set near $(k_*,\phi)$ is precisely the union of the
stationary branch $\{(k,\phi):|k-k_*|<\delta\}$ and the bifurcating
curve $\{(k_\varepsilon,w_\varepsilon):|\varepsilon|<\varepsilon_0\}$,
for some $\delta>0$.
\end{theorem}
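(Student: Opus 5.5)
We would obtain the curve from the Crandall--Rabinowitz theorem on bifurcation from a simple eigenvalue, applied to the map
\[
 F(k,w):=k^2\partial_{ss}w+\Delta_{\mathbb S^{N-1}}w-a^2w+w^p
\]
that defines \eqref{eq:fixed-cylinder}. Here $s=kt$ is the rescaled $2\pi$-periodic variable, so a solution of $F(k,w)=0$ on $\mathbb T\times\mathbb S^{N-1}_+$ is a solution of \eqref{eq:EF-critical} of period $2\pi/k$. We would restrict to the axial-even class: functions axially symmetric in $z$, even in $s$, and vanishing on $\mathbb T\times\partial\mathbb S^{N-1}_+$. Evenness in $s$ removes the translation invariance in $s$. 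The translation $s\mapsto s+\pi$ preserves the class and will account for the symmetry $\varepsilon\mapsto-\varepsilon$. The stationary solutions $(k,\phi)$ form the trivial branch, since $F(k,\phi)=0$ for every $k$.

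\emph{Step 1: regularity of $F$.} The nonlinearity $w\mapsto w^p$ is only $C^{1,p-1}$ at $w=0$, and $w$ vanishes on the boundary. So $F$ is not smooth on $C^{2,\alpha}$ near the zero set, and this is why $\alpha<p-1$ is imposed. We would write $w=\phi+h$ and use that, by the Hopf lemma, $\phi\ge c\,\mathrm{dist}(z,\partial\mathbb S^{N-1}_+)$. For $h$ in $C^{2,\alpha}$ with zero Dirichlet data, $h/\phi$ lies in $C^{0,\alpha}$ and is small in $C^0$, and $w^p=\phi^p(1+h/\phi)^p$. Since $(1+\eta)^p$ is real-analytic for $|\eta|<1/2$, $F$ is smooth (indeed analytic) in $(k,h)$ on a neighbourhood of $h=0$. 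Establishing this in the correct H\"older-type spaces near the boundary is the technical step I expect to be the main obstacle. The weight $\phi^{p}$ has only limited H\"older regularity at the boundary, and one has to check that the map lands in $C^{0,\alpha}$ with the needed estimates. If the direct approach fails, I would fall back on a weighted space $\{h:\ h/\phi\in C^{1,\alpha}\}$ and use boundary Schauder theory for the operator conjugated by $\phi$.

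\emph{Step 2: spectral analysis at $k_*$.} Let $\mathcal L:=\Delta_{\mathbb S^{N-1}}-a^2+p\phi^{p-1}$, with Dirichlet eigenvalues $\lambda_1<\lambda_2\le\cdots$ of $-\mathcal L$ in the axial class and eigenfunctions $\psi_j$. We use from the earlier sections that $\lambda_1<0$ is simple with $\psi_1>0$, and that $\lambda_j>0$ for $j\ge2$; this is the Morse index one and nondegeneracy of the mountain-pass solution $\phi$. Expanding in the modes $\cos(ms)\psi_j$, the linearization $D_wF(k,\phi)=k^2\partial_{ss}+\mathcal L$ acts by the multiplier $-k^2m^2-\lambda_j$. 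At $k=k_*$ this multiplier vanishes only for $m=1$, $j=1$. The case $m=0$ is excluded by nondegeneracy, and $m\ge2$ or $j\ge2$ give a nonzero value. Hence the kernel is spanned by $\psi_1\cos s$. The operator is self-adjoint and elliptic, so by Schauder estimates it is Fredholm of index zero, with range equal to the $L^2$-orthogonal complement of $\psi_1\cos s$.

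\emph{Step 3: transversality and conclusion.} We have $D_{kw}F(k_*,\phi)[\psi_1\cos s]=-2k_*\psi_1\cos s$, which is not in the range since $k_*>0$. Crandall--Rabinowitz then gives the smooth curve $(k_\varepsilon,w_\varepsilon)$ with $w_\varepsilon=\phi+\varepsilon\psi_1\cos s+O(\varepsilon^2)$, as in \eqref{eq:local-expansion}. It also gives that, near $(k_*,\phi)$, the zero set consists exactly of the trivial branch and this curve, which is the claimed local uniqueness in the axial-even class.

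It remains to check three properties of $w_\varepsilon$. Positivity holds for small $\varepsilon$: the perturbation is $O(\varepsilon)$ in $C^{1}$, vanishes on the boundary, and $\partial_\nu\phi<0$ there by Hopf, so $w_\varepsilon>0$ in the interior. The function $w_\varepsilon$ is nonstationary because its first Fourier coefficient in $s$ is $\varepsilon\psi_1+O(\varepsilon^2)\neq0$. Its least period is $2\pi$ because a smaller period $2\pi/m$ with $m\ge2$ would force this first coefficient to vanish.
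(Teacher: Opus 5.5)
Your route is the paper's: Crandall--Rabinowitz at $k_*$ in the axial-even $C^{2,\alpha}\to C^{0,\alpha}$ setting, kernel $\psi_1\cos s$ read off from the separated multipliers, transversality from $\partial_k$ of the linearization, positivity from Hopf, and the first Fourier coefficient in $s$ for the least period. The step you flag as the main obstacle is in fact routine, and your first idea handles it. Write $h/\phi=(h/z_N)(z_N/\phi)$. Here $h/z_N=\int_0^1\partial_{z_N}h(\cdot,\tau z_N)\dd\tau\in C^{1,\alpha}$, and $z_N/\phi$ is positive and $C^{1,\alpha}$ by Hopf. This is Lemma~\ref{lem:power-map}, which divides by $z_N$ instead of $\phi$. (Incidentally, your version does not need $\alpha<p-1$: $\phi^p$ is Lipschitz and the target is only $C^{0,\alpha}$.)

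The real gap is that you import, rather than prove, the spectral input $\lambda_1<0<\lambda_2$ in the axial class, attributing it to ``earlier sections'' and to the mountain-pass nature of $\phi$. In the paper this is Lemma~\ref{lem:spectrum}, and it is the only genuinely non-formal ingredient of the theorem.

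The Morse-index-one half does follow from the variational characterization. The paper minimizes the quadratic energy under the constraint $\int u^{p+1}=\int\phi^{p+1}$ and identifies the minimizer with $\phi$ by uniqueness. It then combines nonnegativity of the constrained second variation on the codimension-one space $\{\int\phi^ph=0\}$ with $\langle\mathscr A_\phi\phi,\phi\rangle<0$.

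Nondegeneracy, i.e.\ that $0$ is not an axial eigenvalue, does not follow from any min--max description. The paper argues by contradiction:
\begin{itemize}
\item A zero eigenvalue would be the second axial eigenvalue, so by Sturm oscillation its eigenfunction $h$ changes sign exactly once, at some $\theta_0$.
\item Set $Y=-\phi'/\phi$. Pairing $h$ with $\mathscr A_\phi\phi=(1-p)\phi^p$ and with $\mathscr A_\phi(\phi^p-a^2\phi)$ gives $\int\sin^{N-2}\theta\,\phi^ph\dd\theta=\int\sin^{N-2}\theta\,\phi^pY^2h\dd\theta=0$.
\item An ODE computation shows $Y>0$ and $Y'>0$, so $h\,(Y^2-Y(\theta_0)^2)$ has a strict sign, contradicting the two relations.
\end{itemize}
Alternatively, one may cite Shioji--Watanabe.

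Without this lemma, the argument breaks down in two ways. An axial $\psi_j$ with $\lambda_j=0$ would be an $s$-independent kernel element at every $k$, so the kernel at $k_*$ would be two-dimensional and Crandall--Rabinowitz would not apply. Likewise, a larger Morse index could produce additional resonances $k_*^2m^2=-\lambda_j$. Your argument is therefore complete only modulo this lemma, which you must prove or cite explicitly.
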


For the corresponding period $T=2\pi/k_\varepsilon$ in the $t$-variable, by
\eqref{eq:EF-transform} and periodicity, we have
$u(e^{-T}x)=e^{aT}u(x)$.  Thus each nonstationary member of the branch
produces a log-periodic, discretely self-similar boundary singularity.

A third question is whether the scale-invariant upper bound can be
chosen uniformly for all solutions.  For $1<q<q_S(N)$,
by combining Bidaut-V\'eron--Ponce--V\'eron
\cite[Theorem~1.5]{BVPV2011} with standard boundary gradient estimates,
we obtain
$u(x)\leq Cx_N|x|^{-(q+1)/(q-1)}$ in $\mathbb R^N_+$, with $C$
independent of $u$.  For comparison, the classification of
Caffarelli--Gidas--Spruck \cite{CGS1989} implies that every positive
solution of the critical interior equation in
$\mathbb R^N\setminus\{0\}$ with a non-removable singularity at $0$
satisfies the uniform estimate $u(x)\leq C_N|x|^{-a}$.
For the critical boundary problem, however, the second author
\cite[Theorem~1.2]{Xiong2017} obtained the corresponding uniform bound
only outside a cone about the inward normal direction: for every
$\gamma\in(0,1)$, one has
$u(x)\leq C(N,\gamma)x_N|x|^{-N/2}$ whenever
$x\in\mathbb R^N_+$ and $x_N\leq\gamma|x|$.
This motivates the following question.
\begin{problem}\label{problem:universal-critical-bound}
At $q=q_S(N)$, does there exist a uniform constant $C=C(N)>0$ such that every positive
solution of \eqref{eq:half-space} with a non-removable isolated
singularity at $0$ satisfies
\[
 u(x)\leq Cx_N|x|^{-N/2},
 \qquad \forall x\in\mathbb R^N_+?
\]
\end{problem}

As a byproduct of Theorems~\ref{thm:main-global} and
\ref{thm:main-resonant-family} below,
Problem~\ref{problem:universal-critical-bound} has a negative answer.
More precisely, the first theorem identifies a noncompact connected branch of
positive bumpy solutions whose unbounded sequences concentrate at a unique
limiting period, while the second theorem constructs the concentrating bumpy
family near that period and proves its local uniqueness.  These results also show that the conical restriction in
the second author's estimate \cite[Theorem~1.2]{Xiong2017} cannot be
removed.

A function $f$ on $\mathbb S^{N-1}_+$ is called \emph{axial} if
$f(Rz',z_N)=f(z',z_N)$ for every $R\in O(N-1)$.  We now pass to the fixed-$2\pi$
formulation.
Let $\mathbb T=\mathbb R/(2\pi\mathbb Z)$, and let $(\lambda_1,\psi_1)$ be the normalized
principal axial Dirichlet eigenpair of the linearization operator at $\phi$,
\begin{equation}\label{eq:principal-eigenpair}
 \bigl(-\Delta_{\mathbb S^{N-1}}+a^2-p\phi^{p-1}\bigr)\psi_1
 =\lambda_1\psi_1,\qquad
 \psi_1>0,\qquad \|\psi_1\|_{L^2(\mathbb S^{N-1}_+)}=1.
\end{equation}
The variational characterization of $\phi$ as the positive ground
state implies that the axial linearized operator has Morse index one.
By Lemma~\ref{lem:spectrum} below, its axial kernel is trivial;
hence $\lambda_1$ is its only negative eigenvalue.
For $k>0$, consider
\begin{equation*}\tag{$P_k$}\label{eq:fixed-cylinder}
 \begin{cases}
  k^2\partial_s^2w+\Delta_{\mathbb S^{N-1}}w-a^2w+w^p=0
  &\text{in }\mathbb T\times\mathbb S^{N-1}_+,\\
  w=0&\text{on }\mathbb T\times\partial\mathbb S^{N-1}_+.
 \end{cases}
\end{equation*}
If $w$ solves \eqref{eq:fixed-cylinder}, then
$v(t,z)=w(kt,z)$ solves \eqref{eq:EF-critical} and has logarithmic period $2\pi/k$.  
The second author \cite[Proposition~1.1]{Xiong2017} proved that every positive solution is axially symmetric.  
To remove the continuous translation degeneracy in the periodic variable, we work throughout in the axial-even class;
the associated spaces and linearizations are restricted to this class unless stated otherwise.

For each $T>0$, let
$\Sigma_T=(\mathbb R/T\mathbb Z)\times\mathbb S^{N-1}_+$, endowed with
$g_{\mathrm{cyl}}$ and volume element $\dd V=\dd t\dd\sigma$, and set
$e_N=(0,\ldots,0,1)$, $P=(0,e_N)\in\Sigma_T$.
Under the change of variables \eqref{eq:EF-transform}, the point
$P=(0,e_N)\in\Sigma_T$ corresponds to $e_N\in\mathbb R^N_+$.  Thus a profile is
centered at $P$ precisely when $v(P)$ is its maximum; in Euclidean variables
this means that $|x|^a u(x)$ is maximized at $e_N$.
To state the global result, define
\begin{equation}\label{eq:intro-periodic-regular-part}
 \mathscr M_N(T)=-2^{2-N}+2\sum_{\ell=1}^{\infty}e^{-a\ell T}
 \bigl[(1-e^{-\ell T})^{2-N}-(1+e^{-\ell T})^{2-N}\bigr].
\end{equation}
This function is strictly decreasing from $+\infty$ to $-2^{2-N}$;
denote its unique zero by $T^*$.
Motivated by the Fowler profiles above, we call a $T$-periodic cylindrical
profile \emph{bumpy} if, after a translation in $t$, it is even and
$\partial_t v<0$ in
$(0,T/2)\times\mathbb S^{N-1}_+$.
Whenever the phase is fixed, we use this centered phase convention.
Recall that the classification result of Caffarelli--Gidas--Spruck
\cite{CGS1989} (see also Obata \cite{Obata1971}) states that every positive entire solution of
$-\Delta V=V^p$ in $\mathbb R^N$ is of the form
\[
 U_{\lambda,\xi}(y)
 =\lambda^a\biggl[1+\frac{\lambda^2|y-\xi|^2}{N(N-2)}\biggr]^{-a},
 \qquad \lambda>0,\quad \xi\in\mathbb R^N.
\]
For radial bubbles we write $U_\lambda:=U_{\lambda,0}$, and we normalize
\[
 U:=U_1,\qquad -\Delta U=U^p\ \text{in }\mathbb R^N,\qquad
 U(0)=1,\qquad \gamma_N:=\int_{\mathbb R^N}U^p\dd y.
\]

\begin{theorem}\label{thm:main-global}
The local branch in Theorem~\ref{thm:local-bifurcation} is contained in
a closed connected set $\mathscr C$ of positive solutions of
\eqref{eq:fixed-cylinder} in
$(0,\infty)\times C^0(\mathbb T\times
\overline{\mathbb S^{N-1}_+})$ such that:
\begin{enumerate}
\renewcommand{\labelenumi}{\textup{(\roman{enumi})}}
 \item Every nonstationary $(k,w)\in\mathscr C$ has least period
 $2\pi$ and a bumpy cylindrical profile; moreover,
 $(k_*/m,\phi)\notin\mathscr C$ for $m\geq2$.

 \item There are constants $0<T_-<T_+<\infty$ and
 $\Lambda_N<\infty$, depending only on $N$, such that the cylindrical
 profile associated with every nonstationary $(k,w)\in\mathscr C$
 satisfies
 \begin{equation}\label{eq:intro-global-bounds}
  T_-\leq T=\frac{2\pi}{k}\leq T_+,\qquad
  \int_{\Sigma_T}v^{p+1}\dd V\leq\Lambda_N.
 \end{equation}

 \item The set $\mathscr C$ is noncompact.  If $(k_n,w_n)\in\mathscr C$
 leaves every compact subset of the ambient product space, set
 $T_n=2\pi/k_n$ and $v_n(t,z)=w_n(k_nt,z)$, and translate $v_n$ so
 that its maximum occurs at $P$.  Then
 \begin{equation}\label{eq:intro-resonant-limit}
 T_n\to T^*,\quad \|v_n\|_{L^\infty}\to\infty,\quad
 \int_{\Sigma_{T_n}}v_n^{p+1}\dd V\to\int_{\mathbb R^N}U^{p+1}\dd y.
\end{equation}
\end{enumerate}
\end{theorem}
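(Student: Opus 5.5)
The plan is to apply Rabinowitz's global bifurcation theorem to \eqref{eq:fixed-cylinder} in the axial-even class. We write the problem as $w=K(k,w)$, where $K$ is compact and given by $(-k^2\partial_s^2-\Delta_{\mathbb S^{N-1}}+a^2)^{-1}(w_+^p)$ on $C^0(\mathbb T\times\overline{\mathbb S^{N-1}_+})$. Near $(k_*,\phi)$ the linearization has a one-dimensional kernel spanned by $\psi_1\cos s$ (Lemma~\ref{lem:spectrum}, together with the fact that $\lambda_1$ is the only negative eigenvalue). Because $\lambda_1+k^2$ changes sign at $k_*$, the crossing is odd. Rabinowitz's alternative then gives a continuum $\mathscr C\ni(k_*,\phi)$ which either is unbounded in $(0,\infty)\times C^0$, or meets $\{k=0\}$, or returns to another point $(k_*/m,\phi)$ of the stationary branch. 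The point $(k_*/m,\phi)$ is where the Fourier mode $\cos ms$ becomes resonant. Near $(k_*,\phi)$ the set $\mathscr C$ coincides with the local curve of Theorem~\ref{thm:local-bifurcation}. Positivity of nonstationary members follows by a maximum-principle and continuation argument: positivity is open, and a nonnegative nontrivial limit is positive by the strong maximum principle.

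To prove (i), we use nodal properties that are preserved along $\mathscr C$. Let $\mathcal O$ be the set of nonstationary $w$ that are even in $s$ and satisfy $\partial_s w<0$ on $(0,\pi)\times\mathbb S^{N-1}_+$; by \eqref{eq:local-expansion}, the local branch lies in $\mathcal O$. The function $\partial_s w$ satisfies the linearized equation and vanishes at $s=0,\pi$. Hence a Hopf lemma applied on $(0,\pi)$ and at $s=0,\pi$ (using $\partial_s^2 w\neq0$ there) shows that $\mathcal O$ is relatively open and closed in the set of nonstationary solutions. So $\mathscr C\setminus\{\text{stationary}\}$ stays in $\mathcal O$, which gives the bumpy property and least period $2\pi$. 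Near $(k_*/m,\phi)$, bifurcating solutions behave like $\phi+\varepsilon\psi_1\cos ms$, which has $m$ bumps, so they are not in $\mathcal O$. By the local uniqueness of Theorem~\ref{thm:local-bifurcation}, applied at the stationary branch (where only $k=k_*$ is singular among modes with one bump), $\mathscr C$ can therefore only touch the stationary set at $(k_*,\phi)$. This rules out the return alternative, and $\mathscr C$ is noncompact.

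For (ii), the lower bound $T\ge T_-$ comes from a Wirtinger-type argument. If $T$ is small, then $\partial_t v$ has a large first nonzero $t$-frequency. Testing the linearized equation for $\partial_tv$ against itself, and using a Pohozaev/integral identity to bound $\int p v^{p-1}|\partial_t v|^2$, forces $\partial_tv\equiv0$. The upper bound $T_+$ and the energy bound $\Lambda_N$ are obtained by blow-up analysis. Bumpy solutions have a single maximum per period. Rescaling at the maximum gives, in the limit, either a bounded nonzero profile or a bubble $U$ (the Liouville theorem of Gidas--Spruck excludes half-space limits). Conserved Hamiltonian energy then bounds $\int v^{p+1}$. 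A long period $T\to\infty$ would produce a solution on $\mathbb R\times\mathbb S^{N-1}_+$ decaying at both ends, which is excluded by Pohozaev.

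For (iii), the bounds in (ii) and elliptic estimates show that unboundedness must come from $\|v_n\|_\infty\to\infty$. The a priori bounds together with bumpiness give exactly one bubble per period, centered at the maximum, and the maximum must sit at $P$: boundary blow-up is excluded by \cite{Xiong2017}-type estimates, and the maximum point lies on the axis by axial symmetry. This yields the energy limit. The \emph{main obstacle} is identifying $T_n\to T^*$. To do this, we write $v_n\approx \gamma_N\varepsilon_n^{a}G_{T_n}(\cdot,P)$ away from $P$, where $G_T$ is the Dirichlet Green function of $-L_{g_{\mathrm{cyl}}}$ on $\Sigma_T$. We then apply a local Pohozaev identity on a small ball around $P$; this forces the regular part of $G_{T}$ at $P$ to vanish in the limit. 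By the method of images (odd reflection in $x_N$ and the dilations $x\mapsto e^{-\ell T}x$), this regular part equals a positive multiple of $\mathscr M_N(T)$. Since $\mathscr M_N$ is strictly decreasing, $T_n\to T^*$.
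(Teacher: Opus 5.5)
\textbf{Where the proposal matches the paper.} Your continuation argument and your identification of $T^*$ in (iii) follow the paper. The continuation uses Rabinowitz with an odd crossing at $k_*$, preservation of the bumpy class, and exclusion of $(k_*/m,\phi)$ because $\partial_s(\psi_1\cos ms)$ changes sign on $(0,\pi)$. The identification of $T^*$ uses the local Pohozaev identity against the regular part $\mathscr M_N$.

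\textbf{Gap 1: the lower period bound.} The genuine gap is in (ii), starting with $T\ge T_-$. Testing the equation for $\xi=\partial_t v$ against $\xi$ and using Wirtinger yields only $p\|v\|_{L^\infty}^{p-1}\ge (2\pi/T)^2+N-1+a^2$. This says $\|v\|_{L^\infty}\to\infty$ as $T\to0$, and nothing more. No integral identity can upgrade it to a bound $\int pv^{p-1}\xi^2\le C\int\xi^2$ with $C$ independent of $T$. If a bubble of scale $\mu\ll T$ sits at $P$, then on the core $\xi$ is close to the translation mode $\partial_t U_\mu$, and $\int pv^{p-1}\xi^2/\int\xi^2\sim\mu^{-2}\gg T^{-2}$. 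That is fully consistent with the tested identity. What is missing is the collapsing blow-up analysis:
\begin{enumerate}
\item Show $\mu/T\to0$. A comparable-scale limit would be a positive entire solution periodic in one direction, which Caffarelli--Gidas--Spruck rules out.
\item Renormalize at the Green scale. This gives a positive harmonic function on $(\mathbb R/\mathbb Z)\times\mathbb R^{N-1}$ with a single pole.
\item Reach a contradiction. For $N=3$, the logarithmic decrease of its averages contradicts positivity. For $N\ge4$, write $H=A|X|^{2-N}+B+O(|X|)$; the constant term satisfies $B>0$, while the local Pohozaev identity forces $AB=0$.
\end{enumerate}

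\textbf{Gap 2: the energy bound, and what depends on it.} The conserved Hamiltonian $H$ does not bound the energy. Over a period it gives only $TH=\int v_t^2-\frac{p-1}{2(p+1)}\int v^{p+1}$, which trades one unknown for another. One-bubble estimates near $P$ also do not control the energy on long periods. The missing idea is half-period stability. Since $-v_t>0$ is a Dirichlet Jacobi field on $(0,T/2)\times\mathbb S^{N-1}_+$, the quadratic form of $-L_{g_{\mathrm{cyl}}}-pv^{p-1}$ is nonnegative there. Testing it with $v\chi$, for a cutoff $\chi(t)$, bounds all of $E_T(v)$ by the energy in a fixed core $\{|t|<r\}$, and the local one-bubble estimates control that core.

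Your upper period bound depends on this. In the bounded-amplitude case with $T_n\to\infty$, nothing in your argument stops the local limit from being, for example, $\phi$ itself. Only a uniform energy bound places the limit in $H^1_0(\mathbb R\times\mathbb S^{N-1}_+)$, where a half-space Liouville/Pohozaev argument applies. In the blow-up case you also need the Green-function argument with $\mathscr M_N(T_n)\to-2^{2-N}\ne0$.

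\textbf{Smaller repairs.}
\begin{itemize}
\item Your class $\mathcal O$ must include half-period translates, since the $\varepsilon<0$ half of the local curve has $\partial_s w>0$ on $(0,\pi)$.
\item Axial symmetry alone does not place the maximum on the axis; you need the moving-plane angular monotonicity $v_\theta<0$.
\item Hopf-type openness degenerates at the edges $\{0,\pi\}\times\partial\mathbb S^{N-1}_+$. The paper avoids this by using the spectral gap $\lambda_1=0<\lambda_2$ on the half period.
\end{itemize}
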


In Theorem~\ref{thm:main-global}, we identify the unique blow-up period but
do not construct the branch approaching it.  We now construct this
branch and prove its local uniqueness.

\begin{theorem}\label{thm:main-resonant-family}
There exist $\mu_0>0$ and a family $(T(\mu),v_\mu)$,
$0<\mu<\mu_0$, of positive bumpy solutions of
\eqref{eq:EF-critical}, with
least period $T(\mu)$ and centered at $P$.  Set
$k(\mu)=2\pi/T(\mu)$ and
$w_\mu(s,z)=v_\mu(T(\mu)s/(2\pi),z)$.
Then $\mu\mapsto(k(\mu),w_\mu)$ is $C^1$ in
$(0,\infty)\times
C^{2,\alpha}(\mathbb T\times\overline{\mathbb S^{N-1}_+})$, and
\begin{equation}\label{eq:intro-resonant-family}
 T(\mu)\longrightarrow T^*,\qquad
 \|v_\mu\|_{L^\infty}\longrightarrow\infty
\end{equation}
as $\mu\downarrow0$.  If $(T_n,v_n)$ is any sequence of positive
bumpy solutions whose maxima occur at $P$, with
$T_n\to T^*$ and $v_n(P)\to\infty$, then, for all sufficiently large
$n$, there is a unique $\mu_n\in(0,\mu_0)$ such that
$(T_n,v_n)=(T(\mu_n),v_{\mu_n})$; necessarily $\mu_n\to0$.
After reducing $\mu_0$ if necessary,
$(k(\mu),w_\mu)\in\mathscr C$ for every $0<\mu<\mu_0$.
\end{theorem}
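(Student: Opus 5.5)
We will construct the concentrating family by a Lyapunov--Schmidt reduction on $\Sigma_T$, with approximate solution a single bubble centered at $P$ and corrected by the Dirichlet condition and periodicity. Concretely, let $\mu>0$ be the concentration parameter and set, in Euclidean variables, $U_\mu(x)=\mu^{-a}U(|x-e_N|/\mu)$. Let $G_T$ denote the Green function of $-L_{g_{\mathrm{cyl}}}$ on $\Sigma_T$ with Dirichlet condition, obtained from the half-space Green function by reflection $x\mapsto x-2x_Ne_N$ and summation over the dilations $x\mapsto e^{\ell T}x$, $\ell\in\mathbb Z$. We take as ansatz $\mathcal V_{\mu,T}=\Pi_T(U_\mu-\text{harmonic correction})$, i.e.\ the $T$-periodization in $t$ of the projection of $U_\mu$ onto the Dirichlet space. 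Since $\mathcal V_{\mu,T}$ is even in $t$, axial, and has its unique maximum at $P$ (after a small phase correction), we can work in the axial-even class. In this class the bubble's kernel $\{\partial_\mu U_\mu,\nabla_\xi U_\mu\}$ reduces to the span of $\partial_\mu U_\mu$ and $\partial_{\xi_N}U_\mu$. The latter is fixed by the centering at $P$, through an extra unknown shift along $e_N$, or equivalently absorbed by adjusting $T$. The regular part of $G_T$ at $(P,P)$, with the standard normalization, is exactly $c_N\mathscr M_N(T)$: the term $-2^{2-N}$ comes from the boundary reflection at distance $2$, and the series comes from the images $e^{\pm\ell T}e_N$ and their reflections, after conformal weighting by $e^{-a\ell T}$.

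We then solve the problem in the usual order. (1) Estimate the error $E=L_{g_{\mathrm{cyl}}}\mathcal V+\mathcal V^p$ in a weighted $C^{0,\alpha}$ norm adapted to the bubble; this gives $\|E\|\lesssim\mu^{a}$ for $N\ge 6$, with the standard modifications for small $N$. (2) Show that the linearized operator $-L_{g_{\mathrm{cyl}}}-p\mathcal V^{p-1}$ is uniformly invertible on the axial-even complement of $\partial_\mu\mathcal V$, for $T$ in a compact neighborhood of $T^*$ and $\mu$ small. We argue by contradiction through blow-up: a normalized sequence in the kernel either concentrates at the bubble scale, where it converges to a kernel element of the bubble linearization that is excluded by the orthogonality and symmetry constraints, or it lives at scale one, where it converges to a solution of $-L_{g_{\mathrm{cyl}}}\varphi=0$ on $\Sigma_T$, which must be $0$ because $-L_{g_{\mathrm{cyl}}}\ge a^2>0$ under the Dirichlet condition. (3) Apply a contraction to obtain a correction $\varphi_{\mu,T}$ with $\|\varphi\|\lesssim\|E\|$, which is $C^1$ in $(\mu,T)$. (4) The reduced equation is the single scalar equation $\int(\text{equation})\,\partial_\mu\mathcal V=0$. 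Its expansion has the form $c\,\mu^{N-2}\mathscr M_N(T)+o(\mu^{N-2})$, after division by the appropriate power, where $c\ne0$; this mirrors the Bahri--Coron/Rey expansion, in which the $\mu$-derivative of the energy is governed by the Robin function. Since $\mathscr M_N$ is strictly decreasing with a simple zero at $T^*$, we have $\mathscr M_N'(T^*)<0$, and the implicit function theorem applied to $F(\mu,T)=\mathscr M_N(T)+o(1)$ gives a unique $C^1$ curve $T(\mu)\to T^*$. Positivity follows since the correction is small relative to $\mathcal V$ away from $\partial\Sigma$, together with the maximum principle near the boundary. Bumpiness ($\partial_tv<0$ on $(0,T/2)$) follows by applying the same blow-up argument to $\partial_t v$, using the moving plane in $t$ inherited from the bubble profile. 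Least period $T(\mu)$ holds because the solution has a single bubble per period.

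For local uniqueness, we take any $(T_n,v_n)$ as in the statement. Because the solutions are bumpy, the energy bound of Theorem~\ref{thm:main-global}(ii) type and single-maximum structure give $\int v_n^{p+1}\to\int U^{p+1}$. A standard blow-up analysis, with isolated simple blow-up at $P$ controlled by the $\partial_tv<0$ property and the bound of \cite{Xiong2017} away from the normal cone, shows that $v_n=\mathcal V_{\mu_n,T_n}+\varphi_n$ with $\mu_n\to0$ and $\varphi_n$ small in the weighted norm, where $\mu_n$ is chosen by the orthogonality condition. Uniqueness of the fixed point in step (3) then gives $\varphi_n=\varphi_{\mu_n,T_n}$, and uniqueness in the implicit function theorem gives $T_n=T(\mu_n)$. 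Finally, $\mathscr C$ is connected and noncompact, and by Theorem~\ref{thm:main-global}(iii) its unbounded sequences consist of bumpy solutions concentrating at $T^*$. We therefore pick such a sequence in $\mathscr C$. By local uniqueness it lies on the curve, and since the portion of $\mathscr C$ near the blow-up end is connected and parametrized by the curve, the curve is contained in $\mathscr C$ for small $\mu$. This inclusion uses that near the end the solution set is exactly the curve, so any connected unbounded subset of it must contain a full tail.

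\textbf{Main obstacle.} The hardest step will be the precise expansion in step (4). We must identify the leading coefficient with $\mathscr M_N(T)$ and obtain a remainder that is $o(\mu^{N-2})$ uniformly in $T$, including in low dimensions, where the bubble tail interacts with the periodic images and the slowly decaying error must be handled with refined corrections. A further difficulty is the a priori step turning arbitrary bumpy blow-up sequences into small perturbations of the ansatz, which needs sharp pointwise decay of $v_n$ away from $P$.
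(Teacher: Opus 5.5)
The construction half of your plan follows the paper's route: a single projected bubble at $P$, reduction to the dilation mode, a reduced function governed by $\mathscr M_N(T)$, and the period selected by $\mathscr M_N'(T^*)<0$. The genuine gap is in \emph{local uniqueness}.

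\textbf{The gap: local uniqueness.} You appeal to uniqueness of the fixed point in step (3). That uniqueness holds only in a ball of the \emph{weighted} norm. So you need $|v_n-\mathcal V_{\mu_n,T_n}|\le K\delta\Psi_{\mu_n}(r)$, together with the matching scaled derivative bounds. This demands a relative accuracy of order $\mu_n^{N-2}$ in the core, and of order a positive power of $\mu_n$ (e.g.\ $\mu_n^2$ for $N\ge5$) away from $P$.

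Isolated-simple blow-up estimates, and the conical bound of \cite{Xiong2017}, give much less. They only yield $v_n\asymp$ bubble/Green profile with $o(1)$ relative error. So the step you call ``standard'' is in fact the whole difficulty. To close it along your route, you would have to bootstrap through the weighted inverse, starting from a global relative smallness $|v_n-\mathcal V_{\mu_n,T_n}|=o(\mathcal V_{\mu_n,T_n})$ that includes the neck, and you have not supplied that.

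The paper avoids weighted closeness entirely by proving uniqueness at the energy level:
\begin{enumerate}
\item Lemma~\ref{lem:res-entry-modulation} uses only the one-bubble energy limit and the core convergence. It produces $\mu_n$ with $\langle v_n-W_{\mu_n,T_n},Z_{\mu_n,T_n}\rangle_{T_n}=0$ and $\|v_n-W_{\mu_n,T_n}\|_{T_n}=o(1)$.
\item The $H^1$ inverse of Lemma~\ref{lem:res-inverse}, together with $\|\mathcal Q_n(\eta)\|_{H^{-1}}\le C\|\eta\|_{T_n}^{1+\min\{1,p-1\}}$, gives $\|\psi_n\|_{T_n}\le C\rho_N(\mu_n)$.
\item The difference $\eta_n=\psi_n-\varphi_{\mu_n,T_n}$ then satisfies $\|\eta_n\|_{T_n}+|\lambda_{\mu_n,T_n}|\le C\rho_N(\mu_n)^{\min\{1,p-1\}}\|\eta_n\|_{T_n}$.
\item Hence $\eta_n=0$ and $\lambda_{\mu_n,T_n}=0$, and $T_n=T(\mu_n)$ follows from strict monotonicity of the reduced function.
\end{enumerate}
You also still owe uniqueness of the parameter, i.e.\ that no two $\mu\ne\mu'$ give the same solution. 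The paper gets this from height matching, which places both parameters in the modulation window, where the modulation lemma decides.

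\textbf{Smaller points.}

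\emph{The $\partial_{\xi_N}$ mode.} It does not survive in the axial-even class. Under the change of variables, $t\mapsto-t$ is the Kelvin inversion $x\mapsto x/|x|^2$. Near $e_N$ this is, to first order, the reflection $x_N-1\mapsto 1-x_N$, and $\partial_{\xi_N}U$ is odd under it. This is why the paper kills all translations by symmetry. It also centers the bubble at $\sqrt{1-\mu^2}\,e_N$ so that $\mathcal U_\mu$ is exactly $t$-even, which is your ``phase correction''. Had a second mode survived, $T$ could not absorb it, since $T$ is already used for the dilation equation.

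\emph{Period selection.} $\mathscr M_N'(T^*)\ne0$ only helps if the remainder is $o(1)$ in $C^1(I)$, and there is no implicit function theorem at $\mu=0$. The paper gets $C^1_T$ control from the fixed-domain lift $\Phi_T$, which is the identity near $P$; there the bubble, cut-off and metric are $T$-independent, and $\partial_TL_{g_T}$ is supported away from $P$. It then uses monotonicity in $T$ for each small $\mu$, applying the implicit function theorem only at $\mu>0$.

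\emph{Bumpiness.} Moving planes in $t$ have no starting position on a compact periodic cylinder. As in Proposition~\ref{prop:res-bumpy}, bumpiness comes from two facts:
\begin{itemize}
\item $-\partial_tv_\mu>0$ in the core, via the rescaled limit $2a\alpha_Ny_1(1+|y|^2)^{-a-1}$ divided by $y_1$;
\item coercivity of $-L_{g_{\mathrm{cyl}}}-pv_\mu^{p-1}$ on the exterior half-period region, which follows from the small exterior $L^{2^*}$ energy.
\end{itemize}

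\emph{Attaching to $\mathscr C$.} You neither need nor have shown that the part of $\mathscr C$ near the end is connected. The curve $\Gamma_*$ is connected, lies in $\mathcal M_+$, and meets $\mathscr C$ (after a half-period translation $\mathcal T_\pi$ of the escaping sequence if needed). Maximality of the component $\mathscr C$ of $\overline{\mathcal M_+\cup\mathcal M_-}$ then gives $\Gamma_*\subset\mathscr C$ at once.
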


Here $\mu$ is the concentration scale: after centering at $P$ and
rescaling by $\mu$, the profiles converge locally in $C^2$ to a
standard Euclidean bubble, so $P$ is the unique concentration point in
each period.  For the corresponding half-space solution $u_\mu$ given by
\eqref{eq:EF-transform}, we have $u_\mu(e_N)=v_\mu(P)\to\infty$, whereas
$x_N|x|^{-N/2}=1$ at $x=e_N$.  Hence the estimate in
Problem~\ref{problem:universal-critical-bound} cannot hold uniformly.

\medskip
\noindent\textit{Organization of the paper.}
Section~\ref{sec:bifurcation-branches} constructs the local branch and
its global continuation.  Section~\ref{sec:finite-period-compactification}
develops the one-bubble blow-up analysis at finite periods, while
Section~\ref{sec:uniform-bounds} proves the uniform energy and period
bounds.  Section~\ref{sec:resonant-end} constructs and locally
classifies the resonant blow-up end and attaches it to the global
branch.

\medskip
\noindent\textit{Notation.}
\begin{center}
\small
\renewcommand{\arraystretch}{1.08}
\begin{tabular}{@{}p{0.22\linewidth}p{0.74\linewidth}@{}}
\textbf{$\mathbb S^{N-1}_+,\,\mathbb T$} &
$\mathbb S^{N-1}_+=\{z\in\mathbb S^{N-1}:z_N>0\}$ and
$\mathbb T=\mathbb R/(2\pi\mathbb Z)$. \\

\textbf{$\Sigma_T,\,g_{\mathrm{cyl}},\,\dd V,\,d_T$} &
\vphantom{\Big|}$\Sigma_T=(\mathbb R/T\mathbb Z)\times\mathbb S^{N-1}_+$,
$g_{\mathrm{cyl}}=\dd t^2+g_{\mathbb S^{N-1}}$,
$\dd V=\dd t\,\dd\sigma$, and
$d_T(X,Y)^2=\min_{m\in\mathbb Z}(|t-s+mT|^2+d_{\mathbb S^{N-1}}(z,y)^2)$ for
$X=(t,z)$, $Y=(s,y)$. \\

\textbf{$P,\,r(X),\,\delta(X)$} &
$P=(0,e_N)$,
$r(X):=d_T(X,P)$,
$\delta(X):=\min\{1,\operatorname{dist}_{d_T}(X,\partial_{\mathrm{lat}}\Sigma_T)\}$,
$\partial_{\mathrm{lat}}\Sigma_T=(\mathbb R/T\mathbb Z)\times\partial\mathbb S^{N-1}_+$.\\

\textbf{$\widetilde\Sigma,\,\widetilde P,\,\Lambda_T$} &
$\widetilde\Sigma=\mathbb R\times\mathbb S^{N-1}_+$,
$\widetilde P=(0,e_N)$,
$\Lambda_T=\{(mT,e_N):m\in\mathbb Z\}$.\\

\textbf{$L_g$} &
$L_g=\Delta_g-\frac{N-2}{4(N-1)}R_g$, hence
$-L_{g_{\mathrm{cyl}}}=-\partial_{tt}-\Delta_{\mathbb S^{N-1}}+a^2$.\\

\textbf{$\mathcal B_r^g(X),\,\mathcal B_r(X),\,B_r(x)$} &
$\mathcal B_r^g(X)$ is the geodesic ball for metric $g$,
$\mathcal B_r(X)$ when context is clear,
and $B_r(x)$ the Euclidean ball in $\mathbb R^N$.\\

\textbf{$\langle\cdot,\cdot\rangle_T,\,\|\cdot\|_T$} &
For $u,v\in H^1_0(\Sigma_T)$,
$\langle u,v\rangle_T:=\int_{\Sigma_T}(\nabla u\cdot\nabla v+a^2uv)\,\dd V$,
$\|u\|_T^2:=\langle u,u\rangle_T$.\\

\textbf{$\lambda_*,\,\alpha_N,\,U_{\lambda_*}$} &
$\lambda_*:=\sqrt{N(N-2)}$,
$\alpha_N:=\lambda_*^a=[N(N-2)]^{(N-2)/4}$, and
$U_{\lambda_*}(y)=\alpha_N(1+|y|^2)^{-a}$; by critical scaling,
$\int_{\mathbb R^N}U_{\lambda_*}^{2^*}\dd y
=\int_{\mathbb R^N}U^{2^*}\dd y$.\\

\textbf{$A\asymp B$} &
There exists $C\ge1$ independent of the relevant parameters such that
$C^{-1}B\le A\le CB$.\\
\end{tabular}
\end{center}

\section{Local bifurcation and global continuation}
\label{sec:bifurcation-branches}

In this section, we prove Theorem~\ref{thm:local-bifurcation} and construct
the global branch underlying Theorem~\ref{thm:main-global}.  We first
bifurcate from the first axial resonance and then continue the local curve
globally, excluding the higher resonances and preserving its bumpy
structure.

\subsection{Local bifurcation}
\label{sec:local-branch}

Let
\[
 \mathscr A_\phi=-\Delta_{\mathbb S^{N-1}}+a^2-p\phi^{p-1},\qquad
 \operatorname{Dom}(\mathscr A_\phi)
 =H^2(\mathbb S^{N-1}_+)\cap H^1_0(\mathbb S^{N-1}_+).
\]
Shioji--Watanabe \cite[Theorem~12]{SW2016} established the axial
nondegeneracy of the positive solution for a more general class of
spherical-cap equations.  
In the present case, we give the following direct proof, which also records the Morse index.

\begin{lemma}\label{lem:spectrum}
All axial Dirichlet eigenvalues $\lambda_i$, $i\geq1$ of $\mathscr A_\phi$ are simple.
Moreover, $\mathscr A_\phi$ has exactly one negative axial eigenvalue
and no zero axial eigenvalue; equivalently,
\begin{equation}\label{eq:spectral-gap}
 \lambda_1<0<\lambda_2.
\end{equation}
\end{lemma}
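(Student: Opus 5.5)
The proof reduces to a singular Sturm--Liouville problem in the polar angle, with the nondegeneracy $\lambda_2\neq0$ as the genuine difficulty. Write $z=(\sin\theta\,\omega,\cos\theta)$ with $\omega\in\mathbb S^{N-2}$ and $\theta\in[0,\pi/2)$. Axial functions depend only on $\theta$. Since $\phi$ is positive and axial (by \cite[Proposition~1.1]{Xiong2017}), the axial eigenvalue problem for $\mathscr A_\phi$ becomes
\[
 -\bigl(\sin^{N-2}\theta\,f'\bigr)'+\sin^{N-2}\theta\,\bigl(a^2-p\phi^{p-1}\bigr)f=\lambda\,\sin^{N-2}\theta\,f ,
 \qquad f(\pi/2)=0 .
\]
At $\theta=0$ we impose boundedness, i.e.\ $f'(0)=0$.

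\textbf{Simplicity.} The endpoint $\theta=0$ is a regular singular point. Its indicial roots are $0$ and $-(N-3)$ for $N\ge4$, and $0$ with a logarithmic companion for $N=3$. Hence the $H^1$-admissible solutions at $\theta=0$ form a one-dimensional space, determined by $f(0)$. Two eigenfunctions with the same eigenvalue are therefore proportional. This gives simplicity of all axial eigenvalues, together with the usual Sturm oscillation statement: the $i$-th eigenfunction has exactly $i-1$ zeros in $(0,\pi/2)$.

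\textbf{$\lambda_1<0$.} Testing with $\phi$ gives
\[
 \langle\mathscr A_\phi\phi,\phi\rangle=(1-p)\int\phi^{p+1}<0 ,
\]
so $\lambda_1<0$. The bound $\lambda_2\ge0$ follows because $\phi$ is obtained as a mountain-pass (Nehari) ground state. On the codimension-one Nehari manifold the second variation is nonnegative, so the Morse index of $\phi$ is at most one. Uniqueness of the positive solution \cite{BVPV2007,BVPV2011} guarantees that this ground state is our $\phi$.

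\textbf{Excluding $\lambda_2=0$ (main obstacle).} Suppose $\mathscr A_\phi\psi=0$ with $\psi$ axial. By Sturm theory $\psi$ has exactly one zero $\theta_0\in(0,\pi/2)$; normalize so that $\psi>0$ on $[0,\theta_0)$. I would use two auxiliary functions.

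The first is $\phi$ itself, which satisfies $\mathscr A_\phi\phi=(1-p)\phi^p<0$.

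The second is $h:=\phi_\theta$. By the moving-plane monotonicity underlying \cite{Xiong2017}, $h<0$ on $(0,\pi/2)$. Differentiating the ODE in $\theta$ gives
\[
 \mathscr A_\phi h=-\frac{N-2}{\sin^2\theta}\,h>0 .
\]
Apply the Green/Picone identity
\[
 \int_I\bigl(h\,\mathscr A_\phi\psi-\psi\,\mathscr A_\phi h\bigr)\sin^{N-2}\theta\,d\theta
 =\bigl[\sin^{N-2}\theta\,(\psi h'-h\psi')\bigr]_{\partial I}
\]
on $I=(0,\theta_0)$ and on $I=(\theta_0,\pi/2)$. The boundary contributions at $0$ vanish because $h(0)=0$ and $\psi'(0)=0$. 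At $\theta_0$ the contribution is $-h(\theta_0)\psi'(\theta_0)$. At $\pi/2$ only $\psi'(\pi/2)h(\pi/2)$ survives, and $h(\pi/2)=\phi_\theta(\pi/2)<0$ by Hopf. On each interval the left-hand side has a definite sign, and the two signs should be incompatible with the sign of $\psi'$ at $\theta_0$ and at $\pi/2$.

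If the sign bookkeeping with $h$ alone is inconclusive, I would run the same identity with a combination $h+c\phi$. The constant $c$ is chosen so that $h+c\phi$ vanishes at $\theta_0$, which kills the interior boundary term, and its image under $\mathscr A_\phi$ has a controllable sign on each subinterval. This is the Kwong--Yanagida-type step. I expect the delicate part to be verifying the sign of $\mathscr A_\phi(h+c\phi)$ on the relevant piece. That should use $c<0$ and the explicit terms $(1-p)\phi^p$ and $-(N-2)\sin^{-2}\theta\,h$, which have the same sign on $(0,\pi/2)$ once $c<0$.

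Combining the three steps gives $\lambda_1<0<\lambda_2$, with all axial eigenvalues simple.
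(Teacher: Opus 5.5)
Your simplicity argument (a single $H^1$-admissible Frobenius branch at $\theta=0$) and your Morse-index bound via the Nehari ground state are fine. They are equivalent to the paper's Wronskian argument and its constrained minimization.

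\textbf{The gap.} It lies in excluding $\lambda_2=0$, which is the heart of the lemma, and your sketch does not close there. Put $m=N-2$ and assume $\mathscr A_\phi\psi=0$. Your identity with $h=\phi_\theta$ gives two relations:
\[
 m\int_0^{\theta_0}\sin^{m-2}\theta\,\psi h\,d\theta=-\sin^m\theta_0\,h(\theta_0)\psi'(\theta_0),
\]
\[
 m\int_{\theta_0}^{\pi/2}\sin^{m-2}\theta\,\psi h\,d\theta=-h(\pi/2)\psi'(\pi/2)+\sin^m\theta_0\,h(\theta_0)\psi'(\theta_0).
\]
We have $h<0$ and $\psi'(\theta_0)<0<\psi'(\pi/2)$. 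Hence both sides of the first relation are negative and both sides of the second are positive. There is no contradiction.

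\textbf{The fallback fails for a concrete reason.} The condition $h(\theta_0)+c\phi(\theta_0)=0$ forces $c=Y(\theta_0)>0$, where $Y=-\phi'/\phi$; it does not give $c<0$. Then $h+c\phi=\phi\,(Y(\theta_0)-Y)$ and
\[
 \mathscr A_\phi(h+c\phi)=\phi\Bigl(\frac{mY}{\sin^2\theta}-(p-1)Y(\theta_0)\phi^{p-1}\Bigr).
\]
The two terms compete. As $\theta\to\pi/2$ we have $Y\to+\infty$ and $\phi\to0$, so this image is positive near $\pi/2$, where $\psi<0$. The resulting identities are
\[
 \int_0^{\theta_0}\sin^m\theta\,\psi\,\mathscr A_\phi(h+c\phi)\,d\theta=0,\qquad
 \int_{\theta_0}^{\pi/2}\sin^m\theta\,\psi\,\mathscr A_\phi(h+c\phi)\,d\theta=\phi'(\pi/2)\psi'(\pi/2)<0,
\]
and both are consistent with that sign pattern. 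If you instead take $c<0$, so that the two terms agree in sign, then $h+c\phi$ is one-signed with a one-signed image. That only re-proves $\lambda_1<0$; it cannot exclude a sign-changing kernel element.

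\textbf{The pair $\phi,\phi_\theta$ cannot supply the mechanism.} One has $\mathscr A_\phi(\alpha\phi_\theta+\beta\phi)=(p-1)\phi^p(\alpha F-\beta)$ with $F=mY/\bigl((p-1)\phi^{p-1}\sin^2\theta\bigr)$. Here $F\to+\infty$ at both endpoints, using $\phi''(0)<0$ at $\theta=0$. So no combination of $\phi$ and $\phi_\theta$ has an image that changes sign exactly once.

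\textbf{What is missing.} You need a test function vanishing on $\partial\mathbb S^{N-1}_+$ whose image changes sign exactly at $\theta_0$, together with the monotonicity that makes this happen. The paper uses two identities:
\[
 \mathscr A_\phi\phi=(1-p)\phi^p,\qquad \mathscr A_\phi(\phi^p-a^2\phi)=p(1-p)\phi^{p-2}(\phi')^2=p(1-p)\phi^pY^2 .
\]
Orthogonality of $\psi$ to both images gives
\[
 \int_0^{\pi/2}\sin^m\theta\,\phi^p\psi\,\bigl(Y^2-Y(\theta_0)^2\bigr)\,d\theta=0 .
\]
This is impossible once $Y^2$ is strictly increasing, because the integrand then has a strict sign.

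That monotonicity is a genuine additional step. The paper derives a first-order linear equation for $S=Y'$ whose source term is $Y\bigl(2Y/\sqrt m-\sqrt m\cot\theta\bigr)^2\geq0$. It shows $S(0)=(\phi(0)^{p-1}-a^2)/(m+1)>0$ and then integrates with an integrating factor to get $S>0$ on $(0,\pi/2)$. Your proposal contains neither this third test function nor the monotonicity of $Y$.
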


\begin{proof}
We first determine the Morse index.  Set
\[
 E(u)=\frac12\int_{\mathbb S^{N-1}_+}
 (|\nabla u|^2+a^2u^2)\dd\sigma,\qquad
 G(u)=\frac1{p+1}\int_{\mathbb S^{N-1}_+}|u|^{p+1}\dd\sigma.
\]
Since $p+1=2N/(N-2)$ is subcritical in dimension $N-1$, the embedding
$H^1_0(\mathbb S^{N-1}_+)\hookrightarrow L^{p+1}(\mathbb S^{N-1}_+)$
is compact.  By the direct method, we obtain a nonnegative minimizer of $E$
under $G(u)=G(\phi)$.  It satisfies $E'(u)=\Lambda G'(u)$ for some
$\Lambda>0$; rescaling, uniqueness in \eqref{eq:stationary-q}, and the
constraint, we obtain $u=\phi$.  Since $E'(\phi)=G'(\phi)$, the constrained
second variation yields
\[
 \langle\mathscr A_\phi h,h\rangle
 =(E-G)''(\phi)[h,h]\geq0
 \quad\text{whenever}\quad
 \int_{\mathbb S^{N-1}_+}\phi^ph\dd\sigma=0.
\]
The last condition has codimension one, whereas
\[
 \langle\mathscr A_\phi\phi,\phi\rangle
 =(1-p)\int_{\mathbb S^{N-1}_+}\phi^{p+1}\dd\sigma<0.
\]
Therefore $\mathscr A_\phi$ has exactly one negative eigenvalue.  Its
principal eigenfunction is positive and simple, and rotational
invariance makes it axial.

We next exclude zero from the axial spectrum.  Put $m=N-2$ and
$\theta=d_{\mathbb S^{N-1}}(z,e_N)$.  
By the moving-plane argument, we have
$\phi=\phi(\theta)$ and $\phi'<0$ on $(0,\pi/2)$.  If an axial
$h\ne0$ satisfied $\mathscr A_\phi h=0$, then
\[
\begin{aligned}
 &-\phi''-m\cot\theta\,\phi'+a^2\phi=\phi^p,
 &&\phi'(0)=\phi(\pi/2)=0,\\
 &-h''-m\cot\theta\,h'+(a^2-p\phi^{p-1})h=0,
 &&h'(0)=h(\pi/2)=0.
\end{aligned}
\]
The second line is a singular Sturm--Liouville problem with separated
boundary conditions.  If $h_1,h_2$ are regular axial eigenfunctions for
the same eigenvalue, the Lagrange identity implies
\[
 \bigl\{\sin^m\theta\,(h_1h_2'-h_1'h_2)\bigr\}'=0.
\]
Regularity at $0$ makes this weighted Wronskian vanish there; hence it
vanishes identically and $h_1,h_2$ are linearly dependent.  Thus every
axial eigenvalue is simple.  It remains to exclude zero from the axial
spectrum.  Suppose, for contradiction, that $0$ is an axial eigenvalue.
Since we have already found exactly one negative eigenvalue, $0$ would
then be the second eigenvalue.  The Sturm oscillation theorem (e.g.
\cite[Theorem~10.12.1]{Zettl2005}) therefore implies that its eigenfunction has a unique zero
$\theta_0\in(0,\pi/2)$.
Changing its sign if necessary, $h>0$ on $(0,\theta_0)$ and $h<0$ on
$(\theta_0,\pi/2)$.
Set $Y=-\phi'/\phi>0$ and $S=Y'$.  
Using the equation for $\phi$, we obtain
\[
 S=Y^2-m\cot\theta\,Y+\phi^{p-1}-a^2,
 \qquad
 S(0)=-\frac{\phi''(0)}{\phi(0)}
 =\frac{\phi(0)^{p-1}-a^2}{m+1}>0.
\]
Indeed, since $\phi$ attains its maximum at $0$, we have $\phi''(0)\leq0$.  If equality held,
the equation at $0$ would imply $\phi(0)^{p-1}=a^2$, so $\phi$ would
have the same regular Cauchy data as the constant equilibrium
$a^{2/(p-1)}$.  Uniqueness would then contradict $\phi(\pi/2)=0$.
A direct calculation yields
\[
 S'=\Bigl[\Bigl(2-\frac4m\Bigr)Y-m\cot\theta\Bigr]S
 +\Bigl[\frac{2Y}{\sqrt m}-\sqrt m\cot\theta\Bigr]^2Y.
\]
Put $Q=(2-4/m)Y-m\cot\theta$.  Since $S(0)>0$, one has
$S(\varepsilon)>0$ for small $\varepsilon>0$.  By the integrating-factor
formula, for $\varepsilon<\theta<\pi/2$, we have
\[
\begin{aligned}
 S(\theta)
 &=e^{\int_\varepsilon^\theta Q(\tau)\dd\tau}
 \Biggl\{S(\varepsilon)+\int_\varepsilon^\theta
 e^{-\int_\varepsilon^sQ(\tau)\dd\tau}Y(s)
 \Bigl[\frac{2Y(s)}{\sqrt m}-\sqrt m\cot s\Bigr]^2\dd s\Biggr\}>0.
\end{aligned}
\]
Thus $S>0$ on $(0,\pi/2)$ and $Y^2$ is strictly increasing.
Note 
\[
 \begin{aligned}
 0&=\langle\mathscr A_\phi h,\phi\rangle
 =\langle h,\mathscr A_\phi\phi\rangle
 =(1-p)\int_0^{\pi/2}\sin^m\theta\,\phi^ph\dd\theta,\\
 0&=\langle\mathscr A_\phi h,\phi^p-a^2\phi\rangle
 =\langle h,\mathscr A_\phi(\phi^p-a^2\phi)\rangle
 =p(1-p)\int_0^{\pi/2}\sin^m\theta\,\phi^{p-2}(\phi')^2h\dd\theta.
 \end{aligned}
\]
Using
$\phi^{p-2}(\phi')^2=\phi^pY^2$, we have
\[
 \int_0^{\pi/2}\sin^m\theta\,\phi^ph\dd\theta=0,
 \qquad
 \int_0^{\pi/2}\sin^m\theta\,\phi^phY^2\dd\theta=0.
\]
Subtracting
$Y(\theta_0)^2$ times the first identity from the second, we obtain
\[
 0=\int_0^{\pi/2}\sin^m\theta\,\phi^ph
 \bigl(Y^2-Y(\theta_0)^2\bigr)\dd\theta<0,
\]
because $h$ and $Y^2-Y(\theta_0)^2$ have opposite signs away from
$\theta_0$.  This contradiction proves that the axial kernel is trivial
and completes the proof of Lemma~\ref{lem:spectrum}.
\end{proof}

Let $\mathscr X$ and $\mathscr Y$ be, respectively, the
$C^{2,\alpha}$ Dirichlet and $C^{0,\alpha}$ spaces in the symmetry class
imposed throughout; no boundary trace condition is imposed on $\mathscr Y$.

\begin{lemma}\label{lem:power-map}
Interpreting $w/z_N$ by its continuous extension to $z_N=0$, define
\[
 \mathscr U=\biggl\{w\in\mathscr X:
 \inf_{\mathbb T\times\overline{\mathbb S^{N-1}_+}}
 \frac{w}{z_N}>0\biggr\}.
\]
Then $\mathscr U$ is open in $\mathscr X$, contains $\phi$, and the following map is smooth:
\[
 \mathscr U\longrightarrow\mathscr Y,\qquad w\longmapsto w^p.
\]
\end{lemma}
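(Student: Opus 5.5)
The approach is to factor the power map through the quotient $w/z_N$, so that the nonlinearity is applied only to a function bounded away from zero. Set $\rho(z)=z_N$ on $\overline{\mathbb S^{N-1}_+}$ and $\Theta(w)=w/\rho$. The key steps, in order, are the following.

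\textbf{Step 1: the quotient map.} I first show that
\[
 \Theta:\mathscr X\to C^{1,\alpha}(\mathbb T\times\overline{\mathbb S^{N-1}_+})
\]
is a bounded linear map. The boundary $\{z_N=0\}$ of the half-sphere is smooth and $\rho$ is a defining function with $|\nabla\rho|=1$ there. Since $w=0$ on the lateral boundary, we can write
\[
 w(s,z)=\rho(z)\int_0^1 \partial_\nu w\bigl(s,\gamma_z(\tau)\bigr)\dd\tau
\]
in a collar, where $\gamma_z$ is the normal segment from $z$ to the boundary and $\partial_\nu$ a smooth extension of the normal derivative. The integral loses one derivative, so $w/\rho\in C^{1,\alpha}$ with $\|w/\rho\|_{C^{1,\alpha}}\le C\|w\|_{C^{2,\alpha}}$. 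Away from the collar, $\rho$ is bounded below and the estimate is trivial. This gives the continuous extension referred to in the statement.

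\textbf{Step 2: openness and $\phi\in\mathscr U$.} Because $\Theta$ is continuous into $C^0$, the condition $\inf\Theta(w)>0$ is an open condition, so $\mathscr U$ is open. For $\phi\in\mathscr U$: the function $\phi$ is positive in the interior. By the Hopf lemma applied to $-\Delta_{\mathbb S^{N-1}}\phi+a^2\phi=\phi^p\ge0$, we have $\partial_\nu\phi<0$ on $\partial\mathbb S^{N-1}_+$, hence $\phi/z_N>0$ on the boundary as well. Compactness then gives a positive infimum. Since $\phi$ is independent of $s$ and axial, it lies in $\mathscr X$.

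\textbf{Step 3: the factorization.} On $\mathscr U$ write
\[
 w^p=\rho^p\,\Theta(w)^p .
\]
Let $V=\{g\in C^{1,\alpha}:\inf g>0\}$, which is open. The map $g\mapsto g^p$ is smooth from $V$ into $C^{1,\alpha}\subset C^{0,\alpha}$. This follows because $f(x)=x^p$ is $C^\infty$ on $[c,C]$ for $c>0$, and the superposition (Nemytskii) operator induced by a smooth function is smooth on Hölder spaces in a neighbourhood of any $g$ with range in a compact subset of the domain of $f$. The $k$-th derivative is $g\mapsto f^{(k)}(g)[h_1,\dots,h_k]$, and the remainders are controlled by Taylor's formula with integral remainder, using that Hölder spaces are Banach algebras.

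Multiplication by $\rho^p$ must then be bounded on $C^{0,\alpha}$. Since $p>1$ and $\rho\ge0$ is smooth, $\rho^p$ is $C^{1}$ up to $\rho=0$, so $\rho^p\in C^{0,\alpha}$, and multiplication by it is a bounded linear map on $C^{0,\alpha}$. Composing a bounded linear map, a smooth map and a bounded multiplication gives smoothness of $w\mapsto w^p$ from $\mathscr U$ to $\mathscr Y$. Symmetry classes (axial, even in $s$) are preserved trivially.

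\textbf{Main obstacle.} The delicate point is Step 1: the quotient by $z_N$ costs exactly one derivative, and one must check that the Hölder regularity survives uniformly up to the boundary. The cleanest way is to flatten the boundary locally and use the formula
\[
 w(x',x_N)=x_N\int_0^1\partial_Nw(x',\tau x_N)\dd\tau ,
\]
whose $C^{1,\alpha}$ norm is bounded by $\|w\|_{C^{2,\alpha}}$. Once this is in place, the remaining steps are standard. The restriction $\alpha<p-1$ is not even needed here, since we only use $\rho^p\in C^{0,\alpha}$; it presumably matters elsewhere in the paper.
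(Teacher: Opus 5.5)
Your proposal is correct and follows the paper's route. The boundary-division formula $w=z_N\int_0^1\partial_{z_N}w(\cdot,\tau z_N)\,\dd\tau$ makes $w\mapsto w/z_N$ a bounded map into $C^{1,\alpha}$, Hopf's lemma puts $\phi$ in $\mathscr U$, and smoothness comes from the factorization $w^p=z_N^p(w/z_N)^p$, which is exactly what the paper's formula $D^m(w^p)[h_1,\ldots,h_m]=(p)_m z_N^p(w/z_N)^{p-m}\prod_j h_j/z_N$ encodes. Your remark that only $z_N^p\in C^{0,\alpha}$ is needed (so $\alpha<p-1$ plays no role in this lemma) is also right; the paper uses the stronger fact $z_N^p\in C^{1,\alpha}$, which is more than required.
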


\begin{proof}
In a boundary chart $(\theta,z_N)$, we use the zero trace to write
\begin{equation}\label{eq:boundary-division}
 \frac{w(s,\theta,z_N)}{z_N}
 =\int_0^1\partial_{z_N}w(s,\theta,\tau z_N)\dd\tau,\qquad
 \Bigl\|\frac w{z_N}\Bigr\|_{C^{1,\alpha}}
 \leq C\|w\|_{C^{2,\alpha}}.
\end{equation}
The maximum principle, Hopf's lemma, and \eqref{eq:boundary-division}
show that $\phi/z_N$ is positive on
$\overline{\mathbb S^{N-1}_+}$; hence $\phi\in\mathscr U$, and the first
two assertions follow.  For $m\geq1$,
\begin{equation}
 D^m(w^p)[h_1,\ldots,h_m]
 =(p)_m z_N^p
   \Bigl(\frac w{z_N}\Bigr)^{p-m}
   \prod_{j=1}^m\frac{h_j}{z_N}.
 \label{eq:power-derivative}
\end{equation}
Here $(p)_m=p(p-1)\cdots(p-m+1)$.  The middle factor in
\eqref{eq:power-derivative} is a smooth composition
on the positive cone, \eqref{eq:boundary-division} controls all remaining
quotients, and $z_N^p\in C^{1,\alpha}$ by the choice of $\alpha$.  Hence every
derivative in \eqref{eq:power-derivative} is a bounded multilinear map
from $\mathscr X^m$ to $\mathscr Y$.
\end{proof}

We now locate the first temporal resonance.  Define
\[
 \mathscr F(k,w)=-k^2\partial_s^2w-\Delta_{\mathbb S^{N-1}}w+a^2w-w^p,
 \qquad (k,w)\in(0,\infty)\times\mathscr U.
\]
Then $\mathscr F$ is smooth, and $\mathscr F(k,\phi)=0$ for every $k>0$.
Recall that $k_*=\sqrt{-\lambda_1}$ and set
\begin{equation*}
 h_*(s,z)=\psi_1(z)\cos s,\qquad
 \mathscr L_{k_*,\phi}=D_w\mathscr F(k_*,\phi)
 =-k_*^2\partial_{ss}+\mathscr A_\phi.
\end{equation*}
Recall that a bounded linear operator is Fredholm if its range is closed,
and its kernel and cokernel are finite-dimensional.  
Its Fredholm index is 
$\dim\ker\mathscr L-\dim\operatorname{coker}\mathscr L$.

\begin{lemma}\label{lem:simple-kernel}
The operator $\mathscr L_{k_*,\phi}:\mathscr X\to\mathscr Y$ is Fredholm
of index zero and
\begin{align}
 \ker \mathscr L_{k_*,\phi}&=\operatorname{span}\{h_*\},
 \label{eq:kernel}\\
 \operatorname{Ran}\mathscr L_{k_*,\phi}
 &=\biggl\{f\in\mathscr Y:
 \int_{\mathbb T\times\mathbb S^{N-1}_+}
 fh_*\dd s\dd\sigma=0
 \biggr\}.
 \label{eq:range}
\end{align}
Moreover,
\begin{equation}\label{eq:transversality}
 D_{kw}\mathscr F(k_*,\phi)[h_*]
 \notin\operatorname{Ran}\mathscr L_{k_*,\phi}.
\end{equation}
\end{lemma}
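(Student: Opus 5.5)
The plan is to diagonalize $\mathscr L_{k_*,\phi}$ by Fourier decomposition in $s$ and by the axial eigenbasis of $\mathscr A_\phi$. In the axial-even class every $f$ expands as $f=\sum_{j\ge0}\sum_{i\ge1} f_{ij}\psi_i(z)\cos(js)$, where $(\lambda_i,\psi_i)$ are the axial Dirichlet eigenpairs of $\mathscr A_\phi$. On the mode $\psi_i\cos(js)$ the operator acts as multiplication by
\[
\mu_{ij}=k_*^2j^2+\lambda_i .
\]
By Lemma~\ref{lem:spectrum}, $\lambda_1=-k_*^2<0<\lambda_2\le\lambda_3\le\cdots$. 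Hence $\mu_{ij}>0$ whenever $i\ge2$. For $i=1$ we get $\mu_{1j}=k_*^2(j^2-1)$, which vanishes only at $j=1$. The $L^2$ kernel is therefore spanned by $h_*=\psi_1\cos s$. Elliptic regularity shows that $h_*\in\mathscr X$, and $h_*$ is indeed axial and even in $s$.

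For the Fredholm property, I will write $\mathscr L_{k_*,\phi}=(-k_*^2\partial_{ss}-\Delta_{\mathbb S^{N-1}}+a^2)-p\phi^{p-1}$. The first operator is an isomorphism $\mathscr X\to\mathscr Y$ by Schauder theory for a uniformly elliptic operator with positive zeroth-order term on the compact manifold with boundary $\mathbb T\times\overline{\mathbb S^{N-1}_+}$; the $C^{0,\alpha}$ data with no boundary trace are fine for the Dirichlet problem. Multiplication by $p\phi^{p-1}\in C^{0,\alpha}$ (since $p-1>\alpha$ and $\phi\asymp z_N$) is compact $\mathscr X\to\mathscr Y$ by Arzel\`a--Ascoli. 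So $\mathscr L_{k_*,\phi}$ is Fredholm of index zero, and the compact perturbation respects the symmetry class.

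For the range \eqref{eq:range}, the inclusion ``$\subset$'' follows by integrating by parts: $\mathscr L_{k_*,\phi}$ is formally self-adjoint with Dirichlet conditions and $\mathscr L h_*=0$. Equality follows because the range has codimension equal to $\dim\ker=1$ by index zero, and the orthogonality condition is a nontrivial codimension-one constraint, since $h_*\in\mathscr Y$ and $\int h_*^2>0$.

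Transversality is a direct computation: $D_{kw}\mathscr F(k_*,\phi)[h_*]=-2k_*\partial_{ss}h_*=2k_*h_*$. Its pairing with $h_*$ is $2k_*\pi\|\psi_1\|^2_{L^2}=2\pi k_*\ne0$, so it is not in the range.

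The main obstacle I expect is the regularity bookkeeping rather than the spectral part. One point is the justification that Schauder theory holds in the symmetry class. The other is that the Fourier identification of the kernel works in $\mathscr X$. For the latter it suffices to argue in $H^1_0$ via Parseval and then upgrade by regularity, using the completeness of $\{\psi_i\cos js\}$ in the axial-even $L^2$ space.
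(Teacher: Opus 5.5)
Your proof is correct. The kernel computation via the modes $\psi_i\cos(js)$, the inclusion of the range in $\{f:\int fh_*=0\}$ by self-adjointness, and the transversality computation $D_{kw}\mathscr F(k_*,\phi)[h_*]=2k_*h_*$ are the same as in the paper. The Fredholm and range step is handled differently.

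\textbf{The paper's route.} The paper proves surjectivity onto $\{f:\int fh_*=0\}$ directly. For such $f$ it builds a weak solution $w\perp h_*$ from the spectral expansion, using that $0$ is isolated in the spectrum. It then upgrades $w$ with the global Schauder estimate and the interpolation $\|w\|_{C^0}\le\varepsilon\|w\|_{C^{2,\alpha}}+C_\varepsilon\|w\|_{L^2}$. Closedness, codimension one and index zero are read off from this explicit range.

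\textbf{Your route.} You obtain index zero abstractly. You write $\mathscr L_{k_*,\phi}$ as the coercive Dirichlet isomorphism $-k_*^2\partial_{ss}-\Delta_{\mathbb S^{N-1}}+a^2:\mathscr X\to\mathscr Y$ plus the compact multiplication by $p\phi^{p-1}$. You then identify the range by counting codimensions: a closed codimension-one subspace inside the codimension-one kernel of $f\mapsto\int fh_*$ must equal it.

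\textbf{What each buys.} Your route never has to regularize a weak $H^1_0$ solution up to $C^{2,\alpha}$, a step the paper leaves implicit. You only need regularity of $\psi_1$ and Schauder existence for the unperturbed coercive problem, and uniqueness there automatically keeps the solution axial-even. The paper's route is more constructive: it gives a right inverse onto $\{f:\int fh_*=0\}$ with values in $\{h_*\}^\perp$, and the a priori bound $\|w\|_{C^{2,\alpha}}\le C(\|f\|_{C^{0,\alpha}}+\|w\|_{L^2})$.

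\textbf{One small correction.} For your compactness step, $p\phi^{p-1}\in C^{0,\alpha}$ does not come from $\phi\asymp z_N$. It holds because $\phi$ is Lipschitz and $t\mapsto t^{p-1}$ is $\min\{1,p-1\}$-H\"older on bounded subsets of $[0,\infty)$, with $\alpha<\min\{1,p-1\}$.
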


\begin{proof}
Let $\mathscr A_\phi\psi_j=\lambda_j\psi_j$ in the axial Dirichlet space.
By self-adjointness and compactness of the resolvent,
$\{\psi_j(z)\cos(ms):j\geq1,\ m\geq0\}$ is a complete orthogonal basis
of the axial-even $L^2$ space, and
\[
 \mathscr L_{k_*,\phi}\bigl(\psi_j(z)\cos(ms)\bigr)
 =(k_*^2m^2+\lambda_j)\psi_j(z)\cos(ms).
\]
Since $k_*^2m^2+\lambda_1=k_*^2(m^2-1)$, while
$k_*^2m^2+\lambda_j\geq\lambda_2>0$ for $j\geq2$, the only zero mode is
$h_*$.  If $\mathscr L_{k_*,\phi}w=f$, then
self-adjointness implies $(f,h_*)_{L^2}=(w,\mathscr L_{k_*,\phi}h_*)_{L^2}=0$.
Conversely, if $(f,h_*)_{L^2}=0$, its coefficient in the unique zero mode
vanishes; the spectral expansion, with zero isolated from the remaining
spectrum, yields a weak solution $w\perp h_*$.  By the global Schauder estimate
\cite{GT2001} and the compact interpolation inequality, we have
\[
 \|w\|_{C^{2,\alpha}}\leq C(\|f\|_{C^{0,\alpha}}+\|w\|_{C^0}),\qquad
 \|w\|_{C^0}\leq\varepsilon\|w\|_{C^{2,\alpha}}+C_\varepsilon\|w\|_{L^2}.
\]
After absorption, $\|w\|_{C^{2,\alpha}}\leq
C(\|f\|_{C^{0,\alpha}}+\|w\|_{L^2})$.  Thus the H\"older realization has
the asserted range, which is closed and of codimension one; hence
its Fredholm index is zero.  Finally,
\begin{align*}
 D_{kw}\mathscr F(k_*,\phi)[h_*]
 &=-2k_*(h_*)_{ss}=2k_*h_*,\\
  \int_{\mathbb T\times\mathbb S^{N-1}_+}(2k_*h_*)h_*\dd s\dd\sigma
 &=2k_*\|h_*\|_{L^2(\mathbb T\times\mathbb S^{N-1}_+)}^2>0.
\end{align*}
By the range characterization in Lemma~\ref{lem:simple-kernel}, the last
strict inequality proves the transversality assertion.
\end{proof}

\begin{proof}[Proof of Theorem~\ref{thm:local-bifurcation}]
By Lemma~\ref{lem:power-map}, $\mathscr F$ is smooth.  By
Lemma~\ref{lem:simple-kernel}, $D_w\mathscr F(k_*,\phi)$ is Fredholm of
index zero, has kernel $\operatorname{span}\{h_*\}$, and satisfies the
required transversality condition.
Together with $\mathscr F(k,\phi)=0$, these verify the hypotheses of the
Crandall--Rabinowitz theorem and its higher-regularity conclusion
\cite[Theorems~1.7 and~1.18]{CR1971}.  Hence, after a smooth reparametrization,
\begin{equation}\label{eq:CR-curve}
 k_\varepsilon=k_*+O(\varepsilon),
 \qquad
 w_\varepsilon=\phi+\varepsilon h_*+O(\varepsilon^2)
 \quad\text{in }C^{2,\alpha}.
\end{equation}
In a neighborhood of $(k_*,\phi)$, the solution set consists of the stationary curve
$\{(k,\phi):k>0\}$ and the curve \eqref{eq:CR-curve}.  Since
$\mathscr U$ is open and $\phi\in\mathscr U$, shrinking
$\varepsilon_0$, we ensure that $w_\varepsilon\in\mathscr U$.
Let
\[
 \widehat w_{\varepsilon,1}(z)
 =\frac1\pi\int_0^{2\pi}w_\varepsilon(s,z)\cos s\dd s.
\]
By \eqref{eq:CR-curve}, for $0<|\varepsilon| \ll 1 $,
\begin{equation}\label{eq:first-mode}
 \widehat w_{\varepsilon,1}
 =\varepsilon\psi_1+O(\varepsilon^2) \ne 0.
\end{equation}
If $\tau>0$ is a period, then
\begin{equation*}
 0=\int_0^{2\pi}
 \bigl(w_\varepsilon(s+\tau,z)-w_\varepsilon(s,z)\bigr)e^{-is}\dd s
 =(e^{i\tau}-1)\int_0^{2\pi}w_\varepsilon(s,z)e^{-is}\dd s.
\end{equation*}
By evenness and \eqref{eq:first-mode}, the last Fourier coefficient is
nonzero.  Thus $e^{i\tau}=1$, and the least positive period is $2\pi$.
\end{proof}

\subsection{Global continuation}
\label{sec:global-continuation}

For $m\geq1$, set
\begin{equation}\label{eq:global-resonances}
 k_m=\frac{k_*}{m}=\frac{\sqrt{-\lambda_1}}{m}.
\end{equation}
Let $\mathscr H=(0,\pi)\times\mathbb S^{N-1}_+$.  Since
$v(t,z)=w(kt,z)$ and $T=2\pi/k$, the bumpy condition introduced in
the Introduction becomes, for the centered $2\pi$-periodic profile,
\begin{equation}\label{eq:bumpy}
 -\partial_s w>0\quad\text{in }\mathscr H.
\end{equation}
The next proposition continues the local curve globally, excludes the
higher resonances, and preserves this bumpy monotonicity.

\begin{proposition}\label{prop:global-continuum}
There exists a closed connected set $\mathscr C$ of positive
solutions of \eqref{eq:fixed-cylinder} such that
$(k_*,\phi)\in\mathscr C$ and $(k_*,\phi)$ lies in the closure of its
nonstationary part.  Moreover:
\begin{enumerate}
 \item every nonstationary member of $\mathscr C$ is bumpy and has
 least $s$-period $2\pi$;
 \item $\mathscr C$ does not contain $(k_m,\phi)$ for any $m\geq2$;
 \item $\mathscr C$ is not relatively compact in
 $(0,\infty)\times C^0(\mathbb T\times
 \overline{\mathbb S^{N-1}_+})$.
\end{enumerate}
Consequently, $\mathscr C$ contains a sequence $(k_n,w_n)$ such that
\begin{equation}\label{eq:global-escape}
  \text{either}\quad
 k_n\longrightarrow0
 \quad\text{or}\quad 
 \|w_n\|_{L^\infty}\longrightarrow\infty.
\end{equation}
\end{proposition}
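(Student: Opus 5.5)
The plan is to apply the Rabinowitz global bifurcation theorem in a positive-cone version, combined with nodal (bumpy) properties preserved along the continuum, in the spirit of Dancer's and Buffoni--Toland's treatment of Crandall--Rabinowitz branches.

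\textbf{Compact-perturbation form.} First I rewrite $\mathscr F(k,w)=0$ as $w=K_k(w):=(-k^2\partial_{ss}-\Delta_{\mathbb S^{N-1}}+a^2)^{-1}w^p$ on the axial-even $C^0$ (or $C^{1}$) space. Here $K_k$ is compact and continuous in $(k,w)$ for $k>0$, by Schauder estimates and the positivity of $a^2$. Set $\mathscr S$ to be the closure of the nontrivial solutions, i.e.\ those with $w\neq\phi$ and $w>0$. Let $\mathscr C$ be the connected component of $\mathscr S\cup\{(k,\phi)\}$ containing $(k_*,\phi)$, restricted to $\mathscr C^+=$ the component in the set $\mathcal O$ of positive, bumpy-or-stationary solutions. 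Theorem~\ref{thm:local-bifurcation} shows that near $(k_*,\phi)$ the nonstationary part consists of $w_\varepsilon$. Since $w_\varepsilon$ and $w_{-\varepsilon}$ are half-period translates, I keep only $\varepsilon>0$. For small $\varepsilon$ we have $-\partial_s w_\varepsilon=\varepsilon\psi_1\sin s+O(\varepsilon^2)$ in $C^{1,\alpha}$. Using $\psi_1>0$ with Hopf at $s=0,\pi$ and on $\partial\mathbb S^{N-1}_+$, this gives \eqref{eq:bumpy} in the strong form: $-\partial_sw>0$ in $\mathscr H$, $-\partial_{ss}w>0$ at $s=0$, $\partial_{ss}w>0$ at $s=\pi$, and the normal derivative of $-\partial_s w$ is negative on the lateral boundary.

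\textbf{Openness and closedness of bumpy.} The key step is that this strong bumpy condition is both open and closed among nonstationary solutions in $\mathscr C$. Openness follows from $C^{2,\alpha}$ convergence (elliptic regularity upgrades $C^0$ convergence). For closedness, a limit $w$ of bumpy solutions satisfies $-\partial_sw\ge0$ in $\mathscr H$. Now $\zeta=-\partial_sw$ solves the linear equation $k^2\zeta_{ss}+\Delta\zeta-a^2\zeta+pw^{p-1}\zeta=0$ with $\zeta=0$ on $\partial\mathscr H$. The strong maximum principle gives either $\zeta\equiv0$, meaning $w$ is stationary and hence, by uniqueness in \eqref{eq:stationary-q}, equals $\phi$, or $\zeta>0$ together with the Hopf/Serrin corner strictness. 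Consequently, along $\mathscr C$ one leaves the bumpy class only by returning to the stationary line.

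\textbf{Excluding higher resonances.} Near $(k_m,\phi)$ with $m\ge2$, Crandall--Rabinowitz shows that the only nonstationary solutions are $\phi\pm\varepsilon\psi_1\cos(ms)+\dots$. These have period $2\pi/m$, so they are not monotone on $(0,\pi)$ and cannot be bumpy. By the local uniqueness in the bifurcation lemma, a bumpy sequence cannot accumulate at $(k_m,\phi)$. At other points $(k,\phi)$ with $k\neq k_m$, the linearization is invertible, so no bifurcation occurs. Hence the only stationary point in $\overline{\mathscr C\setminus\{\text{stationary}\}}$ is $(k_*,\phi)$. Least period $2\pi$ then follows because a bumpy profile is strictly monotone on $(0,\pi)$.

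\textbf{Unboundedness.} Finally I apply Rabinowitz's alternative to the continuum emanating from $(k_*,\phi)$, where the crossing number is odd because $\lambda(k)=k^2-k_*^2$ changes sign simply. Under this alternative, either the continuum is noncompact in $(0,\infty)\times C^0$, or it returns to another point $(k_m,\phi)$. There is also a subtle case: the branch may return to $(k_*,\phi)$ from the $\varepsilon<0$ side, and this must be ruled out using the translation symmetry. I expect the main obstacle to be making the unilateral, Dancer-type version rigorous. The plan is to apply the Leray--Schauder degree argument to the set $\mathscr C^+$ of positive bumpy solutions, so that ``returning'' is possible only at $(k_*,\phi)$ through the same local curve. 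This is excluded by the standard unilateral argument, which uses the half-branch $\varepsilon>0$ and the index jump. Positivity must also be shown to persist: if $w>0$ converges to $w\ge0$, then $w\not\equiv0$, since small solutions are impossible because $a^2>0$ gives $\|w\|_\infty^{p-1}\ge a^2$; the strong maximum principle then gives $w>0$ with $w/z_N>0$. Noncompactness in $(0,\infty)\times C^0$ means either $k_n\to0$, $k_n\to\infty$, or $\|w_n\|_\infty\to\infty$. The case $k\to\infty$ is excluded because large $k$ forces solutions to be stationary: testing the equation for $\partial_sw$ against itself with $\int (\partial_s w)^2$ and using $k^2\ge\|pw^{p-1}\|_\infty$ under a bound gives the contradiction. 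This yields \eqref{eq:global-escape}.
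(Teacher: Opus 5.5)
Your proposal has a genuine gap in the decisive step, item (iii), plus two smaller errors.

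\textbf{The noncompactness step.} The Leray--Schauder/index-jump argument, in the refined form of Lemma~\ref{lem:Rabinowitz-alternative}, needs a relatively open and closed class $\mathcal N$ of nonstationary solutions that contains \emph{every} nonstationary solution near $(k_*,\phi)$. The reason is that you must enclose the continuum in a bounded open set $\Omega$ whose boundary carries no nontrivial solutions, and excision and homotopy then compare the local indices of $\phi$ on the two sides of $k_*$. Your one-sided class (the $\varepsilon>0$ bumpy solutions, $\mathcal M_+$ in the paper's notation) violates this. Any such $\Omega$ contains a neighbourhood of $(k_*,\phi)$, hence points of the $\varepsilon<0$ half of the Crandall--Rabinowitz curve. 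Since $\partial\Omega$ is solution-free, $\Omega$ must then contain the whole connected set $\mathcal T_\pi\mathscr C^+$ as well. So the two halves cannot be separated in the degree argument. ``The standard unilateral argument with the half-branch and the index jump'' does not close the proof: in general the one-sided alternative needs an extra case, and Rabinowitz's original one-sided statement without it is not valid.

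Your appeal to translation symmetry is aimed at the wrong point. A return of $\mathscr C^+$ to $(k_*,\phi)$ along the $\varepsilon<0$ side is already impossible by the clopen property, because $\overline{\mathcal M_+}\cap\overline{\mathcal M_-}=\{(k_*,\phi)\}$ once the $(k_m,\phi)$ are excluded. Symmetry is needed instead to build an admissible class. The set $\mathcal M=\mathcal M_+\cup\mathcal M_-$ is still relatively open and closed among positive nonstationary solutions, contains all nonstationary solutions near $(k_*,\phi)$, and is $\mathcal T_\pi$-invariant. The two-sided refined alternative then applies to the component $\mathscr C$ of $\overline{\mathcal M}$; this is the paper's route. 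If you want the one-sided continuum, note that $\mathscr C=\mathscr C^+\cup\mathcal T_\pi\mathscr C^+$, two closed connected pieces meeting only at $(k_*,\phi)$, so compactness of $\mathscr C^+$ would force compactness of $\mathscr C$. Alternatively, invoke the corrected unilateral theorem and exclude its extra case using
$\int(w-\phi)h_*\,\dd s\,\dd\sigma=2\int_{\mathbb S^{N-1}_+}\psi_1\int_0^\pi(-\partial_sw)\sin s\,\dd s\,\dd\sigma>0$ for every $w\in\mathcal M_+$.

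\textbf{Two further points.} First, as written your $\mathscr C$ is a component of a set containing the whole stationary line: you use $\mathscr S\cup\{(k,\phi)\}$ and ``bumpy-or-stationary''. That component contains every $(k_m,\phi)$, contradicting (ii), and makes (iii) true for the wrong reason. Take instead the component of the closure of the nonstationary bumpy class, whose only stationary point is $(k_*,\phi)$.

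Second, your strict bumpy conditions cannot hold at the edges $\{s=0,\pi\}\times\partial\mathbb S^{N-1}_+$, where $-\partial_sw$ vanishes to second order. The corner (Serrin) information there involves third derivatives of $w$. So neither $C^{2,\alpha}$ convergence nor the $C^{1,\alpha}$ expansion of $-\partial_sw_\varepsilon$ gives openness, or local bumpiness, near the edges. This can be repaired by bootstrapping to $C^3$ convergence. The paper's spectral argument (Lemma~\ref{lem:nodal-preservation}) avoids corners altogether. At a bumpy solution, $-\partial_sw>0$ is a zero eigenfunction on $\mathscr H$, so $\lambda_1(k,w)=0<\lambda_2(k,w)$. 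The gap persists under $C^0$ perturbation, so for nearby nonstationary solutions $-\partial_sw'$ is a principal eigenfunction and has a strict sign.

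The compact map should also use $(w_+)^p$ so that it is defined on $C^0$. Your exclusion of $(k_m,\phi)$ via Crandall--Rabinowitz at $k_m$ is correct and equivalent to the paper's normalized-difference argument: the local curve there is $2\pi/m$-periodic, hence not monotone on $(0,\pi)$. Your handling of $k\to\infty$ is likewise correct.
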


The possibility $k_n\to0$ in \eqref{eq:global-escape} is precisely the divergence of the Emden--Fowler period $T_n=2\pi/k_n$.  
It will be excluded in Proposition~\ref{prop:uniform-bounds}.  
Write $\mathscr C^{\mathrm{ns}}=\{(k,w)\in\mathscr C:\partial_s w\not\equiv0\}$ for the nonstationary part of the branch.

We first recast the equation as a fixed-point problem and compute the
associated local Leray--Schauder degree along the stationary branch.
Endow the space
\[
 \mathscr X_0=
 \bigl\{u\in C(\mathbb T\times\overline{\mathbb S^{N-1}_+}):
 u(-s,z)=u(s,z);\ u(s,Rz)=u(s,z), R\in O(N-1);\
 u|_{\mathbb T\times\partial\mathbb S^{N-1}_+}=0\bigr\}
\]
with the norm $\|u\|_{\mathscr X_0}=\|u\|_{C^0}$.  For $k>0$, set
\[
 g_k:=k^{-2}\dd s^2+g_{\mathbb S^{N-1}},
 \qquad -L_{g_k}=-k^2\partial_{ss}-\Delta_{\mathbb S^{N-1}}+a^2.
\]
We impose periodic boundary conditions in $s$ and zero Dirichlet data on the
lateral boundary.  By coercivity and standard Dirichlet theory,
$-L_{g_k}$ is invertible.  If $J\Subset(0,\infty)$ and $r>N$, then, after
passing to finitely many interior and boundary cylinder charts, by the
$W^{2,r}$ estimate \cite[Theorem~9.15]{GT2001}, we obtain
\[
 \|(-L_{g_k})^{-1}f\|_{W^{2,r}}
 \leq C_J\|f\|_{L^r},\qquad k\in J.
\]
Since $W^{2,r}\Subset C^0$, the family
$\{(-L_{g_k})^{-1}:k\in J\}$ is collectively compact\footnote{That is,
the union of the images of the closed unit ball of $\mathscr X_0$ under
$(-L_{g_k})^{-1}$, $k\in J$, has compact closure in $\mathscr X_0$.} on
$\mathscr X_0$.  It also depends continuously on $(k,f)$: subtracting
$-L_{g_k}u=f$ and
$-L_{g_{\widetilde k}}\widetilde u=\widetilde f$ and using the same
estimate, we obtain
\[
 \|u-\widetilde u\|_{W^{2,r}}
 \leq C_J\bigl(\|f-\widetilde f\|_{L^r}
 +|k^2-\widetilde k^2|\|\widetilde u\|_{W^{2,r}}\bigr).
\]

Put $g(\tau)=(\tau_+)^p$.  Since $p>1$, the superposition map
$u\mapsto g(u)$ on $\mathscr X_0$ is $C^1$, with derivative
$h\mapsto p(u_+)^{p-1}h$ at $u\in\mathscr X_0$.
Writing $w=\phi+v$, define
\begin{equation}\label{eq:global-fixed-point-map}
 \mathscr K(k,v)=(-L_{g_k})^{-1}
 \bigl(g(\phi+v)-g(\phi)\bigr).
\end{equation}
Thus $\mathscr K$ is continuous, compact on bounded parameter strips,
and continuously Fr\'echet differentiable in $v$.  Moreover,
$v=\mathscr K(k,v)$ is equivalent to
\begin{equation}\label{eq:positive-extension}
 -L_{g_k} w=g(w).
\end{equation}
If $w^-=(-w)_+$, then testing \eqref{eq:positive-extension} against $w^-$, we obtain
\[
 0=-\int_{\mathbb T\times\mathbb S^{N-1}_+}
 \bigl(k^2|\partial_s w^-|^2
 +|\nabla_{\mathbb S^{N-1}}w^-|^2+a^2|w^-|^2\bigr)\dd s\dd\sigma.
\]
Hence $w\geq0$; if $w\not\equiv0$, by the strong maximum principle
of Gilbarg--Trudinger \cite[Theorem~3.5]{GT2001}, we obtain $w>0$ in
$\mathbb T\times\mathbb S^{N-1}_+$.
At an interior maximum $M>0$, the equation implies $M^p\geq a^2M$;
therefore every positive fixed point satisfies
\begin{equation}\label{eq:positive-lower-bound}
 \|w\|_{L^\infty}\geq a^{2/(p-1)}.
\end{equation}
Thus each fixed point corresponds either to $w\equiv0$ (that is, $v=-\phi$) or to
a positive solution of \eqref{eq:fixed-cylinder}, and the latter 
at a fixed positive $C^0$-distance from the former.

The global branch will be constructed in the $C^0$ topology of
$\mathscr X_0$, whereas Theorem~\ref{thm:local-bifurcation} is formulated
in a H\"older space.  We next establish the regularity bridge between
these settings.

\begin{lemma}\label{lem:global-bootstrap}
Let $J\Subset(0,\infty)$ and $M<\infty$.  There are
$\widehat\alpha\in(\alpha,1)$ and $C=C(J,M)$ such that every solution of
\eqref{eq:positive-extension} satisfying $k\in J$ and
$\|w\|_{C^0}\leq M$ obeys
\[
 \|w\|_{C^{2,\widehat\alpha}}
 \leq C.
\]
If $k_n\to k$, $w_n\to w$ in $C^0$, and
$-L_{g_{k_n}}w_n=g(w_n)$, then
\begin{equation}\label{eq:C0-to-Holder}
 w_n\longrightarrow w\quad\text{in }C^{2,\,\beta}
 \quad\text{for every }\alpha<\beta<\widehat\alpha.
\end{equation}
Moreover, if the common limit is $\phi$, then
\[
 \inf_{\mathbb T\times\overline{\mathbb S^{N-1}_+}}
 \frac{w_n}{z_N}>0
 \qquad\text{for all sufficiently large }n.
\]
\end{lemma}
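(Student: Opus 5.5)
The plan is a standard bootstrap: start from the $C^0$ bound, pass to $W^{2,r}$, then to $C^{1,\gamma}$, and finish with a Schauder estimate at a H\"older exponent limited by the regularity of $g(w)$ near the lateral boundary. Fix $J\Subset(0,\infty)$ and $\|w\|_{C^0}\le M$.

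Since $|g(w)|\le M^p$, the $W^{2,r}$ estimate for $(-L_{g_k})^{-1}$ recorded before \eqref{eq:global-fixed-point-map} gives $\|w\|_{W^{2,r}}\le C_J M^p$ for every $r<\infty$, uniformly in $k\in J$. Morrey's embedding then yields $\|w\|_{C^{1,\gamma}}\le C$ for every $\gamma<1$.

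Next we need a H\"older bound on $g(w)=w^p$, where $w\ge0$ by the discussion after \eqref{eq:positive-extension}. In the interior $\tau\mapsto\tau^p$ is smooth on $w>0$, but near the lateral boundary $w$ vanishes linearly, so $w^p$ is only $C^{0,\min\{1,p\}}$ there; this is the source of the restriction on the exponent. I would write $w=z_N\cdot(w/z_N)$ as in \eqref{eq:boundary-division}. Since $w\in C^{1,\gamma}$ with zero trace, $w/z_N\in C^{0,\gamma}$. Because $\tau\mapsto\tau^p$ is $C^{0,\min\{1,p\}}$ on bounded sets of $[0,\infty)$ (using $p>1$), we get $\|w^p\|_{C^{0,\beta_0}}\le C(M,J)$ with $\beta_0=\min\{1,p-1+\text{small}\}$; more crudely, $w^p$ is Lipschitz since $p>1$ and $w\in C^1$. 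Choosing $\widehat\alpha\in(\alpha,1)$ (any such value works because $g$ is $C^1$ with $g'$ locally Lipschitz when $p\ge2$, and $C^{0,p-1}$ otherwise), we have $\|g(w)\|_{C^{0,\widehat\alpha}}\le C\|w\|_{C^1}^{\ldots}$, with $\widehat\alpha<\min\{1,p-1\}$ permitted since $\alpha<\min\{1,p-1\}$. The global Schauder estimate in cylinder charts, with constants uniform for $k\in J$ because the operator coefficients depend continuously on $k^2$ bounded away from $0$, then gives $\|w\|_{C^{2,\widehat\alpha}}\le C(J,M)$. The main delicacy is only to make the chart constants uniform in $k$ and to check that the nonlinearity lands in $C^{0,\widehat\alpha}$; the composition bound above handles the latter.

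For \eqref{eq:C0-to-Holder}: the sequence $w_n$ is bounded in $C^{2,\widehat\alpha}$ by the first part, since $\|w_n\|_{C^0}$ is bounded and $k_n$ lies in a compact $J$. The compact embedding $C^{2,\widehat\alpha}\Subset C^{2,\beta}$ for $\beta<\widehat\alpha$ implies every subsequence has a $C^{2,\beta}$-convergent subsequence, whose limit must be $w$ by the $C^0$ convergence; hence the whole sequence converges in $C^{2,\beta}$.

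Finally, if $w=\phi$, then $\phi\in\mathscr U$ by Lemma~\ref{lem:power-map}, so $c:=\inf\phi/z_N>0$. By \eqref{eq:boundary-division} applied to $w_n-\phi$, which has zero trace, $\|w_n/z_N-\phi/z_N\|_{C^0}\le C\|w_n-\phi\|_{C^{2,\beta}}\to0$. Hence $\inf w_n/z_N\ge c/2$ for large $n$.
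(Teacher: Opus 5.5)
Your proposal is correct and follows essentially the same route as the paper: a uniform $W^{2,r}$ bound with Morrey embedding, a H\"older bound on $g(w)$ from the Lipschitz continuity of $g$ on $[-M,M]$, the boundary Schauder estimate with constants uniform for $k\in J$, and the compact embedding $C^{2,\widehat\alpha}\Subset C^{2,\beta}$ with the $C^0$ limit identifying the target. The last part is likewise the same, using the zero-trace quotient formula together with $\inf\phi/z_N>0$. Your side discussion of $w^p$ near the lateral boundary, of $g'$, and of $\widehat\alpha<\min\{1,p-1\}$ is unnecessary and can be deleted: your ``more crudely'' Lipschitz observation already gives $g(w)\in C^{0,\widehat\alpha}$ for every $\widehat\alpha\in(\alpha,1)$, and the condition $\alpha<p-1$ plays no role in this lemma.
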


\begin{proof}
Fix $r>N$ with $1-N/r>\alpha$ and choose $\widehat\alpha\in(\alpha,1-N/r)$.
Since $\|w\|_{C^0}\le M$, the nonlinearity satisfies $\|g(w)\|_{L^r}\le C(M)$.
By the uniform $W^{2,r}$ estimate on $k\in J$ and Sobolev embedding, we have
$\|w\|_{C^{1,1-N/r}}\le C(J,M)$, hence $\|w\|_{C^{0,\widehat\alpha}}\le C(J,M)$.
Because $g(t)=(t_+)^p$ is Lipschitz on $[-M,M]$,
$\|g(w)\|_{C^{0,\widehat\alpha}}\le C(M)$.
By the boundary Schauder estimate \cite[Theorem~6.6]{GT2001}, we now obtain
$\|w\|_{C^{2,\widehat\alpha}}\le C(J,M)$.

If $k_n\to k$ and $w_n\to w$ in $C^0$, the above bound is uniform in $n$.
For $\alpha<\beta<\widehat\alpha$, the compact embedding
$C^{2,\,\widehat\alpha}\Subset C^{2,\,\beta}$ forces $w_n\to w$ in $C^{2,\,\beta}$
since the $C^0$-limit already identifies the target.

When $w=\phi$, the Hopf lemma implies $\partial_\nu\phi>0$, so the quotient
$\phi/z_N$ extends to a positive $C^{1,\,\beta}$ function on the closed domain.
As $w_n\to\phi$ in $C^{2,\,\beta}$ and both vanish on the lateral boundary,
standard scaling shows $w_n/z_N\to\phi/z_N$ in $C^{1,\,\beta}$.
Uniform convergence thus implies $w_n/z_N>0$ for all sufficiently large $n$.
\end{proof}

We shall use the following version of Rabinowitz's global bifurcation
theorem \cite{Rabinowitz1971}, restricted to a relatively open-and-closed class of nontrivial
solutions.  It incorporates the component-separation argument needed
later.
For an isolated fixed point $0$ of a compact map $\mathscr T$, define
its local fixed-point index by
\[
 \operatorname{ind}_{E}(I-\mathscr T,0)
 =\deg_{\mathrm{LS}}(I-\mathscr T,B_\varrho,0),
\]
where $\deg_{\mathrm{LS}}$ denotes the Leray--Schauder degree (cf. Nirenberg \cite[Chapter~2]{Nirenberg2001}), and $\varrho>0$ is chosen so that $0$ is the only
fixed point in $\overline B_\varrho$.

\begin{lemma}
\label{lem:Rabinowitz-alternative}
Let $I\subset\mathbb R$ be open and let
$\mathscr T:I\times E\to E$ be continuous, with
$\mathscr T(\lambda,0)=0$, and compact on bounded subsets of
$J\times E$ for every $J\Subset I$.  Assume that $\mathscr T$
is continuously Fr\'echet differentiable in its second variable near
$I\times\{0\}$.  Suppose that $\lambda_0$ is an isolated parameter at
which $I-D_u\mathscr T(\lambda_0,0)$ is not invertible and that the
local fixed-point index of $0$ changes at $\lambda_0$.  Then the
component of the closure of the nonzero fixed points through
$(\lambda_0,0)$ either leaves every compact subset of $I\times E$ or
meets $(\lambda_1,0)$ at another singular parameter
$\lambda_1\ne\lambda_0$.
More precisely, let $\mathcal N$ be relatively open and closed in the
set of nonzero fixed points and assume that, near $(\lambda_0,0)$,
every nonzero fixed point belongs to $\mathcal N$.  Then the component
$C$ of $\overline{\mathcal N}$ through $(\lambda_0,0)$ satisfies the
same alternative.
\end{lemma}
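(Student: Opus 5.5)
The argument follows Rabinowitz's original proof \cite{Rabinowitz1971}, run with the relatively open-and-closed class $\mathcal N$ in place of all nonzero fixed points. Set $\mathcal S=\overline{\mathcal N}\subset I\times E$. This is closed, and we adjoin the trivial points $(\lambda,0)$ that lie in it. Let $C$ be the component through $(\lambda_0,0)$. We argue by contradiction and assume that $C$ is bounded, stays in $J\times E$ for some $J\Subset I$ (hence is compact, by compactness of $\mathscr T$ on bounded strips), and meets the trivial line only at singular parameters. The further assumption that the only such parameter is $\lambda_0$ then gives the final contradiction.

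First, we build an isolating neighborhood. Since $\mathcal N$ is relatively open and closed among nonzero fixed points, the set $\mathcal S\cup\{\text{trivial points}\}$ is a compact metric space near $C$. By Whyburn's separation lemma we obtain a bounded open set $\mathcal O\subset J\times E$ with $C\subset\mathcal O$, such that $\partial\mathcal O\cap\mathcal S=\emptyset$ and $\mathcal O$ meets $I\times\{0\}$ only in a small interval $(\lambda_0-\delta,\lambda_0+\delta)\times B_\varrho$. Here $\delta$ is chosen so that $\lambda_0$ is the only singular parameter in $[\lambda_0-\delta,\lambda_0+\delta]$. The essential point is that fixed points not in $\overline{\mathcal N}$ must not touch $\partial\mathcal O$. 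Nonzero fixed points outside $\mathcal N$ lie in a relatively clopen complementary set, and near $(\lambda_0,0)$ every nonzero fixed point belongs to $\mathcal N$ by hypothesis. So after shrinking $\mathcal O$ we can use separation to keep it away from the complementary class, whose closure is compact on bounded strips.

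Second, we compute the degree. Write $\mathcal O_\lambda$ for the $\lambda$-slice. By generalized homotopy invariance of the Leray--Schauder degree over $\lambda\in J$, the quantity $\deg_{\mathrm{LS}}(I-\mathscr T(\lambda,\cdot),\mathcal O_\lambda,0)$ is constant in $\lambda$. The reason is that there are no fixed points on $\partial\mathcal O$, and the slices are empty for $\lambda$ outside the compact projection of $\mathcal O$, so this constant is $0$. For $\lambda=\lambda_0\pm\delta$, the points $(\lambda,0)$ with $\lambda\neq\lambda_0$ are isolated fixed points, because $I-D_u\mathscr T(\lambda,0)$ is invertible there. Excision therefore splits the degree as
\[
\deg_{\mathrm{LS}}(I-\mathscr T(\lambda,\cdot),\mathcal O_\lambda\setminus \overline B_r,0)+\operatorname{ind}_E(I-\mathscr T(\lambda,\cdot),0).
\]
Next we shrink $r$ using a small ball $B_r$ such that, for $|\lambda-\lambda_0|\le\delta$ and $\lambda\neq\lambda_0$, every nonzero fixed point in $B_r$ lies in $\mathcal N$. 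This follows from the hypothesis together with the fact that $\mathscr T$ is $C^1$ near the trivial line. The first term is in turn homotopy invariant on the annular region between $\lambda_0-\delta$ and $\lambda_0+\delta$, since no fixed points cross its boundaries. Hence $\operatorname{ind}(\lambda_0-\delta)=\operatorname{ind}(\lambda_0+\delta)$, which contradicts the assumed index jump.

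The main obstacle is the first step, namely guaranteeing that $\partial\mathcal O$ avoids \emph{all} fixed points and not just those in $\overline{\mathcal N}$, since degree theory sees every fixed point. I would handle this by applying Whyburn's lemma in the compact set $K=\overline{\mathcal N}\cap\overline{\mathcal O'}$, where $\mathcal O'$ is a bounded neighborhood. The complementary nonzero fixed points form a closed set in $\overline{\mathcal O'}$ minus the trivial line, and by the local hypothesis they stay a positive distance from $C$. We then intersect $\mathcal O$ with the complement of a neighborhood of that set. The case in which $C$ meets another singular $(\lambda_1,0)$ is exactly the second alternative. The general case of several trivial intersection points is handled in the same way, by summing index jumps, and this gives the stated alternative.
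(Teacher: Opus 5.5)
Your separation step matches the paper's argument. Component separation (Whyburn's lemma) gives a bounded open $\mathcal O\supset C$ whose boundary misses the closure of \emph{all} nonzero fixed points, and inside which every nonzero fixed point lies in $\overline{\mathcal N}$. Your treatment of the complementary class also works: $\overline{\mathcal N^c}$ meets $\overline{\mathcal N}$ only on the trivial line, so it lies at positive distance from the compact set $C$. It therefore suffices to run Whyburn's lemma inside an $\epsilon$-neighbourhood of $C$ with $\epsilon$ below that distance.

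The gap is in the degree computation, where two homotopy-invariance claims fail.
\begin{itemize}
\item \emph{Trivial fixed points on $\partial\mathcal O$.} The set $\mathcal O$ is bounded, open and contains $(\lambda_0,0)$. Its trace on $I\times\{0\}$ is therefore a bounded open set whose endpoints, $(\lambda_0\pm\delta,0)$ in your normalization, are trivial fixed points lying on $\partial\mathcal O$. The condition $\partial\mathcal O\cap\mathcal S=\emptyset$ does not exclude them, because they are not limits of nonzero fixed points. Consequently $d(\lambda):=\deg_{\mathrm{LS}}(I-\mathscr T(\lambda,\cdot),\mathcal O_\lambda,0)$ is not constant on $J$. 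You only get $d(\lambda)=0$ for $|\lambda-\lambda_0|>\delta$, and at $\lambda=\lambda_0\pm\delta$, where you then evaluate it, $d$ need not even be defined.
\item \emph{The annular degree is not constant.} The degree on $\mathcal O_\lambda\setminus\overline B_r$ is not constant on $[\lambda_0-\delta,\lambda_0+\delta]$. The set $C$ is connected and contains both $(\lambda_0,0)$ and nonzero points, and nonzero fixed points of small norm occur only near singular parameters. Hence, for every small $r$, $C$ meets $\{\|u\|=r\}$ at some $\lambda$ close to $\lambda_0$. The bifurcating branch crosses exactly the inner boundary of your annular region. That these points belong to $\mathcal N$ is irrelevant, since the degree sees every fixed point.
\end{itemize}
The two errors cancel formally, which is why you still arrive at the right identity.

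The correct bookkeeping is what the paper's ``degree excision and homotopy invariance on $\Omega$'' refers to, following Rabinowitz, and it reverses the roles. The comparison \emph{across} $\lambda_0$ uses the full slice degree $d$. The local index enters at the nonsingular endpoints $\lambda_0\pm\delta$, where the trivial fixed point leaves $\mathcal O$. Write $\operatorname{ind}_\pm$ for the local index of $0$ on $(\lambda_0,\lambda_0+\delta)$ and $(\lambda_0-\delta,\lambda_0)$ respectively; each is constant there.
\begin{itemize}
\item Enlarge $\mathcal O$ by a thin tube $(\lambda_0-\delta,\lambda_0+\delta)\times B_\rho$. Use the implicit function theorem at the nonsingular parameters away from $\lambda_0$ to choose $\rho$ so small that the only fixed points on $\partial\mathcal O$ are $(\lambda_0\pm\delta,0)$.
\item Generalized homotopy invariance on $[\lambda_0-\delta+\epsilon,\lambda_0+\delta-\epsilon]$ is then legitimate, even though this interval contains $\lambda_0$. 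Every fixed point near $(\lambda_0,0)$, trivial or not, is interior to $\mathcal O$, so $d$ is constant on this interval.
\item Near $\lambda_0-\delta$, delete a tube $[\lambda_0-\delta-\epsilon,\lambda_0-\delta+\epsilon]\times\overline B_{\rho'}$ that contains no nonzero fixed points. Homotopy invariance on the remaining set, together with excision, gives $d(\lambda_0-\delta+\epsilon)=d(\lambda_0-\delta-\epsilon)+\operatorname{ind}_-=\operatorname{ind}_-$. In the same way, $d(\lambda_0+\delta-\epsilon)=\operatorname{ind}_+$.
\end{itemize}
Hence $\operatorname{ind}_-=\operatorname{ind}_+$, contradicting the index jump.
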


\begin{proof}
Set
\[
 F(\lambda,u)=u-\mathscr T(\lambda,u),\qquad
 \mathscr S=\overline{\{(\lambda,u):F(\lambda,u)=0,\ u\ne0\}}.
\]
By the index-jump argument of Rabinowitz
\cite[Lemma~1.2 and Theorem~1.3]{Rabinowitz1971}, the component
of $\mathscr S$ through $(\lambda_0,0)$ is noncompact or meets the trivial
branch again; at such a second intersection the implicit function
theorem forces the linearization to be singular.

For the refined statement, suppose instead that $C$ is compact and has
no second singular trivial point.  Since $\mathcal N$ is relatively
open and closed, by component separation
\cite[Theorem~6.1.23]{Engelking1989}, we may choose a bounded open set
$\Omega$ containing $C$ such that
$\partial\Omega\cap\mathscr S=\varnothing$ and
$\Omega\cap\mathscr S\subset\overline{\mathcal N}$.  Degree excision and
homotopy invariance on $\Omega$ then give equal local indices on the two
sides of $\lambda_0$, contradicting the assumed index change.
\end{proof}

At $v=0$, $\mathscr B_k:=D_v\mathscr K(k,0)=(-L_{g_k})^{-1}p\phi^{p-1}$, and
$I-\mathscr B_k$ is not invertible precisely when $\mathscr L_{k,\phi}h:=(-k^2\partial_{ss}+\mathscr A_\phi)h=0$.
Since $\mathscr L_{k,\phi}$ is self-adjoint with compact resolvent, define
its Morse index by
\[
 \operatorname{ind}^{-}(\mathscr L_{k,\phi})
 =\max\Bigl\{\dim E:
 \langle\mathscr L_{k,\phi}h,h\rangle_{L^2}<0
 \ \text{for }0\ne h\in E\Bigr\}.
\]

\begin{lemma}\label{lem:index-bridge}
The nonzero spectra of $\mathscr B_k$ on the corresponding $C^0$ and
$L^2$ realizations
agree, including algebraic multiplicities.  If $1\notin\sigma(\mathscr B_k)$, then
\begin{equation}\label{eq:index-Morse}
 \operatorname{ind}_{\mathscr X_0}(I-\mathscr B_k,0)
 =(-1)^{\operatorname{ind}^-(\mathscr L_{k,\phi})}.
\end{equation}
\end{lemma}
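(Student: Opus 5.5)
We argue in two stages: first identify the spectrum of $\mathscr B_k=(-L_{g_k})^{-1}p\phi^{p-1}$ across realizations, then compute the Leray--Schauder index by the standard formula $\operatorname{ind}(I-\mathscr B,0)=(-1)^{\beta}$. Here $\beta$ is the sum of the algebraic multiplicities of the real eigenvalues of $\mathscr B$ greater than $1$ (cf. Nirenberg \cite[Chapter~2]{Nirenberg2001}). This formula holds for compact linear $\mathscr B$ with $1\notin\sigma(\mathscr B)$.

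\emph{Step 1: the spectra agree.} Let $\mu\ne0$ and suppose $\mathscr B_kh=\mu h$ with $h$ in the $L^2$ realization. Then $-L_{g_k}h=\mu^{-1}p\phi^{p-1}h$, and $\phi^{p-1}$ is bounded. Applying the $W^{2,r}$ estimate iteratively, together with Sobolev embedding, gives $h\in C^0$ with zero trace and the imposed symmetries, so $h\in\mathscr X_0$. The converse inclusion is trivial, since $\mathscr X_0\subset L^2$ on the bounded domain. The same bootstrap applies to generalized eigenvectors: if $(\mathscr B_k-\mu)^jh=0$, induct on $j$, noting that $(\mathscr B_k-\mu)h$ is regular by the inductive hypothesis, so $\mu h=\mathscr B_kh-(\mathscr B_k-\mu)h$ is regular. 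Hence the generalized eigenspaces coincide, and so do the algebraic multiplicities.

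\emph{Step 2: symmetrize on $L^2$.} Use the form inner product $\langle u,v\rangle_k=\int(k^2u_sv_s+\nabla u\cdot\nabla v+a^2uv)$ on $H^1_0$, restricted to the axial-even class. With respect to it, $\mathscr B_k$ is self-adjoint: $\langle\mathscr B_ku,v\rangle_k=\int p\phi^{p-1}uv$. It is also compact. Therefore its nonzero eigenvalues are real and semisimple, so algebraic multiplicity equals geometric multiplicity. By Step 1, the $L^2$ and $H^1_0$ eigenfunctions coincide, since the bootstrap applies equally. It then suffices to show that
\[
\#\{\text{eigenvalues }\mu>1\text{ of }\mathscr B_k,\text{ with multiplicity}\}=\operatorname{ind}^-(\mathscr L_{k,\phi}).
\]

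\emph{Step 3: count the eigenvalues.} Note that
\[
\langle(I-\mathscr B_k)h,h\rangle_k=\langle\mathscr L_{k,\phi}h,h\rangle_{L^2}
\]
as quadratic forms on $H^1_0$. Let $E_+$ be the span of the eigenvectors with $\mu>1$. On $E_+$ the form is negative definite, so $\dim E_+\le\operatorname{ind}^-$. The Morse index is computed on the form domain, which is legitimate because $\mathscr L_{k,\phi}$ has compact resolvent and $H^2\cap H^1_0$ is a form core. Conversely, the $\langle\cdot,\cdot\rangle_k$-orthogonal complement of $E_+$ is invariant under $\mathscr B_k$ and carries the spectrum $\{\mu\le1\}$. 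There the form is nonnegative, so any negative subspace has dimension at most $\dim E_+$. Equality follows.

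Combined with the degree formula applied in $\mathscr X_0$, and with Step 1 transferring the multiplicities from $L^2$ to $\mathscr X_0$, this yields \eqref{eq:index-Morse}.

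The main obstacle is the multiplicity transfer in Step 1. On $\mathscr X_0$ the operator $\mathscr B_k$ is not self-adjoint, so Jordan chains could a priori be longer there. The regularity induction resolves this: every $C^0$ generalized eigenvector is an $H^1_0$ generalized eigenvector, and in $H^1_0$ Step 2 shows the chains have length one.
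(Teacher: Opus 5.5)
Your proof is correct. Its skeleton matches the paper's: you bootstrap the nonzero generalized eigenspaces into $C^{2,\alpha}\cap\mathscr X_0$, symmetrize to obtain a real semisimple spectrum, identify eigenvalues above $1$ with negative directions of the form of $\mathscr L_{k,\phi}$, and finish with the Leray--Schauder index formula.

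The difference lies in the symmetrization.

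\emph{The paper's route.} It stays in $L^2$ and introduces $\mathscr S_k=(-L_{g_k})^{-1/2}p\phi^{p-1}(-L_{g_k})^{-1/2}$. It transfers algebraic multiplicities from $\mathscr B_k=AB$ to $\mathscr S_k=BA$ via the intertwining identities $(AB-\lambda I)^mA=A(BA-\lambda I)^m$. It then counts eigenvalues of $\mathscr S_k$ above $1$ by min--max after the substitution $y=(-L_{g_k})^{1/2}h$.

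\emph{Your route.} You observe that $\mathscr B_k$ is already self-adjoint for the energy inner product $\langle\cdot,\cdot\rangle_k$ on $H^1_0$. This removes both the fractional powers and the $AB/BA$ lemma. The price is a third realization, whose generalized eigenspaces you identify with the $L^2$ and $C^0$ ones by the same bootstrap. Your explicit splitting into $E_+$ and its $\langle\cdot,\cdot\rangle_k$-orthogonal complement replaces the min--max citation.

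The two arguments are unitarily equivalent. The map $(-L_{g_k})^{1/2}$ sends $(H^1_0,\langle\cdot,\cdot\rangle_k)$ isometrically onto $L^2$ and conjugates $\mathscr B_k$ to $\mathscr S_k$. So neither gains generality, but yours is slightly more elementary and self-contained.

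Your form-core remark is unnecessary. $E_+$ lies in $H^2\cap H^1_0$ by the bootstrap, and any negative subspace of the operator domain already lies in $H^1_0$, so both inequalities go through without it.
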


\begin{proof}
For $\lambda\ne0$ and $m\geq1$, by the bootstrap argument, we have
\[
 \ker_{L^2}(\mathscr B_k-\lambda)^m
 \subset C^{2,\alpha}\cap\mathscr X_0.
\]
Thus the nonzero generalized eigenspaces in $C^0$ and $L^2$ coincide.
On $L^2$, set
\[
 \mathscr S_k
 =(-L_{g_k})^{-1/2}p\phi^{p-1}(-L_{g_k})^{-1/2}.
\]
It is compact, self-adjoint, and nonnegative.  The standard
intertwining argument applies with
\[
 A=(-L_{g_k})^{-1/2},\qquad
 B=(-L_{g_k})^{-1/2}p\phi^{p-1},\qquad
 AB=\mathscr B_k,\quad BA=\mathscr S_k.
\]
Indeed, for every $\lambda\in\mathbb C$, and for every $m\geq1$,
\[
 (AB-\lambda I)^mA=A(BA-\lambda I)^m,\qquad
 (BA-\lambda I)^mB=B(AB-\lambda I)^m.
\]
If $\lambda\ne0$, $A$ and $B$ induce injections in opposite directions
between the corresponding generalized eigenspaces of $BA$ and $AB$:
$A$ is injective, while $Bx=0$ and
$x\in\ker(AB-\lambda I)^m$ imply $x=0$.
Thus $\mathscr B_k$ and $\mathscr S_k$ have the same nonzero algebraic
spectrum.  For $y=(-L_{g_k})^{1/2}h$,
\[
 \langle\mathscr L_{k,\phi}h,h\rangle_{L^2}
 =\|y\|_{L^2}^2-\langle\mathscr S_ky,y\rangle_{L^2}.
\]
By the min--max principle, the Morse index is the number of eigenvalues
of $\mathscr S_k$ larger than one.  By the Leray--Schauder index formula
\cite[Chapter~2]{Nirenberg2001}, we now have
\[
 \operatorname{ind}_{\mathscr X_0}(I-\mathscr B_k,0)
 =(-1)^{\#\{\lambda\in\sigma(\mathscr B_k):\,\lambda>1\}}
 =(-1)^{\operatorname{ind}^-(\mathscr L_{k,\phi})},
\]
with algebraic multiplicity, which proves Lemma~\ref{lem:index-bridge}.
\end{proof}

For $j\geq1$ and $m\geq0$, separation of variables yields
\begin{equation*}
 \mathscr L_{k,\phi}\bigl(\psi_j(z)\cos(ms)\bigr)
 =\bigl(\lambda_j+k^2m^2\bigr)\psi_j(z)\cos(ms).
\end{equation*}
By Lemma~\ref{lem:spectrum}, the positive parameters at
which $\mathscr L_{k,\phi}$ is not invertible are precisely $k_m$,
$m\geq1$, and
\begin{equation}\label{eq:global-kernel-at-km}
 \ker\mathscr L_{k_m,\phi}
 =\operatorname{span}\{\psi_1(z)\cos(ms)\}.
\end{equation}
The Morse index changes by one at each $k_m$.  For $k\ne k_m$ close to
$k_m$, the implicit function theorem and
$\mathscr K(k,v)-\mathscr B_kv=o(\|v\|_{\mathscr X_0})$ give
\[
 \operatorname{ind}_{\mathscr X_0}(I-\mathscr K(k,\cdot),0)
 =\operatorname{ind}_{\mathscr X_0}(I-\mathscr B_k,0).
\]
Lemmas \ref{lem:Rabinowitz-alternative} and \ref{lem:index-bridge} therefore
apply at each $k_m$; by \eqref{eq:global-kernel-at-km}, these are the only
possible intersections of a bifurcating branch with the stationary
branch.
By \eqref{eq:positive-lower-bound}, a positive branch cannot meet
zero, and uniqueness of the positive stationary solution
shows that its stationary members have the form $(k,\phi)$.
We next follow the branch issuing from the first resonance and show
that its nonstationary members retain the strict half-period monotonicity.
For a solution in the fixed symmetry class, $\xi=-\partial_s w$ satisfies
\begin{equation}\label{eq:q-linearized}
 \begin{cases}
 (-k^2\partial_{ss}-\Delta_{\mathbb S^{N-1}}+a^2
      -pw^{p-1})\xi=0&\text{in }\mathscr H,\\
 \xi=0&\text{on }\partial \mathscr H.
 \end{cases}
\end{equation}
We shall use the principal-eigenfunction continuation argument
of Dancer \cite[Theorem~3]{Dancer2001}.
Set
\[
 \mathscr S_{\mathrm{ns}}
 :=\bigl\{(k,w)\in(0,\infty)\times\mathscr X_0:
 w\text{ is a positive solution of \eqref{eq:fixed-cylinder} and }
 \partial_s w\not\equiv0\bigr\}.
\]

\begin{lemma}\label{lem:nodal-preservation}
The set
\(
 \bigl\{(k,w)\in\mathscr S_{\mathrm{ns}}:
 -\partial_s w>0\text{ in }\mathscr H\bigr\}
\)
is relatively open and closed in $\mathscr S_{\mathrm{ns}}$.
\end{lemma}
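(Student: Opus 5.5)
The plan is to use Lemma~\ref{lem:global-bootstrap} to upgrade $C^0$ convergence to $C^{2,\beta}$ convergence, and then to argue spectrally with the linearized problem \eqref{eq:q-linearized} on the half-period domain $\mathscr H$. This follows Dancer's principal-eigenfunction idea, and it lets us avoid any Hopf or Serrin corner lemma at the edges of $\mathscr H$. Throughout, for $(k,w)\in\mathscr S_{\mathrm{ns}}$ we write $\xi=-\partial_sw$ and $V_w=pw^{p-1}$.

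\textbf{Step 1: $\xi$ is a Dirichlet solution on $\mathscr H$.} By Lemma~\ref{lem:global-bootstrap}, $w\in C^{2,\widehat\alpha}$, so $\xi\in C^{1,\widehat\alpha}(\overline{\mathscr H})$. Evenness and $2\pi$-periodicity in $s$ give $\xi=0$ on $\{s=0\}$ and $\{s=\pi\}$, and the lateral Dirichlet condition on $w$ gives $\xi=0$ on $(0,\pi)\times\partial\mathbb S^{N-1}_+$. Hence $\xi\in H^1_0(\mathscr H)$ is a weak solution of \eqref{eq:q-linearized}. Moreover $\xi\not\equiv0$ on $\mathscr H$: by evenness, vanishing on $\mathscr H$ would force $\partial_sw\equiv0$ on all of $\mathbb T\times\mathbb S^{N-1}_+$. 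Consequently $0$ is a Dirichlet eigenvalue of
\[
\mathscr M_{k,w}:=-k^2\partial_{ss}-\Delta_{\mathbb S^{N-1}}+a^2-V_w \quad\text{on }H^1_0(\mathscr H).
\]
The domain $\mathscr H$ is only Lipschitz, but $\mathscr M_{k,w}$ is self-adjoint with compact resolvent and bounded potential, so the min--max eigenvalues $\mu_1<\mu_2\le\cdots$ are well defined. The principal eigenvalue $\mu_1$ is simple with an eigenfunction that is strictly positive in $\mathscr H$. If $\xi>0$ in $\mathscr H$, then $\xi$ is a positive eigenfunction, so $\mu_1(\mathscr M_{k,w})=0$ and hence $\mu_2(\mathscr M_{k,w})>0$.

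\textbf{Step 2: openness.} Take $(k,w)$ in the set, and let $(k',w')\in\mathscr S_{\mathrm{ns}}$ converge to it in $(0,\infty)\times\mathscr X_0$. Since $k>0$, the parameters stay in a compact $J\Subset(0,\infty)$. Lemma~\ref{lem:global-bootstrap} then gives $w'\to w$ in $C^{2,\beta}$, so $\xi'\to\xi$ in $C^{1,\beta}(\overline{\mathscr H})$ and $V_{w'}\to V_w$ uniformly. The quadratic forms of $\mathscr M_{k',w'}$ and $\mathscr M_{k,w}$ differ by $O(|k'^2-k^2|)\|\partial_s\cdot\|^2+O(\|V_{w'}-V_w\|_\infty)\|\cdot\|^2$, which is a small relative perturbation. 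By min--max, $\mu_j(\mathscr M_{k',w'})\to\mu_j(\mathscr M_{k,w})$ for $j=1,2$, so $\mu_2(\mathscr M_{k',w'})>0$ eventually.

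By Step 1 applied to $(k',w')$, the function $\xi'\not\equiv0$ is a Dirichlet eigenfunction of $\mathscr M_{k',w'}$ with eigenvalue $0<\mu_2$. The eigenvalue $0$ must therefore equal $\mu_1(\mathscr M_{k',w'})$, which is simple, so $\xi'=c\,\varphi'_1$ with $\varphi'_1>0$ the principal eigenfunction and $c\ne0$. Since $\xi'\to\xi>0$ uniformly, $\xi'$ is positive somewhere in $\mathscr H$, so $c>0$ and $\xi'>0$ throughout $\mathscr H$. This proves openness.

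\textbf{Step 3: closedness.} Let $(k_n,w_n)$ lie in the set and converge to $(k,w)\in\mathscr S_{\mathrm{ns}}$. As before, Lemma~\ref{lem:global-bootstrap} gives $\xi_n\to\xi$ uniformly, so $\xi\ge0$ in $\mathscr H$, and $\xi\not\equiv0$ by Step 1. Choose $c\ge\|V_w\|_\infty$; then
\[
\bigl(-k^2\partial_{ss}-\Delta_{\mathbb S^{N-1}}+a^2+c-V_w\bigr)\xi=c\,\xi\ge0,
\]
with a nonnegative zeroth-order coefficient. The strong maximum principle \cite[Theorem~3.5]{GT2001} then yields $\xi>0$ in the connected set $\mathscr H$.

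The main point requiring care is the openness step. A pointwise perturbation argument would need boundary control of $\xi'$ at the edges $\{s=0,\pi\}\cap\partial_{\mathrm{lat}}$, where Hopf's lemma fails. The spectral-gap argument replaces that entirely: it needs only $\xi'\in H^1_0(\mathscr H)$, which Step 1 provides, and the continuity of $\mu_1,\mu_2$ under $C^0$-small potential perturbations and bounded changes of $k$.
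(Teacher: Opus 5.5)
Your proposal is correct and follows essentially the same route as the paper. At a bumpy solution the positive Jacobi field $-\partial_s w$ forces $\lambda_1=0<\lambda_2$ on $\mathscr H$; min--max continuity of $\lambda_1,\lambda_2$ under $C^0$ perturbations, together with simplicity of the zero eigenvalue, gives openness; and closedness follows from $C^1$ convergence plus a strong maximum principle, where the paper cites Harnack. The only differences are cosmetic: you work in all of $H^1_0(\mathscr H)$ rather than its axial subspace (harmless), and you give the shifted-coefficient maximum principle in place of Harnack.
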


\begin{proof}
Let $\mathscr L_{k,w}$ denote the operator in \eqref{eq:q-linearized}, and let
$\lambda_1(k,w)\leq\lambda_2(k,w)\leq\cdots$ be its Dirichlet
eigenvalues on $H^1_{0,\mathrm{ax}}(\mathscr H)$.  At a bumpy solution,
$\xi=-\partial_s w>0$ is a zero eigenfunction.  If
$\lambda_1(k,w)<0$ and $\eta_1>0$ is a corresponding first
eigenfunction, self-adjointness would give
\[
 0=\langle\eta_1,\mathscr L_{k,w}\xi\rangle_{L^2}
  =\langle \mathscr L_{k,w}\eta_1,\xi\rangle_{L^2}
  =\lambda_1(k,w)\int_{\mathscr H}\eta_1\xi<0,
\]
a contradiction.  By simplicity of the principal eigenvalue, we therefore have
\begin{equation}\label{eq:nodal-principal-gap}
 \lambda_1(k,w)=0<\lambda_2(k,w).
\end{equation}
To see that this gap persists, set
\[
 Q_{k,w}[\zeta]
 :=\int_{\mathscr H}\bigl(k^2|\zeta_s|^2
 +|\nabla_{\mathbb S^{N-1}}\zeta|^2
 +(a^2-pw^{p-1})\zeta^2\bigr),
 \qquad \zeta\in H^1_{0,\mathrm{ax}}(\mathscr H).
\]
If $(k_n,w_n)\to(k,w)$ in $(0,\infty)\times C^0$, then
\[
 |Q_{k_n,w_n}[\zeta]-Q_{k,w}[\zeta]|
 \leq |k_n^2-k^2|\|\zeta_s\|_{L^2(\mathscr H)}^2
 +p\|w_n^{p-1}-w^{p-1}\|_{C^0}\|\zeta\|_{L^2(\mathscr H)}^2
 =o(1)\|\zeta\|_{H^1_0(\mathscr H)}^2.
\]
By the min--max principle, we therefore have
$\lambda_i(k_n,w_n)\to\lambda_i(k,w)$ for $i=1,2$.

For $(k',w')$ sufficiently close to a bumpy solution $(k,w)$,
$\lambda_2(k',w')>0$.  If $(k',w')$ is nonstationary, then
$-\partial_s w'\not\equiv0$ is a zero eigenfunction.  Hence zero must be
the simple principal eigenvalue, and $-\partial_s w'$ has one strict
sign.  By Lemma~\ref{lem:global-bootstrap}, the convergence is in $C^1$, so its
sign agrees with that of $-\partial_s w$ at a fixed interior point.
This proves relative openness.

Let $(k_n,w_n)$ satisfy $-\partial_s w_n>0$ and converge to a
nonstationary solution $(k,w)$.  By Lemma~\ref{lem:global-bootstrap},
$-\partial_s w_n\to-\partial_s w\geq0$ in $C^1$.
Since $\partial_s w\not\equiv0$, by \eqref{eq:q-linearized} and the Harnack
inequality, we have $-\partial_s w>0$ in $\mathscr H$.  
This proves relative closedness.
\end{proof}

\begin{proof}[Proof of Proposition~\ref{prop:global-continuum}]
\textbf{Step I. Local bifurcation and the bumpy class.}
At $(k_*,\phi)$, by separating variables in \eqref{eq:q-linearized},
we obtain the eigenpairs
$\bigl(\lambda_j+k_*^2\ell^2,\psi_j\sin(\ell s)\bigr)$, $j,\ell\geq1$;
hence zero is simple and principal, with eigenfunction $\psi_1\sin s$.
Along the local curve,
\begin{equation*}
 -\partial_s w_\varepsilon
 =\varepsilon\psi_1(z)\sin s+O(\varepsilon^2)
 \quad\text{in }C^{1,\alpha}.
\end{equation*}
Since $\partial_s w_\varepsilon$ solves \eqref{eq:q-linearized} and the second
eigenvalue is positive, the side $\varepsilon>0$ satisfies
\eqref{eq:bumpy}; the other side does so after translation by $\pi$.
Let $\mathcal M_+$ consist of the positive nonstationary solutions satisfying
\eqref{eq:bumpy}, let $\mathcal M_-$ be their half-period translates, and set
$\mathcal M=\mathcal M_+\cup\mathcal M_-$.
By Lemma~\ref{lem:nodal-preservation}, $\mathcal M$ is relatively open
and closed among positive nonstationary solutions.  The component
$\mathscr C$ of $\overline{\mathcal M}$ containing $(k_*,\phi)$ is
closed and connected, and its nonstationary members lie in $\mathcal M$.
For $(k,w)\in\mathcal M$, $\partial_s w(s,\cdot)\equiv0$ in one period only for
$s=0,\pi,2\pi$.  A period $\tau\in(0,2\pi)$ would therefore equal
$\pi$, contradicting strict monotonicity on $(0,\pi)$.  Thus every
nonstationary member has least $s$-period $2\pi$.

\textbf{Step II. Global alternative and higher resonances.}
The lower bound \eqref{eq:positive-lower-bound} separates the zero
solution from the positive fixed points, while uniqueness of the
positive stationary solution leaves only $(k,\phi)$ on the stationary
branch.  Thus $\mathcal M$ is relatively open and closed among all
nonzero fixed points away from the trivial branch.  By
Theorem~\ref{thm:local-bifurcation} and Lemma~\ref{lem:global-bootstrap},
every nonstationary fixed point sufficiently close to $(k_*,\phi)$
belongs to $\mathcal M$.  Applying Lemma~\ref{lem:Rabinowitz-alternative}
with $\mathcal N=\mathcal M$, we conclude that $\mathscr C$ is not
relatively compact or contains $(k_m,\phi)$ for some $m\geq2$.

Suppose the second alternative holds.  Choose
$(k_n,w_n)\in\mathcal M$ with $(k_n,w_n)\to(k_m,\phi)$ in
$(0,\infty)\times C^0$, and set
$h_n=(w_n-\phi)/\|w_n-\phi\|_{C^0}$.  Since $\phi$ is independent of
$s$, $-L_{g_{k_n}}\phi=\phi^p$.  Subtracting its equation from that
of $w_n$ and dividing by $\|w_n-\phi\|_{C^0}$ gives
\begin{equation}\label{eq:normalized-difference}
 \mathscr L_{k_n,\phi}h_n
 =\frac{w_n^p-\phi^p-p\phi^{p-1}(w_n-\phi)}
        {\|w_n-\phi\|_{C^0}}
 \longrightarrow0\quad\text{in }C^0.
\end{equation}
Indeed, the superposition map $u\mapsto (u_+)^p$ is $C^1$ on $C^0$
because $p>1$, so the numerator in \eqref{eq:normalized-difference} is
$o(\|w_n-\phi\|_{C^0})$.  By standard elliptic regularity and
\eqref{eq:global-kernel-at-km}, after passing to a subsequence, we then have
\begin{equation*}
 h_n\longrightarrow c\psi_1(z)\cos(ms)
 \quad\text{in }C^1,\qquad c\ne0.
\end{equation*}
After applying the half-period translation if necessary,
\begin{equation*}
 0<\frac{-\partial_s w_n}{\|w_n-\phi\|_{C^0}}
 \longrightarrow
 cm\psi_1(z)\sin(ms)
 \quad\text{in }C^0(\overline{\mathscr H}).
\end{equation*}
The limit must be nonnegative, whereas $\psi_1>0$ and $\sin(ms)$
changes sign on $(0,\pi)$ for $m\geq2$.  Thus no higher resonance lies
on $\mathscr C$, which is therefore not relatively compact.

\textbf{Step III. Escape modes.}
For a nonstationary $(k,w)\in\mathscr C$, translate by half a period if
necessary and set $\xi=-\partial_s w>0$ in $\mathscr H$.  Testing
\eqref{eq:q-linearized} against $\xi$ and using the first
Dirichlet eigenvalues of $(0,\pi)$ and $\mathbb S^{N-1}_+$, we obtain
\begin{align*}
 p\|w\|_{L^\infty}^{p-1}\int_{\mathscr H}\xi^2
 \geq p\int_{\mathscr H} w^{p-1}\xi^2 
 =\int_{\mathscr H}k^2|\xi_s|^2
   +|\nabla_{\mathbb S^{N-1}}\xi|^2+a^2\xi^2
 \geq\bigl(k^2+N-1+a^2\bigr)\int_{\mathscr H}\xi^2.
\end{align*}
Therefore,
\begin{equation}\label{eq:frequency-amplitude}
 p\|w\|_{L^\infty}^{p-1}
 \geq k^2+(N-1)+a^2.
\end{equation}
For every compact interval $J\subset(0,\infty)$ and $M<\infty$, Lemma
\ref{lem:global-bootstrap} shows that
$\{(k,w)\in\mathscr C:k\in J,\ \|w\|_{C^0}\leq M\}$ is compact in
$(0,\infty)\times\mathscr X_0$.
Thus an escaping sequence in $\mathscr C$ has, after passage to a
subsequence, either $k_n\to0$, $k_n\to\infty$, or
$\|w_n\|_{L^\infty}\to\infty$.
If $k_n\to\infty$, then, by \eqref{eq:frequency-amplitude},
\begin{equation*}
 p\|w_n\|_{L^\infty}^{p-1}
 \geq k_n^2+(N-1)+a^2\longrightarrow\infty.
\end{equation*}
Hence this case is contained in the last alternative, and
\eqref{eq:global-escape} follows.
\end{proof}

\section{Finite-period blow-up analysis}
\label{sec:finite-period-compactification}

In this section, we analyze blow-up sequences whose periods converge to a
positive finite limit.  We identify their single-bubble core and periodic
Green outer profile, show that blow-up selects the unique resonant period
$T^*$, and deduce compactness away from this period.  Given a solution
$(k,w)$ of \eqref{eq:fixed-cylinder}, we use throughout the corresponding
profile $v(t,z)=w(kt,z)$ in the original $t$-variable, with period $T=2\pi/k$; thus
\begin{equation}\label{eq:t-cylinder}
 \begin{cases}
  -L_{g_{\mathrm{cyl}}}v=v^p
  &\text{in }\Sigma_T,\\
  v=0&\text{on }(\mathbb R/T\mathbb Z)
                    \times\partial\mathbb S^{N-1}_+.
 \end{cases}
\end{equation}

Throughout this section we center each bumpy profile at $P$, so that
\begin{equation}\label{eq:t-bumpy}
 v(-t,z)=v(t,z),\qquad
 v_t<0\quad\text{in }(0,T/2)\times\mathbb S^{N-1}_+.
\end{equation}
Testing \eqref{eq:t-cylinder} against $v$, we obtain
\begin{equation}\label{eq:cylindrical-energy}
 E_T(v):=\int_{\Sigma_T}v^{2^*}\dd V
 =\int_{\Sigma_T}\bigl(|\nabla v|^2+a^2v^2\bigr)\dd V.
\end{equation}
By the change of variables \eqref{eq:EF-transform}, $\dd V=|x|^{-N}\dd x$; hence
\begin{equation}\label{eq:conformal-measures}
 v^p\dd V=|x|^{-a}u^p\dd x,\qquad
 v^{2^*}\dd V=u^{2^*}\dd x,\qquad
 v^2\dd V=|x|^{-2}u^2\dd x.
\end{equation}

\subsection{Periodic Green's functions}

Set
\[
 c_N:=\frac{1}{(N-2)|\mathbb S^{N-1}|},
 \qquad -\Delta\bigl(c_N|x|^{2-N}\bigr)=\delta_0.
\]
The periodic Green's function controls the outer profile and selects the
limiting blow-up period.  For $X=(t,z)$ and $Y=(\tau,\eta)$ in the full
cylinder, set
\begin{equation}\label{eq:full-cylinder-kernel}
 \Gamma(X,Y)=c_N2^{-a}
 \bigl(\cosh(t-\tau)-z\cdot\eta\bigr)^{-a}.
\end{equation}
Thus $(-L_{g_{\mathrm{cyl}},X})\Gamma(X,Y)=\delta_Y$.  For
$\eta\in\mathbb S^{N-1}_+$, write
$\eta^*=(\eta_1,\ldots,\eta_{N-1},-\eta_N)$.  By conformal covariance and
the half-space image formula, the Dirichlet Green's function on the
universal half-cylinder is
\begin{align}
 G_\infty((t,z),(\tau,\eta))
 =\Gamma((t,z),(\tau,\eta))-\Gamma((t,z),(\tau,\eta^*)).
 \label{eq:universal-cylinder-green}
\end{align}
Its periodization is
\begin{equation}\label{eq:periodized-cylinder-green}
 G_T((t,z),(\tau,\eta))
 =\sum_{m\in\mathbb Z}
 G_\infty((t+mT,z),(\tau,\eta)).
\end{equation}

Define the corresponding Euclidean Green function by
\begin{equation}\label{eq:flat-periodized-green-kernel}
 \widetilde G_T(x,e_N)
 =|x|^{-a}G_T
 \Bigl(\bigl(-\log|x|,{x}/{|x|}\bigr),(0,e_N)\Bigr).
\end{equation}
By the image formula and periodization, we have
\begin{align*}
 \widetilde G_T(x,e_N)
 =c_N\sum_{m\in\mathbb Z}e^{-amT}\bigl(
 |e^{-mT}x-e_N|^{2-N}-|e^{-mT}x+e_N|^{2-N}\bigr).
\end{align*}
Pairing the terms with indices $m$ and $-m$ and expanding at $e_N$, we
obtain
\begin{equation*}
 c_N^{-1}\lim_{x\to e_N}
 \bigl(\widetilde G_T(x,e_N)-c_N|x-e_N|^{2-N}\bigr)
 =\mathscr M_N(T),
\end{equation*}
where $\mathscr M_N(T)$ is defined in \eqref{eq:intro-periodic-regular-part}, and hence
\begin{equation}\label{eq:periodic-green-regular-expansion}
 \begin{aligned}
  \widetilde G_T(x,e_N)
  =c_N|x-e_N|^{2-N}\!+\!c_N\mathscr M_N(T)\!+\!R_T(x),\quad
  R_T(x) =O(|x-e_N|),\quad \nabla R_T(x)=O(1).
 \end{aligned}
\end{equation}
For every $\tau>0$, the remainders and their first $T$-derivatives are
uniform for $T\geq\tau$; this follows by differentiating the paired
series and summing the resulting geometric majorants.

We first isolate the scalar information carried by the regular part of
the periodic Green function.
\begin{proposition}\label{prop:unique-regular-zero}
The function $\mathscr M_N$ is
continuous and strictly decreasing on $(0,\infty)$, with
\begin{equation}\label{eq:regular-part-end-limits}
 \lim_{T\downarrow0}\mathscr M_N(T)=+\infty,
 \qquad
 \lim_{T\to\infty}\mathscr M_N(T)=-2^{2-N}.
\end{equation}
Consequently, it has a unique zero $T^*>0$.
\end{proposition}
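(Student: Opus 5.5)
The plan is to study the series term by term, writing $q=e^{-\ell T}\in(0,1)$ for each $\ell\ge1$. The $\ell$-th summand is
\[
 f(q)=q^{a}\bigl[(1-q)^{2-N}-(1+q)^{2-N}\bigr].
\]
\textbf{Step 1: positivity and monotonicity of $f$.} For $q\in(0,1)$ we have $(1-q)^{2-N}>(1+q)^{2-N}$, so $f>0$. The factor $q^a$ is increasing. The bracket is also increasing, since its derivative is
\[
 (N-2)\bigl[(1-q)^{1-N}+(1+q)^{1-N}\bigr]>0.
\]
Hence $f$ is strictly increasing on $(0,1)$. Because $q=e^{-\ell T}$ decreases in $T$, each summand is a strictly decreasing function of $T$.

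\textbf{Step 2: convergence and continuity.} The series converges locally uniformly on $(0,\infty)$. On $T\ge\tau>0$ we have $q\le e^{-\ell\tau}\le e^{-\tau}<1$. The bracket is then bounded by $C_\tau q$, by the mean value theorem with the derivative bounded by $2(N-2)(1-e^{-\tau})^{1-N}$. This gives the geometric majorant $C_\tau e^{-(a+1)\ell\tau}$. The Weierstrass M-test then yields continuity. A sum of strictly decreasing functions that converges pointwise is strictly decreasing; the strictness already comes from the $\ell=1$ term.

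\textbf{Step 3: the two limits.} As $T\to\infty$, the majorant $C_1e^{-(a+1)\ell T}$ (valid for $T\ge1$) shows that the sum tends to $0$. Therefore $\mathscr M_N(T)\to-2^{2-N}$.

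As $T\downarrow0$, discard all terms except $\ell=1$, which is legitimate because every summand is positive. This gives
\[
 \mathscr M_N(T)\ge -2^{2-N}+2e^{-aT}\bigl[(1-e^{-T})^{2-N}-2^{2-N}\bigr].
\]
Since $N\ge3$, the factor $(1-e^{-T})^{2-N}\to+\infty$, so the right-hand side tends to $+\infty$.

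\textbf{Step 4: the zero.} The function $\mathscr M_N$ is continuous and strictly decreasing, it tends to $+\infty$ at $0^+$, and it tends to the negative value $-2^{2-N}$ at $+\infty$. The intermediate value theorem together with strict monotonicity gives a unique zero $T^*$.

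The only mildly delicate point is the uniform majorant in Step 2. It is needed both for continuity and for passing to the limit at infinity, and it must be uniform on $T\ge\tau$; the mean value bound above supplies it. No step appears to be a serious obstacle.
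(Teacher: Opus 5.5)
Your proof is correct in substance and is essentially the paper's. You use the same summand $f(q)=q^a[(1-q)^{2-N}-(1+q)^{2-N}]$, the same geometric majorant of order $e^{-(a+1)\ell\tau}$, and the same treatment of both limits.

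The one structural difference is how you get strict decrease. You sum strictly decreasing terms. The paper instead differentiates termwise, using a majorant for $\frac{d}{dT}f(e^{-\ell T})$, to get $\mathscr M_N'(T)=-2\sum_{\ell}\ell e^{-\ell T}f'(e^{-\ell T})<0$. Your version suffices for this statement. The paper's version also gives $\mathscr M_N'(T^*)\neq0$, which is cited later as the transversality condition when the period is selected in the Lyapunov--Schmidt reduction. Your Step~1 already yields $f'>0$, so adding a derivative majorant would recover this.

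Fix a small slip in Step~3. Since $1+e^{-T}<2$ and $2-N<0$, one has $(1+e^{-T})^{2-N}>2^{2-N}$, so your displayed lower bound is reversed. Use $(1+e^{-T})^{2-N}<1$ instead, which gives $\mathscr M_N(T)\geq-2^{2-N}+2e^{-aT}\bigl[(1-e^{-T})^{2-N}-1\bigr]\to+\infty$.
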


\begin{proof}
For $0<q<1$, set
$
 f(q)=q^a\bigl((1-q)^{-2a}-(1+q)^{-2a}\bigr).
$
Then
\begin{align*}
 f'(q)
 =a q^{a-1}\bigl((1-q)^{-2a}-(1+q)^{-2a}\bigr)
 +2a q^a\bigl((1-q)^{-2a-1}+(1+q)^{-2a-1}\bigr)>0.
\end{align*}
Moreover, as $q\downarrow0$,
\[
 f(q)=4a q^{a+1}+O(q^{a+3}),\qquad
 f'(q)=4a(a+1)q^a+O(q^{a+2}).
\]
Hence, on every $K=[T_0,T_1]\subset(0,\infty)$,
\[
 |f(e^{-\ell T})|
 \leq C_Ke^{-(a+1)\ell T_0},\qquad
 \Bigl|\frac{\dd}{\dd T}f(e^{-\ell T})\Bigr|
 \leq C_K\ell e^{-(a+1)\ell T_0}.
\]
Both majorant series are summable, so termwise differentiation yields
\[
 \mathscr M_N'(T)
 =-2\sum_{\ell=1}^{\infty}
 \ell e^{-\ell T}f'(e^{-\ell T})<0.
\]
As $T\to\infty$,
\[
 0\leq2\sum_{\ell=1}^{\infty}f(e^{-\ell T})
 \leq C\sum_{\ell=1}^{\infty}e^{-(a+1)\ell T}\longrightarrow0.
\]
As $T\downarrow0$, every summand is positive and
\[
 2e^{-aT}\bigl((1-e^{-T})^{2-N}-(1+e^{-T})^{2-N}\bigr)
 =2T^{2-N}(1+o(1))\longrightarrow+\infty.
\]
This proves \eqref{eq:regular-part-end-limits}; by continuity and strict
monotonicity, the zero is unique.
\end{proof}

We shall use the following uniform estimate both below and in
Section~\ref{sec:uniform-bounds}.
\begin{lemma}\label{lem:long-cylinder-green}
For $0\leq\omega<N-1+a^2$, let $G_T^\omega$ denote the positive
Dirichlet Green's function of $-L_{g_{\mathrm{cyl}}}-\omega$ on $\Sigma_T$.  Fix
$T_0,r_0>0$ and $0\leq\omega_0<N-1+a^2$.  There exist $c,C>0$,
depending only on $N,T_0,r_0,\omega_0$, such that, for $T\geq T_0$,
$0\leq\omega\leq\omega_0$, and $j=0,1$,
\begin{equation}\label{eq:long-cylinder-green-decay}
 |\nabla_X^jG_T^\omega(X,P)|
 \leq Ce^{-c\operatorname{dist}_{\mathbb R/T\mathbb Z}(t,0)}
 \quad\text{on }\Sigma_T\setminus \mathcal B_{r_0}(P),
\end{equation}
and
\begin{equation}\label{eq:long-cylinder-green-integrability}
 \sup_{\substack{T\geq T_0\\0\leq\omega\leq\omega_0}}
 \int_{\Sigma_T\setminus \mathcal B_{r_0}(P)}
 \bigl((G_T^\omega)^p+|\nabla G_T^\omega|^p\bigr)\dd V<\infty.
\end{equation}
Moreover, uniformly for $\omega\in[0,\omega_0]$,
\begin{equation}\label{eq:long-cylinder-green-convergence}
 G_T^\omega(\cdot,P)\longrightarrow G_\infty^\omega(\cdot,P)
 \quad\text{in }C^1_{\mathrm{loc}}
 \bigl((\mathbb R\times\overline{\mathbb S^{N-1}_+})\setminus\{P\}\bigr)
 \quad\text{as }T\to\infty.
\end{equation}
\end{lemma}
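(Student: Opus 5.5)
The proof rests on one spectral fact. The operator $-L_{g_{\mathrm{cyl}}}-\omega=-\partial_{tt}-\Delta_{\mathbb S^{N-1}}+a^2-\omega$ with Dirichlet data on the lateral boundary has spectral gap $\kappa^2:=N-1+a^2-\omega\geq N-1+a^2-\omega_0>0$ in the spherical variable. Here we use that the first Dirichlet eigenvalue of $-\Delta_{\mathbb S^{N-1}}$ on $\mathbb S^{N-1}_+$ is $N-1$, with eigenfunction $z_N$. Consequently any solution of the homogeneous equation on a slab $\{|t|>r\}$ that is bounded decays like $e^{-\kappa|t|}$ in the $t$-direction. We first treat the universal half-cylinder Green's function $G^\omega_\infty(\cdot,P)$ and then obtain $G^\omega_T$ by periodization:
\[
 G_T^\omega(X,P)=\sum_{m\in\mathbb Z}G^\omega_\infty((t+mT,z),P).
\]
This identity holds because the periodized sum is a positive distributional solution with the right singularity and zero lateral trace, provided the sum converges. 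Uniqueness then follows from coercivity of $-L_{g_{\mathrm{cyl}}}-\omega$ on $H^1_0(\Sigma_T)$, since the quadratic form is bounded below by $(N-1+a^2-\omega)\|\cdot\|_{L^2}^2$.

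\textbf{Step 1: decay of $G^\omega_\infty$.} Expand $G^\omega_\infty((t,z),P)$ for $|t|\geq r_0/2$ in the Dirichlet eigenfunctions $\varphi_j$ of $-\Delta_{\mathbb S^{N-1}_+}$, with eigenvalues $\mu_j\geq N-1$. Each coefficient solves $-g''+(\mu_j+a^2-\omega)g=0$ on $|t|>r_0/2$ and decays at infinity, so it equals $g_j(\pm r_0/2)\,e^{-\sqrt{\mu_j+a^2-\omega}(|t|-r_0/2)}$. The $L^2$ size of the traces at $|t|=r_0/2$ is bounded uniformly in $\omega\leq\omega_0$. To see this, compare $G^\omega_\infty$ with its $\omega=0$ version: the resolvent identity together with the local singularity $c_N d^{2-N}$ gives a uniform local bound on the annulus $r_0/2\leq d(X,P)\leq r_0$ (the $\omega=0$ kernel is the explicit function \eqref{eq:universal-cylinder-green}). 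Interior and boundary Schauder estimates on unit slabs then convert the $L^2$ exponential decay into a pointwise $C^1$ bound $Ce^{-\kappa_0|t|}$ with $\kappa_0=\sqrt{N-1+a^2-\omega_0}$. On the bounded set $\{|t|\leq r_0/2\}\setminus\mathcal B_{r_0}(P)$ the $C^1$ bound comes from the same local estimates.

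\textbf{Step 2: periodization.} For $T\geq T_0$ and $X$ outside $\mathcal B_{r_0}(P)$, sum the bounds from Step 1 over $m$. The term with $|t+mT|$ minimal is controlled as in Step 1. For $T\geq T_0$ the terms with $m\neq m_0$ satisfy $|t+mT|\geq\operatorname{dist}_{\mathbb R/T\mathbb Z}(t,0)+(|m-m_0|-1)T_0$, with a similar bound handling the second-nearest term. Summing the resulting geometric series yields \eqref{eq:long-cylinder-green-decay} with $c=\kappa_0/2$, say. Termwise differentiation is justified by the same majorants. Integrating this bound gives
\[
 \int_{\Sigma_T\setminus\mathcal B_{r_0}}\bigl((G^\omega_T)^p+|\nabla G^\omega_T|^p\bigr)\leq C\int_{\mathbb R}e^{-cp|t|}\,\dd t,
\]
uniformly in $T$ and $\omega$, which is \eqref{eq:long-cylinder-green-integrability}. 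For the convergence \eqref{eq:long-cylinder-green-convergence}, note that on a compact set in $(\mathbb R\times\overline{\mathbb S^{N-1}_+})\setminus\{P\}$ we have $G_T^\omega-G^\omega_\infty=\sum_{m\neq0}G^\omega_\infty(\cdot+mT)$. In $C^1$ this is $O(e^{-cT})$ uniformly in $\omega$.

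\textbf{Main obstacle.} The delicate point is uniformity in $\omega$ up to $\omega_0$ of the trace bound in Step 1, i.e., controlling $G^\omega_\infty$ near but outside the pole. I would handle it by writing
\[
 G^\omega_\infty=G^0_\infty+\omega\,(-L_{g_{\mathrm{cyl}}}-\omega)^{-1}G^0_\infty .
\]
The function $G^0_\infty(\cdot,P)$ lies in $L^2$ away from the pole, and its local part near the pole lies in $L^{r}$ for $r<N/(N-2)$. Since $(-L-\omega)^{-1}$ is bounded on $L^2$ with norm at most $(N-1+a^2-\omega_0)^{-1}$, and $W^{2,r}$ estimates handle the $L^r$ part, we get an $\omega$-uniform bound for the correction in $C^1$ away from $P$.
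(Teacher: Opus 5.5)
Your proposal is correct and follows essentially the same route as the paper: a spectral expansion in the spherical variable with an $\omega$-uniform gap, exponential decay of $G^\omega_\infty$, periodization by images with a geometric majorant, integration in $t$, and $O(e^{-cT})$ image tails for the $C^1_{\mathrm{loc}}$ convergence. The main differences are that the paper reads the decay directly off the explicit modal series $\sum_j e^{-\beta_{j,\omega}|t|}(2\beta_{j,\omega})^{-1}\varphi_j(z)\varphi_j(e_N)$ (using Weyl's law and polynomial $C^1$ bounds on $\varphi_j$) rather than through traces at $|t|=r_0/2$ and slab estimates, and that it only asserts the near-pole bound you flag as the main obstacle; your resolvent-identity treatment of that bound needs more justification for $N\geq6$, since there $\chi G^0_\infty\notin H^{-1}$, but it is easily repaired, e.g.\ by the monotonicity $G^0_\infty\leq G^\omega_\infty\leq G^{\omega_0}_\infty$ (from the maximum principle) followed by local elliptic estimates.
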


\begin{proof}
Let $(\lambda_j,\varphi_j)$ be an $L^2$-orthonormal Dirichlet
eigenbasis of $-\Delta_{\mathbb S^{N-1}}+a^2$ and set
$\beta_{j,\omega}=(\lambda_j-\omega)^{1/2}$.  Uniformly for
$0\leq\omega\leq\omega_0$,
\[
 \beta_{j,\omega}\geq c(1+\lambda_j)^{1/2},\qquad
 G_\infty^\omega((t,z),P)
 =\sum_{j=1}^\infty
 \frac{e^{-\beta_{j,\omega}|t|}}{2\beta_{j,\omega}}
 \varphi_j(z)\varphi_j(e_N),
 \qquad
 G_T^\omega=\sum_{m\in\mathbb Z}G_\infty^\omega(t+mT,\cdot;P).
\]
Here the first representation is the standard eigenfunction expansion
of Davies \cite{Davies1989}, and the second is its periodization.
By boundary elliptic estimates \cite{GT2001} and Weyl's law
\cite{Hormander1968}, for some $q=q(N)$, we have
\[
 \|\varphi_j\|_{C^1}\leq C(1+\lambda_j)^q,\qquad
 \lambda_j\geq cj^{2/(N-1)}.
\]
Hence, by the Weierstrass test, we may differentiate termwise to obtain,
for $k=0,1$ and $|t|\geq r_0/2$,
\[
 \sum_{j=1}^\infty
 \left|\nabla_{t,z}^k\left(
 \frac{e^{-\beta_{j,\omega}|t|}}{2\beta_{j,\omega}}
 \varphi_j(z)\varphi_j(e_N)\right)\right|
 \leq C_{r_0}e^{-c|t|}.
\]
On the complementary off-diagonal region the same bound follows from
the local Green estimate and interior or boundary regularity.  If
$|t|=\operatorname{dist}_{\mathbb R/T\mathbb Z}(t,0)\leq T/2$, then
\[
 \sum_{m\in\mathbb Z}e^{-\beta|t+mT|}
 =\frac{e^{-\beta|t|}+e^{-\beta(T-|t|)}}{1-e^{-\beta T}}
 \leq\frac{2e^{-\beta|t|}}{1-e^{-\beta_0T_0}},
 \qquad \beta\geq\beta_0>0.
\]
This proves the decay estimate in Lemma~\ref{lem:long-cylinder-green};
integration in $t$ gives the stated integral bound.  Finally, on any
compact set $K$ away from $P$, the nonzero image terms satisfy
\[
 \|G_T^\omega-G_\infty^\omega\|_{C^1(K)}
 \leq C_K\sum_{m\ne0}e^{-c(|m|T-\sup_K|t|)}
 \leq C_Ke^{-cT},
\]
which proves the asserted local convergence.
\end{proof}

We next record the local Pohozaev identity used in the blow-up analysis.
For a smooth
function $h$ near $\partial B_r(Q)\subset\mathbb R^N$, set
\begin{equation}\label{eq:local-pohozaev-functional}
 \mathcal P_Q(r,h):= \int_{\partial B_r(Q)} \Bigl(a h\partial_\nu h-\frac r2|\nabla h|^2 +r(\partial_\nu h)^2\Bigr)\dd S.
\end{equation}
If $-\Delta h=\varepsilon h^p$ in $B_r(Q)$, then multiplication by
$(x-Q)\cdot\nabla h+a h$ and integration by parts give
\begin{equation}\label{eq:local-pohozaev-identity}
 0=\mathcal P_Q(r,h)+\frac{\varepsilon r}{p+1}
 \int_{\partial B_r(Q)}h^{p+1}\dd S.
\end{equation}
Moreover, if $h$ is harmonic near $Q$ and
\begin{equation*}
 h(x)=A|x-Q|^{2-N}+B+O(|x-Q|),\qquad
 \nabla h(x)=-(N-2)A|x-Q|^{1-N}\frac{x-Q}{|x-Q|}+O(1),
\end{equation*}
then, by direct substitution into \eqref{eq:local-pohozaev-functional}, we obtain
\begin{equation}\label{eq:local-pohozaev-pole-limit}
 \lim_{r\downarrow0}\mathcal P_Q(r,h)
 =-\frac{(N-2)^2}{2}|\mathbb S^{N-1}|AB.
\end{equation}

We close this subsection with the fixed-domain mechanism used when the
period varies in the construction.  It is convenient to make the
covering lift explicit.  The localization below freezes the metric,
the bubble, and the cut-off near $P$; a linear lift
$t\mapsto (T/T^*)t$ would not have this property.

Fix $r_0>0$ so small that $16r_0<T^*$.  Choose a nonnegative, even,
$T^*$-periodic function $b\in C^\infty(\mathbb R)$ such that
\begin{equation}\label{eq:fixed-domain-cutoff}
 b(t)=0\quad\text{if }
 \operatorname{dist}(t,T^*\mathbb Z)\leq8r_0,
 \qquad
 \int_0^{T^*}b(s)\dd s=1.
\end{equation}
Let $I\Subset(0,\infty)$ be a closed interval containing $T^*$ in its
interior and sufficiently small that
\begin{equation}\label{eq:fixed-domain-interval}
 \sup_{T\in I}|T-T^*|\,\|b\|_{L^\infty}\leq\frac12.
\end{equation}
For $T\in I$, define
\begin{equation}\label{eq:fixed-domain-lift}
 \psi(t)=\int_0^t b(s)\dd s,
 \qquad
 J_T(t)=1+(T-T^*)b(t),
 \qquad
 \theta_T(t)=t+(T-T^*)\psi(t).
\end{equation}
Then $\psi$ and $\theta_T$ are odd,
\begin{equation}\label{eq:fixed-domain-equivariance}
 \psi(t+T^*)=\psi(t)+1,
 \qquad
 \theta_T(t+T^*)=\theta_T(t)+T,
 \qquad
 \theta_T'(t)=J_T(t)\geq\frac12.
\end{equation}
Thus $\theta_T$ descends to the diffeomorphism
\begin{equation}\label{eq:fixed-domain-map}
 \Phi_T:\Sigma_{T^*}\longrightarrow\Sigma_T,
 \qquad
 \Phi_T([t]_{T^*},z)=([\theta_T(t)]_T,z).
\end{equation}
Set $g_T:=\Phi_T^*g_{\mathrm{cyl}}$.  Then
$\Phi_T^*\dd V=J_T\dd t\dd\sigma$, and
\begin{equation}\label{eq:fixed-domain-metric}
 g_T=J_T^2\dd t^2+g_{\mathbb S^{N-1}},\qquad
 -L_{g_T}=-J_T^{-1}\partial_t\bigl(J_T^{-1}\partial_t\bigr)
 -\Delta_{\mathbb S^{N-1}}+a^2.
\end{equation}
Hence $\Phi_T=\mathrm{id}$ and $g_T=g_{\mathrm{cyl}}$ on
$\mathcal B_{8r_0}(P)$.  Moreover, $\partial_TL_{g_T}$ is supported
outside this ball.

For the Green kernel on the fixed cylinder, set
\begin{equation}\label{eq:fixed-domain-green-pullback}
 G_T^\sharp(X,Y)=G_T(\Phi_T(X),\Phi_T(Y)),
 \qquad X,Y\in\Sigma_{T^*}.
\end{equation}
We identify the two cylinders through $\Phi_T$ and suppress the superscript
$\sharp$.  More precisely, if $f_T$ is a family of functions on $\Sigma_T$,
then $\partial_T f_T$ denotes the $T$-derivative of
$\Phi_T^*f_T$ on the fixed cylinder $\Sigma_{T^*}$.  For kernels we pull
back both variables, and for operators we use the corresponding conjugated
operators on $\Sigma_{T^*}$.  All $T$-derivatives below are understood in
this fixed-domain sense.  For
$X,Y\in\mathcal B_{2r_0}(P)$, define the regular part by
$\mathcal H_T(X,Y):=G_T(X,Y)-\Gamma(X,Y)$, where $\Gamma$ is the
full-cylinder Green kernel introduced above.

\begin{lemma}\label{lem:local-periodic-green}
The family $T\mapsto\mathcal H_T$ is $C^1$ with values in
$C^2(\mathcal B_{2r_0}(P)^2)$ and
\begin{equation}\label{eq:local-periodic-green-regularity}
 \sup_{T\in I}\sum_{j=0}^1
 \|\partial_T^j\mathcal H_T\|_{C^2(\mathcal B_{2r_0}(P)^2)}<\infty,
 \qquad
 \partial_T^j\mathcal H_T(P,P)=c_N\partial_T^j\mathscr M_N(T),
 \quad j=0,1.
\end{equation}
Moreover,
\[
\begin{gathered}
 G_T(X,Y)\leq C d_T(X,Y)^{2-N}
 \min\left\{1,\frac{\delta(X)}{d_T(X,Y)}\right\}
 \min\left\{1,\frac{\delta(Y)}{d_T(X,Y)}\right\},\\
 0\leq\int_{\Sigma_T}G_T(X,Y)\dd V_Y\leq C\delta(X).
\end{gathered}
\]
If $Y\in\mathcal B_{2r_0}(P)$ and $r(X)\geq4r_0$, then
\begin{equation}\label{eq:uniform-periodic-green-separated}
 c\delta(X)\leq G_T(X,Y)\leq C\delta(X),
 \qquad
 |\partial_TG_T(X,Y)|\leq C\delta(X).
\end{equation}
All constants are uniform for $T\in I$.
\end{lemma}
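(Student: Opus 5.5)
The plan is to work with the explicit image/periodization representation. For $X,Y\in\mathcal B_{2r_0}(P)$ with $2r_0<T^*/8$, the map $\Phi_T$ is the identity near $P$. We then have
\[
 \mathcal H_T(X,Y)=-\Gamma(X,Y^*)+\sum_{m\ne0}G_\infty((t+mT,z),Y),
\]
where $Y^*=(\tau,\eta^*)$. Note that $\Gamma(X,Y^*)$ is smooth on $\mathcal B_{2r_0}(P)^2$: since $z_N,\eta_N>0$ there, $\cosh(t-\tau)-z\cdot\eta^*\ge 1-z\cdot\eta^*$ is bounded below. Each term with $m\neq0$ is smooth, because $|t+mT-\tau|\ge T_{\min}-4r_0>0$ for $T\in I$.

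\textbf{Regularity of $\mathcal H_T$.} Differentiating the kernel \eqref{eq:full-cylinder-kernel} in $(X,Y,T)$ produces geometric majorants of the form $C|m|^je^{-a|m|T_{\min}}$, which are summable. Hence $T\mapsto\mathcal H_T$ is $C^1$ into $C^2$, with bounds uniform on $I$. Because the points lie near $P$, the pulled-back derivative $\partial_T$ coincides with the naive one.

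\textbf{Identification at $P$.} By conformal covariance, $|x|^{-a}\Gamma(\cdot,P)=c_N|x-e_N|^{2-N}$. Evaluating \eqref{eq:flat-periodized-green-kernel} at $x\to e_N$ (where $|x|^{-a}\to1$) and using \eqref{eq:periodic-green-regular-expansion} gives $\mathcal H_T(P,P)=c_N\mathscr M_N(T)$. The $T$-derivative follows by termwise differentiation of the same uniformly convergent paired series, and it agrees with $c_N\mathscr M_N'(T)$ as computed in Proposition~\ref{prop:unique-regular-zero}.

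\textbf{Global kernel bounds.} The pointwise bound on $G_T(X,Y)$ is checked in two regimes.
\begin{itemize}
\item[] When $d_T(X,Y)\le r_0$, the $m=0$ term $G_\infty$ dominates. It is the conformal pullback of the half-space Dirichlet Green function $c_N(|x-y|^{2-N}-|x-y^*|^{2-N})$, which satisfies the classical estimate $|x-y|^{2-N}\min\{1,x_Ny_N/|x-y|^2\}$. That estimate is equivalent to the product of the two minima, since $x_N\asymp\delta$ locally.
\item[] The $m\ne0$ terms and the far regime are handled by the image pairing. The difference $\Gamma(X,Y)-\Gamma(X,Y^*)$ is bounded by $C z_N\eta_N$ times the derivative of the kernel, which gives $C\delta(X)\delta(Y)e^{-a|t-\tau+mT|}$.
\end{itemize}

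For the integral bound, I will use the fact that $\int G_T(X,Y)\,dV_Y$ solves $-L_{g_{\rm cyl}}w=1$ with zero Dirichlet data. A barrier of the form $C\,\mathrm{dist}$ (for instance, a multiple of the principal sphere eigenfunction $z_N$, which satisfies $-\Delta_{\mathbb S}z_N+a^2z_N=(N-1+a^2)z_N\ge c>0$ where $z_N$ is not small; correct this with a constant near the boundary by the maximum principle) gives $\le C\delta(X)$.

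\textbf{Separated estimate and the main obstacle.} For \eqref{eq:uniform-periodic-green-separated}, with $Y\in\mathcal B_{2r_0}(P)$ and $r(X)\ge4r_0$, the upper bound follows from the bound above, since $d_T\ge 2r_0$ and $\delta(Y)\asymp1$. The lower bound $c\delta(X)$ follows from positivity of $G_T$ together with the Hopf lemma and Harnack inequality on the compact set $\Sigma_T\setminus\mathcal B_{3r_0}(P)$. The constants are uniform because the family $g_T$ on the fixed cylinder varies continuously in $T\in I$.

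The main obstacle I expect is $|\partial_TG_T|\le C\delta(X)$ in the fixed-domain sense, where $X$ may lie in the region where $b\ne0$ and $\Phi_T$ moves points. I would differentiate the pullback equation $-L_{g_T}G^\sharp_T(\cdot,Y)=\delta_Y$ (with respect to $J_T\,dV$). This gives
\[
 -L_{g_T}\,\partial_TG^\sharp=(\partial_TL_{g_T})G^\sharp+(\text{volume term}),
\]
whose right-hand side is supported where $b\ne0$, i.e. away from $\mathcal B_{8r_0}(P)\ni Y$. There $G^\sharp$ is bounded in $C^2$ by $C\delta$ by boundary regularity, so the source is $O(\delta)$ in $L^\infty$ with a second-order-in-$t$ structure. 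Representing $\partial_TG^\sharp$ through $G_T$ and using the integral bound $\int G_T\,dV\le C\delta$ yields $|\partial_TG^\sharp(X,Y)|\le C\delta(X)$. The derivative terms can be integrated by parts onto $G_T$ in the $t$-direction, which is tangential and preserves the $\delta$ factor.
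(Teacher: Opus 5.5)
Your proposal is correct. For most of the lemma it is the paper's argument:
\begin{itemize}
\item the regular part, by termwise differentiation of the image series and comparison of its diagonal value with \eqref{eq:periodic-green-regular-expansion};
\item the separated two-sided bound, by positivity, Hopf, Harnack and compactness of $I$;
\item the bound $|\partial_TG_T|\le C\delta(X)$, by differentiating the pulled-back Green equation, whose source $(\partial_TL_{g_T})G_T(\cdot,Y)$ lives on $\operatorname{supp}b$, away from $Y$.
\end{itemize}

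The genuine difference is the global pointwise bound. The paper gets it from uniform Dirichlet heat-kernel bounds for the compact uniformly elliptic family $g_T=J_T^2\dd t^2+g_{\mathbb S^{N-1}}$ (Lierl--Saloff-Coste), integrated in time and combined with Gr\"uter--Widman. You read it off the explicit formula instead, in two pieces.
\begin{itemize}
\item For the nearest lift near the diagonal, you use $G_\infty(X,Y)=c_N(|x||y|)^a\bigl(|x-y|^{2-N}-|x-y^*|^{2-N}\bigr)$ and the classical half-space estimate $|x-y|^{2-N}\min\{1,x_Ny_N|x-y|^{-2}\}$.
\item For every other term, you use the mean-value pairing $\Gamma(X,Y)-\Gamma(X,Y^*)\le Cz_N\eta_N(\cosh s-z\cdot\eta)^{-a-1}$. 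This yields $C\delta(X)\delta(Y)$, which the target dominates because $d_T$ is bounded on $\Sigma_T$ for $T\in I$.
\end{itemize}

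What each route buys:
\begin{itemize}
\item Yours is elementary and self-contained, but it relies on the product structure and the image formula. To state it in the fixed-domain convention you only need that $\Phi_T$ acts on $t$ alone with $\tfrac12\le\theta_T'\le\tfrac32$, so $d_T$ and $\delta$ change by uniform factors.
\item The paper's uses only uniform ellipticity of the pulled-back metrics. It would therefore survive non-explicit perturbations, at the price of heavier external results.
\end{itemize}

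\textbf{Barrier for the integral bound.} As you note, $z_N$ alone is not a supersolution of $-L_{g_{\mathrm{cyl}}}w\ge1$ near the equator, and adding a constant destroys the zero boundary data. Use $\bar w=Az_N-z_N^2$ instead. Since $|\nabla_{\mathbb S^{N-1}}z_N|^2=1-z_N^2$, one has $(-\Delta_{\mathbb S^{N-1}}+a^2)\bar w=A(N-1+a^2)z_N-(2N+a^2)z_N^2+2\ge2$ for $A\ge(2N+a^2)/(N-1+a^2)$. Comparison then gives $\int_{\Sigma_T}G_T(X,Y)\dd V_Y\le Az_N\le C\delta(X)$ for $X=(t,z)$. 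Equivalently, $(-L_{g_{\mathrm{cyl}}})^{-1}1$ is independent of $t$ and $T$, and boundary $C^1$ regularity on the hemisphere applies; this is essentially the paper's argument.

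\textbf{The $\partial_T$ step.} No integration by parts is needed. On $\operatorname{supp}b$, $\partial_t^jG_T(\cdot,Y)$ for $j\le2$ is uniformly bounded. The bounded source together with $\int G_T\,\dd V\le C\delta$ already gives $C\delta(X)$.
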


\begin{proof}
Because $\Phi_T$ is the identity near $P$, the lifted periodization
formula gives, for $X,Y\in\mathcal B_{2r_0}(P)$ and
$Y^*=(\tau,\eta^*)$,
\[
 \mathcal H_T(X,Y)=-\Gamma(X,Y^*)
 +\sum_{m\ne0}\{\Gamma((t+mT,z),Y)
                 -\Gamma((t+mT,z),Y^*)\}.
\]
The first term is smooth, while every derivative of the summands of
order at most two in $X,Y$ and one in $T$ is bounded by
$C|m|e^{-c|m|\min I}$.  We may therefore differentiate the series
termwise in the claimed $C^1_TC^2_{X,Y}$ topology.  Comparing its
diagonal value with \eqref{eq:periodic-green-regular-expansion} proves
the regular-part assertion of Lemma~\ref{lem:local-periodic-green}.

The metrics in \eqref{eq:fixed-domain-metric} form a compact uniformly
elliptic family.  Their Dirichlet heat kernels satisfy the uniform bounds
\cite{LierlSaloffCoste2014}
\[
 p_T^D(s,X,Y)\leq Cs^{-N/2}e^{-d_T(X,Y)^2/(Cs)}
 \min\left\{1,\frac{\delta(X)}{\sqrt s}\right\}
 \min\left\{1,\frac{\delta(Y)}{\sqrt s}\right\}
 \quad\text{if }0<s\leq1,
\]
and $p_T^D(s,X,Y)\leq Ce^{-cs}\delta(X)\delta(Y)$ for $s\geq1$.
Integrating the heat-kernel representation and using the standard
Green-function estimate~\cite{GruterWidman1982} proves the pointwise bound
in Lemma~\ref{lem:local-periodic-green}.
By the maximum principle and
the uniform boundary $C^1$ estimate applied to
$(-L_{g_T})^{-1}1$, we obtain
the corresponding integral estimate.

Finally, in the separated region specified in
Lemma~\ref{lem:local-periodic-green}, the two points are uniformly separated.
Boundary regularity, Hopf's lemma, positivity, and compactness of $I$
give its two-sided bound.  By
\eqref{eq:fixed-domain-cutoff}--\eqref{eq:fixed-domain-metric}, the
coefficients of $\partial_TL_{g_T}$ are uniformly bounded and supported
away from $Y$.  Differentiating the pulled-back Green equation and using
the zero lateral trace therefore gives
$|\partial_TG_T(X,Y)|\leq C\delta(X)$.
\end{proof}

\subsection{One-bubble blow-up}

We begin with the uniqueness of the peak.  The bumpy normalization singles out
the concentration point, while the angular monotonicity is a standard
moving-plane consequence.

\begin{lemma}\label{lem:unique-peak}
Let $v$ be a positive solution of \eqref{eq:t-cylinder}
satisfying \eqref{eq:t-bumpy}. Then
\begin{equation}\label{eq:angular-monotonicity}
 v_\theta(t,\theta)<0,
 \qquad 0<\theta<\frac{\pi}{2}.
\end{equation}
Consequently, $P$ is its unique local and global maximum.
\end{lemma}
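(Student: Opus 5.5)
Here $\theta=d_{\mathbb S^{N-1}}(z,e_N)$. By \cite[Proposition~1.1]{Xiong2017} we may regard $v=v(t,\theta)$ as axial, and we prove \eqref{eq:angular-monotonicity} by a moving-plane argument in the polar angle on the half-sphere. We work in Euclidean variables, where this is cleanest. Let $u(x)=|x|^{-a}v(-\log|x|,x/|x|)$ be the corresponding solution of \eqref{eq:half-space}; it is positive, vanishes on $\partial\mathbb R^N_+\setminus\{0\}$, and satisfies $u(e^{-T}x)=e^{aT}u(x)$. The variable $\theta$ measures the angle from the $x_N$-axis. Fix a unit vector $\nu\perp e_N$, and for $0<\beta<\pi/2$ consider the hyperplane $H_\beta$ through the origin containing $\nu^\perp\cap e_N^\perp$ and making angle $\beta$ with $e_N$; let $R_\beta$ be the reflection across it. The reflection is a linear isometry, so $u_\beta:=u\circ R_\beta$ again solves $-\Delta u_\beta=u_\beta^p$. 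Moreover, $R_\beta$ maps the wedge $W_\beta$ (between $H_\beta$ and $\{x_N=0\}$ on the side away from $e_N$) into the half-space.

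Define $\omega_\beta=u_\beta-u$ on $W_\beta$. Then $\omega_\beta\ge0$ on $\partial W_\beta$: it vanishes on $H_\beta$, and on $\{x_N=0\}$ we have $u=0\le u_\beta$. We also have $-\Delta\omega_\beta=c_\beta\omega_\beta$ with $c_\beta$ bounded by $p\max(u,u_\beta)^{p-1}$. In cylindrical variables this becomes a problem on $(\mathbb R/T\mathbb Z)\times\{$a spherical wedge$\}$. The conformal factor $|x|^{-a}$ is common to $u$ and $u_\beta$ because $R_\beta$ preserves $|x|$, so $w_\beta:=v\circ R_\beta-v$ solves the linear equation $(-L_{g_{\mathrm{cyl}}}-c_\beta)w_\beta=0$ on a compact domain, namely the period cell times the spherical wedge $\{z:\text{angle}\}$. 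This compactness, supplied by periodicity, is what makes the moving plane go through without any decay at infinity.

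The argument then runs in the standard order. \emph{Start:} for $\beta$ close to $\pi/2$ the spherical wedge is thin, so the narrow-domain maximum principle on the compact cylinder (the first Dirichlet eigenvalue of the thin wedge is large, while $c_\beta\le p\|v\|_\infty^{p-1}$) gives $w_\beta\ge0$. \emph{Continuation:} let $\beta_0=\inf\{\beta'>0: w_\beta\ge0 \text{ for all }\beta\in[\beta',\pi/2)\}$. If $\beta_0>0$, then $w_{\beta_0}\ge0$ and $w_{\beta_0}\not\equiv0$, because on the lateral boundary of the wedge $v=0<v\circ R_{\beta_0}$. The strong maximum principle and Hopf's lemma then give $w_{\beta_0}>0$ inside, with a positive normal derivative on $H_{\beta_0}$. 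The usual compactness argument, using narrow regions near the boundary and compact interior pieces (uniform in $t$ by periodicity), extends positivity to $\beta$ slightly below $\beta_0$, a contradiction. Hence $\beta_0=0$. Hopf's lemma on each $H_\beta$ gives $\partial_\theta v<0$ at angle $\beta$ in the direction $\nu$. Since $\nu$ was arbitrary and $v$ is axial, \eqref{eq:angular-monotonicity} holds for $0<\theta<\pi/2$. At $\theta=0$ axial symmetry gives $v_\theta=0$, which is consistent.

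\emph{The main obstacle} is the corner where $H_\beta$ meets $\{x_N=0\}$ in the origin's direction, together with the fact that the wedge touches the lateral boundary. Both are handled by working on the compact cylinder cross-section, where the only boundary points are the spherical edge (where $w_\beta>0$ strictly, except on $\partial\mathbb S^{N-1}_+\cap H_\beta$) and $H_\beta$. At the edge $H_\beta\cap\partial\mathbb S^{N-1}_+$ we use Serrin's corner lemma, or more simply the positivity of $v/z_N$, to rule out minimizing sequences escaping there.

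\emph{Conclusion.} By \eqref{eq:t-bumpy} and evenness, $v(t,\theta)\le v(0,\theta)\le v(0,0)$. Equality in the second inequality forces $\theta=0$. Equality in the first forces $t\equiv0$ mod $T$, because $v_t<0$ on $(0,T/2)$ and $v_t>0$ on $(-T/2,0)$. So $P$ is the unique global maximum. It is also the unique local maximum: at any interior critical point with $t\not\equiv0,T/2$ we have $v_t\ne0$, and for $t\equiv T/2$ or $\theta\neq0$ strict monotonicity in $\theta$ or $t$ excludes a local maximum. At $t\equiv T/2$, $\theta=0$ the point is a local minimum in $t$, so it is not a maximum.
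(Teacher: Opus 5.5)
Your proposal is correct and follows the paper's route. The paper proves the angular monotonicity by the standard moving-plane argument on the upper half-sphere, which uses exactly your great-sphere reflections (reflections across hyperplanes through the origin), together with the strong maximum principle and Hopf's lemma; it cites Gidas--Ni--Nirenberg and Xiong for the details, supplies the compactness through periodicity of the period cell as you do, and then combines the result with the temporal monotonicity in \eqref{eq:t-bumpy} to obtain the unique maximum at $P$.
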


\begin{proof}
The angular monotonicity follows from the standard moving-plane argument on
the upper half-sphere, using the strong maximum principle and the Hopf lemma
\cite{GidasNiNirenberg1979};
cf. Xiong~\cite[Proposition~1.1]{Xiong2017} for the corresponding
half-space moving-plane argument.  Combining Lemma~\ref{lem:unique-peak}
with the axial monotonicity in \eqref{eq:t-bumpy} gives the uniqueness
of the maximum at $P$.
\end{proof}

We next record, in a form that will also be used in Section~4, the three
standard properties needed below: the bubble profile at the concentration
scale, the two-sided estimate from the bubble scale to the outer region, and
the Green profile.

\begin{lemma}\label{lem:single-bubble-estimates}
Let $T_n\to T\in(0,\infty)$ and let $v_n$ be positive bumpy solutions of
\eqref{eq:t-cylinder} on $\Sigma_{T_n}$, centered at $P$.  We use the
same notation for their $T_n$-periodic lifts to $\widetilde\Sigma$.  Assume
\[
 M_n:=v_n(P)\longrightarrow\infty.
\]
After passing to a subsequence, the following hold.
\begin{enumerate}
 \item[(A)] For every fixed $R>0$,
 \begin{equation}\label{eq:core-bubble-limit}
  M_n^{-1}v_n\!\left(
   \exp_{\widetilde P}^{g_{\mathrm{cyl}}}
   \bigl(M_n^{-2/(N-2)}y\bigr)\right)
  \longrightarrow U(y)
  \quad\text{in }C^2(B_R).
 \end{equation}

 \item[(B)] There exist $R_0,C>0$, independent of $n$, such that
 \begin{equation}\label{eq:finite-period-green-comparison}
  C^{-1}G_{T_n}(X,\widetilde P)
  \leq M_nv_n(X)
  \leq C G_{T_n}(X,\widetilde P)
 \end{equation}
 whenever
 \[
  \operatorname{dist}_{g_{\mathrm{cyl}}}(X,\Lambda_{T_n})
  \geq R_0M_n^{-2/(N-2)}.
 \]

 \item[(C)] On the universal cylinder,
 \begin{equation}\label{eq:finite-period-green-profile}
  M_nv_n\longrightarrow\gamma_NG_T(\,\cdot\,,\widetilde P)
  \quad\text{in }C^2_{\mathrm{loc}}
  (\widetilde\Sigma\setminus\Lambda_T).
 \end{equation}
\end{enumerate}
\end{lemma}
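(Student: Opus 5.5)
The proof follows the standard isolated-simple blow-up scheme of Schoen and Li, adapted to the half-cylinder. The two structural inputs specific to this setting are Lemma~\ref{lem:unique-peak}, which makes $P$ the unique maximum in each period, and the bumpy normalization \eqref{eq:t-bumpy}.

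\textbf{Step 1: part (A).} Set $\mu_n=M_n^{-2/(N-2)}$ and $\tilde v_n(y)=M_n^{-1}v_n(\exp_{\widetilde P}(\mu_n y))$. Then $0<\tilde v_n\le 1=\tilde v_n(0)$, since $P$ is the global maximum. The rescaled equation has metric converging to the flat metric in $C^2_{\mathrm{loc}}$, and the zeroth-order term $a^2\mu_n^2$ tends to zero. The distance from $\widetilde P$ to the lateral boundary is $\pi/2$, which in rescaled variables tends to infinity, so no half-space limit arises. Elliptic estimates then give $C^2_{\mathrm{loc}}$ convergence to a solution $0\le V\le V(0)=1$ of $-\Delta V=V^p$ in $\mathbb R^N$. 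By Caffarelli--Gidas--Spruck, $V$ is a bubble with maximum $1$ at $0$, hence $V=U$.

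\textbf{Step 2: isolated simple blow-up and the upper bound in (B).} First I will show that $P$ is an isolated blow-up point, i.e.
\[
v_n(X)\le C\,d(X,\widetilde P)^{-a}\quad\text{on }\mathcal B_{r_0}(\widetilde P).
\]
Arguing by contradiction with a selection of points $X_n$, a second blow-up point would have to be a local maximum or a boundary concentration. Lemma~\ref{lem:unique-peak} excludes the former. The latter is excluded by rescaling at $X_n$: a blow-up with $\delta(X_n)\mu_n^{-1}$ bounded gives a positive solution in a half-space with zero data, contradicting the Gidas--Spruck half-space Liouville theorem; otherwise one obtains a second bubble at a point that is not a maximum, contradicting the monotonicity in $t$ and $\theta$.

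Next, the spherical average $\bar v_n(r)r^a$ has a single critical point in $(0,\rho)$. This follows from the standard Pohozaev argument \eqref{eq:local-pohozaev-identity} in conformally flat coordinates, where the cylinder near $P$ is conformal to a Euclidean ball around $e_N$. Isolated simplicity then yields
\[
v_n\le C M_n^{-1}d^{2-N}\quad\text{for }R_0\mu_n\le d\le r_0
\]
via the Harnack inequality plus a maximum-principle comparison with $M_n^{-1}d^{2-N}$ for the linearized operator $-L_{g_{\mathrm{cyl}}}-v_n^{p-1}$, whose potential is at most $Cd^{-2}\epsilon$ in that annulus.

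To globalize, I use the Green representation
\[
v_n(X)=\int G_{T_n}(X,Y)\,v_n^p(Y)\,\dd V_Y.
\]
For the lower bound, restricting the integral to the core gives $\ge c\,G_{T_n}(X,\widetilde P)\int_{\text{core}}v_n^p\ge cM_n^{-1}G_{T_n}$. For the upper bound, split the integral into the core, which contributes $\le C M_n^{-1}G_{T_n}(X,\widetilde P)$ by the two-sided kernel bounds of Lemma~\ref{lem:local-periodic-green}, and the exterior region. On the exterior region, away from $\mathcal B_{r_0}(P)$, I first obtain $\sup v_n\to0$ from Harnack and the bumpy monotonicity. Then $v_n^{p-1}$ is a small potential there, and comparing with $G$ of $-L-\omega$ (Lemma~\ref{lem:long-cylinder-green}) gives $v_n\le CM_n^{-1}G_{T_n}$. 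Near the lateral boundary, the factor $\delta(X)$ in the kernel bounds handles the region consistently.

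\textbf{Step 3: part (C).} With (B) in hand, $w_n:=M_nv_n$ solves
\[
-L w_n=M_n^{1-p}w_n^p\to0\quad\text{locally away from }\Lambda_T,
\]
and is locally bounded. Elliptic estimates give $C^2_{\mathrm{loc}}$ convergence to a nonnegative solution $h$ of $-Lh=0$ away from $\Lambda_T$, vanishing on the lateral boundary, $T$-periodic, and with $h\asymp G_T$. By Bôcher's theorem, $h=\beta G_T(\cdot,\widetilde P)$ plus a regular solution, and the regular part vanishes by Dirichlet uniqueness.

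The constant is
\[
\beta=\lim_n M_n\int_{\mathcal B_\rho}v_n^p\,\dd V .
\]
In the core this equals $\int U^p=\gamma_N$ by (A). The tail $\int_{d\ge R\mu_n}M_n v_n^p$ is bounded by $M_n^{1-p}\int (M_n^{-1}d^{2-N})^p$, which after scaling is $O(R^{-2})$ and so can be made arbitrarily small. The contributions from the other periods are handled by periodicity of the lift.

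\textbf{Main obstacle.} The step I expect to be hardest is the isolatedness and simplicity in Step 2, specifically excluding boundary-approaching secondary concentration and bounding the outer region uniformly. For this I would lean on the monotonicity from Lemma~\ref{lem:unique-peak} together with the half-space Liouville theorem.
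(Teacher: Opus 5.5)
Your proposal is correct and follows essentially the route the paper takes; the paper itself only sketches this lemma by citing Schoen, Chen--Lin, Li and Han. As there, (A) comes from critical rescaling, maximality at $P$ and the Caffarelli--Gidas--Spruck classification. (B) comes from the isolated and isolated-simple blow-up analysis in the conformally flat chart at $e_N$, with the unique peak and the axial and angular monotonicity propagating the local bound through the fundamental region. (C) comes from identifying the limit of $M_nv_n$ as $\gamma_NG_T(\cdot,\widetilde P)$, where your B\^ocher-plus-flux computation of the coefficient is an equivalent substitute for the paper's Green-representation argument.

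One correction to your sketch of the neck estimate. The potential in $-L_{g_{\mathrm{cyl}}}-v_n^{p-1}$ is nonnegative, so $\Gamma(\cdot,\widetilde P)\asymp d^{2-N}$ is a subsolution of this operator, not a supersolution, and cannot serve as an upper barrier. The bound $v_n\le CM_n^{-1}d^{2-N}$ instead comes from the usual $d^{-\delta}$ and $d^{2-N+\delta}$ barriers, combined with the flux/simplicity argument that yields $M_nv_n\le C$ on $\partial\mathcal B_{r_0}(P)$.
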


Assertion~(A) follows from local elliptic estimates, maximality at $P$,
and the Caffarelli--Gidas--Spruck classification
\cite[Corollary~8.2(b)]{CGS1989}.
The pointwise and Green-type asymptotics in (B)--(C) are standard in
blow-up analysis~\cite{Schoen1991,ChenLin1995}; see also
Li~\cite[Propositions~2.2, 2.3 and~4.1]{Li1995} and
Chen--Lin~\cite[Theorem~1.3]{ChenLin1997}.
In particular, once the one-bubble estimate in (B) is available,
the global Green profile in (C) follows from Green's representation
and the coefficient $\gamma_N$ in the one-bubble limit, as in
Han~\cite[Proposition~1]{Han1991}.
Only the local forms of (A)--(B) will be used in Section~4.  In each such
application we first pass, by the critical rescaling, to a fixed local
ball.  The rescaled metrics stay in a bounded $C^2$ family and converge
smoothly there, while the lower-order coefficients remain uniformly
bounded.  Consequently, the constants in the cited local pointwise and
Harnack estimates may be chosen uniformly.  The unique-peak property and
the axial and angular monotonicity then propagate the resulting local
estimate through the relevant fundamental region.  This is the sense in
which (A)--(B) are used in both the collapsing and long-period regimes.

\begin{theorem}\label{thm:finite-period-compactification}
Under the hypotheses of Lemma~\ref{lem:single-bubble-estimates},
\begin{equation}\label{eq:one-bubble-energy}
 E_{T_n}(v_n)\longrightarrow\int_{\mathbb R^N}U^{2^*}\dd y,
\end{equation}
and necessarily
$
 T=T^*.
$
\end{theorem}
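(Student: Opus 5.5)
The plan has two parts: an energy quantization argument using the one-bubble estimates, and a local Pohozaev identity at the concentration point that forces the regular part of the periodic Green function to vanish.

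\textbf{Energy.} I would split $\Sigma_{T_n}$ into three regions: the core $\mathcal B_{RM_n^{-2/(N-2)}}(P)$, the neck $RM_n^{-2/(N-2)}\le r(X)\le r_0$, and the outer region $r(X)\ge r_0$.

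In the core, the rescaling (A) of Lemma~\ref{lem:single-bubble-estimates} is critical, so $v^{2^*}\dd V$ is invariant up to $1+o(1)$ metric factors. Hence the core contributes $\int_{B_R}U^{2^*}\dd y+o(1)$.

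In the neck and outer region I would use (B). Since $G_{T_n}(X,P)\le Cr(X)^{2-N}$ by Lemma~\ref{lem:local-periodic-green}, we get $v_n\le CM_n^{-1}r^{2-N}$ there. Then
\[
\int_{r\ge RM_n^{-2/(N-2)}}v_n^{2^*}\dd V\le CM_n^{-2^*}\int_{RM_n^{-2/(N-2)}}^{\infty}\rho^{-2N}\rho^{N-1}\dd\rho+CM_n^{-2^*}\le CR^{-N}+o(1).
\]
The outer part is bounded using the uniform Green bounds for $T_n$ in a compact interval. Letting $n\to\infty$ and then $R\to\infty$ gives \eqref{eq:one-bubble-energy}.

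\textbf{Period selection.} Work in the Euclidean picture near $e_N$. Set $u_n(x)=|x|^{-a}v_n(-\log|x|,x/|x|)$, which solves $-\Delta u_n=u_n^p$ near $e_N$. Define $h_n=M_nu_n$, so that $-\Delta h_n=M_n^{1-p}h_n^p$. Apply the Pohozaev identity \eqref{eq:local-pohozaev-identity} with $Q$ equal to the critical point of $u_n$ near $e_N$; by Lemma~\ref{lem:unique-peak} and axial symmetry this point lies on the $e_N$-axis. With $\varepsilon=M_n^{1-p}$ and a fixed small radius $r$, the identity reads
\[
0=\mathcal P_Q(r,h_n)+\frac{M_n^{1-p}r}{p+1}\int_{\partial B_r}h_n^{p+1}.
\]
By (C), $h_n\to\gamma_N\widetilde G_T(\cdot,e_N)$ in $C^2$ on $\partial B_r(e_N)$. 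The nonlinear boundary term is $O(M_n^{1-p})\to0$, and the shift of $Q$ toward $e_N$ is harmless because the convergence is $C^2$ on spheres. Therefore
\[
\mathcal P_{e_N}\bigl(r,\gamma_N\widetilde G_T(\cdot,e_N)\bigr)=0\qquad\text{for all small }r.
\]

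Next I use that $\widetilde G_T(\cdot,e_N)$ is harmonic in $B_r(e_N)\setminus\{e_N\}$. By the divergence identity, $\mathcal P$ is then independent of $r$ on annuli. Letting $r\downarrow0$ and using the expansion \eqref{eq:periodic-green-regular-expansion} together with \eqref{eq:local-pohozaev-pole-limit}, with $A=\gamma_Nc_N$ and $B=\gamma_Nc_N\mathscr M_N(T)$, gives
\[
-\tfrac{(N-2)^2}{2}|\mathbb S^{N-1}|\,\gamma_N^2c_N^2\,\mathscr M_N(T)=0.
\]
So $\mathscr M_N(T)=0$, and Proposition~\ref{prop:unique-regular-zero} yields $T=T^*$.

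\textbf{Main obstacle.} The delicate step is justifying the Pohozaev passage. The identity must be applied to the exact solution on a fixed sphere, which requires that no energy or singularity sits away from $e_N$ inside $B_r$. Uniqueness of the peak (Lemma~\ref{lem:unique-peak}) and the bound (B) supply this. One must also check that the conformal factor $|x|^{-a}$ does not spoil the equation: it does not, since the Euclidean equation is exactly $-\Delta u=u^p$.

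The energy part hinges on (B) holding uniformly down to the bubble scale. I would take this as given from Lemma~\ref{lem:single-bubble-estimates}.
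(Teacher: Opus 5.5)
Your proposal is correct and follows the paper's proof essentially step for step. The energy quantization uses (A) in the core and (B) in the neck and outer region; the period is then identified by applying the local Pohozaev identity to $M_nu_n$ on a fixed small sphere about $e_N$, passing to the Green limit from (C), and using the pole formula \eqref{eq:local-pohozaev-pole-limit} to get $\mathscr M_N(T)=0$. The only deviations are cosmetic: \eqref{eq:local-pohozaev-identity} holds for any center, so the paper simply takes $Q=e_N$, which makes your moving critical point and the ``no energy away from $e_N$'' concern unnecessary, and your $r$-independence of $\mathcal P_{e_N}(r,h)$ for the harmonic limit stands in for the paper's limit $r\downarrow0$.
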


\begin{proof}
By Lemma~\ref{lem:single-bubble-estimates}(A),(B), the energy outside the
bubble core is negligible.  More precisely, for fixed $R>1$,
part (A) gives the bubble energy in
$\mathcal B_{R M_n^{-2/(N-2)}}(P)$, while
part (B) gives
\[
 \int_{\mathcal B_{r_0}(P)\setminus
        \mathcal B_{R M_n^{-2/(N-2)}}(P)}
 v_n^{2^*}\dd V\leq CR^{-N},
 \qquad
 \int_{\Sigma_{T_n}\setminus\mathcal B_{r_0}(P)}v_n^{2^*}\dd V=o(1).
\]
Letting first $n\to\infty$ and then $R\to\infty$ proves
\eqref{eq:one-bubble-energy}.

It remains to determine the limiting period.  Let $u_n$ be defined from the
$T_n$-periodic lift of $v_n$ by \eqref{eq:EF-transform}.  Since
Lemma~\ref{lem:single-bubble-estimates}(C) holds directly on the universal cover,
\[
 M_nu_n(x)
 =|x|^{-a}M_nv_n\!\left(-\log|x|,\frac{x}{|x|}\right)
 \longrightarrow\gamma_N\widetilde G_T(x,e_N)
\]
in $C^1_{\mathrm{loc}}(B_{r_0}(e_N)\setminus\{e_N\})$.  Set
$V_n=M_nu_n$.  Then
\[
 -\Delta V_n=M_n^{1-p}V_n^p.
\]
For every fixed small $r>0$, the local Pohozaev identity
\eqref{eq:local-pohozaev-identity} gives
\[
 0=\mathcal P_{e_N}(r,V_n)
 +\frac{M_n^{1-p}r}{p+1}
  \int_{\partial B_r(e_N)}V_n^{p+1}\dd S.
\]
The second term tends to zero, while
$\mathcal P_{e_N}(r,V_n)$ converges to
$\mathcal P_{e_N}(r,h)$ for
$h=\gamma_N\widetilde G_T(\cdot,e_N)$.  Letting first $n\to\infty$ and
then $r\downarrow0$, and using \eqref{eq:periodic-green-regular-expansion}
and \eqref{eq:local-pohozaev-pole-limit}, we obtain
\[
 0=-\frac{(N-2)^2}{2}|\mathbb S^{N-1}|
   (\gamma_Nc_N)^2\mathscr M_N(T).
\]
Hence $\mathscr M_N(T)=0$, and
by Proposition~\ref{prop:unique-regular-zero}, we have $T=T^*$.
\end{proof}

Consequently, blow-up is excluded on compact period sets avoiding
$T^*$, yielding the following compactness statement.

\begin{corollary}
\label{cor:nonresonant-compactness}
Fix $0<\alpha<1$ and a compact set
$K\subset(0,\infty)\setminus\{T^*\}$.  The positive bumpy
solutions of \eqref{eq:t-cylinder} with period $T\in K$ are,
after centering their maxima at $P$, precompact in $C^{2,\alpha}$
under the period identifications.  Every limit is either $\phi$ or a
positive centered bumpy solution.  Consequently, for every fixed
$T\ne T^*$, the centered bumpy solution set with $\phi$ adjoined
is compact in $C^{2,\alpha}(\Sigma_T)$.
\end{corollary}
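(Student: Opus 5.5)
The plan is a contradiction argument built on Theorem~\ref{thm:finite-period-compactification}, followed by elliptic bootstrapping. Take a sequence $(T_n,v_n)$ of positive bumpy solutions centered at $P$ with $T_n\in K$. Since $K$ is compact, after passing to a subsequence $T_n\to T\in K$, and $T\neq T^*$.

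\textbf{Step 1: uniform sup bound.} I first claim that $M_n:=\|v_n\|_{L^\infty}=v_n(P)$ is bounded. The identity $\|v_n\|_{L^\infty}=v_n(P)$ holds by Lemma~\ref{lem:unique-peak}. If $M_n\to\infty$ along a subsequence, then the hypotheses of Lemma~\ref{lem:single-bubble-estimates} are met with $T_n\to T\in(0,\infty)$. Theorem~\ref{thm:finite-period-compactification} then forces $T=T^*$, which is a contradiction. Hence $\sup_n M_n<\infty$.

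\textbf{Step 2: compactness on a fixed domain.} Pass to the fixed $2\pi$ formulation $w_n(s,z)=v_n(T_ns/(2\pi),z)$ with $k_n=2\pi/T_n$. The $k_n$ lie in a compact subinterval $J\Subset(0,\infty)$, and $\|w_n\|_{C^0}\le M$. Lemma~\ref{lem:global-bootstrap} gives a uniform $C^{2,\widehat\alpha}$ bound for some $\widehat\alpha>\alpha$; for arbitrary $\alpha<1$ one further Schauder bootstrap pushes this to any exponent below $1$. The compact embedding $C^{2,\widehat\alpha}\Subset C^{2,\alpha}$ then yields a subsequence converging in $C^{2,\alpha}$ to a limit $w$ solving $(P_k)$ with $k=\lim k_n$. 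In the $t$-variable this is the statement modulo the period identifications.

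\textbf{Step 3: identifying the limit.} By \eqref{eq:positive-lower-bound}, $\|w\|_{L^\infty}\ge a^{2/(p-1)}$, so $w\not\equiv0$. Then $w\ge0$, and the strong maximum principle gives $w>0$. The limit inherits evenness and $-\partial_s w\ge0$ on $\mathscr H$.
\begin{itemize}
\item If $\partial_s w\equiv0$, then $w$ is a positive stationary solution, hence $w=\phi$ by the uniqueness of Bidaut-V\'eron--Ponce--V\'eron.
\item Otherwise $\xi=-\partial_s w\ge0$ solves \eqref{eq:q-linearized} and is not identically zero. The Harnack inequality, as in the closedness part of Lemma~\ref{lem:nodal-preservation}, gives $\xi>0$, so $w$ is bumpy.
\end{itemize}
The maximum remains at $P$ by uniform convergence. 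This proves that every limit is either $\phi$ or a positive centered bumpy solution.

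\textbf{Step 4: fixed period.} For fixed $T\neq T^*$, take $K=\{T\}$. The argument above shows that the set of centered bumpy solutions together with $\phi$ is sequentially compact in $C^{2,\alpha}(\Sigma_T)$, hence compact.

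The only substantive step is Step~1, and it is already supplied by Theorem~\ref{thm:finite-period-compactification}. The main care needed is in Step~2: the change of period must be handled through the fixed-domain rescaling so that convergence is genuinely taken in a single H\"older space.
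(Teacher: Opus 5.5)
Your proposal is correct and follows essentially the same route as the paper. The paper gives no separate proof and presents the corollary as an immediate consequence of Theorem~\ref{thm:finite-period-compactification}, which excludes blow-up when the limit period differs from $T^*$; your Steps 1--3 unpack this using the bootstrap of Lemma~\ref{lem:global-bootstrap} and the closedness argument of Lemma~\ref{lem:nodal-preservation}. One small point of phrasing: the nonvanishing of the limit comes from passing the bound \eqref{eq:positive-lower-bound} for $w_n$ to the uniform limit, not from applying it to $w$ directly, as you evidently intend.
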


\section{Uniform energy and period bounds}
\label{sec:uniform-bounds}

We now combine the finite-period compactification of
Section~\ref{sec:finite-period-compactification} with
separate arguments at the two ends of the period range. We first exclude
collapsing periods, then prove a uniform energy bound, and finally rule out
periods tending to infinity.

\begin{proposition}\label{prop:uniform-bounds}
There exist constants
$0<T_-<T_+<\infty$ and $\Lambda_N<\infty$, depending only on $N$, such
that every positive solution of \eqref{eq:t-cylinder}
satisfying \eqref{eq:t-bumpy} obeys
\begin{equation}\label{eq:uniform-bounds}
 T_-\leq T\leq T_+,
 \qquad E_T(v)\leq\Lambda_N.
\end{equation}
Consequently, every nonstationary $(k,w)\in\mathscr C$ satisfies
\begin{equation}\label{eq:continuum-k-bounds}
 \frac{2\pi}{T_+}\leq k\leq\frac{2\pi}{T_-},
 \qquad
 E_{2\pi/k}\bigl(w(k\,\cdot,\cdot)\bigr)\leq\Lambda_N.
\end{equation}
After enlarging $\Lambda_N$, the last assertion also includes the
stationary bifurcation point $(k_*,\phi)$ with the assigned period
$2\pi/k_*$.  In particular, the possibility $k_n\to0$ in
\eqref{eq:global-escape} is excluded, and $\mathscr C$ contains a
sequence whose $L^\infty$ norm tends to infinity.
\end{proposition}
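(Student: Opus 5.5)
The plan is to prove the three bounds in the order announced at the start of Section~\ref{sec:uniform-bounds}: a lower period bound, then the energy bound on bounded period windows, then the upper period bound. The consequences for $\mathscr C$ then follow from Proposition~\ref{prop:global-continuum}.

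\textbf{Lower period bound.} For a bumpy solution, $\xi=-\partial_tv>0$ on the half-cylinder $(0,T/2)\times\mathbb S^{N-1}_+$ and vanishes on its boundary. It is therefore a positive zero-eigenfunction of $-L_{g_{\mathrm{cyl}}}-pv^{p-1}$ with Dirichlet data. Testing against $\xi$ as in the proof of \eqref{eq:frequency-amplitude} gives
\[
p\|v\|_{L^\infty}^{p-1}\ge(2\pi/T)^2+N-1+a^2 .
\]
So if $T_n\to0$, then $M_n=v_n(P)\to\infty$, and this is the collapsing regime. Rescale at the scale $T_n$: the functions $\tilde v_n(y)=T_n^{a}v_n(P+T_ny)$ solve $-\Delta\tilde v=\tilde v^p+O(T_n^2)\tilde v$ on a slab of period $1$ in the $t$-direction. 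The lateral boundary recedes to distance $\sim T_n^{-1}$. Lemma~\ref{lem:single-bubble-estimates}(A)--(B), used in the local rescaled form described after that lemma, then gives a single bubble per period. Its Green profile is the limit of $T_n^{a}$-rescaled $G_{T_n}$, namely the Green function of a $1$-periodic, $t$-independent-at-infinity domain $\mathbb R\times\mathbb R^{N-1}$, i.e.\ the $1$-periodized Newtonian kernel. Its regular part at the pole is $+\sum_{m\ne0}c_N|m|^{2-N}>0$, which is effectively the $T\downarrow0$ limit $\mathscr M_N\to+\infty$. The Pohozaev argument of Theorem~\ref{thm:finite-period-compactification} then forces this regular part to vanish, a contradiction, so $T\ge T_-$.

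\textbf{Energy bound.} On any window $T\in[T_-,T_1]$, argue by contradiction with $E_{T_n}(v_n)\to\infty$. If $M_n$ stays bounded, Lemma~\ref{lem:global-bootstrap} gives a $C^{2}$ bound and hence a bound on $E_T$. If $M_n\to\infty$, Theorem~\ref{thm:finite-period-compactification} gives $E\to\int U^{2^*}$. Either way $E$ is bounded on the window.

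\textbf{Upper period bound.} This is the main obstacle: excluding $T_n\to\infty$, and at the same time keeping the energy bounded there. The plan is as follows. If $M_n\to\infty$ with $T_n\to\infty$, the one-bubble Green profile converges by \eqref{eq:long-cylinder-green-convergence} to $\gamma_NG_\infty$, whose regular part is $-2^{2-N}c_N<0$, i.e.\ $\lim\mathscr M_N$. The Pohozaev identity again yields a contradiction. The delicate point is establishing (B) uniformly along a long cylinder; I would use Lemma~\ref{lem:long-cylinder-green} with $\omega$ below $N-1+a^2$ to control the exponential decay away from the peak. If instead $M_n$ stays bounded, $v_n$ converges on compacts to a bounded positive solution on the infinite half-cylinder that is even and decreasing in $|t|$. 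It therefore has limits as $t\to\pm\infty$, each equal to a stationary solution, hence to $\phi$ or $0$. It is homoclinic to $\phi$ or to $0$.
\begin{itemize}
\item Homoclinic to $0$: the limit is a positive solution of \eqref{eq:half-space} with removable singularity, excluded by the Gidas--Spruck Liouville theorem.
\item Homoclinic to $\phi$: the conserved cylindrical Hamiltonian $\int(\partial_tv)^2-|\nabla_zv|^2-a^2v^2+\frac2{p+1}v^{p+1}\,\dd\sigma$ combined with the monotonicity structure is used. The decay $v_t\to0$ and $v\to\phi$ at the two ends are governed by $\mathscr A_\phi$. Since $\lambda_1<0$, the approach to $\phi$ is oscillatory with temporal frequency $k_*$, contradicting strict monotonicity in $t$ on $(0,\infty)$.
\end{itemize}
This gives $T\le T_+$, and the energy bound on $[T_-,T_+]$ follows from the previous step.

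Finally, \eqref{eq:continuum-k-bounds} follows from Proposition~\ref{prop:global-continuum}(1). The point $(k_*,\phi)$ has finite energy, which is absorbed by enlarging $\Lambda_N$. Since $k_n\to0$ is now impossible, \eqref{eq:global-escape} leaves only $\|w_n\|_{L^\infty}\to\infty$.
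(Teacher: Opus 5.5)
\textbf{Gap in the upper period bound.} You exclude $T_n\to\infty$ before you have any energy bound, and you get the energy bound afterwards only on a compact period window. This leaves a genuine gap when $T_n\to\infty$ with $M_n=v_n(P)$ bounded.

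The local limit $V$ is even, nonincreasing in $|t|$, and satisfies $V(P)\ge a^{2/(p-1)}$. Nothing in your argument rules out $V\equiv\phi$. Your dichotomy ``homoclinic to $\phi$ or to $0$'' silently contains this stationary case, where there is no approach to analyze. Without energy control you have no information as $|t|\to\infty$.

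The mechanism you give for the nontrivial homoclinic case is also wrong. The linearization at $\phi$ is $\partial_{tt}h=\mathscr A_\phi h$, so the mode $\psi_1\cos(k_*t)$ is neutral, not decaying. Convergence to $\phi$ comes from the hyperbolic modes $\psi_je^{-\sqrt{\lambda_j}t}$, $j\ge2$, which are not oscillatory. Monotonicity could then be contradicted only through the sign changes of $\psi_j$, and that needs an asymptotic expansion you have not supplied.

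The paper closes this case by proving the energy bound first, uniformly in $T$. Since $\xi=-v_t>0$ is a Dirichlet Jacobi field on the half period, the second variation $Q_v$ is nonnegative there. Testing it with $v\chi$ (Lemma~\ref{lem:core-controls-total-energy}) bounds $E_T(v)$ by the energy in a fixed slab around $P$, which the core estimates then control (Lemma~\ref{lem:local-energy}). With $E_{T_n}(v_n)\le\Lambda_N$, the limit $V$ lies in $H^1_0(\mathbb R\times\mathbb S^{N-1}_+)$. This excludes $\phi$ and every non-decaying profile at once, and reduces the case to Lemma~\ref{lem:no-universal-profile}.

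If you want to keep your order, the same half-period stability repairs it. Suppose $v_n\to\phi$ uniformly on $[0,R]\times\overline{\mathbb S^{N-1}_+}$. Then $f(t)=\int\xi_n\psi_1\,\dd\sigma$ satisfies $f(0)=0$, $f>0$, and $f''+(k_*^2-o(1))f\le0$ on $(0,R)$. Sturm comparison against $\sin$ makes this impossible for $R>\pi/k_*$. A nontrivial homoclinic $V\ge\phi$ is excluded by the same comparison applied to $\int(V-\phi)\psi_1\,\dd\sigma$, using convexity of $s\mapsto s^p$.

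\textbf{Holes in the lower period bound.} First, before invoking the one-bubble estimates you must show $\mu_n/T_n\to0$. Your $T_n$-rescaled functions have maximum $(T_n/\mu_n)^a$, which is only known to be bounded below. The paper proves it by observing that otherwise the rescaled profiles converge to a positive entire solution periodic in one direction, which Caffarelli--Gidas--Spruck excludes.

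Second, and more seriously, the $N=3$ case fails as written. The periodized kernel $\sum_m c_3((s+m)^2+|y|^2)^{-1/2}$ diverges, so there is no positive periodic Green function and no finite regular part $\sum_{m\ne0}c_N|m|^{2-N}$; the $T_n$-rescaled $G_{T_n}$ blows up logarithmically. The paper treats $N=3$ separately. The normalized limit $H$ would be a positive harmonic function on $((\mathbb R/\mathbb Z)\times\mathbb R^2)\setminus\{0\}$ with a positive pole. Its mean over $(\mathbb R/\mathbb Z)\times\partial B_R$ equals $C_0-\frac{\gamma}{2\pi}\log R$, which contradicts positivity for large $R$.

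For $N\ge4$ your Pohozaev argument matches the paper's. The limit there is $\lambda\Gamma_{\mathrm{per}}+C_0$ with $C_0\ge0$ fixed by Liouville, which only increases the regular part.
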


\subsection{Lower period bound}

We first rule out $T\downarrow0$ by separating the bubble and period
scales, applying the local form of the one-bubble estimates from
Section~\ref{sec:finite-period-compactification}, and then using a local
Pohozaev identity.
\begin{proposition}\label{prop:lower-period-bound}
There exists $T_->0$, depending only on $N$, such that every positive
bumpy solution of \eqref{eq:t-cylinder} satisfies
\begin{equation}\label{eq:lower-period-bound}
 T\geq T_-.
\end{equation}
\end{proposition}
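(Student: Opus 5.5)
We argue by contradiction.  Suppose there is a sequence of positive bumpy solutions $v_n$ of \eqref{eq:t-cylinder}, centered at $P$, with periods $T_n\downarrow0$.  Put $M_n=v_n(P)=\|v_n\|_{L^\infty}$; by Lemma~\ref{lem:unique-peak} this is the unique maximum.

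The first step is to show $M_n\to\infty$, and in fact that the bubble scale $M_n^{-2/(N-2)}$ is much smaller than $T_n$.  For the unboundedness, use the bumpy monotonicity: $\xi=-\partial_tv_n>0$ on $(0,T_n/2)\times\mathbb S^{N-1}_+$ solves the linearized equation with zero Dirichlet data on that half-slab.  Testing against $\xi$ exactly as in \eqref{eq:frequency-amplitude}, with the Dirichlet eigenvalue $(2\pi/T_n)^2$ of $(0,T_n/2)$, gives
\[
 p M_n^{p-1}\geq (2\pi/T_n)^2+N-1+a^2 .
\]
Hence $M_n^{(p-1)/2}T_n\geq c>0$, that is, $M_n^{-2/(N-2)}\leq CT_n$.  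This gives $M_n\to\infty$, but only comparability of the two scales, not separation.

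To get separation, rescale $\widetilde v_n(y)=M_n^{-1}v_n(\exp_P(M_n^{-2/(N-2)}y))$.  This is bounded by $1$ and periodic in the $t$-direction with period $\ell_n=T_nM_n^{2/(N-2)}\geq c$.  It solves an equation that converges to $-\Delta\widetilde v=\widetilde v^p$ on $\mathbb R^N$, since $P$ is an interior point at fixed distance from the lateral boundary and the scale tends to zero.  Suppose $\ell_n\to\ell<\infty$ along a subsequence.  Then the limit is a positive entire solution that is $\ell$-periodic in one direction, even, and nonconstant with $\partial_t<0$ on $(0,\ell/2)$ (by strict monotonicity and Harnack).  Its maximum is $1$ at the origin.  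Such a solution cannot exist: by Caffarelli--Gidas--Spruck every positive entire solution is a bubble $U_{\lambda,\xi}$, which decays and so is not periodic.  Hence $\ell_n\to\infty$, the scales separate, and the local form of Lemma~\ref{lem:single-bubble-estimates}(A) yields $\widetilde v_n\to U$ in $C^2_{\mathrm{loc}}$.

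Next, work in the fixed rescaled window $\{|t|<T_n/2\}$ near $P$ and rescale by $T_n$ instead: set $V_n(y)=\lambda_nT_n^{a}v_n(\exp_P(T_ny))$.  Here $\lambda_n$ is chosen so that $V_n$ is normalized Green-like, namely $\lambda_n=M_nT_n^{N-2}$ up to constants, which is the analogue of $M_nv_n$ in (C).  These are $1$-periodic in $y_1$ in the flat limit, since the sphere directions flatten to $\mathbb R^{N-1}$ and the lateral boundary recedes to infinity at scale $T_n$.  By the local one-bubble estimates (B), propagated along the fundamental region by the axial and angular monotonicity as described after Lemma~\ref{lem:single-bubble-estimates}, $V_n$ converges in $C^2_{\mathrm{loc}}$ away from $\mathbb Z e_1$ to $\gamma_N G_{\mathrm{per}}$.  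Here $G_{\mathrm{per}}$ is the Green's function of $-\Delta$ on $(\mathbb R/\mathbb Z)\times\mathbb R^{N-1}$, suitably normalized, with the term $a^2T_n^2$ vanishing.  Now apply the local Pohozaev identity \eqref{eq:local-pohozaev-identity} on a small sphere around $0$ and pass to the limit as in Theorem~\ref{thm:finite-period-compactification}.  By \eqref{eq:local-pohozaev-pole-limit} the boundary term becomes $-\frac{(N-2)^2}{2}|\mathbb S^{N-1}|AB$, where $B$ is the regular part at the pole of the periodic Green function.  That regular part is $\sum_{m\neq0}c_N|m|^{2-N}>0$; when $N=3$ the sum diverges, so instead one takes a finite-cylinder truncation in which the mass is positive and grows.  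Either way $B>0$ contradicts the Pohozaev identity, which forces $B=0$.  In the paper's language this is just $\mathscr M_N(T)\to+\infty$ as $T\downarrow0$ from Proposition~\ref{prop:unique-regular-zero}.  So a simpler route is to keep the Euclidean picture of Theorem~\ref{thm:finite-period-compactification}, rescale to unit period, and use $\mathscr M_N>0$ for small $T$.

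The main obstacle is the uniform Green comparison (B) when $T_n\to0$.  Lemma~\ref{lem:single-bubble-estimates} is stated only for $T_n\to T>0$, and the neighboring images of the bubble lie at distance $T_n\to0$.  I would handle this by doing all estimates on the rescaled cylinder of unit period, where the problem becomes one with fixed period and shrinking lower-order terms, and then applying the cited local pointwise and Harnack estimates uniformly there.
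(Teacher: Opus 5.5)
Your first two steps coincide with the paper's: the frequency--amplitude bound gives $M_n\to\infty$ and $M_n^{-2/(N-2)}\le CT_n$, and scale separation follows from the Caffarelli--Gidas--Spruck classification of a periodic entire limit. Your remedy for the ``main obstacle'' is also the paper's: local estimates on the unit-period rescaled cylinder, propagated by monotonicity. For $N\ge4$, the Green-scale limit followed by the local Pohozaev identity is the paper's route as well.

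The genuine gap is $N=3$. There $(\mathbb R/\mathbb Z)\times\mathbb R^2$ has no positive Green's function, so the limit $\gamma_NG_{\mathrm{per}}$ you assert does not exist. Moreover, ``a finite-cylinder truncation in which the mass is positive and grows'' is not an argument. To run Pohozaev with a regular part $B_n\sim\log(1/T_n)$, you would need $V_n-\gamma_NG^{(n)}=o(B_n)$ in $C^1$ on small spheres, where $G^{(n)}$ is the Green function of the truncated rescaled slab. That is precisely the uniform global comparison you concede is unavailable when $T_n\to0$.

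The growing-mass scenario also contradicts the bound you do use. The rescaled two-sided estimate $C^{-1}d^{2-N}\le H_n\le Cd^{2-N}$ on $C\vartheta_n\le d\le c$, propagated by the temporal and angular monotonicity, keeps $H_n=M_nT_n^{N-2}v_n(\exp_P(T_n\,\cdot))$ uniformly bounded away from the pole. So no growing regular part can appear.

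The missing idea is to use this boundedness directly. It gives $H_n\to H$, a positive harmonic function on $((\mathbb R/\mathbb Z)\times\mathbb R^{N-1})\setminus\{0\}$, and by B\^ocher's theorem $H=A|X|^{2-N}+h$ with $A>0$. For $N=3$ this is already contradictory, with no Pohozaev identity needed. We have $-\Delta H=4\pi A\,\delta_0$ on $(\mathbb R/\mathbb Z)\times\mathbb R^2$. The flux identity for the average of $H$ over $(\mathbb R/\mathbb Z)\times\partial B_R$ then gives $\overline H(R)=C_0-2A\log R$, which is negative for large $R$.

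For $N\ge4$, fix three smaller points.

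First, your normalization is off by a power of $T_n$. The factor in front of $T_n^av_n(\exp_P(T_n\,\cdot))$ must be $\lambda_n=M_nT_n^{a}=\vartheta_n^{-a}$, with $\vartheta_n=M_n^{-2/(N-2)}/T_n$. Your choice $M_nT_n^{N-2}$ sends $V_n\to0$.

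Second, you should not claim $V_n\to\gamma_NG_{\mathrm{per}}$. Excluding an additive constant needs decay of the limit as $|y|\to\infty$, which is again a global statement you do not have. It is also unnecessary. With $\lambda=A/c_N$, the function $H-\lambda\Gamma_{\mathrm{per}}$ is bounded and harmonic once the singularity is removed. Liouville then gives $H=\lambda\Gamma_{\mathrm{per}}+C_0$, with $C_0\ge0$ by positivity. Hence $B=\lambda m_N+C_0>0$, and the Pohozaev contradiction follows after transferring to the exact flat equation $-\Delta W_n=\vartheta_n^2W_n^p$.

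Third, your ``simpler route'' through $\mathscr M_N(T)\to+\infty$ is not available. Theorem~\ref{thm:finite-period-compactification} and Lemma~\ref{lem:single-bubble-estimates}(C) require $T_n\to T\in(0,\infty)$. Using $\mathscr M_N(T_n)$ with $T_n\to0$ would need error bounds small relative to $\mathscr M_N(T_n)$, which is the same missing uniform comparison.
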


\begin{proof}
Suppose to the contrary that positive bumpy solutions $v_n$ have
periods $T_n\downarrow0$.  Translate each unique maximum to $P$ and put
\begin{equation*}
 M_n=v_n(P),\qquad
 \mu_n=M_n^{-2/(N-2)},\qquad
 \vartheta_n=\frac{\mu_n}{T_n}.
\end{equation*}
In the $t$-variable, \eqref{eq:frequency-amplitude} becomes
\begin{equation*}
 pM_n^{p-1}
 \geq\Bigl(\frac{2\pi}{T_n}\Bigr)^2+(N-1)+a^2
 \geq 4\pi^2 \vartheta_n^{2} M_n^{p-1}.
\end{equation*}
Since $T_n\downarrow0$, the first inequality gives $M_n\to\infty$,
while the second gives $\vartheta_n\leq C$.

\textbf{Step I.} We prove the scale separation
$\vartheta_n\to0$.  On
$\mathcal O_n=(\mathbb R/\mathbb Z)\times
B_{\pi/(2T_n)}^{N-1}$, set
\begin{equation*}
 V_n(s,y)=T_n^av_n(T_ns,\exp_{e_N}(T_ny)),
 \qquad
 g_n=\dd s^2+g_{ij}(T_ny)\dd y^i\dd y^j,
\end{equation*}
where $g_{ij}$ are the normal-coordinate coefficients of
$g_{\mathbb S^{N-1}}$ at $e_N$.  On fixed compact sets,
\begin{equation*}
 g_n\longrightarrow \dd s^2+\dd y^2\quad\text{in }C^\infty,
 \qquad
 -\Delta_{g_n}V_n+a^2T_n^2V_n=V_n^p,
\end{equation*}
where the second identity uses $a(p-1)=2$.  If, along a subsequence,
$\vartheta_n\geq\vartheta_0>0$, maximality at $P$ gives
\begin{equation*}
 0<V_n\leq V_n(0,0)=\vartheta_n^{-a}
 \leq\vartheta_0^{-a},
 \qquad V_n(0,0)\geq C^{-a}.
\end{equation*}
Interior Schauder estimates give, along a subsequence,
\begin{equation*}
 V_n\longrightarrow V>0
 \quad\text{in }C^2_{\mathrm{loc}}
 \bigl((\mathbb R/\mathbb Z)\times\mathbb R^{N-1}\bigr),
 \qquad -\Delta V=V^p.
\end{equation*}
Its lift is a positive entire solution on $\mathbb R^N$, periodic in one
direction, but, by Caffarelli--Gidas--Spruck
\cite[Corollary~8.2(b)]{CGS1989}, it must be some
$U_{\lambda,\xi}$, which is not periodic.  Hence
\begin{equation}\label{eq:small-period-scale-separation}
 \vartheta_n=\frac{\mu_n}{T_n}\longrightarrow0.
\end{equation}

\textbf{Step II.} Normalize at the Green scale by
\begin{equation*}
 H_n=\vartheta_n^{-a}V_n=V_n(0,0)V_n,
 \qquad
 d(s,y)=
 \bigl(\operatorname{dist}_{\mathbb R/\mathbb Z}
       (s,0)^2+|y|^2\bigr)^{1/2}.
\end{equation*}
By the rescaled local form of
Lemma~\ref{lem:single-bubble-estimates}(B), applied to $V_n$ on fixed balls
with $g_n\to \dd s^2+\dd y^2$ and $a^2T_n^2\to0$, there exist
$c,C>0$ such that
\begin{equation}\label{eq:local-collapsing-green-bound}
 C^{-1}d(s,y)^{2-N}
 \leq H_n(s,y)
 \leq Cd(s,y)^{2-N}
\end{equation}
for $C\vartheta_n\leq d(s,y)\leq c$.  By the temporal and angular
monotonicity in \eqref{eq:t-bumpy} and
Lemma~\ref{lem:unique-peak}, respectively, for
$0<\varrho<c$ and any
$e\in\mathbb S^{N-2}$, we have
\begin{equation}\label{eq:global-collapsing-green-bound}
 \sup_{\mathcal O_n\setminus B_\varrho(0)}H_n
 \leq\max\bigl\{H_n(\varrho/\sqrt2,0),
                  H_n(0,\varrho e/\sqrt2)\bigr\}
 \leq C_\varrho.
\end{equation}
For $\varrho\geq c$, the same bound follows by monotonicity.
Since
\begin{equation*}
 -\Delta_{g_n}H_n+a^2T_n^2H_n
 =\vartheta_n^2H_n^p,
\end{equation*}
\eqref{eq:small-period-scale-separation},
\eqref{eq:global-collapsing-green-bound}, $g_n\to \dd s^2+\dd y^2$,
and local Schauder estimates give, after passing to a subsequence,
\begin{equation}\label{eq:collapsing-harmonic-limit}
 H_n\longrightarrow H
 \quad\text{in }C^2_{\mathrm{loc}}
 \bigl(((\mathbb R/\mathbb Z)\times\mathbb R^{N-1})
       \setminus\{0\}\bigr),
\end{equation}
where $H>0$ is harmonic.  In the local lift $X=(s,y)$,
B\^ocher's theorem as stated by Axler--Bourdon--Ramey
\cite[Theorem~9.9]{AxlerBourdonRamey2001} and
\eqref{eq:local-collapsing-green-bound}, we have
\begin{equation*}
 H(X)=A|X|^{2-N}+h(X)\quad\text{near }0,
 \qquad A>0,
\end{equation*}
with $h$ harmonic near zero.

\noindent\emph{Case 1: $N=3$.}
By the local expansion and global harmonicity, distributionally on
$(\mathbb R/\mathbb Z)\times\mathbb R^2$,
\begin{equation*}
 -\Delta H=\gamma\delta_0,
 \qquad \gamma=4\pi A>0,
\end{equation*}
and we set
\begin{equation*}
 \overline H(R)=\frac1{2\pi}
 \int_0^1\int_{\mathbb S^1}H(s,R\zeta)
 \dd\zeta\dd s.
\end{equation*}
By the divergence theorem on $(\mathbb R/\mathbb Z)\times B_R^2$, we obtain
\begin{equation*}
\begin{aligned}
 \gamma
 =-R\int_0^1\int_{\mathbb S^1}
       \partial_RH(s,R\zeta)\dd\zeta\dd s
 =-2\pi R\overline H'(R),\quad
 \overline H(R)=C_0-\frac{\gamma}{2\pi}\log R,
\end{aligned}
\end{equation*}
which is negative for large $R$, contrary to $H>0$.

\noindent\emph{Case 2: $N\geq4$.}
The positive periodic fundamental solution
\begin{equation*}
 \Gamma_{\mathrm{per}}(s,y)
 =c_N\sum_{m\in\mathbb Z}
 \bigl((s+m)^2+|y|^2\bigr)^{(2-N)/2}
\end{equation*}
converges in $C^2_{\mathrm{loc}}$ away from its pole and tends to zero as
$|y|\to\infty$, uniformly in $s$.  At the pole,
\begin{equation*}
 \Gamma_{\mathrm{per}}(X)
 =c_N|X|^{2-N}+m_N+O(|X|),
 \qquad m_N=2c_N\sum_{m=1}^{\infty}m^{2-N}>0.
\end{equation*}
For $\lambda=A/c_N$, the bounded harmonic function
$H-\lambda\Gamma_{\mathrm{per}}$ has a removable singularity.  Its
periodic lift is constant, say $C_0$, by Liouville's theorem; positivity
of $H$ and $\Gamma_{\mathrm{per}}\to0$ imply $C_0\geq0$.  Hence
\begin{equation}\label{eq:positive-collapsed-expansion}
 H(X)=A|X|^{2-N}+B+O(|X|),
 \qquad B=\lambda m_N+C_0>0.
\end{equation}

We transfer this expansion to the exact Euclidean equation.  Let $u_n$ be defined by \eqref{eq:EF-transform}.  Set on $B_{r_0}(0)$, $r_0<1/4$,
\begin{equation}\label{eq:exact-collapsing-flat-equation}
 W_n(X)=\vartheta_n^{-a}T_n^au_n(e_N+T_nX),
 \qquad -\Delta W_n=\vartheta_n^2W_n^p.
\end{equation}
The coordinate map is
\begin{equation*}
 \Psi_n(X)=\Biggl(
 -T_n^{-1}\log|e_N+T_nX|,
 T_n^{-1}\exp_{e_N}^{-1}
  \frac{e_N+T_nX}{|e_N+T_nX|}
 \Biggr).
\end{equation*}
On fixed balls,
\begin{equation*}
 \Psi_n\longrightarrow\Psi_0(X)=(-X_N,X')\quad\text{in }C^2,
 \qquad
 W_n(X)=|e_N+T_nX|^{-a}H_n(\Psi_n(X)).
\end{equation*}
The first component is taken in $\mathbb R/\mathbb Z$; $r_0<1/4$
prevents wrapping.  Since $\Psi_0$ is orthogonal,
by \eqref{eq:collapsing-harmonic-limit} and
\eqref{eq:positive-collapsed-expansion}, we have
\begin{equation}\label{eq:flat-collapsed-expansion}
 W_n\longrightarrow W:=H\circ\Psi_0
 \quad\text{in }C^2_{\mathrm{loc}}(B_{r_0}\setminus\{0\}),
 \qquad
 W(X)=A|X|^{2-N}+B+O(|X|).
\end{equation}

Applying the local identity \eqref{eq:local-pohozaev-identity} with
$Q=0$ and $\varepsilon=\vartheta_n^2$ to
\eqref{eq:exact-collapsing-flat-equation}, we obtain the critical local
identity of Pohozaev~\cite{Pohozaev1965},
\begin{equation*}
 0=\mathcal P_0(r,W_n)
 +\frac{\vartheta_n^2r}{p+1}
  \int_{\partial B_r}W_n^{p+1}\dd S.
\end{equation*}
For fixed $r>0$, the convergence is uniform on $\partial B_r$ and hence
\begin{equation*}
 0\leq\frac{\vartheta_n^2r}{p+1}
 \int_{\partial B_r}W_n^{p+1}\dd S
 \leq C_r\vartheta_n^2\longrightarrow0,
 \qquad
 \mathcal P_0(r,W_n)\longrightarrow
 \mathcal P_0(r,W).
\end{equation*}
Thus $\mathcal P_0(r,W)=0$.  Applying the pole-limit formula
\eqref{eq:local-pohozaev-pole-limit} to
\eqref{eq:flat-collapsed-expansion}, we obtain a strictly negative limit, a
contradiction.  Thus $T_n\downarrow0$ is impossible for every $N\geq3$.
\end{proof}

\subsection{Uniform energy bound}

Using the positive Jacobi field on a half-period, we reduce the total
energy to a fixed core and obtain a uniform bound from blow-up compactness.
\begin{lemma}\label{lem:core-controls-total-energy}
If $r>0$ and $T\geq4r$, every positive bumpy solution of
\eqref{eq:t-cylinder} satisfies
\begin{equation}\label{eq:core-controls-total-energy}
 E_T(v)
 \leq
 2\int_{\{|t|<r\}\times\mathbb S^{N-1}_+}v^{p+1}\dd V
 +
 \frac{2}{(p-1)r^2}
  \int_{\{|t|<r\}\times\mathbb S^{N-1}_+}v^2\dd V.
\end{equation}
\end{lemma}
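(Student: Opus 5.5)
The plan is to combine the half-period stability supplied by the positive Jacobi field $\xi=-v_t$ with a cut-off test function $\zeta=\chi(t)v$, and to control the leftover pieces by the temporal monotonicity in \eqref{eq:t-bumpy}. By evenness it suffices to work on the half-period $H:=(0,T/2)\times\mathbb S^{N-1}_+$ and double at the end.

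\textbf{Step 1: stability on a half-period.} Set
\[
 Q[\zeta]=\int_{H}\bigl(|\nabla\zeta|^2+a^2\zeta^2-pv^{p-1}\zeta^2\bigr)\dd V .
\]
I claim $Q[\zeta]\ge0$ for every $\zeta\in H^1_0(H)$, with Dirichlet conditions at $t=0$, at $t=T/2$ and on the lateral boundary. Indeed, $\xi=-v_t$ vanishes at $t=0$ and $t=T/2$ by evenness and periodicity. It vanishes on the lateral boundary because $v$ does, it is positive in $H$, and it solves the linearized equation there. So $\xi$ is a positive Dirichlet eigenfunction with eigenvalue $0$. By the argument already used in Lemma~\ref{lem:nodal-preservation}, $0$ is then the principal Dirichlet eigenvalue, and $Q\ge0$. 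I expect the main obstacle to be the choice of test function: $v$ itself does not vanish at $t=T/2$, and extending to a full period, or using Neumann data at $T/2$, loses the sign information. This is why the cut-off must vanish at both ends of the half-period.

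\textbf{Step 2: the test function.} Let $\chi$ be the Lipschitz function on $[0,T/2]$ given by
\[
 \chi(t)=\begin{cases}
 t/r, & 0\le t\le r,\\
 1, & r\le t\le T/2-r,\\
 (T/2-t)/r, & T/2-r\le t\le T/2.
 \end{cases}
\]
This is admissible because $T\ge4r$. Testing \eqref{eq:t-cylinder} against $\chi^2v$ gives
\[
 \int_H\chi^2\bigl(|\nabla v|^2+a^2v^2\bigr)\dd V+2\int_H\chi\chi'vv_t\dd V=\int_H\chi^2v^{p+1}\dd V .
\]
Expanding $Q[\chi v]$ and substituting this identity, the cross terms cancel and we obtain
\[
 0\le Q[\chi v]=(1-p)\int_H\chi^2v^{p+1}\dd V+\int_H\chi'^2v^2\dd V .
\]
Since $\chi\equiv1$ on the middle piece, this yields
\[
 (p-1)\int_{r<t<T/2-r}v^{p+1}\dd V\le\frac1{r^2}\Bigl(\int_{0<t<r}v^2\dd V+\int_{T/2-r<t<T/2}v^2\dd V\Bigr).
\]

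\textbf{Step 3: monotonicity and summation.} For $0<s<r\le T/4$ we have $s\le T/2-s<T/2$. Since $v_t<0$ on $(0,T/2)$, this gives $v(T/2-s,z)\le v(s,z)$. Hence
\[
 \int_{T/2-r}^{T/2}\!\!\int v^2\le\int_0^r\!\!\int v^2,
 \qquad
 \int_{T/2-r}^{T/2}\!\!\int v^{p+1}\le\int_0^r\!\!\int v^{p+1}.
\]
Therefore
\[
 \int_{r<t<T/2}v^{p+1}\dd V\le\int_{0<t<r}v^{p+1}\dd V+\frac{2}{(p-1)r^2}\int_{0<t<r}v^2\dd V .
\]
By evenness the same bound holds on $(-T/2,-r)$ with $(-r,0)$ on the right. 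Adding both halves to $\int_{|t|<r}v^{p+1}\dd V$ and using $E_T(v)=\int_{\Sigma_T}v^{p+1}\dd V$ from \eqref{eq:cylindrical-energy} gives exactly \eqref{eq:core-controls-total-energy}.
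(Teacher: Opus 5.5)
Your proposal is correct and follows the paper's proof essentially step for step. It uses half-period stability from the positive Jacobi field $\xi=-v_t$, a cut-off $\chi$ vanishing at $t=0$ and $t=T/2$, the identity $0\le Q[\chi v]=\int_H(\chi')^2v^2\dd V-(p-1)\int_H\chi^2v^{p+1}\dd V$, and the reflection $v(T/2-s,\cdot)\le v(s,\cdot)$ to move the end contributions into the core. The only difference is cosmetic: you get $Q\ge0$ from the principal-eigenvalue argument of Lemma~\ref{lem:nodal-preservation}, whereas the paper uses the ground-state substitution $Q[\zeta]=\int\xi^2|\nabla(\zeta/\xi)|^2\dd V$, and either works.
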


\begin{proof}
Let $T\geq4r$, put $L=T/2$,
$\mathscr H_T=(0,L)\times\mathbb S^{N-1}_+$, and $\xi=-v_t$.
Differentiating \eqref{eq:t-cylinder} and using evenness,
periodicity, and the lateral Dirichlet condition, we obtain
\begin{equation*}
 \xi>0\quad\text{in }\mathscr H_T,
 \qquad
 (-L_{g_{\mathrm{cyl}}}-pv^{p-1})\xi=0\quad\text{in }\mathscr H_T,
 \qquad \xi=0\quad\text{on }\partial \mathscr H_T.
\end{equation*}
Define
\begin{equation*}
 Q_v[\zeta]
 :=\int_{\mathscr H_T}\bigl(|\nabla\zeta|^2+a^2\zeta^2
                    -pv^{p-1}\zeta^2\bigr)\dd V,
 \qquad \zeta\in H^1_0(\mathscr H_T).
\end{equation*}
For $\zeta\in C_c^\infty(\mathscr H_T)$, put $f=\zeta/\xi$. Since
$(-L_{g_{\mathrm{cyl}}}-pv^{p-1})\xi=0$, integration by parts gives
\begin{equation*}
 Q_v[\zeta]=Q_v[\xi f]
 =\int_{\mathscr H_T}\xi^2|\nabla f|^2\dd V
 +\int_{\mathscr H_T}f^2\xi(-L_{g_{\mathrm{cyl}}}-pv^{p-1})\xi\dd V
 =\int_{\mathscr H_T}\xi^2
   \Bigl|\nabla\!\Bigl(\frac{\zeta}{\xi}\Bigr)\Bigr|^2\dd V\geq0.
\end{equation*}
Since $v^{p-1}\in L^\infty(\mathscr H_T)$, density extends this inequality to
$H^1_0(\mathscr H_T)$.
Choose $\chi\in H^1_0(0,L)$ such that
\begin{equation*}
 0\leq\chi\leq1,\qquad
 \chi=1\quad\text{on }[r,L-r],
 \qquad |\chi'|\leq r^{-1},
 \qquad
 \operatorname{supp}\chi'\subset[0,r]\cup[L-r,L].
\end{equation*}
Here $v\chi\in H^1_0(\mathscr H_T)$. Testing \eqref{eq:t-cylinder} by
$v\chi^2$ gives
\begin{equation*}
 \int_{\mathscr H_T}\bigl(\chi^2|\nabla v|^2
 +2v\chi\nabla v\cdot\nabla\chi+a^2v^2\chi^2\bigr)\dd V
 =\int_{\mathscr H_T}v^{p+1}\chi^2\dd V.
\end{equation*}
Expanding $|\nabla(v\chi)|^2$ and using $|\nabla\chi|=|\chi'|$
therefore yields
\begin{equation*}
 0\leq Q_v[v\chi]
 =\int_{\mathscr H_T}v^2|\chi'|^2\dd V
  -(p-1)\int_{\mathscr H_T}v^{p+1}\chi^2\dd V.
\end{equation*}
For every $\ell\geq1$, by the monotonicity in
\eqref{eq:t-bumpy}, we have
\begin{equation*}
 \int_{\{L-r<t<L\}}v^\ell\dd V
 \leq\int_{\{0<t<r\}}v^\ell\dd V.
\end{equation*}
Combining the identity for $Q_v[v\chi]$ with the monotonicity estimate
gives
\begin{equation*}
 (p-1)\int_{\mathscr H_T}v^{p+1}\chi^2\dd V
 \leq\frac{2}{r^2}\int_{\{0<t<r\}}v^2\dd V.
\end{equation*}
Since $\chi=1$ on $[r,L-r]$,
\begin{equation*}
 \int_{\mathscr H_T}v^{p+1}\dd V
 \leq\int_{\mathscr H_T}v^{p+1}\chi^2\dd V
 +2\int_{\{0<t<r\}}v^{p+1}\dd V
 \leq2\int_{\{0<t<r\}}v^{p+1}\dd V+
 \frac{2}{(p-1)r^2}
 \int_{\{0<t<r\}}v^2\dd V.
\end{equation*}
Evenness in $t$ proves Lemma~\ref{lem:core-controls-total-energy}.
\end{proof}

\begin{lemma}\label{lem:local-energy}
For every $0<R<\infty$ there is $C=C(N,R)$ such that every positive
bumpy solution satisfies
\begin{equation}\label{eq:local-energy}
 \int_{\{|t|<\min(R,T/2)\}\times\mathbb S^{N-1}_+}
 (v^{2^*}+v^2)\dd V\leq C.
\end{equation}
Consequently, there exists $\Lambda_N<\infty$, depending only on $N$, such
that
\begin{equation}\label{eq:unconditional-energy-bound}
 E_T(v)\leq\Lambda_N.
\end{equation}
\end{lemma}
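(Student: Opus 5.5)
We argue by contradiction and split the local bound \eqref{eq:local-energy} into a bound on $v^2$ and a bound on $v^{2^*}$ over the core slab $D_R:=\{|t|<\min(R,T/2)\}\times\mathbb S^{N-1}_+$. Suppose a sequence of positive bumpy solutions $v_n$ with periods $T_n$, centered at $P$, makes the left side of \eqref{eq:local-energy} blow up. By Proposition~\ref{prop:lower-period-bound}, $T_n\geq T_-$. We pass to a subsequence with $T_n\to T\in[T_-,\infty]$ and distinguish whether $M_n=v_n(P)$ stays bounded or not. If $M_n\leq C$, then by Lemma~\ref{lem:unique-peak} we have $v_n\leq M_n$ everywhere, and both integrals over $D_R$ are trivially bounded by $C|D_R|\leq C(R+1)|\mathbb S^{N-1}_+|$. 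So only $M_n\to\infty$ needs work.

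\textbf{Case $T<\infty$.} Lemma~\ref{lem:single-bubble-estimates}(A)--(B) applies directly. Part~(A) controls the core $\mathcal B_{R_0\mu_n}(P)$, with $\mu_n=M_n^{-2/(N-2)}$. On the complement, part~(B) gives $v_n\leq CM_n^{-1}G_{T_n}(\cdot,P)$. By Lemma~\ref{lem:local-periodic-green} this is at most $CM_n^{-1}d^{2-N}$, so
\[
 \int v_n^{2^*}\leq C\int_{d\geq R_0\mu_n}M_n^{-2^*}d^{-2N}\,\dd V\leq C,
\]
exactly as in the proof of Theorem~\ref{thm:finite-period-compactification}. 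The $v^2$ integral is even smaller, since $G_T\in L^2$ away from the pole for $N\geq3$ locally, and $M_n^{-2}\to0$. This yields a contradiction in this case.

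\textbf{Case $T_n\to\infty$.} On the fixed window $\{|t|<R\}$ we use the local form of (A)--(B), as described after Lemma~\ref{lem:single-bubble-estimates}. By temporal monotonicity, comparison with $G_{T_n}$ is only needed near $P$. Lemma~\ref{lem:long-cylinder-green} gives uniform bounds on $G_{T_n}(\cdot,P)$ for $T_n\geq T_-$, and this is the uniform input we need. The same splitting then bounds both integrals by $C(N,R)$. This contradicts the assumption and proves \eqref{eq:local-energy}.

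For \eqref{eq:unconditional-energy-bound}, fix $r=T_-/4$, so that $T\geq4r$ holds for every bumpy solution. Lemma~\ref{lem:core-controls-total-energy} bounds $E_T(v)$ by the $v^{2^*}$ and $v^2$ integrals over $\{|t|<r\}$. Since $r<T/2$, these integrals are controlled by \eqref{eq:local-energy} with $R=r$, and we take
\[
 \Lambda_N=\Bigl(2+\frac{2}{(p-1)r^2}\Bigr)C(N,r).
\]

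The main obstacle is obtaining the outer bound (B) uniformly when $T_n\to\infty$, because Lemma~\ref{lem:single-bubble-estimates} is stated only for finite limit periods. To handle it, I would first establish the local Harnack/isolated-simple blow-up estimate on a fixed ball $\mathcal B_{r_0}(P)$, which is independent of $T$. I would then propagate it to $\{|t|<R\}$ using the unique peak from Lemma~\ref{lem:unique-peak} together with the axial and angular monotonicity. This bounds $v_n\leq C M_n^{-1}$ off $\mathcal B_{r_0}(P)$ inside the window, and that suffices for both integrals.
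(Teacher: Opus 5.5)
Your proposal is correct and follows the paper's argument. The bounded-peak case is trivial. For $M_n\to\infty$, the local form of Lemma~\ref{lem:single-bubble-estimates}(B), uniform for $T\geq T_-$, is propagated through the slab by the unique peak and the axial/angular monotonicity; this is exactly your final paragraph, and it gives \eqref{eq:local-energy}, after which Lemma~\ref{lem:core-controls-total-energy} yields $\Lambda_N$.

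Because that uniform local argument covers every $T\geq T_-$, your split into $T<\infty$ versus $T_n\to\infty$ is redundant. Your choice $r=T_-/4$ is a tidy substitute for the paper's $T\leq4$/$T\geq4$ split. One small correction: $G_T$ is not in $L^2_{\mathrm{loc}}$ at the pole when $N\geq4$. The $v^2$ bound in the neck should instead come from $M_n^{-2}\int_{R_0\mu_n}^{r_0}s^{3-N}\dd s\to0$.
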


\begin{proof}
By Proposition~\ref{prop:lower-period-bound}, $T\geq T_-$.  Suppose
the local estimate in Lemma~\ref{lem:local-energy} fails for a sequence
$(T_n,v_n)$ and set
\[
 M_n=v_n(P),\qquad \mu_n=M_n^{-2/(N-2)}.
\]
Since the slab has uniformly bounded volume for fixed $R$, necessarily
$M_n\to\infty$.

Fix $r_0>0$ so that a Euclidean chart centered at $e_N$ is injective for
all $T\geq T_-$.  By maximality at $P$ and the local form of
Lemma~\ref{lem:single-bubble-estimates}(B), uniformly for $T\geq T_-$,
we have
\begin{equation}\label{eq:peak-simple-upper}
 u_n(x)\leq C M_n
 \quad (|x-e_N|\leq C\mu_n),
 \qquad
 u_n(x)\leq C M_n^{-1}|x-e_N|^{2-N}
 \quad (C\mu_n\leq|x-e_N|\leq r_0).
\end{equation}
Consequently,
\[
 \int_{B_{r_0}(e_N)}u_n^{2^*}\dd x
 \leq C\Bigl(M_n^{2^*}\mu_n^N
 +M_n^{-2^*}\int_{C\mu_n}^{r_0}r^{-N-1}\dd r\Bigr)\leq C,
\]
and similarly
\[
 \int_{B_{r_0}(e_N)}u_n^2\dd x\leq C.
\]
By \eqref{eq:conformal-measures}, the same bounds hold for
$v_n^{2^*}+v_n^2$ on the corresponding cylindrical neighborhood of $P$.

It remains to control the rest of the fixed slab.  Choose
$0<\rho<r_0/4$.  By \eqref{eq:t-bumpy} and
Lemma~\ref{lem:unique-peak}, $v_n$ is decreasing in both $|t|$ and the
angular distance from $e_N$.  Hence every point outside
$\mathcal B_\rho(P)$ satisfies
\[
 v_n(t,z)\leq
 \max\{v_n(\rho/2,e_N),\ v_n(0,z_\rho)\},
 \qquad d_{\mathbb S^{N-1}}(z_\rho,e_N)=\rho/2.
\]
The right-hand side is bounded by \eqref{eq:peak-simple-upper}; in fact it
is $O(M_n^{-1})$.  Since the slab has volume bounded by
$2R|\mathbb S^{N-1}_+|$, this proves the local estimate.

If $T\leq4$, the local estimate with $R=2$ gives directly
$E_T(v)\leq C$.  If $T\geq4$, combine
Lemma~\ref{lem:core-controls-total-energy} with
the local estimate for $R=1$.  This proves the uniform bound in
Lemma~\ref{lem:local-energy}.
\end{proof}

\subsection{Upper period bound}

By Lemma~\ref{lem:local-energy}, the energy hypothesis used in the
compactness argument below holds without any upper bound for $T$.
This bound alone does not exclude $T\to\infty$; the
contradiction comes from the Dirichlet Green's function and the local
Pohozaev identity.

\begin{lemma}\label{lem:no-universal-profile}
There is no nonzero nonnegative
$W\in H^1_0(\mathbb R\times\mathbb S^{N-1}_+)$ satisfying
\begin{equation}\label{eq:universal-cylinder-profile}
 -L_{g_{\mathrm{cyl}}} W=W^p.
\end{equation}
\end{lemma}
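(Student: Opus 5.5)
Under the change of variables \eqref{eq:EF-transform}, such a $W$ corresponds to a nonnegative solution of the Euclidean equation in the half-space. We then invoke the half-space Liouville theorem. Concretely, set $u(x)=|x|^{-a}W(-\log|x|,x/|x|)$ on $\mathbb R^N_+$. By \eqref{eq:conformal-measures} and the conformal covariance of the conformal Laplacian, $u$ solves $-\Delta u=u^p$ weakly in $\mathbb R^N_+$, and $u=0$ on $\partial\mathbb R^N_+\setminus\{0\}$. Moreover $\int_{\mathbb R^N_+}u^{2^*}\dd x=\int W^{2^*}\dd V<\infty$ and $\int_{\mathbb R^N_+}|\nabla u|^2\dd x=\int(|\nabla W|^2+a^2W^2)\dd V<\infty$. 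The second identity is the standard cylindrical Hardy identity; the boundary terms in $t$ vanish because $W\in H^1_0$, which we check on cutoffs in $t$ and pass to the limit. Thus $u\in D^{1,2}_0(\mathbb R^N_+)$ in the sense of the homogeneous space: the point $0$ has zero capacity, so removing it is harmless. Hence $u$ is a finite-energy weak solution of the Dirichlet problem in the half-space.

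Next, regularity. By the Brezis--Kato/Trudinger iteration (the nonlinearity $u^{p-1}\in L^{N/2}$), $u\in L^\infty_{\mathrm{loc}}$ up to the boundary, including near $0$, since $0$ is no longer distinguished for a $D^{1,2}_0$ solution. Boundary Schauder estimates then give $u\in C^2(\mathbb R^N_+)\cap C(\overline{\mathbb R^N_+})$ with $u=0$ on $\partial\mathbb R^N_+$. Now $u$ is a nonnegative solution with a removable singularity at the origin. By the half-space Liouville theorem of Gidas--Spruck \cite[Theorem~1.3]{GidasSpruck1981Apriori}, already used in the Introduction, $u\equiv0$, hence $W\equiv0$.

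An alternative, self-contained route avoids the Liouville theorem and uses a Pohozaev identity in the $t$-direction. Multiply \eqref{eq:universal-cylinder-profile} by $\partial_tW$, integrate over the slab $\{t_1<t<t_2\}$, and note that the flux $\int_{\mathbb S^{N-1}_+}(\tfrac12W_t^2-\tfrac12|\nabla_zW|^2-\tfrac{a^2}2W^2+\tfrac1{2^*}W^{2^*})\dd\sigma$ is constant in $t$. Finite energy forces it to vanish along a sequence $t\to\pm\infty$, so it is identically zero. This alone does not give a contradiction, so instead we would use the Euclidean dilation Pohozaev identity with the half-space boundary term $\int_{\partial\mathbb R^N_+}(x\cdot\nu)|\partial_\nu u|^2$. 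Since $x\cdot\nu=0$ on $\{x_N=0\}$, the better choice is the translation identity in the $e_N$ direction: multiplying by $\partial_Nu$ gives $\tfrac12\int_{\partial\mathbb R^N_+}|\partial_Nu|^2=0$, so $\partial_Nu=0$ on the boundary. Together with $u=0$ there and Hopf's lemma, this forces $u\equiv0$.

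The main obstacle in either route is the justification near $x=0$ and at infinity: the integrations by parts require cutting off a small ball around $0$ and a large ball. We would control the resulting surface terms on a sequence of radii $r_j\to0$ and $R_j\to\infty$ chosen via the coarea formula, using $\int(|\nabla u|^2+u^{2^*})<\infty$. For this, $\liminf_{r\to0}r\int_{\partial B_r\cap\mathbb R^N_+}(|\nabla u|^2+u^{2^*})\dd S=0$ suffices for the translation identity, since its boundary terms are bounded by $\int_{\partial B_r}(|\nabla u|^2+u^{2^*})$. We would first try the Liouville reduction and fall back on the translation Pohozaev argument if the regularity at $0$ is delicate.
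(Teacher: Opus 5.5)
Your main route is correct, but it differs from the paper's. The paper also transports $W$ to $u\in D^{1,2}_0(\mathbb R^N_+)$. It then applies the Cayley transform to obtain $\widetilde u\in H^1_0(B_1)$, and Brezis--Kato on the ball shows that $u$ is bounded on all of $\mathbb R^N_+$ and vanishes at infinity. Finally it invokes Dancer's Liouville theorem for \emph{bounded} half-space solutions \cite[Remark~2]{Dancer1992}, which applies since $p<(N+1)/(N-3)$.

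You need only \emph{local} regularity at the origin (Brezis--Kato, e.g.\ after odd reflection across $x_N=0$) to remove the singularity. You then apply the Gidas--Spruck half-space theorem, which needs no boundedness at infinity. The paper itself invokes that theorem in the Introduction for exactly this situation (a nonnegative solution with a removable singularity at $0$). So your route is somewhat more economical. The paper's global Cayley transform buys the boundedness and decay that Dancer's theorem requires.

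Your fallback (the translation Pohozaev identity in the $e_N$ direction, giving $\partial_N u=0$ on $\partial\mathbb R^N_+$, followed by Hopf) is sound. It avoids any Liouville theorem: it is essentially the Esteban--Lions nonexistence argument and works for any nonlinearity with $F(0)=0$. However, your justification near the origin is wrong. The surface terms of the translation identity on $\partial B_r$ carry no factor $r$. Finite energy only gives $\liminf_{r\to0} r\int_{\partial B_r}(|\nabla u|^2+u^{2^*})\dd S=0$, and that does not make those terms vanish. The fallback therefore cannot replace the regularity step "if the regularity at $0$ is delicate"; it depends on it. Once the removable-singularity regularity from your first route is in hand, the terms on $\partial B_r$ are $O(r^{N-1})$. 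At infinity, integrability does give $\liminf_{R\to\infty}\int_{\partial B_R}(|\nabla u|^2+u^{2^*})\dd S=0$, which suffices there.
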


\begin{proof}
By \eqref{eq:EF-transform}, $W$ determines a nonnegative
$u\in D^{1,2}_0(\mathbb R^N_+)$ solving \eqref{eq:half-space}, hence
with zero trace.  Its Cayley transform
$\widetilde u\in H^1_0(B_1)$ satisfies
$\widetilde u^{p-1}\in L^{N/2}(B_1)$; by the iteration of Brezis--Kato
\cite{BrezisKato1979} and boundary elliptic regularity, it is bounded.
If $\mathcal C$ is the Cayley map
and $\lambda=2/(|x'|^2+(x_N+1)^2)$, by conformal covariance, we have
$u=\lambda^a\widetilde u\circ\mathcal C$; thus $u$ is bounded and
vanishes at infinity.
Since
$p<(N+1)/(N-3)$ for $N>3$, with no restriction for $N\leq3$,
by Dancer \cite[Remark~2]{Dancer1992}, we have $u\equiv0$, hence
$W\equiv0$.
\end{proof}

\begin{proposition}\label{prop:upper-period-bound}
For every $\Lambda<\infty$ there exists $T_\Lambda<\infty$ such that no
positive bumpy solution satisfies
\begin{equation}\label{eq:large-period-energy-hypothesis}
 T\geq T_\Lambda,
 \qquad E_T(v)\leq\Lambda.
\end{equation}
\end{proposition}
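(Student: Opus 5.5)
We argue by contradiction.  Suppose that there are positive bumpy solutions $v_n$ on $\Sigma_{T_n}$, centered at $P$, with $T_n\to\infty$ and $E_{T_n}(v_n)\le\Lambda$.  Put $M_n=v_n(P)$.  We split into two cases: $M_n$ bounded, and $M_n\to\infty$ along a subsequence.

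\emph{Bounded case.}  Suppose $M_n\le C$.  Interior and boundary Schauder estimates, uniform because the geometry of $\Sigma_{T_n}$ near any point is fixed, give a subsequence converging in $C^2_{\mathrm{loc}}(\mathbb R\times\overline{\mathbb S^{N-1}_+})$ to a nonnegative solution $W$ of \eqref{eq:universal-cylinder-profile}.  By \eqref{eq:positive-lower-bound} we have $M_n\ge a^{2/(p-1)}$, so $W(P)\ge a^{2/(p-1)}>0$.  The energy identity \eqref{eq:cylindrical-energy} together with Fatou gives
\[
 \int(|\nabla W|^2+a^2W^2)\le\Lambda .
\]
Hence $W\in H^1_0(\mathbb R\times\mathbb S^{N-1}_+)$ is nonzero, which contradicts Lemma~\ref{lem:no-universal-profile}.

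\emph{Blow-up case.}  Suppose $M_n\to\infty$.  We use the local forms of Lemma~\ref{lem:single-bubble-estimates}(A),(B).  As the paper notes, these are uniform in the long-period regime because $T_n\ge T_-$ and the unique-peak and monotonicity properties from \eqref{eq:t-bumpy} and Lemma~\ref{lem:unique-peak} hold.  They yield the bubble core at $P$ and the bound
\[
 M_nv_n\le C\,G_{T_n}(\cdot,P)
 \quad\text{outside } \mathcal B_{R_0\mu_n}(P).
\]
Away from $P$, the Green decay \eqref{eq:long-cylinder-green-decay} and monotonicity control $M_nv_n$ uniformly.  The equation
\[
 -L_{g_{\mathrm{cyl}}}(M_nv_n)=M_n^{1-p}(M_nv_n)^p
\]
has a right side that vanishes in $C^0_{\mathrm{loc}}$ off $P$.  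Green's representation on $\Sigma_{T_n}$, with the core contributing mass $\gamma_N$, and \eqref{eq:long-cylinder-green-convergence} then give
\[
 M_nv_n\to\gamma_NG_\infty(\cdot,P)
 \quad\text{in }C^2_{\mathrm{loc}}
 \bigl((\mathbb R\times\overline{\mathbb S^{N-1}_+})\setminus\{P\}\bigr).
\]
This is the $T=\infty$ analogue of (C).

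We pass to Euclidean variables as in the proof of Theorem~\ref{thm:finite-period-compactification}.  Then $V_n=M_nu_n$ converges near $e_N$ to $h=\gamma_N\widetilde G_\infty(\cdot,e_N)$, where
\[
 \widetilde G_\infty(x,e_N)=c_N\bigl(|x-e_N|^{2-N}-|x+e_N|^{2-N}\bigr).
\]
Its regular part at $e_N$ is $-c_N2^{2-N}$, which equals $c_N\lim_{T\to\infty}\mathscr M_N(T)$ by \eqref{eq:regular-part-end-limits}.  We apply the local Pohozaev identity \eqref{eq:local-pohozaev-identity} on $\partial B_r(e_N)$.  We let $n\to\infty$, using that the nonlinear boundary term is $O(M_n^{1-p})$, and then let $r\downarrow0$ using \eqref{eq:local-pohozaev-pole-limit}.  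This gives
\[
 0=-\frac{(N-2)^2}{2}|\mathbb S^{N-1}|(\gamma_Nc_N)^2\bigl(-2^{2-N}\bigr)>0,
\]
a contradiction.  Hence $T_\Lambda$ exists.

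\emph{Main obstacle.}  The delicate step is the uniform Green comparison and the convergence of $M_nv_n$ to $\gamma_N G_\infty$ when $T_n\to\infty$.  Two points need care.  First, we must show that no mass of $v_n^p$ escapes to $|t|\sim T_n/2$ and that the coefficient is exactly $\gamma_N$.  For this I would first use monotonicity to bound $M_nv_n$ on $\{|t|\ge1\}$ by its values on $\{|t|=1\}$, which are $O(1)$ by (B).  Then I would show that $\int M_n^{1-p}(M_nv_n)^p$ away from the core tends to $0$, using the energy bound $\Lambda$ and the exponential Green decay in Lemma~\ref{lem:long-cylinder-green}.  Second, we need the analogue of (B) with constants independent of $T_n$.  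This follows from the fact that the estimates are local near $P$ and that $\Sigma_{T_n}$ is isometric to a fixed neighborhood there for $T_n\ge T_-$.
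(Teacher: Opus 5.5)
Your proposal is correct and follows the paper's proof: the bounded case is excluded by Lemma~\ref{lem:no-universal-profile}, and in the blow-up case the Green profile at $P$ combined with the local Pohozaev identity contradicts the limiting regular part $-c_N2^{2-N}$. The only difference there is that you pass directly to $G_\infty$, whereas the paper keeps $G_{T_n}$ and uses $\mathscr M_N(T_n)\to-2^{2-N}$; these are equivalent by \eqref{eq:long-cylinder-green-convergence}.

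One caveat concerns your claimed global comparison $M_nv_n\le CG_{T_n}(\cdot,P)$ on the whole long cylinder, which does not follow from the local form of (B). The paper obtains far-field decay from the barrier $v_n\le CM_n^{-1}G^\omega_{T_n}(\cdot,P)$ with $0<\omega<N-1+a^2$, using $v_n^{p-1}\le\omega$ off $\mathcal B_{r_0}(P)$. Your fallback already suffices without it. The bound $M_nv_n=O(1)$ off $\mathcal B_{r_0}(P)$, together with $\int_{\Sigma_{T_n}}G_{T_n}(X,Y)\dd V_Y\le a^{-2}$, makes the far-field term in the representation formula $O(M_n^{1-p})$. So you never need the unweighted far-field mass to vanish, which the energy bound alone would not give when $N\ge4$.
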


\begin{proof}
Suppose that $T_n\to\infty$ and $E_{T_n}(v_n)\leq\Lambda$.  Center the
maximum at $P$ and write the profiles on
$(-T_n/2,T_n/2)\times\mathbb S^{N-1}_+$.  If
$M_n:=v_n(P)$ stayed bounded, interior and boundary Schauder estimates
would give, after passing to a subsequence,
\[
 v_n\longrightarrow V
 \quad\text{in }C^2_{\mathrm{loc}}
 (\mathbb R\times\overline{\mathbb S^{N-1}_+}).
\]
At the maximum, $M_n^{p-1}\geq a^2$, and therefore
$V(P)\geq a^{2/(p-1)}>0$.  Moreover, for every fixed $R>0$ and all
large $n$,
\[
 \int_{(-R,R)\times\mathbb S^{N-1}_+}
 \bigl(|\nabla v_n|^2+a^2v_n^2\bigr)\dd V
 \leq E_{T_n}(v_n)\leq\Lambda.
\]
Passing to the limit on the fixed slab and then letting $R\to\infty$
gives $V\in H^1(\mathbb R\times\mathbb S^{N-1}_+)$.  The boundary
convergence preserves the zero lateral trace, and a cutoff in the
axial variable shows that
$V\in H^1_0(\mathbb R\times\mathbb S^{N-1}_+)$.  This contradicts
Lemma~\ref{lem:no-universal-profile}.  Hence
\[
 M_n\longrightarrow\infty,
 \qquad \mu_n=M_n^{-2/(N-2)}.
\]
The local forms of Lemma~\ref{lem:single-bubble-estimates}(A),(B) are
uniform for $T_n\geq1$.  Thus, for some fixed $r_0>0$,
\begin{equation}\label{eq:large-period-core-profile}
 M_n^{-1}v_n\!\left(\exp_P^{g_{\mathrm{cyl}}}(\mu_n y)\right)
 \longrightarrow U(y)
 \quad\text{in }C^2_{\mathrm{loc}}(\mathbb R^N),
\end{equation}
and
\begin{equation}\label{eq:large-period-core-bound}
 v_n(X)\leq C M_n^{-1} d_{T_n}(X,P)^{2-N}
 \quad\text{if }R_0\mu_n\leq d_{T_n}(X,P)\leq2r_0.
\end{equation}
The temporal and angular monotonicity propagate this estimate away from
$P$, so for every fixed $r>0$,
\begin{equation}\label{eq:large-period-outer-small}
 \sup_{\Sigma_{T_n}\setminus\mathcal B_r(P)}v_n
 \leq C_r M_n^{-1}\longrightarrow0.
\end{equation}
Choose $0<\omega<N-1+a^2$.  By
\eqref{eq:large-period-outer-small}, for large $n$,
$(-L_{g_{\mathrm{cyl}}}-\omega)v_n\leq0$ on
$\Sigma_{T_n}\setminus\mathcal B_{r_0}(P)$; since
$v_n\leq CM_n^{-1}$ on $\partial\mathcal B_{r_0}(P)$, the maximum
principle yields
\begin{equation}\label{eq:large-period-green-barrier}
 v_n(X)\leq C M_n^{-1}G_{T_n}^\omega(X,P)
 \quad\text{on }\Sigma_{T_n}\setminus\mathcal B_{r_0}(P).
\end{equation}

The standard core--neck Green-representation argument now applies:
\eqref{eq:large-period-core-profile} gives the coefficient $\gamma_N$ of
the Green function at $P$,
\eqref{eq:large-period-core-bound} controls the neck, and
\eqref{eq:large-period-green-barrier} together with
Lemma~\ref{lem:long-cylinder-green} makes the contribution from the ends of
the long cylinder negligible.  Hence, as in Han~\cite[Proposition~1]{Han1991},
\begin{equation}\label{eq:large-period-periodic-green-profile}
 M_nv_n-\gamma_NG_{T_n}(\,\cdot\,,P)
 \longrightarrow0
 \quad\text{in }C^1_{\mathrm{loc}}
 \bigl((\mathbb R\times\overline{\mathbb S^{N-1}_+})\setminus\{P\}\bigr).
\end{equation}

Let $u_n$ be defined by \eqref{eq:EF-transform} and put $h_n=M_nu_n$.
By conformal covariance and
\eqref{eq:large-period-periodic-green-profile},
\[
 h_n-\gamma_N\widetilde G_{T_n}(\cdot,e_N)
 \longrightarrow0
 \quad\text{in }C^1_{\mathrm{loc}}
 (B_{r_0}(e_N)\setminus\{e_N\}).
\]
Since $-\Delta h_n=M_n^{1-p}h_n^p$, the local Pohozaev identity,
\eqref{eq:periodic-green-regular-expansion}, and
\eqref{eq:local-pohozaev-pole-limit} give, uniformly in $n$,
\[
 0=-\frac{(N-2)^2}{2}|\mathbb S^{N-1}|
   (\gamma_Nc_N)^2\mathscr M_N(T_n)+O(r)+o_n(1).
\]
Letting first $n\to\infty$ and then $r\downarrow0$ contradicts
$\mathscr M_N(T_n)\to-2^{2-N}$ from
Proposition~\ref{prop:unique-regular-zero}.  Therefore the periods are
uniformly bounded above under the energy hypothesis.
\end{proof}

\begin{proof}[Proof of Proposition~\ref{prop:uniform-bounds}]
By Proposition~\ref{prop:lower-period-bound} and
Lemma~\ref{lem:local-energy},
$T\geq T_-$ and $E_T(v)\leq\Lambda_N$.  Applying
Proposition~\ref{prop:upper-period-bound} with $\Lambda=\Lambda_N$, we obtain
$T\leq T_{\Lambda_N}$.  Set
$T_+=\max\{T_{\Lambda_N},2T_-\}$.
Then $T_-<T_+$ and \eqref{eq:uniform-bounds} follows.  The conclusions
for $\mathscr C$ follow from
\begin{equation*}
 (k,w)\in\mathscr C^{\mathrm{ns}}
 \Longrightarrow
 \frac{2\pi}{T_+}\leq k\leq\frac{2\pi}{T_-},
 \qquad E_{2\pi/k}\bigl(w(k\,\cdot,\cdot)\bigr)\leq\Lambda_N,
\end{equation*}
Proposition~\ref{prop:global-continuum}, and the fixed finite energy of
$(k_*,\phi)$.
In particular, the uniform lower bound for $k$ excludes the alternative
$k_n\to0$ in \eqref{eq:global-escape}; the remaining escape sequence
therefore satisfies $\|w_n\|_{L^\infty}\to\infty$.
\end{proof}

\begin{proof}[Proof of Theorem~\ref{thm:main-global}]
By Proposition~\ref{prop:global-continuum}, we obtain the noncompact
connected branch, its bumpy and least-period properties, and the
exclusion of higher stationary resonances.  By
Proposition~\ref{prop:uniform-bounds} and Lemma~\ref{lem:global-bootstrap},
\eqref{eq:intro-global-bounds} holds and every escaping sequence blows up
with periods in a compact subset of $(0,\infty)$.  By Lemma~\ref{lem:single-bubble-estimates} and
Theorem~\ref{thm:finite-period-compactification}, every convergent period
subsequence has the limits in \eqref{eq:intro-resonant-limit}; hence these
limits hold for the whole sequence.
\end{proof}

\section{Construction of the blow-up branch}\label{sec:resonant-end}

{
By Theorem~\ref{thm:finite-period-compactification}, we have identified
$T^*$ as the only possible limiting period, but we have neither constructed
nor locally classified the concentrating family approaching this end.
}
We now carry out a
Lyapunov--Schmidt reduction; see, for example, Rey \cite{Rey1990} and del
Pino--Dolbeault--Musso \cite{delPinoDolbeaultMusso2004}, as well as
Wei--Yan \cite{WeiYan2010ScalarCurvature,WeiYan2011CriticalGrowth}.
Throughout this section, all spaces and operators are restricted to the
axial-even symmetry class specified in the Introduction.

\subsection{Approximate solution and preliminary estimates}

For $0<\mu<\mu_0$, where $\mu_0>0$ will be decided later,
consider the following standard bubble
\begin{equation}\label{eq:res-flat-bubble}
 \begin{aligned}
 U_{\lambda_*/\mu,\,\sqrt{1-\mu^2}\,e_N}(x)
 =\mu^{-a}U_{\lambda_*} \biggl(
   \frac{x-\sqrt{1-\mu^2}\,e_N}{\mu}\biggr)
 =\alpha_N\biggl(\frac{\mu}
   {\mu^2+|x-\sqrt{1-\mu^2}\,e_N|^2}\biggr)^{a},
 \quad x\in\mathbb R^N.
 \end{aligned}
\end{equation}
According to the Emden--Fowler transformation \eqref{eq:EF-transform},
the corresponding bubble on $\mathbb R\times\mathbb S^{N-1}$ is
\[
 \mathcal U_\mu(t,z)
 :=e^{-at}U_{\lambda_*/\mu,\,\sqrt{1-\mu^2}\,e_N}(e^{-t}z)
 =\alpha_N\biggl(\frac{\mu}
 {2\bigl(\cosh t-\sqrt{1-\mu^2}\,z_N\bigr)}\biggr)^{a}.
\]
By conformal covariance,
$-L_{g_{\mathrm{cyl}}}\mathcal U_\mu=\mathcal U_\mu^p$. 
The full-cylinder Green kernel $\Gamma$ \eqref{eq:full-cylinder-kernel} implies
\[
 (\mu\partial_\mu)^k\mathcal U_\mu(X)
 =\int_{\mathbb R\times\mathbb S^{N-1}}
 \Gamma(X,Y)(\mu\partial_\mu)^k\mathcal U_\mu^p(Y)\dd V_Y,
 \qquad k=0,1,2.
\]
Fix $r_0>0$ and the function $b$ as in
Lemma~\ref{lem:local-periodic-green}, decreasing $r_0$ if necessary so
that $r_0<1/8$ and $\mathcal B_{8r_0}(P)$ lies in the fixed interior
conformal chart.  Choose a function in the fixed symmetry class,
\[
 \chi\in C_c^\infty(\mathcal B_{2r_0}(P)),\qquad
 0\leq\chi\leq1,\qquad \chi\equiv1\quad\text{on }\mathcal B_{r_0}(P).
\]
Choose $\delta_0\in(0,T^*/2)$ sufficiently small
that the interval
\[
 I=[T^*-\delta_0,T^*+\delta_0]
\]
is admissible in \eqref{eq:fixed-domain-interval} and
$8r_0<\inf_{T\in I}T$.  We use the fixed-domain convention introduced
after \eqref{eq:fixed-domain-green-pullback}; thus every $\partial_T$
below is taken after pull-back by $\Phi_T$.  Since $\Phi_T$ is the identity
near $P$, we regard $\chi$ as the same function on every $\Sigma_T$.
Henceforth $T\in I$, and we decrease $\mu_0$ so that $\mu_0<r_0$.

Since $\mathcal U_\mu$ is not $T$-periodic, every integral over $\Sigma_T$
involving the uncut $\mathcal U_\mu$ is taken over the central fundamental
region $(-T/2,T/2)\times\mathbb S^{N-1}_+$.
For the local estimates below, put
$r_\mu(X)=\min\{1,\mu+r(X)\}$ and abbreviate
$r=r(X)$, $\delta=\delta(X)$, and $r_\mu=r_\mu(X)$ when the evaluation
point is clear.  Fix $c>0$ small.
For the reader's convenience, we collect the elementary bubble estimates
used below.
\begin{lemma}\label{lem:res-bubble-estimates}
The following estimates hold uniformly for $T\in I$ and
$0<\mu<\mu_0$, with $k=0,1,2$:
\begin{enumerate}
\renewcommand{\labelenumi}{\textup{(\roman{enumi})}}
\setlength{\itemsep}{0pt}
 \item $|(\mu\partial_\mu)^k\mathcal U_\mu^p|^{1/p}
 +|(\mu\partial_\mu)^k\mathcal U_\mu|\leq C\mathcal U_\mu$;
 \item $|(\mu\partial_\mu)^k\mathcal U_\mu^p(X)|\leq C\mu^{a+2}$
 for $X\notin\mathcal B_{r_0/2}(P)$;
 \item $r_\mu(X)^\alpha[(\mu\partial_\mu)^k\mathcal U_\mu^p]_
 {C^\alpha(\mathcal B_{cr_\mu(X)}(X)\cap \Sigma_T)}
 \leq C\mathcal U_\mu^p(X)$ for $r(X)<r_0$;
 \item $\displaystyle
 \int_{\Sigma_T}r(X)^m|(\mu\partial_\mu)^k\mathcal U_\mu^p|\dd V
 \leq C\mu^{a+m}$ for $m=0,1$;
 \item $\displaystyle
 \int_{\Sigma_T}\chi(\mu\partial_\mu)^k\mathcal U_\mu^p\dd V
 =\frac{\alpha_N}{c_N}a^k\mu^a+O(\mu^{a+1})$.
\end{enumerate}
\end{lemma}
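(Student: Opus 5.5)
The approach is to work directly with the explicit formula
\[
 \mathcal U_\mu=\alpha_N\Bigl(\frac{\mu}{2D_\mu}\Bigr)^{a},\qquad
 D_\mu(t,z):=\cosh t-s_\mu z_N,\qquad s_\mu:=\sqrt{1-\mu^2},
\]
restricted to the central fundamental region $(-T/2,T/2)\times\mathbb S^{N-1}_+$. The basic two-sided comparison is
\[
 D_\mu\asymp r_\mu^2 .
\]
Write $D_\mu=(\cosh t-z_N)+(1-s_\mu)z_N$. Near $P$ one has $\cosh t-z_N\asymp r^2$ and $1-s_\mu\asymp\mu^2$, with $z_N\geq c$. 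Away from $\mathcal B_{r_0/2}(P)$ one has $D_\mu\geq c$, uniformly for $T\in I$, because $T\geq T^*-\delta_0$ keeps $|t|\leq T/2$ bounded. Hence $\mathcal U_\mu\asymp(\mu/r_\mu^2)^a$. Uniformity in $T$ is automatic, since everything lives on the central region and the metric near $P$ does not depend on $T$. Together with $ap=a+2$, this comparison immediately gives (ii): off $\mathcal B_{r_0/2}(P)$ we get $\mathcal U_\mu^p\leq C\mu^{ap}=C\mu^{a+2}$.

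For (i), compute $\mu\partial_\mu D_\mu=\mu^2z_N/s_\mu$. This yields
\[
 \mu\partial_\mu\log\mathcal U_\mu=a\Bigl(1-\frac{\mu^2z_N}{s_\mu D_\mu}\Bigr).
\]
Since $D_\mu\geq1-s_\mu\geq\mu^2/2$, the quotient $\mu^2/D_\mu$ is bounded. Iterating, $(\mu\partial_\mu)^k\mathcal U_\mu$ and $(\mu\partial_\mu)^k\mathcal U_\mu^p$, for $k\le2$, equal $\mathcal U_\mu$ (respectively $\mathcal U_\mu^p$) times polynomials in $\mu^2/D_\mu$ and $\mu^2$. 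This proves (i), and (ii) extends to $k=1,2$ for the same reason.

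For (iii), note that $|\nabla D_\mu|\leq Cr\leq Cr_\mu$ near $P$. Hence $|\nabla\log D_\mu|\leq C/r_\mu$. On $\mathcal B_{cr_\mu(X)}(X)$ with $c$ small, $D_\mu$ stays comparable to $D_\mu(X)$. The gradient bound then gives $r_\mu\,|\nabla(\mu\partial_\mu)^k\mathcal U_\mu^p|\leq C\mathcal U_\mu^p(X)$ on that ball, and the H\"older seminorm estimate follows by interpolation.

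For (iv), I would split $\Sigma_T$ into $\mathcal B_{r_0}(P)$ and its complement. The complement contributes $O(\mu^{a+2})$ by (ii) and bounded volume. Inside the ball, use a normal chart in which the cylinder metric is uniformly equivalent to the Euclidean one, together with $\mathcal U_\mu^p\asymp(\mu/(\mu^2+r^2))^{a+2}$. Substituting $r=\mu|y|$ gives
\[
 \mu^{a+2-2(a+2)+N+m}\int_{\mathbb R^N}(1+|y|^2)^{-(N+2)/2}|y|^m\dd y=C\mu^{a+m}.
\]
The integral converges for $m=0,1$ since $2a+4=N+2$.

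For (v), I would pass to Euclidean variables using \eqref{eq:conformal-measures}: $\mathcal U_\mu^p\dd V=|x|^{-a}U_{\lambda_*/\mu,\xi_\mu}^p\dd x$ with $\xi_\mu=s_\mu e_N$. Scaling $x=\xi_\mu+\mu y$ gives
\[
 \mu^{a}\int U_{\lambda_*}^p(y)\,|\xi_\mu+\mu y|^{-a}\,\chi(\xi_\mu+\mu y)\dd y .
\]
Replace $|\xi_\mu+\mu y|^{-a}\chi$ by $1$ at cost $O(\mu|y|+\mu^2)$ on $|y|\lesssim 1/\mu$. The tail $|y|\gtrsim r_0/\mu$ costs $O(\mu^2)$. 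Since $\int U_{\lambda_*}^p|y|\dd y<\infty$, this leaves $\mu^a\int U_{\lambda_*}^p+O(\mu^{a+1})$. The constant follows from the flux identity: since $-\Delta U_{\lambda_*}=U_{\lambda_*}^p$ and $U_{\lambda_*}\sim\alpha_N|y|^{2-N}$, we get $\int U_{\lambda_*}^p=\alpha_N(N-2)|\mathbb S^{N-1}|=\alpha_N/c_N$.

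For $k=1,2$, $\chi$ does not depend on $\mu$, so $\mu\partial_\mu$ commutes with the integral. The scaled integrand depends smoothly on $\mu$, and applying $\mu\partial_\mu$ to the remainder terms again produces $O(\mu)$ with integrable weights. This yields $a^k\mu^a\,\alpha_N/c_N+O(\mu^{a+1})$.

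The main obstacle is (v) with $k=1,2$: one must check that the $O(\mu^{a+1})$ error survives differentiation. I would handle this by writing the remainder explicitly as $\mu^a\bigl[F(\mu)-F(0)\bigr]$, with $F$ being $C^2$ in $\mu$ after the scaling.
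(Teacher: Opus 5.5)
Your proposal follows the paper's proof essentially step by step. Items (i)--(iii) come from the explicit formula; you supply more detail than the paper, via $\mu\partial_\mu\log\mathcal U_\mu=a(1-\mu^2z_N/(s_\mu D_\mu))$, $D_\mu\geq\mu^2/2$ and $D_\mu\asymp r_\mu^2$. Item (iv) comes from splitting off $\mathcal B_{r_0}(P)$ and rescaling. Item (v) comes from the change of variables $x=s_\mu e_N+\mu y$ and comparing $A_\mu(y):=\widehat\chi(s_\mu e_N+\mu y)\,|s_\mu e_N+\mu y|^{-a}$ with $1$.

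One step needs correcting. In your last paragraph you propose that $F(\mu)=\int_{\mathbb R^N}A_\mu U_{\lambda_*}^p\,\dd y$ is $C^2$ up to $\mu=0$. This is false. $\partial_\mu^2A_\mu$ grows like $|y|^2$ on $\{|y|\lesssim1/\mu\}$, while $\int_{|y|<c/\mu}|y|^2U_{\lambda_*}^p\,\dd y\sim\log(1/\mu)$. In fact, since $\Delta|x|^{-a}=-a^2\neq0$ at $e_N$, $F(\mu)-F(0)$ contains a nonzero multiple of $\mu^2\log(1/\mu)$, so $F''$ is unbounded as $\mu\downarrow0$.

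What works is the sentence just before it, made precise; this is exactly what the paper does. For $j\geq1$, $(\mu\partial_\mu)^jA_\mu$ vanishes off the set where $\mu|y|\leq C$. There the term $\mu^2|y|^2$ produced by the second $\mu\partial_\mu$ is bounded by $C\mu|y|$. Combine this with $|A_\mu-1|\leq C(\mu|y|+\mu^2)$ for $|y|\leq c/\mu$ and $|A_\mu-1|\leq C$ beyond. This gives $\sum_{j=0}^2\int_{\mathbb R^N}|(\mu\partial_\mu)^j(A_\mu-1)|\,U_{\lambda_*}^p\,\dd y\leq C\mu$. The Leibniz rule applied to $\mu^a\int(A_\mu-1)U_{\lambda_*}^p\,\dd y$ then yields (v) for $k=1,2$. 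So estimate only $(\mu\partial_\mu)^j$-derivatives, not ordinary $\mu$-derivatives.
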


\begin{proof}
Estimates (i)--(iii) follow by direct differentiation of the explicit
formula for $\mathcal U_\mu$.
For (iv), 
\[
\begin{aligned}
 \int_{\Sigma_T}r(X)^m|(\mu\partial_\mu)^k\mathcal U_\mu^p|\dd V
 &=\int_{\mathcal B_{r_0}(P)}r^m
 |(\mu\partial_\mu)^k\mathcal U_\mu^p|\dd V
 +\int_{\Sigma_T\setminus\mathcal B_{r_0}(P)}r^m
 |(\mu\partial_\mu)^k\mathcal U_\mu^p|\dd V\\
 &\leq C\mu^{a+2}\int_0^{r_0}
 \frac{s^{N-1+m}}{(\mu^2+s^2)^{(N+2)/2}}\dd s+C\mu^{a+2}\\
 &=C\mu^{a+m}\int_0^{r_0/\mu}
 \frac{t^{N-1+m}}{(1+t^2)^{(N+2)/2}}\dd t+C\mu^{a+2}
 \leq C\mu^{a+m},
\end{aligned}
\]
where we have used $(ii)$ to control the second term.
For (v), write
$\widehat\chi(x)=\chi(-\log|x|,x/|x|)$ in the conformal chart and extend
it by zero.  By \eqref{eq:conformal-measures} and
\eqref{eq:res-flat-bubble}, changing variables by
$x=\sqrt{1-\mu^2}\,e_N+\mu y$, we obtain
\[
\begin{aligned}
 F(\mu)
 :=&\int_{\Sigma_T}\chi\mathcal U_\mu^p\dd V
 =\int_{\mathbb R^N}\widehat\chi(x)|x|^{-a}
 U_{\lambda_*/\mu,\,\sqrt{1-\mu^2}e_N}^p(x)\dd x\\
 =&\,\mu^a\int_{\mathbb R^N}(A_\mu(y)-1)U_{\lambda_*}^p(y)\dd y
 + \mu^a\int_{\mathbb R^N}U_{\lambda_*}^p(y)\dd y,
\end{aligned}
\]
where
\[
 A_\mu(y):=
 \widehat\chi\bigl((1-\mu^2)^{1/2}e_N+\mu y\bigr)
 \bigl|(1-\mu^2)^{1/2}e_N+\mu y\bigr|^{-a}.
\]
Direct differentiation and radial integration give
\[
 \sum_{j=0}^2\int_{\mathbb R^N}
 |(\mu\partial_\mu)^j(A_\mu-1)|U_{\lambda_*}^p\dd y
 \leq C\mu\int_0^{c/\mu}
 \frac{(1+r)r^{N-1}}{(1+r^2)^{(N+2)/2}}\dd r
 +C\int_{c/\mu}^{\infty}
 \frac{r^{N-1}}{(1+r^2)^{(N+2)/2}}\dd r
 \leq C\mu.
\]
Since
$\int_{\mathbb R^N}U_{\lambda_*}^p\dd y =\alpha_N/c_N $,
we have, for $k=0,1,2$,
\[
\begin{aligned}
 \int_{\Sigma_T}\!\chi(\mu\partial_\mu)^k
 \mathcal U_\mu^p\dd V
 =(\mu\partial_\mu)^kF(\mu)
 =\frac{a^k\alpha_N}{c_N}\mu^a
 +(\mu\partial_\mu)^k\left[
 \mu^a\!\!\int_{\mathbb R^N}\!(A_\mu\!-\!1)U_{\lambda_*}^p\!\dd y\right]
 =\frac{a^k\alpha_N}{c_N}\mu^a+O(\mu^{a+1}).
\end{aligned}
\]
\end{proof}

Using the periodic Green function $G_T$ introduced in \eqref{eq:periodized-cylinder-green}, we define the projected bubble by
\begin{equation}\label{eq:res-projected-bubble}
 W_{\mu,T}(X)
 =(-L_{g_{\mathrm{cyl}}})^{-1}(\chi\mathcal U_\mu^p)(X)
 =\int_{\Sigma_T}G_T(X,Y)\chi(Y)\mathcal U_\mu^p(Y)\dd V_Y.
\end{equation}
Let $\mathcal H_T=G_T-\Gamma$ be the regular part introduced in
Lemma~\ref{lem:local-periodic-green}.  The key estimate in the proof of
Lemma~\ref{lem:res-projection} is, for
$X\in\mathcal B_{r_0}(P)$, $\ell,j=0,1$, and $k=0,1,2$,
\begin{equation}\label{eq:res-Green-expansion}
 \left|\nabla_X^\ell\left[
 \partial_T^j(\mu\partial_\mu)^k(W_{\mu,T}-\mathcal U_\mu)(X)
 -\frac{\alpha_N}{c_N}a^k\mu^a
  \partial_T^j\mathcal H_T(X,P)\right]\right|
 \leq C\mu^{a+1}.
\end{equation}
We collect the estimates for the projected bubble below; their
proof is given in \ref{sec:res-technical-appendix}.

\begin{lemma}\label{lem:res-projection}
The estimates below hold uniformly for $T\in I$, $0<\mu<\mu_0$,
$j=0,1$, and $k=0,1,2$:
\begin{enumerate}
\renewcommand{\labelenumi}{\textup{(\roman{enumi})}}
\setlength{\itemsep}{0pt}
 \item $\begin{aligned}[t]
  C^{-1}\mu^a(\mu^2+r(X)^2)^{-a}
  &\leq W_{\mu,T}(X)\leq C\mu^a(\mu^2+r(X)^2)^{-a},
  &\quad&r(X)<r_0,\\
  C^{-1}\mu^a\delta(X)
  &\leq W_{\mu,T}(X)\leq C\mu^a\delta(X),
  &&r(X)\geq r_0.
 \end{aligned}$
 \item $\begin{aligned}[t]
  |\partial_TW_{\mu,T}(X)|
  +|\partial_T(\mu\partial_\mu W_{\mu,T})(X)|
  &\leq C\mu^a,
  &\quad&r(X)<r_0,\\
  |\partial_TW_{\mu,T}(X)|
  +|\partial_T(\mu\partial_\mu W_{\mu,T})(X)|
  &\leq C\mu^a\delta(X),
  &&r(X)\geq r_0.
 \end{aligned}$
 \item $\displaystyle
 |\partial_T^j(\mu\partial_\mu)^kW_{\mu,T}(X)|
 \leq CW_{\mu,T}(X),\qquad X\in\Sigma_T.$
 \item $\displaystyle
  \sum_{\ell=1}^{2}r_\mu^{\ell}
   \bigl|\nabla^\ell\partial_T^j(\mu\partial_\mu)^kW_{\mu,T}(X)\bigr|
  +r_\mu^{2+\alpha}
 [\nabla^2\partial_T^j(\mu\partial_\mu)^kW_{\mu,T}]_
  {C^\alpha(\mathcal B_{cr_\mu}(X)\cap \Sigma_T)}
  \leq C\mu^a r_\mu^{2-N}.$
\end{enumerate}
\end{lemma}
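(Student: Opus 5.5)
The plan is to reduce everything to the Green-representation identity for $W_{\mu,T}$ and to split the source region in two. Near $P$ the periodic kernel is the full-cylinder kernel plus a smooth regular part. Away from $P$ the source is at most $C\mu^{a+2}$, so its contribution is negligible. Concretely, for $X\in\Sigma_T$ we write
\[
 \partial_T^j(\mu\partial_\mu)^kW_{\mu,T}(X)
 =\int_{\Sigma_T}\partial_T^j\bigl[G_T(X,Y)\bigr]\chi(Y)(\mu\partial_\mu)^k\mathcal U_\mu^p(Y)\dd V_Y .
\]
Here $\chi$ and $\mathcal U_\mu$ do not depend on $T$, because $\Phi_T$ is the identity on $\mathcal B_{8r_0}(P)\supset\operatorname{supp}\chi$. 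Inside $\mathcal B_{2r_0}(P)$ we split $G_T=\Gamma+\mathcal H_T$. The full-cylinder representation of $(\mu\partial_\mu)^k\mathcal U_\mu$ gives
\[
 \int\Gamma(X,Y)\chi(\mu\partial_\mu)^k\mathcal U_\mu^p
 =(\mu\partial_\mu)^k\mathcal U_\mu(X)-\int\Gamma(X,Y)(1-\chi)(\mu\partial_\mu)^k\mathcal U_\mu^p .
\]
The last integral is bounded in $C^1(\mathcal B_{r_0}(P))$ by $C\mu^{a+2}$, using Lemma~\ref{lem:res-bubble-estimates}(ii) and the separation of the supports. For the $\mathcal H_T$ part we Taylor expand $\mathcal H_T(X,Y)=\mathcal H_T(X,P)+O(r(Y))$ in $C^1_X$, with the $T$-derivative included, via Lemma~\ref{lem:local-periodic-green}. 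Lemma~\ref{lem:res-bubble-estimates}(iv) with $m=1$ and (v) then give
\[
 \frac{\alpha_N}{c_N}a^k\mu^a\partial_T^j\mathcal H_T(X,P)+O(\mu^{a+1}).
\]
This is the key expansion \eqref{eq:res-Green-expansion}, and I would establish it first.

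For (i), (iii), (iv) in the region $r(X)<r_0$, the expansion says that $\partial_T^j(\mu\partial_\mu)^kW_{\mu,T}$ equals $(\mu\partial_\mu)^k\mathcal U_\mu$ up to $O(\mu^a)$ in $C^1$. Since $\mathcal U_\mu\asymp\mu^a(\mu^2+r^2)^{-a}\geq c\mu^a$ there, the two-sided bound in (i) follows directly. The bound $|\partial_T^j(\mu\partial_\mu)^kW|\leq CW$ in (iii) follows from Lemma~\ref{lem:res-bubble-estimates}(i), and the first-derivative part of (iv) follows from the explicit gradient of $\mathcal U_\mu$. For the second derivatives and the Hölder seminorm in (iv) I would use interior Schauder estimates on balls $\mathcal B_{cr_\mu}(X)$, rescaled to unit size. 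The equation there is $-L_{g_{\mathrm{cyl}}}(\partial_T^j(\mu\partial_\mu)^kW)=\chi(\mu\partial_\mu)^k\mathcal U_\mu^p$, because the conjugated operator is $T$-independent near $P$. The rescaled sup-norm is $\mu^ar_\mu^{2-N}$, and the right side times $r_\mu^{2}$ is controlled through Lemma~\ref{lem:res-bubble-estimates}(iii). Since $\mathcal U_\mu^p r_\mu^2\lesssim\mu^{a+2}r_\mu^{-N}\leq\mu^ar_\mu^{2-N}$, this yields (iv) near $P$. Part (ii) near $P$ needs only the sup bound: $\partial_T\mathcal U_\mu=0$, so $\partial_TW$ is $O(\mu^a)$ by the expansion, using the uniform $C^1_T$ bound on $\mathcal H_T$.

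In the region $r(X)\geq r_0$ the kernel is separated from $\operatorname{supp}\chi\cap\mathcal B_{r_0/2}(P)$. By \eqref{eq:uniform-periodic-green-separated}, $G_T(X,Y)\asymp\delta(X)$ and $|\partial_TG_T(X,Y)|\leq C\delta(X)$ there. Integrating against the nonnegative mass of $\chi\mathcal U_\mu^p$ over $\mathcal B_{r_0/2}(P)$, which by Lemma~\ref{lem:res-bubble-estimates}(v) is $\asymp\mu^a$, gives the lower bound $c\mu^a\delta(X)$. For the upper bounds, the annulus $r(Y)\in[r_0/2,2r_0]$ contributes at most $C\mu^{a+2}\int G_T\leq C\mu^{a+2}\delta(X)$, by Lemma~\ref{lem:local-periodic-green}. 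The $T$-derivatives and $\mu\partial_\mu$ versions follow the same way with absolute values, which gives (i), (ii), and (iii) away from $P$. The remaining bound (iv) with $r_\mu\asymp1$ follows from boundary Schauder estimates for $\partial_T^j(\mu\partial_\mu)^kW$. The equation it satisfies now contains the commutator $[\partial_T,L_{g_T}]W$, which is supported away from $P$ and bounded by $C\mu^a$ by the bounds just proved. Together with zero lateral trace, this gives $\|\cdot\|_{C^{2,\alpha}}\leq C\mu^a$.

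I expect the main obstacle to be the $T$-differentiated statements, i.e.\ justifying $\partial_T$ of $G_T$ uniformly in the fixed-domain sense. Near $P$ this is handled by the $C^1_TC^2$ regularity of $\mathcal H_T$ in Lemma~\ref{lem:local-periodic-green}. Far away, the same lemma bounds $\partial_TG_T$ only for separated points. So for the annular part of the source one must also control $\partial_TG_T(X,Y)$ when $X$ is near $Y$. I would first try to handle this by differentiating the equation, $-L_{g_T}\partial_TW=[\partial_T,L_{g_T}]W$, whose right side is supported where $b\neq0$, i.e.\ at distance at least $8r_0$ from $P$. I would then invert with the Green-function integral estimate $\int G_T\leq C\delta$, which avoids pointwise kernel derivatives altogether.
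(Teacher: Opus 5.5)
Your proposal is correct and follows the paper's own proof essentially step by step. The shared steps are: the splitting $G_T=\Gamma+\mathcal H_T$ with a $T$-independent cut-off error; Taylor expansion of $\mathcal H_T(X,\cdot)$ at $P$ combined with Lemma~\ref{lem:res-bubble-estimates}(iv)--(v) to obtain \eqref{eq:res-Green-expansion}; separated Green bounds away from $P$; and scaled Schauder estimates for (iv), first for $j=0$ and then for $j=1$ through the commutator $(\partial_TL_{g_T})(\mu\partial_\mu)^kW_{\mu,T}$ (near $P$ the right-hand side for $j=1$ is simply zero, not the bubble source).

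Two small points.
\begin{enumerate}
\item The lower bound in (i) for $r(X)$ close to $r_0$ needs $r_0$ small, as the paper arranges, so that $\mathcal U_\mu\geq c\,r_0^{-2a}\mu^a$ dominates the $O(\mu^a)$ regular-part error. The bound $\mathcal U_\mu\geq c\mu^a$ alone does not absorb it.
\item The obstacle you flag for $r_0\leq r(X)\leq4r_0$ is handled in the paper by observing that both points lie in $\mathcal B_{8r_0}(P)$. There $\Phi_T$ is the identity and the singular $m=0$ term of the periodization is $T$-independent, so $\partial_TG_T$ is uniformly bounded. Your alternative also works: invert $-L_{g_T}\partial_TW_{\mu,T}=(\partial_TL_{g_T})W_{\mu,T}$, whose right-hand side is $O(\mu^a)$ on $\operatorname{supp}b$ by Schauder, against $\int_{\Sigma_T}G_T(X,Y)\dd V_Y\leq C\delta(X)$. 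It even gives (ii) globally in one step.
\end{enumerate}
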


To measure the residual and the correction, set
\begin{equation}\label{eq:res-rho}
 \rho_N(\mu)=
 \begin{cases}
  \mu^{N-2},&3\leq N\leq5,\\
  \mu^4\sqrt{1+|\log\mu|},&N=6,\\
  \mu^{(N+2)/2},&N\geq7,
 \end{cases}
\end{equation}
We first define the solution weight
\begin{equation}\label{eq:res-solution-weight}
 \Psi_\mu(r)
 =\begin{cases}
   \mu^a,&\text{if }0\leq r\leq\mu,\\
   \mu^{a+1}r^{-1},&\text{if }\mu<r\leq r_0\text{ and }N=3,\\
   \mu^{a+2}r^{-2}\{1+\log(r/\mu)\},
      &\text{if }\mu<r\leq r_0\text{ and }N=4,\\
   \mu^{a+2}r^{-2},&\text{if }\mu<r\leq r_0\text{ and }N\geq5,\\
   \Psi_\mu(r_0),&\text{if }r>r_0.
  \end{cases}
\end{equation}
Choose a fixed cut-off $\chi_0=\chi_0(r)$ on the cylinder such that
$\chi_0=0$ if $r\leq r_0/2$ and $\chi_0=1$ if $r\geq r_0$.  Define the
source weight by
\begin{equation*}
 \Theta_\mu(X)
 =(1-\chi_0(X))\mu^{a+2}(\mu^2+r(X)^2)^{-2}
   +\chi_0(X)\Psi_\mu(r_0).
\end{equation*}
Thus
\begin{equation}\label{eq:res-source-weight-regions}
 \Theta_\mu=\mu^{a+2}(\mu^2+r^2)^{-2}\quad\text{if }r\leq r_0/2,
 \qquad
 \Theta_\mu=\Psi_\mu(r_0)\quad\text{if }r\geq r_0,
\end{equation}
and on $r_0/2<r<r_0$ we have
$C^{-1}\mu^{a+2}\leq\Theta_\mu\leq C\Psi_\mu(r_0)$.
For functions on $\Sigma_T$, define
\begin{align*}
 \|\varphi\|
 &=\rho_N(\mu)^{-1}\|\varphi\|_T
  +\sup_{X\in \Sigma_T^\circ}
     \frac{|\varphi(X)|}{\delta(X)\Psi_\mu(r(X))}                 \notag\\
 &\quad+\sup_{X\in \Sigma_T}
 \frac{\sum_{j=1}^2r_\mu(X)^{j}|\nabla^j\varphi(X)|
 +r_\mu(X)^{2+\alpha}
 [\nabla^2\varphi]_{C^\alpha(\mathcal B_{cr_\mu(X)}(X)\cap \Sigma_T)}}
 {\Psi_\mu(r(X))}.
\end{align*}
Since $a>0$ is fixed, the energy and $H^1$ norms are uniformly equivalent:
\[
 C^{-1}\|\varphi\|_{H^1(\Sigma_T)}
 \leq \|\varphi\|_T
 \leq C\|\varphi\|_{H^1(\Sigma_T)},
 \qquad T\in I,
\]
where $C$ is independent of $T$.
Set $Z_{\mu,T}:=\mu\partial_\mu W_{\mu,T}$.  We also use
\[
 Z_0(y):=-\left.\lambda\partial_\lambda U_\lambda(y)
 \right|_{\lambda=\lambda_*}
 =a\alpha_N\frac{|y|^2-1}{(1+|y|^2)^{a+1}},\qquad
 \kappa_N:=\int_{\mathbb R^N}pU_{\lambda_*}^{p-1}Z_0^2\dd y>0.
\]
\begin{lemma}\label{lem:res-scale-mode}
Under the notation and assumptions of Lemma~\ref{lem:res-projection},
as $\mu\downarrow0$, we have
\begin{equation}\label{eq:res-scale-mode-norms}
 \sup_{T\in I}\,\bigl|\|Z_{\mu,T}\|_T^2-\kappa_N\bigr|\longrightarrow0,\qquad
 \sup_{T\in I}\left(
 \mu^{-a}\|\partial_TZ_{\mu,T}\|_T
 +\mu^{N-2}\|Z_{\mu,T}\|\right)\leq C.
\end{equation}
\end{lemma}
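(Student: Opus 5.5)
The plan is to compare $Z_{\mu,T}=\mu\partial_\mu W_{\mu,T}$ with the cut-off scaling mode of the free bubble, $\mu\partial_\mu\mathcal U_\mu$, using \eqref{eq:res-Green-expansion} with $k=1$. The energy norm then localizes to the bubble core, where the rescaled profile converges to $Z_0$ (up to sign). For the first limit, start from the defining relation
\[
-L_{g_{\mathrm{cyl}}}Z_{\mu,T}=\chi\,\mu\partial_\mu(\mathcal U_\mu^p)=p\chi\,\mathcal U_\mu^{p-1}\mu\partial_\mu\mathcal U_\mu .
\]
Pairing it with $Z_{\mu,T}$ and integrating by parts on $\Sigma_T$ gives
\[
\|Z_{\mu,T}\|_T^2=\int_{\Sigma_T}p\chi\,\mathcal U_\mu^{p-1}(\mu\partial_\mu\mathcal U_\mu)\,Z_{\mu,T}\dd V .
\]
This is legitimate since $Z_{\mu,T}$ has zero lateral trace and is $T$-periodic.

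Next write $Z_{\mu,T}=\mu\partial_\mu\mathcal U_\mu+E_{\mu,T}$ on $\mathcal B_{r_0}(P)$. By \eqref{eq:res-Green-expansion} with $j=\ell=0$, $k=1$,
\[
|E_{\mu,T}|\le C\mu^a\bigl(|\mathcal H_T(\cdot,P)|+\mu\bigr)\le C\mu^a ,
\]
uniformly in $T\in I$ by Lemma~\ref{lem:local-periodic-green}. The main term $\int p\chi\,\mathcal U_\mu^{p-1}(\mu\partial_\mu\mathcal U_\mu)^2\dd V$ is computed in the conformal chart, exactly as in Lemma~\ref{lem:res-bubble-estimates}(v). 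Using \eqref{eq:conformal-measures}, the chart $x=\sqrt{1-\mu^2}e_N+\mu y$, and the fact that $\mu\partial_\mu$ of $\mu^{-a}U_{\lambda_*}((x-\sqrt{1-\mu^2}e_N)/\mu)$ equals $\mu^{-a}Z_0(y)$ up to an $O(\mu)$ error coming from the center shift $\sqrt{1-\mu^2}$, the integral converges to $\kappa_N$ with an $O(\mu)$ error. The conformal weights are $|x|^{-a}$-type factors equal to $1+O(\mu|y|)$, and the tails are controlled as in (iv).

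The cross term is bounded by $C\mu^a\int\chi\,\mathcal U_\mu^{p}\dd V\le C\mu^{2a}\to0$ by (i) and (iv). The region outside $\operatorname{supp}\chi$ contributes nothing. Hence $\|Z_{\mu,T}\|_T^2\to\kappa_N$ uniformly in $T\in I$.

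For $\partial_TZ_{\mu,T}$, note that in the fixed-domain sense $\partial_T$ acts only on $G_T$, because $\chi$ and $\mathcal U_\mu$ are $T$-independent near $P$. Differentiating the Green equation gives
\[
-L_{g_T}\partial_TZ_{\mu,T}=(\partial_TL_{g_T})Z_{\mu,T},
\]
with a right-hand side supported outside $\mathcal B_{8r_0}(P)$ and coefficients bounded uniformly. By Lemma~\ref{lem:res-projection}(iii),(iv), $Z_{\mu,T}$ and its first two derivatives are $O(\mu^a)$ there. The coercive energy estimate for $-L_{g_T}$, which holds uniformly in $T\in I$, then gives $\|\partial_TZ_{\mu,T}\|_T\le C\mu^a$.

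For the weighted norm, the bound $\mu^{N-2}\|Z_{\mu,T}\|\le C$ is checked term by term using Lemma~\ref{lem:res-projection}(i),(iii),(iv). For the energy part, $\rho_N(\mu)^{-1}\|Z\|_T\le C\rho_N(\mu)^{-1}$, and $\rho_N(\mu)\ge c\mu^{N-2}$ in every dimension (with equality of order for $N\le5$, and $\mu^4\sqrt{\cdot}$, $\mu^{(N+2)/2}\ge\mu^{N-2}$ for $N\ge6$, $\mu<1$). For the pointwise and derivative parts, one needs
\[
\mu^ar_\mu^{2-N}\le C\mu^{2-N}\Psi_\mu(r)\quad\text{and}\quad |Z|\le C\mu^{2-N}\delta\Psi_\mu .
\]
Check these region by region. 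On $r\le\mu$ both sides are of order $\mu^{a+2-N}$. On $\mu<r<r_0$, the left side $\mu^ar^{2-N}$ is at most $\mu^{2-N}\mu^{a+2}r^{-2}$ precisely because $r^{4-N}\le\mu^{4-N}$ for $N\ge4$. For $N=3$ the weight $\mu^{a+1}r^{-1}$ gives the same comparison, and the logarithm for $N=4$ only helps. On $r\ge r_0$ the boundary factor $\delta$ comes from (i). The main obstacle is getting this bookkeeping right in low dimensions, which is why the weight $\mu^{N-2}$ is chosen rather than something sharper.
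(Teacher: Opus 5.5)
Your proposal is correct and follows essentially the same route as the paper. You test the equation for $Z_{\mu,T}$ against itself and split off $\mu\partial_\mu\mathcal U_\mu$ via \eqref{eq:res-Green-expansion}, obtaining the core limit $\kappa_N$ and a cross term of order $\mu^{N-2}$. You use a coercive energy estimate for the $T$-differentiated equation, whose source is supported away from $P$. And you compare Lemma~\ref{lem:res-projection} with $\mu^{2-N}\delta\Psi_\mu$ region by region.

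The differences are minor. The paper uses the oddness of $\partial_{y_N}U_{\lambda_*}$ to get an $O(\mu^2)$ core error, where your $O(\mu)$ is enough. There is in fact no $|x|^{-a}$ weight in that core integral, because $\mathcal U_\mu^{p-1}(\mu\partial_\mu\mathcal U_\mu)^2\dd V$ has total homogeneity $p+1$ and is conformally invariant; the extra factor you posited would anyway only add $O(\mu)$. Finally, the paper bounds $(\partial_T L_{g_T})Z_{\mu,T}$ in $H^{-1}$ by duality with $\|Z_{\mu,T}\|_{H^1(\operatorname{supp}\partial_TL_{g_T})}$, whereas you use pointwise second-derivative bounds.
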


\begin{proof}
Since $(-L_{g_{\mathrm{cyl}}})Z_{\mu,T}
=\chi(\mu\partial_\mu)\mathcal U_\mu^p$, testing with $Z_{\mu,T}$ gives
\[
\begin{aligned}
 \|Z_{\mu,T}\|_T^2
 =\int_{\Sigma_T}\chi(\mu\partial_\mu\mathcal U_\mu)
       (\mu\partial_\mu\mathcal U_\mu^p)\dd V
       +\int_{\Sigma_T}\chi
       (Z_{\mu,T}-\mu\partial_\mu\mathcal U_\mu)
       (\mu\partial_\mu\mathcal U_\mu^p)\dd V.
\end{aligned}
\]
Moreover, by \eqref{eq:res-Green-expansion} and
Lemma~\ref{lem:res-bubble-estimates},
\[
\begin{aligned}
 \left|\int_{\Sigma_T}\chi
       (Z_{\mu,T}-\mu\partial_\mu\mathcal U_\mu)
       (\mu\partial_\mu\mathcal U_\mu^p)\dd V\right|
 &\leq C\mu^a\int_{\Sigma_T}
       |(\mu\partial_\mu)\mathcal U_\mu^p|\dd V
 \leq C\mu^{N-2}.
\end{aligned}
\]
At
$x=x(y)=\sqrt{1-\mu^2}\,e_N+\mu y$, direct differentiation gives
\[
 \mu^a(\mu\partial_\mu)
 U_{\lambda_*/\mu,\,\sqrt{1-\mu^2}e_N}(x)
 =Z_0(y)+\frac{\mu}{\sqrt{1-\mu^2}}
   \partial_{y_N}U_{\lambda_*}(y).
\]
Write $\widehat\chi(x)=\chi(-\log|x|,x/|x|)$.  The conformal change of
variables in \eqref{eq:res-flat-bubble} gives
\[
\begin{aligned}
 \int_{\Sigma_T}\chi p\mathcal U_\mu^{p-1}
       (\mu\partial_\mu\mathcal U_\mu)^2\dd V
 =\int_{\mathbb R^N}
 \widehat\chi\bigl(x(y)\bigr)
 pU_{\lambda_*}^{p-1}
 \left(Z_0+\frac{\mu}{\sqrt{1-\mu^2}}
       \partial_{y_N}U_{\lambda_*}\right)^2\dd y.
\end{aligned}
\]
Since $\chi=1$ near $P$, for some $c_0>0$ the factor
$1-\widehat\chi(x(y))$ vanishes if
$|y|<c_0/\mu$. 
Then we have
\[
\begin{aligned}
 \left|\int_{\mathbb R^N}
 \left\{1-\widehat\chi\bigl(x(y)\bigr)\right\}
 pU_{\lambda_*}^{p-1}
 \left(Z_0+\frac{\mu}{\sqrt{1-\mu^2}}
       \partial_{y_N}U_{\lambda_*}\right)^2\dd y\right|
 \leq C\int_{c_0/\mu}^{\infty}
 \bigl(r^{-2N}+\mu^2r^{-2N-2}\bigr)r^{N-1}\dd r
 \leq C\mu^N.
\end{aligned}
\]
Moreover, $Z_0$ and $U_{\lambda_*}$ are radial, whereas
$\partial_{y_N}U_{\lambda_*}$ is odd in $y_N$.  Therefore
\[
\begin{aligned}
 \int_{\mathbb R^N}pU_{\lambda_*}^{p-1}
 \left(Z_0+\frac{\mu}{\sqrt{1-\mu^2}}
       \partial_{y_N}U_{\lambda_*}\right)^2\dd y 
 =\kappa_N+
 \frac{\mu^2}{1-\mu^2}
 \int_{\mathbb R^N}pU_{\lambda_*}^{p-1}
       |\partial_{y_N}U_{\lambda_*}|^2\dd y
 =\kappa_N+O(\mu^2).
\end{aligned}
\]
Combining these estimates,
we get
$
 \|Z_{\mu,T}\|_T^2=\kappa_N+O(\mu^2+\mu^{N-2}),
$
which shows the first assertion.

We next estimate the period derivative.  After pull-back to the fixed
domain, the equation for $Z_{\mu,T}$ has a right-hand side independent
of $T$.  Differentiating it with respect to $T$, we obtain
\[
 (-L_{g_T})\partial_TZ_{\mu,T}
 =-\bigl(\partial_T(-L_{g_T})\bigr)Z_{\mu,T}.
\]
The coefficients of $\partial_TL_{g_T}$ are uniformly bounded and
supported away from $P$.  Testing this equation with
$\partial_TZ_{\mu,T}$, and then using uniform coercivity, duality, and
Lemma~\ref{lem:res-projection}, we obtain
\[
\begin{aligned}
 \|\partial_TZ_{\mu,T}\|_T^2
 &\leq C\left|\left\langle
   \bigl(\partial_T(-L_{g_T})\bigr)Z_{\mu,T},
   \partial_TZ_{\mu,T}\right\rangle\right|\\
 &\leq C\bigl\|\bigl(\partial_T(-L_{g_T})\bigr)Z_{\mu,T}\bigr\|_{H^{-1}}
          \|\partial_TZ_{\mu,T}\|_{H^1}\\
 &\leq C\|Z_{\mu,T}\|_{H^1(\operatorname{supp}\partial_TL_{g_T})}
          \|\partial_TZ_{\mu,T}\|_T
 \leq C\mu^a\|\partial_TZ_{\mu,T}\|_T.
\end{aligned}
\]
Consequently, $\|\partial_TZ_{\mu,T}\|_T\leq C\mu^a$.

We finally estimate the weighted norm.  From the first assertion,
$\|Z_{\mu,T}\|_T\leq C$.  Since the energy and $H^1$ norms are uniformly
equivalent and $\mu^{N-2}/\rho_N(\mu)\leq C$, we obtain
\[
 \mu^{N-2}\rho_N(\mu)^{-1}
 \|Z_{\mu,T}\|_{H^1(\Sigma_T)}\leq C.
\]
By Lemma~\ref{lem:res-projection} and the definition of $\Psi_\mu$,
applied separately when $r\leq\mu$, $\mu<r<r_0$, and $r\geq r_0$,
we have the direct estimates
\[
\begin{aligned}
 \mu^{N-2}\frac{|Z_{\mu,T}(X)|}{\delta(X)\Psi_\mu(r(X))}
 &\leq C\mu^{N-2}
 \frac{W_{\mu,T}(X)}{\delta(X)\Psi_\mu(r(X))}\leq C,\\
 \mu^{N-2}
 \frac{\displaystyle\sum_{\ell=1}^{2}r_\mu(X)^\ell
 |\nabla^\ell Z_{\mu,T}(X)|
 +r_\mu(X)^{2+\alpha}
 [\nabla^2Z_{\mu,T}]_{C^\alpha(\mathcal B_{cr_\mu(X)}(X)\cap\Sigma_T)}}
 {\Psi_\mu(r(X))}
 &\leq C\mu^{N-2}
 \frac{\mu^ar_\mu(X)^{2-N}}{\Psi_\mu(r(X))}\leq C.
\end{aligned}
\]
Together with the $H^1$ estimate above, this proves
$\mu^{N-2}\|Z_{\mu,T}\|\leq C$ and completes the proof.
\end{proof}

Define the residual by
\[
 R_{\mu,T}:=(-L_{g_{\mathrm{cyl}}})W_{\mu,T}-W_{\mu,T}^p.
\]
\begin{lemma}\label{lem:res-residual-estimates}
The following estimates hold uniformly for $T\in I$ and
$0<\mu<\mu_0$:
\begin{enumerate}
\renewcommand{\labelenumi}{\textup{(\roman{enumi})}}
\setlength{\itemsep}{0pt}
 \item $\displaystyle
 |\partial_T^j(\mu\partial_\mu)^kR_{\mu,T}(X)|
 +r_\mu(X)^\alpha
 [\partial_T^j(\mu\partial_\mu)^kR_{\mu,T}]_
 {C^\alpha(\mathcal B_{cr_\mu(X)}(X)\cap\Sigma_T)}
 \leq C\Theta_\mu(X)$ for $j,k\in\{0,1\}$.
 \item $\displaystyle
 \|\partial_T^jR_{\mu,T}\|_{H^{-1}}
 +\|\partial_T^j(\mu\partial_\mu R_{\mu,T})\|_{H^{-1}}
 \leq C\rho_N(\mu)$ for $j\in\{0,1\}$.
 \item $\displaystyle
 \Big\|T\longmapsto
 \mu^{2-N}\int_{\Sigma_T}R_{\mu,T}Z_{\mu,T}\dd V
 +\frac{a\alpha_N^2}{c_N}\mathscr M_N(T)\,
 \Big\|_{C^1(I)}\longrightarrow0$ as $\mu\downarrow0$.
\end{enumerate}
\end{lemma}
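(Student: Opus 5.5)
The plan starts from the exact form of the residual. Since $(-L_{g_{\mathrm{cyl}}})W_{\mu,T}=\chi\mathcal U_\mu^p$, we have
\[
 R_{\mu,T}=\chi\mathcal U_\mu^p-W_{\mu,T}^p
 =\chi\bigl(\mathcal U_\mu^p-W_{\mu,T}^p\bigr)-(1-\chi)W_{\mu,T}^p .
\]
We treat the regions $r\le r_0$ and $r\ge r_0$ separately.

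\textbf{Near $P$.} Write $W_{\mu,T}=\mathcal U_\mu+\varphi$. By \eqref{eq:res-Green-expansion} and Lemma~\ref{lem:local-periodic-green}, $\varphi=\frac{\alpha_N}{c_N}\mu^a\mathcal H_T(X,P)+O(\mu^{a+1})$, so $|\varphi|\le C\mu^a$ together with its $T$- and $\mu\partial_\mu$-derivatives. The mean value theorem then gives
\[
 |\mathcal U_\mu^p-W^p|\le C\mathcal U_\mu^{p-1}\mu^a+C\mu^{ap}
 \le C\mu^{a+2}(\mu^2+r^2)^{-2}+C\mu^{a+2},
\]
because $\mathcal U_\mu^{p-1}\asymp\mu^2(\mu^2+r^2)^{-2}$.

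\textbf{Away from $P$.} For $r\ge r_0/2$, Lemma~\ref{lem:res-projection}(i) gives $|W^p|\le C\mu^{ap}\delta^p=C\mu^{a+2}\delta^p\le C\Psi_\mu(r_0)$. Derivatives in $T$ and $\mu\partial_\mu$ are handled with Lemma~\ref{lem:res-projection}(ii)--(iii). The H\"older seminorms follow from Lemma~\ref{lem:res-bubble-estimates}(iii) and Lemma~\ref{lem:res-projection}(iv). This proves (i).

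\textbf{$H^{-1}$ bound.} For (ii), use the duality $\|f\|_{H^{-1}}\le C\|f\|_{L^{2N/(N+2)}}$ and integrate the pointwise bound $\Theta_\mu$:
\[
 \int_{r<r_0}\bigl(\mu^{a+2}(\mu^2+r^2)^{-2}\bigr)^{2N/(N+2)}r^{N-1}\,dr .
\]
Rescaling $r=\mu s$ shows this is dominated by the region $r\sim 1$ when $N\le5$, giving $\mu^{a+2}=\mu^{(N+2)/2}$. That looks too good for $N\le5$, where $\rho_N=\mu^{N-2}$; so the binding contribution is actually the far-field term $\Psi_\mu(r_0)$ over a region of volume $O(1)$. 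For $N=3$ this gives $\Psi_\mu(r_0)=\mu^{a+1}=\mu^{N-2}$. The $\log$ at $N=6$ comes from borderline integrability of $(\mu^2+r^2)^{-4N/(N+2)}$ when $4N/(N+2)\cdot$ matches $N$. I will match exponents case by case, using $\rho_N\ge$ each contribution. The $T$-derivatives are handled the same way via (i).

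\textbf{Reduced energy (iii).} This is the main step. Decompose
\[
 \int R Z=\int\chi\bigl(\mathcal U_\mu^p-W^p\bigr)Z-\int(1-\chi)W^pZ .
\]
The second integral is $O(\mu^{(p+1)a})=O(\mu^N)$, which is $o(\mu^{N-2})$. In the first, linearize:
\[
 \mathcal U_\mu^p-W^p=-p\,\mathcal U_\mu^{p-1}\varphi+O\bigl(\mathcal U_\mu^{p-2}\varphi^2+|\varphi|^p\bigr),
 \qquad Z=\mu\partial_\mu\mathcal U_\mu+O(\mu^a).
\]
The main term is
\[
 -\frac{\alpha_N}{c_N}\mu^a\mathcal H_T(P,P)\int p\,\mathcal U_\mu^{p-1}\mu\partial_\mu\mathcal U_\mu
 =-\frac{\alpha_N}{c_N}\mu^a\mathcal H_T(P,P)\int\chi\,\mu\partial_\mu(\mathcal U_\mu^p).
\]
By Lemma~\ref{lem:res-bubble-estimates}(v) the last integral is $\frac{\alpha_N}{c_N}a\mu^a+O(\mu^{a+1})$. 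Since $\mathcal H_T(P,P)=c_N\mathscr M_N(T)$, we get
\[
 -\frac{a\alpha_N^2}{c_N}\mu^{N-2}\mathscr M_N(T).
\]

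\textbf{Error terms and the main obstacle.} Replacing $\mathcal H_T(X,P)$ by its value at $P$ costs $O(r)$, and Lemma~\ref{lem:res-bubble-estimates}(iv) with $m=1$ gives $O(\mu^{N-1})$. The quadratic terms are $O(\mu^{2a}\int\mathcal U^{p-1})$, which is $o(\mu^{N-2})$ after checking the cases $N=3,4$ carefully; there $\mathcal U_\mu^{p-2}$ may be singular, so $|\varphi|^p$ has to be used instead. The $C^1(I)$ convergence follows by differentiating in $T$ in the fixed-domain sense, using \eqref{eq:res-Green-expansion} with $j=1$ and \eqref{eq:local-periodic-green-regularity}. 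I expect the main obstacle to be the uniform $o(\mu^{N-2})$ control of these quadratic and $T$-derivative remainders in low dimensions.
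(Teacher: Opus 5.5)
Your part (i) is essentially the paper's argument, and your route to (iii) works. The argument for (ii), however, does not give the stated bound when $N=6$. The dual Sobolev embedding $L^{2N/(N+2)}\subset H^{-1}$ loses a logarithm exactly in this borderline dimension. For $N=6$ one has $a=2$, $2N/(N+2)=3/2$, and
\[
 \int_{\{r<r_0/2\}}\Theta_\mu^{3/2}\dd V
 \asymp\mu^6\int_0^{r_0/(2\mu)}\frac{s^5\dd s}{(1+s^2)^3}
 \asymp\mu^6|\log\mu|,
\]
so $\|\Theta_\mu\|_{L^{3/2}}\asymp\mu^4|\log\mu|^{2/3}$. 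This is larger than $\rho_6(\mu)=\mu^4\sqrt{1+|\log\mu|}$ by a factor $|\log\mu|^{1/6}$. A better pointwise bound does not help: for $T$ with $\mathscr M_N(T)\neq0$, $|R_{\mu,T}|$ is comparable to $\Theta_\mu$ on the neck, so $\|R_{\mu,T}\|_{L^{3/2}}$ is itself of this size.

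The paper instead pairs with $\eta\in H^1_0$ and combines Hardy's inequality $\int_{\mathcal B_{r_0}(P)}\eta^2r^{-2}\dd V\le C\|\eta\|_T^2$ with Cauchy--Schwarz. The relevant quantity is then
\[
 \int_{\{r<r_0/2\}}r^2\Theta_\mu^2\dd V
 \asymp\mu^{2N-4}\int_0^{r_0/(2\mu)}\frac{s^{N+1}\dd s}{(1+s^2)^4}
 \asymp\rho_N(\mu)^2 .
\]
This holds in all three regimes, since the integrand behaves like $s^{N-7}$; the far field contributes only $C\Psi_\mu(r_0)\le C\rho_N(\mu)$.

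Your case analysis is also inverted. For $N\le5$ the $L^{2N/(N+2)}$ integral is dominated by the core $r\sim\mu$ and yields $\mu^{N-2}$. The far-field term is strictly smaller: for $N=3$, $\Psi_\mu(r_0)\asymp\mu^{a+1}=\mu^{3/2}$, not $\mu^{N-2}$. Only for $N\ge7$ does the region $r\sim1$ dominate, giving $\mu^{(N+2)/2}$. So your duality argument proves (ii) for $N\neq6$. The weaker $N=6$ bound would be harmless for the later uses, which need only $\rho_N(\mu)^2=o(\mu^{N-2})$, but it does not prove the lemma as stated.

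For (iii), your argument differs from the paper's but is correct. The paper rescales at $X=\exp_P(\mu y)$ and uses $\mu^{2-a}R_{\mu,T}\to-p\alpha_N\mathscr M_N(T)U_{\lambda_*}^{p-1}$ and $\mu^aZ_{\mu,T}\to Z_0$ on $\mathcal B_{R\mu}(P)$. It then bounds the neck by $CR^{-2}$ and lets $\mu\downarrow0$, then $R\to\infty$. You instead linearize $\chi(\mathcal U_\mu^p-W_{\mu,T}^p)$ on all of $\operatorname{supp}\chi$ and read off the main term from Lemma~\ref{lem:res-bubble-estimates}(v) with $k=1$. The remainders are bounded in two ways:
\begin{enumerate}
\item by $C\mu^{N-1}$, using Lemma~\ref{lem:res-bubble-estimates}(iv) and the $O(\mu^{a+1})$ term in \eqref{eq:res-Green-expansion};
\item by $C\mu^{2a}\int_{\mathcal B_{2r_0}(P)}\mathcal U_\mu^{p-1}\dd V$, which is $O(\mu^2)$, $O(\mu^4|\log\mu|)$ and $O(\mu^N)$ for $N=3$, $N=4$ and $N\ge5$ respectively.
\end{enumerate}
This avoids the two-parameter limit and gives an explicit relative error $O(\mu)$ in $C^1(I)$, which the paper's argument does not.

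Two small corrections. First, with your $\varphi=W_{\mu,T}-\mathcal U_\mu$, the factor $\mathcal U_\mu^{p-2}$ is a negative power only for $N\ge7$, not for $N=3,4$. In that range the Taylor remainder is bounded by $C|\varphi|^p$, and $\int_{\mathcal B_{2r_0}(P)}\mathcal U_\mu\dd V\le C\mu^a$ gives $O(\mu^N)$. Second, the $T$-derivative must include the volume term coming from $\partial_TJ_T=b$. It is harmless: $\chi=0$ on its support, so $R_{\mu,T}=-W_{\mu,T}^p$ there and the contribution is $O(\mu^N)$.
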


\begin{proof}
\emph{(i).}
We first take $j=k=0$.  Since
$R_{\mu,T}=\chi\mathcal U_\mu^p-W_{\mu,T}^p$, the mean-value formula,
\eqref{eq:res-Green-expansion}, and Lemma~\ref{lem:res-projection} give,
if $r<r_0/2$,
\[
\begin{aligned}
 |R_{\mu,T}|
 &\leq C\bigl(\mathcal U_\mu^{p-1}
       |W_{\mu,T}-\mathcal U_\mu|
       +|W_{\mu,T}-\mathcal U_\mu|^p\bigr)\\
 &\leq C\mu^a\mathcal U_\mu^{p-1}
 \leq C\mu^{a+2}(\mu^2+r^2)^{-2}.
\end{aligned}
\]
On the transition region we have $|R_{\mu,T}|\leq C\mu^{a+2}$, and
outside $\operatorname{supp}\chi$ we have
$|R_{\mu,T}|\leq C\mu^{a+2}\delta^p$.
The corresponding gradient estimates and scaled interpolation on
$\mathcal B_{cr_\mu}(X)$ therefore yield
\[
 |R_{\mu,T}(X)|+r_\mu(X)^\alpha
 [R_{\mu,T}]_{C^\alpha(\mathcal B_{cr_\mu(X)}(X)\cap\Sigma_T)}
 \leq C\Theta_\mu(X).
\]
After applying $\partial_T^j(\mu\partial_\mu)^k$, the same calculation
uses \eqref{eq:res-Green-expansion} and Lemma~\ref{lem:res-projection};
this proves (i).

\emph{(ii).}
For $\eta\in H_0^1(\Sigma_T)$, by Hardy's inequality and radial integration,
we obtain
\[
 \int_{\Sigma_T}\Theta_\mu|\eta|\dd V
 \leq C\|\eta\|_{H^1}
 \left\{\mu^{N+2}\int_0^{r_0}
 \frac{r^{N+1}}{(\mu^2+r^2)^4}\dd r\right\}^{1/2}
 +C\Psi_\mu(r_0)\|\eta\|_{H^1}
 \leq C\rho_N(\mu)\|\eta\|_{H^1}.
\]
Combining this inequality with (i), we obtain (ii).

\emph{(iii).}
Fix $R>1$.  By \eqref{eq:res-Green-expansion} and
$\mathcal H_T(P,P)=c_N\mathscr M_N(T)$, on
$X=\exp_P(\mu y)$ the following limits hold locally uniformly in $y$,
together with one $T$-derivative:
\[
 \mu^aZ_{\mu,T}\to Z_0,\qquad
 \mu^{-a}(W_{\mu,T}-\mathcal U_\mu)\to
 \alpha_N\mathscr M_N(T),\qquad
 \mu^{2-a}R_{\mu,T}\to
 -p\alpha_N\mathscr M_N(T)U_{\lambda_*}^{p-1}.
\]
Consequently,
\[
 \mu^{2-N}\int_{\mathcal B_{R\mu}(P)}R_{\mu,T}Z_{\mu,T}\dd V
 \longrightarrow
 -p\alpha_N\mathscr M_N(T)
 \int_{B_R}U_{\lambda_*}^{p-1}Z_0\dd y
 \quad\text{in }C^1(I).
\]
Furthermore,
\[
 p\int_{\mathbb R^N}U_{\lambda_*}^{p-1}Z_0\dd y
 =\left.\lambda\partial_\lambda\int_{\mathbb R^N}
 \left\{\alpha_N\left(\frac{\lambda}
 {\lambda^2+|y|^2}\right)^a\right\}^{p}\dd y\right|_{\lambda=1}
 =\frac{a\alpha_N}{c_N}.
\]
{
All derivatives of integrals are taken after the fixed-domain
identification \eqref{eq:fixed-domain-map}--\eqref{eq:fixed-domain-metric}.
Thus, for
any integrand $f_T$,
\[
 \frac{\dd}{\dd T}\int_{\Sigma_T}f_T\dd V
 =\int_{\Sigma_{T^*}}
 \left\{\partial_T(\Phi_T^*f_T)J_T
       +(\Phi_T^*f_T)\partial_TJ_T\right\}\dd t\dd\sigma,
 \qquad \partial_TJ_T=b(t).
\]
The second term vanishes on $\mathcal B_{8r_0}(P)$ and is therefore
absent from the core and neck calculations below.
}
For $R\mu<r<r_0$, by (i) and Lemma~\ref{lem:res-projection},
\[
\begin{aligned}
 &\mu^{2-N}\int_{\mathcal B_{r_0}(P)\setminus
 \mathcal B_{R\mu}(P)}
 \bigl(|R_{\mu,T}Z_{\mu,T}|
 +|\partial_TR_{\mu,T}||Z_{\mu,T}|
 +|R_{\mu,T}||\partial_TZ_{\mu,T}|\bigr)\dd V\\
 &\qquad\leq C\mu^{2-N}
 \int_{R\mu}^{r_0}
 \mu^{a+2}r^{-4}\,\mu^ar^{2-N}r^{N-1}\dd r
 \leq CR^{-2}.
\end{aligned}
\]
On the outer region, the same estimates give
\[
 \begin{aligned}
 &\mu^{2-N}\int_{\Sigma_T\setminus\mathcal B_{r_0}(P)}
 \bigl(|R_{\mu,T}Z_{\mu,T}|
 +|\partial_TR_{\mu,T}||Z_{\mu,T}|
 +|R_{\mu,T}||\partial_TZ_{\mu,T}|\bigr)\dd V
 \leq C\mu^{2-N+a}\Psi_\mu(r_0)=o(1).
 \end{aligned}
\]
{
The Jacobian contribution admits the sharper bound $O(\mu^2)$.  Indeed,
on $\operatorname{supp}\partial_TJ_T$ we have $\chi=0$ and hence
$R_{\mu,T}=-W_{\mu,T}^p$.  Since $J_T\geq1/2$ and
$|Z_{\mu,T}|\leq CW_{\mu,T}\leq C\mu^a\delta$ there, we obtain
\[
 \begin{aligned}
 &\mu^{2-N}\left|\int_{\Sigma_T}R_{\mu,T}Z_{\mu,T}
 \frac{\partial_TJ_T}{J_T}\dd V\right|
 \leq C\mu^{2-N+a(p+1)}
 \int_{\operatorname{supp}b}\delta^{p+1}\dd V
 \leq C\mu^2.
 \end{aligned}
\]
}
We first let $\mu\downarrow0$ and then $R\to\infty$.  The preceding
estimates prove (iii).
\end{proof}

\subsection{Uniform linear theory}

We next invert the linearized operator modulo the dilation mode, uniformly
in the concentration scale and the period.
\begin{lemma}\label{lem:res-inverse}
There exist $\mu_0>0$ and $C<\infty$ such that, for
$0<\mu<\mu_0$, $T\in I$, every $h\in H^{-1}(\Sigma_T)$ in the fixed
symmetry class, and every $\gamma\in\mathbb R$, there exists a unique
pair $(\varphi,b)\in H_0^1(\Sigma_T)\times\mathbb R$, with $\varphi$ in the
same symmetry class, satisfying
\begin{equation}\label{eq:res-augmented-linear-problem}
 \begin{cases}
 (-L_{g_{\mathrm{cyl}}}-pW_{\mu,T}^{p-1})\varphi
   =h+b(-L_{g_{\mathrm{cyl}}})Z_{\mu,T},\\
 \langle\varphi,Z_{\mu,T}\rangle_T=\gamma
 \end{cases}
\end{equation}
and
\begin{equation}\label{eq:res-energy-inverse}
 \|\varphi\|_T+|b|
 \leq C\bigl(\|h\|_{H^{-1}(\Sigma_T)}+|\gamma|\bigr).
\end{equation}
\end{lemma}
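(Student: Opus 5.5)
The plan is to reduce the augmented problem \eqref{eq:res-augmented-linear-problem} to a uniform coercivity estimate for $\mathcal L_{\mu,T}:=-L_{g_{\mathrm{cyl}}}-pW_{\mu,T}^{p-1}$ on the $\langle\cdot,\cdot\rangle_T$-orthogonal complement of $Z_{\mu,T}$, and then conclude by Fredholm theory. Write $K_{\mu,T}:=(-L_{g_{\mathrm{cyl}}})^{-1}pW_{\mu,T}^{p-1}$, which is compact and self-adjoint on $(H^1_0(\Sigma_T),\langle\cdot,\cdot\rangle_T)$. Let $E=\{Z_{\mu,T}\}^{\perp_T}$ in the symmetry class. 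The system is equivalent to finding $\varphi=\varphi^\perp+sZ_{\mu,T}$ with $s=\gamma/\|Z_{\mu,T}\|_T^2$, together with $b$, such that
\[
 \varphi^\perp-\Pi K_{\mu,T}\varphi^\perp
 =\Pi\bigl((-L_{g_{\mathrm{cyl}}})^{-1}h+sK_{\mu,T}Z_{\mu,T}\bigr),
\]
where $\Pi$ is the orthogonal projection onto $E$. The number $b$ is then read off from the $Z_{\mu,T}$-component of the equation. By Lemma~\ref{lem:res-scale-mode}, $\|Z_{\mu,T}\|_T^2\to\kappa_N>0$, so $|s|\le C|\gamma|$. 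It is also easy to see that $\|K_{\mu,T}Z_{\mu,T}\|_T\le C$, since $W_{\mu,T}^{p-1}$ is controlled by the bubble weight (Lemma~\ref{lem:res-projection}(i)) and Hardy/Sobolev applies. Therefore everything reduces to the a priori estimate
\[
 \|\varphi\|_T\le C\|\Pi(I-K_{\mu,T})\varphi\|_T\qquad\text{for }\varphi\in E,
\]
uniformly in $\mu<\mu_0$ and $T\in I$. Injectivity on $E$ then gives surjectivity by the Fredholm alternative for $I-\Pi K$, and uniqueness of $(\varphi,b)$ follows at once.

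I will prove the a priori estimate by contradiction, which I expect to be the main step. Suppose $\mu_n\to0$, $T_n\to T_\infty\in I$, and $\varphi_n\in E$ with $\|\varphi_n\|_T=1$, satisfying $\mathcal L_{\mu_n,T_n}\varphi_n=h_n+b_n(-L)Z_n$ with $\|h_n\|_{H^{-1}}\to0$. Testing against $Z_n$ and using $\langle\varphi_n,Z_n\rangle_T=0$ gives
\[
 b_n\|Z_n\|_T^2=-\langle h_n,Z_n\rangle-\int pW^{p-1}\varphi_nZ_n .
\]
The last term is $o(1)$. To see this, blow up: put $\tilde\varphi_n(y)=\mu_n^{a}\varphi_n(\exp_P(\mu_ny))$. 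Then $\mu_n^aZ_n\to Z_0$ by \eqref{eq:res-Green-expansion}, and $\mathcal L Z_n$ is small in the dual weighted sense because $Z_n$ is an approximate kernel element. Consequently $b_n\to0$. The rescaled functions $\tilde\varphi_n$ are bounded in $D^{1,2}(\mathbb R^N)$ and converge weakly to a solution of $-\Delta\tilde\varphi=pU_{\lambda_*}^{p-1}\tilde\varphi$. By the nondegeneracy of the bubble, $\tilde\varphi$ is a combination of $Z_0$ and the $\partial_{y_i}U_{\lambda_*}$. Axial symmetry kills $\partial_{y_i}$ for $i<N$, and evenness in $t$, which corresponds to reflection $y_N\mapsto-y_N$ up to the conformal chart, kills $\partial_{y_N}$. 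The orthogonality $\langle\varphi_n,Z_n\rangle_T=0$ passes to the limit as $\int pU^{p-1}_{\lambda_*}Z_0\tilde\varphi=0$, so $\tilde\varphi=0$. Hence $\int pW^{p-1}\varphi_n^2\to0$: the core part vanishes by local compactness, while the neck and outer parts are small because $W^{p-1}\lesssim\mu^2(\mu^2+r^2)^{-2}$ decays there and the weak limit on $\Sigma_{T_\infty}$ away from $P$ is a solution of $-L\varphi=0$, hence zero. Testing the equation with $\varphi_n$ then gives
\[
 1=\|\varphi_n\|_T^2=\int pW^{p-1}\varphi_n^2+o(1)\to0,
\]
a contradiction.

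The delicate points, which I regard as the main obstacle, are uniformity in $T$ and showing that the weight $pW_{\mu,T}^{p-1}$ has no mass away from the core. For uniformity in $T$, pull back by $\Phi_T$ to the fixed cylinder $\Sigma_{T^*}$. The metrics $g_T$ are uniformly equivalent and coincide with $g_{\mathrm{cyl}}$ near $P$, so weak limits along $T_n\to T_\infty$ make sense. For the mass estimate, split the integral into $\{r<R\mu\}$, $\{R\mu<r<\varepsilon\}$ and $\{r>\varepsilon\}$. The middle region is handled by Hardy's inequality: there $pW^{p-1}\le C\mu^2r^{-4}\le CR^{-2}r^{-2}$, which is small uniformly in $n$. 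On $\{r>\varepsilon\}$ one has $W^{p-1}\le C\mu^2\to0$ uniformly. This completes the estimate \eqref{eq:res-energy-inverse}, with $|b|\le C(\|h\|_{H^{-1}}+|\gamma|)$ following from the formula for $b$.
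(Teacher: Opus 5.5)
Your proposal is correct and follows essentially the same route as the paper: a normalized contradiction sequence, $b_n\to0$ by pairing with $Z_{\mu,T}$ (orthogonality plus the fact that $(-L_{g_{\mathrm{cyl}}}-pW_{\mu,T}^{p-1})Z_{\mu,T}=\mu\partial_\mu R_{\mu,T}$ is $O(\rho_N(\mu))$ in $H^{-1}$), a blow-up to the linearized bubble in which symmetry removes the translations and orthogonality removes the dilation, smallness of the potential in the neck and outer regions, and finally Fredholm theory with the shift by $\gamma Z_{\mu,T}/\|Z_{\mu,T}\|_T^2$. The only real difference is how you close the contradiction: you use the identity $\|\varphi_n\|_T^2=\int pW_{\mu_n,T_n}^{p-1}\varphi_n^2\,\mathrm{d}V+o(1)$, where the $b_n$-term drops out exactly by orthogonality, together with Hardy in the neck and the uniform $O(\mu_n^2)$ bound outside, and this is a legitimate streamlining of the paper's region-by-region cutoff tests and Sobolev absorption.
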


\begin{proof}
We first consider the case $\gamma=0$.  The identity
\begin{equation}\label{eq:res-scale-residual-identity}
 (-L_{g_{\mathrm{cyl}}}-pW_{\mu,T}^{p-1})Z_{\mu,T}
 =\mu\partial_\mu R_{\mu,T}
\end{equation}
follows by differentiating the definition of $R_{\mu,T}$.  If the
estimate failed, after normalization we could find
$\mu_n\downarrow0$, $T_n\to T\in I$, and
\begin{equation}
 \|\varphi_n\|_{T_n}=1,\qquad
 \langle\varphi_n,Z_n\rangle_{T_n}=0,\qquad
 (-L_{g_{\mathrm{cyl}}}-pW_n^{p-1})\varphi_n
 =h_n+b_n(-L_{g_{\mathrm{cyl}}})Z_n,\qquad
 \|h_n\|_{H^{-1}}\to0 .
\end{equation}
Pairing with $Z_n$ and using
\eqref{eq:res-scale-residual-identity},
Lemma~\ref{lem:res-scale-mode}, and the $H^{-1}$ estimate in
Lemma~\ref{lem:res-residual-estimates}, we obtain $b_n\to0$.

After pull-back to the fixed cylinder,
$\varphi_n\rightharpoonup\varphi$ in $H_0^1$.  Away from $P$,
$W_n^{p-1}\to0$ uniformly, so from the weak equation we obtain
$-L_{g_{\mathrm{cyl}}}\varphi=0$ on $\Sigma_T\setminus\{P\}$.
The point $P$ has zero $H^1$-capacity for $N\geq3$; hence the equation
extends across $P$, and by coercivity we obtain $\varphi=0$.  Testing
this equation with a cut-off, we obtain, for every
fixed $r>0$,
\begin{equation}
 \|\varphi_n\|_{H^1(\Sigma_{T_n}\setminus\mathcal B_r(P))}\to0.
\end{equation}
{
Indeed, choose $\eta_r=0$ on $\mathcal B_{r/2}(P)$ and $\eta_r=1$
outside $\mathcal B_r(P)$.  Testing with $\eta_r^2\varphi_n$ gives
\[
 \|\eta_r\varphi_n\|_{T_n}^2
 \leq o(1)+C\int_{\mathcal B_r(P)\setminus
 \mathcal B_{r/2}(P)}\varphi_n^2\dd V=o(1),
\]
where we used $W_n^{p-1}\to0$ away from $P$, $h_n\to0$ in $H^{-1}$,
$b_n\to0$, and the compact $H^1\hookrightarrow L^2$ embedding on the
fixed annulus.
}

To analyze the core, set
$\widehat\varphi_n(y)=\mu_n^a\varphi_n(\exp_P(\mu_n y))$ and
$\widehat W_n(y)=\mu_n^aW_n(\exp_P(\mu_n y))$.
The rescaled metrics converge locally to the Euclidean metric.  By
\eqref{eq:res-Green-expansion} and the scaled Schauder estimates in
Lemma~\ref{lem:res-projection}, we have
$\widehat W_n\to U_{\lambda_*}$ in $C^2_{\mathrm{loc}}$.  Passing to a
subsequence and then to the limit in the weak equation, we obtain
\[
 (-\Delta-pU_{\lambda_*}^{p-1})\widehat\varphi=0
 \quad\text{in }\mathbb R^N,\qquad
 \widehat\varphi\in\mathcal D^{1,2}(\mathbb R^N).
\]
By the Euclidean nondegeneracy theorem
\cite[Appendix~D]{Rey1990}, the kernel is generated by the dilation and
translation fields.  The imposed symmetry eliminates all translations,
so $\widehat\varphi=cZ_0$.  Passing the orthogonality condition to the
limit gives
\[
 0=c\int_{\mathbb R^N}pU_{\lambda_*}^{p-1}Z_0^2\dd y,
\]
and hence $c=0$.  A local cut-off test then shows that, for every fixed
$R$,
\begin{equation}
 \|\varphi_n\|_{H^1(\mathcal B_{R\mu_n}(P))}\to0.
\end{equation}
{
To see this directly, let $\zeta_R=1$ on $B_R$ and
$\operatorname{supp}\zeta_R\subset B_{2R}$ in the rescaled variables.
Testing the rescaled equation with
$\zeta_R^2\widehat\varphi_n$ and using
$\widehat\varphi_n\to0$ in $L^2(B_{2R})$ yields
\[
 \int_{B_R}\bigl(|\nabla\widehat\varphi_n|^2
 +\widehat\varphi_n^2\bigr)\dd y
 \leq C_R\int_{B_{2R}}\widehat\varphi_n^2\dd y+o(1)=o(1).
\]
}

It remains to rule out concentration in the neck.  Choose $R$ large
so that, uniformly for small fixed $r$,
\[
 C\|pW_n^{p-1}\|_{L^{N/2}
 (\mathcal B_r(P)\setminus\mathcal B_{R\mu_n}(P))}<\frac13.
\]
{
The required smallness follows explicitly from
\[
 \|pW_n^{p-1}\|_{L^{N/2}
 (\mathcal B_r(P)\setminus\mathcal B_{R\mu_n}(P))}
 \leq C\left(\int_{R\mu_n}^{r}
 \mu_n^Ns^{-2N}s^{N-1}\dd s\right)^{2/N}
 \leq CR^{-2}.
\]
Let $\eta_{n,R,r}$ equal one on
$\mathcal B_{r/2}(P)\setminus\mathcal B_{2R\mu_n}(P)$ and be supported
in $\mathcal B_r(P)\setminus\mathcal B_{R\mu_n}(P)$.  Testing the
equation with $\eta_{n,R,r}^2\varphi_n$ and using Sobolev's inequality
gives
\[
 \begin{aligned}
  \|\eta_{n,R,r}\varphi_n\|_{T_n}^2
  &\leq C\|pW_n^{p-1}\|_{L^{N/2}}
       \|\eta_{n,R,r}\varphi_n\|_{T_n}^2\\
  &\quad+C\int_{\operatorname{supp}\nabla\eta_{n,R,r}}
       |\nabla\eta_{n,R,r}|^2\varphi_n^2\dd V+o(1).
 \end{aligned}
\]
The transition integral tends to zero by the core and outer limits
above.  Taking $R$ large absorbs the first term.  Therefore,
}
\[
 \|\varphi_n\|_{H^1
 (\mathcal B_{r/2}(P)\setminus\mathcal B_{2R\mu_n}(P))}\to0.
\]
The core, neck, and outer estimates contradict
$\|\varphi_n\|_{T_n}=1$.  Thus the desired estimate holds when
$\gamma=0$; pairing with $Z_{\mu,T}$ also yields the bound for $b$.

For general $\gamma$, apply the preceding estimate to
$\varphi-\gamma Z_{\mu,T}/\|Z_{\mu,T}\|_T^2$ and use
Lemmas~\ref{lem:res-scale-mode} and~\ref{lem:res-residual-estimates}.
Finally, the augmented operator in
\eqref{eq:res-augmented-linear-problem} is a compact perturbation of
the coercive Dirichlet operator together with a one-dimensional
constraint.  It is Fredholm of index zero, and the estimate makes its
kernel trivial.  Hence it is bijective, and we obtain existence and
uniqueness.
\end{proof}

We record the weight estimates used in the weighted inverse.
From the Hardy estimate in the proof of
Lemma~\ref{lem:res-residual-estimates} and
Lemma~\ref{lem:res-scale-mode}, we obtain the first two estimates below;
the third follows from Lemma~\ref{lem:res-bubble-estimates} and
$(-L_{g_{\mathrm{cyl}}})Z_{\mu,T}=\mu\partial_\mu(\chi\mathcal U_\mu^p)$:
\begin{equation}\label{eq:res-weight-preliminaries}
 \|\Theta_\mu\|_{H^{-1}}\leq C\rho_N(\mu),\qquad
 \mu^{N-2}|Z_{\mu,T}|\leq C\delta\Psi_\mu(r),\qquad
 \mu^{N-2}|(-L_{g_{\mathrm{cyl}}})Z_{\mu,T}|\leq C\Theta_\mu.
\end{equation}
The two weighted integrals needed below follow by radial integration.
Indeed,
\begin{align}
 \int_{\Sigma_T}\Theta_\mu|Z_{\mu,T}|\dd V
 &\leq C\mu^{2a+2}\int_0^{r_0/2}
       \frac{s^{N-1}\dd s}{(\mu^2+s^2)^{a+2}}
       +C\mu^a\Psi_\mu(r_0)
 \leq C\mu^{N-2},                                      \label{eq:res-weight-identities-two}\\
 \int_{\Sigma_T}\Theta_\mu\delta\Psi_\mu(r)\dd V
 &\leq C\mu^{2N-4}
 +C\Psi_\mu(r_0)^2
 +C\begin{cases}
  \mu^{2a+3}\displaystyle\int_\mu^{r_0/2}s^{N-6}\dd s,&N=3,\\
  \mu^{2a+4}\displaystyle\int_\mu^{r_0/2}
       s^{N-7}\{1+\log(s/\mu)\}\dd s,&N=4,\\
  \mu^{2a+4}\displaystyle\int_\mu^{r_0/2}s^{N-7}\dd s,&N\geq5,
 \end{cases}                                             \notag\\
 &\leq C\rho_N(\mu)^2=o(\mu^{N-2}).                     \label{eq:res-weight-identities-four}
\end{align}
Here the first integral is split into
$\mathcal B_{r_0/2}(P)$ and its complement; the three terms in the second
line correspond to $r\leq\mu$, $\mu<r<r_0/2$, and
$r\geq r_0/2$, respectively.

\begin{lemma}\label{lem:res-weighted-inverse}
Under the assumptions of Lemma~\ref{lem:res-inverse}, let
$(\varphi,b)$ solve \eqref{eq:res-augmented-linear-problem}.  If, for
some $A\geq0$,
\begin{equation}\label{eq:res-weighted-data}
 |h(X)|+r_\mu(X)^{\alpha}[h]_{C^\alpha(\mathcal B_{cr_\mu(X)}(X)\cap \Sigma_T)}
 \leq A\Theta_\mu(X),\qquad |\gamma|\leq A\mu^{N-2},
\end{equation}
then
\begin{equation}\label{eq:res-weighted-inverse}
 \|\varphi\|+\mu^{2-N}|b|\leq CA.
\end{equation}
\end{lemma}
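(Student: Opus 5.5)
We first bound $b$ and the energy part of $\|\varphi\|$ using Lemma~\ref{lem:res-inverse} together with \eqref{eq:res-weight-preliminaries}. Since $|h|\le A\Theta_\mu$ and $\|\Theta_\mu\|_{H^{-1}}\le C\rho_N(\mu)$, we have $\|h\|_{H^{-1}}\le CA\rho_N(\mu)$. Combined with $|\gamma|\le A\mu^{N-2}\le CA\rho_N(\mu)$, the energy estimate \eqref{eq:res-energy-inverse} gives $\|\varphi\|_T\le CA\rho_N(\mu)$. This controls the first summand of $\|\varphi\|$.

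The bound on $b$ needs a sharper pairing argument than the crude $|b|\le CA\rho_N(\mu)$. We pair the equation with $Z_{\mu,T}$ and use \eqref{eq:res-scale-residual-identity}:
\[
 b\|Z_{\mu,T}\|_T^2=\langle\varphi,\mu\partial_\mu R_{\mu,T}\rangle_{L^2}-\int_{\Sigma_T}hZ_{\mu,T}\dd V .
\]
By \eqref{eq:res-weight-identities-two}, the last term is at most $CA\mu^{N-2}$. For the first term, Lemma~\ref{lem:res-residual-estimates}(i) gives $|\mu\partial_\mu R_{\mu,T}|\le C\Theta_\mu$, so it is bounded by $C\|\Theta_\mu\|_{H^{-1}}\|\varphi\|_T\le CA\rho_N^2=o(A\mu^{N-2})$. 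Lemma~\ref{lem:res-scale-mode} then yields $|b|\le CA\mu^{N-2}$. With this, the full right-hand side $f:=h+pW^{p-1}_{\mu,T}\varphi+b(-L)Z_{\mu,T}$ has its data terms bounded by $CA\Theta_\mu$, again by \eqref{eq:res-weight-preliminaries}.

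The main obstacle is the pointwise weighted bound $|\varphi|\le CA\,\delta\Psi_\mu(r)$. We argue by contradiction with a blow-up of the weighted norm, in the style of del Pino--Musso--Pacard and Wei--Yan. Suppose $\|\varphi_n\|_*=1$ in the sup-part while $A_n\to0$. First, Green's representation $\varphi=\int G_T(\cdot,Y)f(Y)\dd V_Y$ together with the kernel bounds of Lemma~\ref{lem:local-periodic-green} shows that the convolution $\int G_T\,\Theta_\mu$ is bounded by $C\delta\Psi_\mu(r)$; this is a radial computation that also explains the logarithm for $N=4$. In the same way, $\int G_T\,W^{p-1}|\varphi|$ is bounded by $C\delta\Psi_\mu$ times the sup of $|\varphi|/(\delta\Psi_\mu)$ over the core region $r\le R\mu$, plus $o_R(1)\|\varphi\|_*$; the point is that $W^{p-1}\Psi_\mu$ decays fast outside $R\mu$. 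It therefore suffices to show that $|\varphi_n|/\Psi_\mu\to0$ on $\mathcal B_{R\mu_n}(P)$. After rescaling $\widehat\varphi_n=\mu_n^{a}\varphi_n(\exp_P(\mu_ny))$, the functions $\widehat\varphi_n$ are locally bounded and converge to a bounded solution of the linearized bubble equation. Axial-even symmetry forces the limit to be $cZ_0$, and the orthogonality condition (with $\gamma_n$ small) forces $c=0$. This gives the contradiction. The delicate point is to check that the orthogonality passes to the limit and that the tail of the convolution estimate is genuinely small uniformly in $T\in I$.

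Finally, the derivative and H\"older parts of $\|\varphi\|$ follow from scaled Schauder estimates. On each ball $\mathcal B_{cr_\mu(X)}(X)$, rescaled to unit size, we apply interior or boundary $C^{2,\alpha}$ estimates with data $f$, whose scaled H\"older seminorm is bounded by $CA\Theta_\mu\le CA\,r_\mu^{-2}\Psi_\mu$. The scaled H\"older seminorm of the coefficient $pW^{p-1}$ is controlled by Lemma~\ref{lem:res-projection}. The $C^0$ bound just obtained supplies the remaining term, which gives \eqref{eq:res-weighted-inverse}.
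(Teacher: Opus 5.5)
Your outline is correct and follows the paper's overall scheme. The shared steps are: the energy bound from Lemma~\ref{lem:res-inverse}; pairing with $Z_{\mu,T}$ to get $|b|\le CA\mu^{N-2}$; the Green mapping estimate $\int_{\Sigma_T}G_T(\cdot,Y)\Theta_\mu(Y)\dd V_Y\le C\delta\Psi_\mu$; a contradiction argument normalized by $S(\varphi)=\sup|\varphi|/(\delta\Psi_\mu)$ with a core blow-up onto the bubble kernel; and scaled Schauder estimates.

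One normalization is wrong, however. Since $\Psi_\mu=\mu^a$ on the core, the rescaling must be $\widetilde\varphi_n(y)=\mu_n^{-a}\varphi_n(\exp_P(\mu_n y))$. The factor $\mu_n^{a}$ you wrote is the energy scaling used in Lemma~\ref{lem:res-inverse}, not the weighted one. With $\mu_n^{a}$ the rescaled functions are $O(\mu_n^{2a})$ and tend to zero trivially. You then learn nothing about $\sup_{\mathcal B_{R\mu_n}(P)}|\varphi_n|/\Psi_{\mu_n}$, and the limiting orthogonality $\int_{\mathbb R^N}pU_{\lambda_*}^{p-1}Z_0\widetilde\varphi\,\dd y=0$ does not follow. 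With $\mu_n^{-a}$, the quantity $\mu_n^{2-N}\langle\varphi_n,Z_{\mu_n,T_n}\rangle_{T_n}$ converges to exactly that integral, since the neck and outer contributions vanish as $R\to\infty$. After this correction your argument goes through.

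The genuine difference from the paper is in the exterior step.
\begin{itemize}
\item The paper works on $\Sigma_T\setminus\mathcal B_{R\mu}(P)$, using the Dirichlet Green kernel of that domain and the harmonic function $H_{R,\mu,T}$. It invokes Kato's inequality to make $|\varphi_n|$ a subsolution and bounds $\mathcal K_{\mu,R}$ by a separate radial computation. It needs only vanishing of $\varphi_n/\Psi_{\mu_n}$ on the sphere $\partial\mathcal B_{R\mu_n}(P)$.
\item You use the global representation $\varphi=\int_{\Sigma_T}G_T(\cdot,Y)f(Y)\dd V_Y$ with $G_T\ge0$, splitting only the potential term at $r=R\mu$. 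This is purely linear and shorter.
\end{itemize}

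You should, however, state the pointwise fact behind your phrase ``decays fast''. First, $W_{\mu,T}^{p-1}\delta\Psi_\mu\le C\Theta_\mu$ everywhere. Second, on $\{r\ge R\mu\}$,
\[
W_{\mu,T}^{p-1}\delta\Psi_\mu\le(\varepsilon_R+o_\mu(1))\Theta_\mu,\qquad \varepsilon_R=C\mu^{-a}\Psi_\mu(R\mu),
\]
that is, $\varepsilon_R=CR^{-1}$, $CR^{-2}(1+\log R)$, $CR^{-2}$ for $N=3$, $N=4$, $N\ge5$. With this, Step~I alone gives
\[
S(\varphi_n)\le C\sup_{\mathcal B_{R\mu_n}(P)}\frac{|\varphi_n|}{\Psi_{\mu_n}}+C\varepsilon_R\,S(\varphi_n)+o(1)
\]
on all of $\Sigma_{T_n}$, core included. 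The price is that you need vanishing on the whole ball $\mathcal B_{R\mu_n}(P)$, which the local uniform convergence supplies anyway.

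Your other deviations are fine. Invoking nondegeneracy for bounded kernel elements of $-\Delta-pU_{\lambda_*}^{p-1}$ replaces the paper's decay iteration into $\mathcal D^{1,2}(\mathbb R^N)$ before citing Rey; it is a standard spherical-harmonics result, but you should cite it. Using approximate orthogonality with $\mu_n^{2-N}\gamma_n\to0$ directly replaces subtracting $\gamma_nZ_{\mu_n,T_n}/\|Z_{\mu_n,T_n}\|_{T_n}^2$.
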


The proof is given in \ref{sec:res-technical-appendix}.

\begin{corollary}\label{cor:res-weighted-inverse-T}
Under the hypotheses of Lemma~\ref{lem:res-weighted-inverse}, suppose
that the pullback of $h$ and the scalar $\gamma$ are $C^1$ in $T$ and,
for $j=0,1$,
\begin{align*}
 |\partial_T^jh(X)|
 +r_\mu(X)^{\alpha}
 [\partial_T^jh]_
 {C^\alpha(\mathcal B_{cr_\mu(X)}(X)\cap \Sigma_T)}
 \leq A\Theta_\mu(X),\qquad
 |\partial_T^j\gamma| \leq A\mu^{N-2}.
\end{align*}
Then
\begin{equation}\label{eq:res-weighted-inverse-T}
 \sum_{j=0}^1\biggl\{
 \|\partial_T^j\varphi\|
 +\mu^{2-N}|\partial_T^jb|\biggr\}\leq CA.
\end{equation}
\end{corollary}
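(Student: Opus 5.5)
The plan is to differentiate the augmented system \eqref{eq:res-augmented-linear-problem} in $T$ after pull-back to the fixed cylinder $\Sigma_{T^*}$, and then apply Lemma~\ref{lem:res-weighted-inverse} to the differentiated system. The case $j=0$ is exactly Lemma~\ref{lem:res-weighted-inverse}.

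\textbf{Step 1: differentiability of the solution.} The augmented operator of Lemma~\ref{lem:res-inverse} is a $C^1$ family in $T$ of isomorphisms $H^1_0\times\mathbb R\to H^{-1}\times\mathbb R$ on the fixed domain, with uniformly bounded inverses. Its $T$-dependence enters only through three sources: the coefficients of $L_{g_T}$, which are supported away from $\mathcal B_{8r_0}(P)$; the coefficient $W_{\mu,T}^{p-1}$; and $Z_{\mu,T}$, including the inner product $\langle\cdot,\cdot\rangle_T$. All three are $C^1$ in $T$ by Lemma~\ref{lem:res-projection} and Lemma~\ref{lem:res-scale-mode}. The implicit function theorem therefore gives that $(\varphi,b)$ is $C^1$ in $T$.

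\textbf{Step 2: the differentiated system.} Differentiating, the pair $(\partial_T\varphi,\partial_Tb)$ solves \eqref{eq:res-augmented-linear-problem} with new data. The new right-hand side is
\[
 \widetilde h=\partial_Th+\bigl(\partial_T(-L_{g_T})\bigr)\varphi-b\,\partial_T\bigl[(-L_{g_T})Z_{\mu,T}\bigr]+p\,\partial_T\bigl(W_{\mu,T}^{p-1}\bigr)\varphi .
\]
The new constraint constant is $\widetilde\gamma=\partial_T\gamma-\langle\varphi,\partial_TZ_{\mu,T}\rangle_T-(\partial_T\langle\cdot,\cdot\rangle_T)(\varphi,Z_{\mu,T})$.

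\textbf{Step 3: checking the hypotheses \eqref{eq:res-weighted-data} with constant $CA$.} Each term is estimated in turn.
\begin{itemize}
 \item $\partial_Th$ is controlled by assumption.
 \item $(-L_{g_T})Z_{\mu,T}=\mu\partial_\mu(\chi\mathcal U_\mu^p)$ is $T$-independent on the fixed domain. So $\partial_T$ of that term vanishes, and $b$ multiplies nothing new.
 \item $\partial_T(-L_{g_T})\varphi$ lives where $r\geq 8r_0$. There $\Theta_\mu=\Psi_\mu(r_0)$, and the weighted norm gives $|\nabla^j\varphi|\le CA\Psi_\mu(r_0)$ for $j\le2$, together with Hölder control.
 \item For $p\,\partial_T(W^{p-1})\varphi$, use Lemma~\ref{lem:res-projection}(ii),(i): for $r<r_0$ one has $|\partial_TW^{p-1}|\le C W^{p-2}\mu^a\le C\mu^{2}(\mu^2+r^2)^{-1}$, since $W^{p-2}\mu^a\lesssim \mu^{a(p-1)}(\mu^2+r^2)^{-a(p-2)}$ and $a(p-1)=2$.
\end{itemize}
Multiplying by $|\varphi|\le CA\Psi_\mu(r)$ should give a quantity $\le CA\Theta_\mu$. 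This is the main obstacle. Near the core, $\mu^2(\mu^2+r^2)^{-1}\Psi_\mu\lesssim\mu^{a+2}(\mu^2+r^2)^{-2}$ needs a case check in $N$ using \eqref{eq:res-solution-weight}. For $N=3$ the exponent $p-2$ is positive, so this is fine. For $N\ge6$ we have $p<2$; then I would use instead the bound $|\partial_TW^{p-1}|\le CW^{p-1}\cdot(\mu^a/W)$, which remains $\lesssim\mu^2$ near $P$. The Hölder seminorms follow by the same scaled interpolation on $\mathcal B_{cr_\mu}(X)$ used in Lemma~\ref{lem:res-residual-estimates}(i).

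For the constraint, $\|\partial_TZ_{\mu,T}\|_T\le C\mu^a$ (Lemma~\ref{lem:res-scale-mode}) and $\|\varphi\|_T\le CA\rho_N(\mu)$ give $|\langle\varphi,\partial_TZ\rangle|\le CA\mu^a\rho_N\le CA\mu^{N-2}$. The metric-derivative term is supported away from $P$, and there both $\varphi$ and $Z_{\mu,T}$ are small, of size $\mu^a\cdot A\Psi_\mu(r_0)$.

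\textbf{Step 4: conclusion.} Applying Lemma~\ref{lem:res-weighted-inverse} to $(\partial_T\varphi,\partial_Tb)$ with data bounded by $CA$ yields \eqref{eq:res-weighted-inverse-T}. If the factor $\Theta_\mu$ bound fails in some dimension for the $W^{p-1}$ term, a fallback is to absorb it using the neck smallness $\|W^{p-1}\|_{L^{N/2}}\lesssim R^{-2}$, as in Lemma~\ref{lem:res-inverse}, but in the weighted setting via the barrier argument of the appendix proof.
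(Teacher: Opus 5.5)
Your proposal is correct and follows the paper's proof. You differentiate the pulled-back augmented system and note that $(-L_{g_T})Z_{\mu,T}=\mu\partial_\mu(\chi\mathcal U_\mu^p)$ is $T$-independent. You then bound the commutator $(\partial_T L_{g_T})\varphi$ and the term $p\,\partial_T(W_{\mu,T}^{p-1})\varphi$ by $CA\Theta_\mu$, bound the new constraint constant by $CA\mu^{N-2}$, and reapply Lemma~\ref{lem:res-weighted-inverse}. The step you flag as the main obstacle is easier than your case check suggests, and that check is slightly off: for $N=3$ one has $W_{\mu,T}^{p-2}\mu^a\asymp\mu^2(\mu^2+r^2)^{-3/2}$, and $p=2$ (not $p<2$) when $N=6$. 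It is simpler to use Lemma~\ref{lem:res-projection}(iii), which gives $|\partial_T(W_{\mu,T}^{p-1})|\le CW_{\mu,T}^{p-1}$ in every region, including $r\ge r_0$; the inequality $W_{\mu,T}^{p-1}\delta\Psi_\mu\le C\Theta_\mu$ (Step~II in the proof of Lemma~\ref{lem:res-weighted-inverse}) then finishes the estimate in all dimensions.
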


\begin{proof}
Differentiate the pulled-back equation and the orthogonality condition.
The differentiated pair has the same augmented principal operator as
in Lemma~\ref{lem:res-weighted-inverse}.  The commutators with
$\partial_TL_{g_T}$ are supported away from $P$ and, by
Lemma~\ref{lem:res-projection}, are bounded by $CA\Theta_\mu$; the
remaining terms are controlled by the assumed bounds for
$\partial_Th$, $\partial_T\gamma$, and $\partial_TZ_{\mu,T}$.
By applying Lemma~\ref{lem:res-weighted-inverse} to the differentiated
system, we obtain Corollary~\ref{cor:res-weighted-inverse-T}.
\end{proof}

\subsection{Nonlinear projected problem}

We now solve the infinite-dimensional equation by contraction, leaving
only the dilation multiplier.
\begin{proposition}\label{prop:res-correction}
There are $\mu_0>0$ and $K<\infty$ such that, for
$0<\mu<\mu_0$ and $T\in I$, there is a pair
$(\varphi_{\mu,T},\lambda_{\mu,T})$ with
$\langle\varphi_{\mu,T},Z_{\mu,T}\rangle_T=0$ and
$\|\varphi_{\mu,T}\|\leq K$ such that
\begin{equation}\label{eq:res-projected-nonlinear-equation}
 (-L_{g_{\mathrm{cyl}}})(W_{\mu,T}+\varphi_{\mu,T})
 -g(W_{\mu,T}+\varphi_{\mu,T})
 =\lambda_{\mu,T}(-L_{g_{\mathrm{cyl}}})Z_{\mu,T}.
\end{equation}
This pair is unique among those satisfying the equation, the orthogonality
condition, and $\|\varphi\|\leq K$.
After the fixed-domain identification, the pair is $C^1$ in
$(\mu,T)\in(0,\mu_0)\times I$, and
\begin{align}
 \sum_{j=0}^1\|\partial_T^j\varphi_{\mu,T}\|
 &\leq C,                                                        \label{eq:res-correction-weighted}\\
 |\lambda_{\mu,T}|+|\partial_T\lambda_{\mu,T}|
 &\leq C\mu^{N-2}.                                               \label{eq:res-multiplier-bound}
\end{align}
Moreover,
\begin{equation}\label{eq:res-nonlinear-pairing}
 \left\|T\longmapsto\int_{\Sigma_T}
 \bigl[g(W_{\mu,T}+\varphi_{\mu,T})-g(W_{\mu,T})
       -pW_{\mu,T}^{p-1}\varphi_{\mu,T}\bigr]Z_{\mu,T}\dd V
 \right\|_{C^1(I)}=o(\mu^{N-2}).
\end{equation}
\end{proposition}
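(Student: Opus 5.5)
We will solve the projected problem by a contraction in the weighted space $\mathscr E_\mu:=\{\varphi:\|\varphi\|<\infty,\ \langle\varphi,Z_{\mu,T}\rangle_T=0\}$. Write
\[
 N_{\mu,T}(\varphi):=g(W_{\mu,T}+\varphi)-g(W_{\mu,T})-pW_{\mu,T}^{p-1}\varphi .
\]
Then \eqref{eq:res-projected-nonlinear-equation} is equivalent to
\[
 (-L_{g_{\mathrm{cyl}}}-pW_{\mu,T}^{p-1})\varphi
 =-R_{\mu,T}+N_{\mu,T}(\varphi)+\lambda(-L_{g_{\mathrm{cyl}}})Z_{\mu,T}.
\]
Let $\mathcal T_{\mu,T}$ be the solution operator $h\mapsto(\varphi,b)$ of Lemma~\ref{lem:res-inverse} with $\gamma=0$. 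The problem becomes the fixed-point equation
\[
 \varphi=\mathcal T_{\mu,T}\bigl(-R_{\mu,T}+N_{\mu,T}(\varphi)\bigr)_1 ,
\]
with $\lambda$ read off as the second component.

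\textbf{Bounding the source terms.} Lemma~\ref{lem:res-residual-estimates}(i) shows that $-R_{\mu,T}$ satisfies \eqref{eq:res-weighted-data} with $A=C_0$. Lemma~\ref{lem:res-weighted-inverse} therefore gives $\|\varphi_0\|+\mu^{2-N}|b_0|\le CC_0$. Set $K=2CC_0$. The key pointwise bound is that for $\|\varphi\|\le K$ one has
\[
 |\varphi|\le K\delta\Psi_\mu(r)\ll W_{\mu,T}.
\]
This follows from Lemma~\ref{lem:res-projection}(i), because $\Psi_\mu(r)/(\mu^a(\mu^2+r^2)^{-a})\le C\mu^{\min(1,2)}\to0$ for $r<r_0$ and $\Psi_\mu(r_0)/\mu^a\to0$ outside. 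Hence $W+\varphi>0$, $g(W+\varphi)=(W+\varphi)^p$, and
\[
 |N_{\mu,T}(\varphi)|\le CW^{p-2}\varphi^2 \ \ (p\ge2),\qquad |N_{\mu,T}(\varphi)|\le C|\varphi|^p \ \ (\text{in general}).
\]
In either case we get $|N_{\mu,T}(\varphi)|\le C K^2 o(1)\Theta_\mu$. This is checked regionwise: in the core, $W^{p-2}\Psi_\mu^2\le \mu^{-2}\cdot\mu^{2a}\cdot\mu^{a\cdot\ldots}$, which is a small multiple of $\mu^{a+2}(\mu^2+r^2)^{-2}$; in the neck and outer region the ratio to $\Theta_\mu$ is likewise $o(1)$. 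The Hölder seminorms are handled by the scaled $C^{2,\alpha}$ part of the norm, and the Lipschitz bound $|N(\varphi_1)-N(\varphi_2)|\le o(1)\|\varphi_1-\varphi_2\|\Theta_\mu$ follows from the same computation.

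\textbf{Contraction and the multiplier.} Lemma~\ref{lem:res-weighted-inverse} now makes the map a contraction of the ball $\{\|\varphi\|\le K\}$ for $\mu<\mu_0$. This yields existence and local uniqueness, together with $|\lambda_{\mu,T}|\le C\mu^{N-2}$.

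\textbf{Regularity in $(\mu,T)$.} $C^1$ dependence comes from the implicit function theorem applied to
\[
 (\varphi,\lambda)\mapsto \varphi-\mathcal T(\cdots),
\]
using the smoothness of the data in $(\mu,T)$ from Lemmas~\ref{lem:res-projection} and~\ref{lem:res-residual-estimates}. For the $T$-derivative bounds \eqref{eq:res-correction-weighted}--\eqref{eq:res-multiplier-bound}, differentiate the fixed-point equation in $T$ on the fixed domain. The data then satisfy the hypotheses of Corollary~\ref{cor:res-weighted-inverse-T}: $\partial_TR_{\mu,T}$ is controlled by Lemma~\ref{lem:res-residual-estimates}(i). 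The term $\partial_T N$ equals $[p(W+\varphi)^{p-1}-pW^{p-1}]\partial_T\varphi$ plus a term involving $\partial_TW$, which is bounded by Lemma~\ref{lem:res-projection}(ii),(iii). The derivative of the orthogonality constraint,
\[
 \partial_T\gamma=-\langle\varphi,\partial_TZ_{\mu,T}\rangle-(\text{metric term}),
\]
is bounded by $C\mu^{N-2}$ using Lemma~\ref{lem:res-scale-mode}. The term in $\partial_T\varphi$ has a small coefficient and is absorbed.

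\textbf{The pairing estimate \eqref{eq:res-nonlinear-pairing}.} This is the main obstacle. We need the pairing of $N$ with $Z$ to be $o(\mu^{N-2})$, together with its $T$-derivative. Using $|Z_{\mu,T}|\le CW_{\mu,T}$ and the quadratic bound,
\[
 \int|N\,Z|\le C\int W^{p-1}\varphi^2 ,
\]
which is essentially $\int\Theta_\mu\delta\Psi_\mu\cdot(\Psi_\mu/\ldots)$. By \eqref{eq:res-weight-identities-four} this is at most $C\rho_N(\mu)^2=o(\mu^{N-2})$. When $p<2$ (i.e.\ $N\ge7$) we instead use $|N|\le C|\varphi|^p$ in the region where $|\varphi|$ is comparable to $W$ (which does not occur) and $W^{p-2}\varphi^2$ elsewhere. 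The $T$-derivative is treated identically, with the Jacobian term $\partial_TJ_T$ supported far away where everything is $O(\mu^{a(p+1)})$. The delicate point is tracking exponents near $N=6$ and for the logarithmic case $N=4$. I expect to verify these regionwise via the radial integrals already displayed in \eqref{eq:res-weight-identities-four}.
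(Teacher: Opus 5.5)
Your proposal is correct and follows the paper's route. You run a contraction in the weighted norm via Lemma~\ref{lem:res-weighted-inverse}, driven by the relative smallness $|\varphi|\le K\delta\Psi_\mu\ll W_{\mu,T}$; you then get $C^1$ dependence from the implicit function theorem, the $T$-derivative bounds from Corollary~\ref{cor:res-weighted-inverse-T} (absorbing the small $\partial_T\varphi$ term), and the pairing bound by regionwise integration. Your shortcut $W_{\mu,T}^{p-1}\varphi^2\le CK^2\Theta_\mu\delta\Psi_\mu$ together with \eqref{eq:res-weight-identities-four} already gives $O(\rho_N(\mu)^2)=o(\mu^{N-2})$ in every dimension, including $N=4,6$, so the exponent tracking you flagged is done. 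The one point to make explicit is that the implicit function theorem should be posed, as the paper does, on fixed unweighted H\"older spaces with the orthogonality as an extra component; there $\alpha<p-1$ is what makes the Nemytskii map $C^1$ near the lateral boundary.
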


\begin{proof}
For brevity, write $W=W_{\mu,T}$ and define
\[
 \mathcal Q_W(\zeta)=g(W+\zeta)-g(W)-pW^{p-1}\zeta.
\]
The elementary Nemytskii estimate
\begin{equation}\label{eq:res-Nemytskii-remainder}
 |\mathcal Q_W(\zeta)|\leq C
 \begin{cases}
  W^{p-2}|\zeta|^2+|\zeta|^p,&p\geq2,\\
  |\zeta|^p,&1<p<2,
 \end{cases}
\end{equation}
and its local H\"older version will be used below.  Given $\zeta$, let
$(\mathcal T_{\mu,T}(\zeta),\lambda)$ be the solution supplied by
Lemma~\ref{lem:res-weighted-inverse} with
$h=-R_{\mu,T}+\mathcal Q_W(\zeta)$ and $\gamma=0$.

By the definitions of $\Theta_\mu$ and $\Psi_\mu$ and
Lemma~\ref{lem:res-projection}, on every fixed
ball $\{\|\zeta\|\leq K\}$,
\[
 |\mathcal Q_W(\zeta)|+r_\mu^\alpha
 [\mathcal Q_W(\zeta)]_{C^\alpha}
 \leq \varepsilon_\mu(K)\Theta_\mu,
 \qquad \varepsilon_\mu(K)\to0,
\]
and the same estimate for the difference of two arguments has the
additional factor $\|\zeta_1-\zeta_2\|$.  By
Lemma~\ref{lem:res-residual-estimates} and
Lemma~\ref{lem:res-weighted-inverse}, choosing $K$ fixed and then
$\mu_0$ small makes $\mathcal T_{\mu,T}$ a contraction of this ball.
Its unique fixed point is $\varphi_{\mu,T}$, and the corresponding
scalar is $\lambda_{\mu,T}$.  By the same weighted estimate, we obtain
the asserted weighted and multiplier bounds.
More explicitly, by the definitions of
$\Psi_\mu$ and the two-region bounds for $W_{\mu,T}$, we have
\[
 \sup_{X\in\Sigma_T^\circ}
 \frac{\delta(X)\Psi_\mu(r(X))}{W_{\mu,T}(X)}=o(1)
\]
uniformly for $T\in I$.  Therefore
\begin{equation}\label{eq:res-relative-correction-small}
 \sup_{T\in I}\left\|\frac{\varphi_{\mu,T}}{W_{\mu,T}}\right\|_{L^\infty}
 \longrightarrow0.
\end{equation}

We justify the claimed $C^1$ dependence more explicitly.
{
More precisely, fix $(\bar\mu,\bar T)\in(0,\mu_0)\times I$.  After
pull-back, in a neighborhood of this point we use the fixed unweighted
H\"older spaces
\[
 \mathcal X=C^{2,\alpha}_{0,\mathrm{sym}}(\Sigma_{T^*})\times\mathbb R,
 \qquad
 \mathcal Y=C^\alpha_{\mathrm{sym}}(\Sigma_{T^*})\times\mathbb R.
\]
The weighted norms above are used only for the uniform a priori estimates.
On $\mathcal X$, consider
\[
 (\zeta,\ell)\longmapsto
 \left(
 (-L_{g_T})(W_{\mu,T}+\zeta)-g(W_{\mu,T}+\zeta)
 -\ell(-L_{g_T})Z_{\mu,T},
 \langle\zeta,Z_{\mu,T}\rangle_T\right).
\]
Since $\alpha<p-1$, the corresponding Nemytskii map is $C^1$ from
$\mathcal X$ to $\mathcal Y$.  Its derivative in $(\zeta,\ell)$ at the
constructed solution is the augmented operator of
Lemma~\ref{lem:res-weighted-inverse}, plus multiplication by
$pW_{\mu,T}^{p-1}-g'(W_{\mu,T}+\varphi_{\mu,T})$.  By
\eqref{eq:res-relative-correction-small} and the weighted inverse estimate,
the augmented inverse composed with this multiplication operator has norm
$o(1)$.  Hence the full derivative is invertible by a Neumann-series
argument; standard Schauder regularity gives the same conclusion on
$\mathcal X\to\mathcal Y$.  The differentiated metric, potential,
orthogonality, and volume terms are controlled by
Corollary~\ref{cor:res-weighted-inverse-T} and the fixed-domain estimates.
}

We finally justify the size of the nonlinear contribution to the
reduced equation.  Put $\varphi=\varphi_{\mu,T}$ and
\[
 \mathscr N_\mu(T):=
 \sum_{j=0}^1\int_{\Sigma_T}
 \bigl|\partial_T^j\{\mathcal Q_W(\varphi)Z_{\mu,T}\}\bigr|\dd V.
\]
{
The derivative of the volume form produces in addition
\[
 \int_{\Sigma_{T^*}}
 \bigl|\Phi_T^*\{\mathcal Q_W(\varphi)Z_{\mu,T}\}\bigr|
 |\partial_TJ_T|\dd t\dd\sigma,
\]
which is bounded by the $j=0$ term in $\mathscr N_\mu(T)$ because
$\partial_TJ_T=b$ is fixed and bounded.
}
By \eqref{eq:res-Nemytskii-remainder}, Lemma~\ref{lem:res-projection}, and
the relative smallness of $\varphi/W$, we have
\[
 \sum_{j=0}^1\bigl|\partial_T^j\{\mathcal Q_W(\varphi)Z_{\mu,T}\}\bigr|
 \leq C
 \begin{cases}
  W^{p-1}(|\varphi|^2+|\varphi|\,|\partial_T\varphi|),&p\geq2,\\
  W(|\varphi|^p+|\varphi|^{p-1}|\partial_T\varphi|),&1<p<2.
 \end{cases}
\]
Since $|\varphi|+|\partial_T\varphi|\leq C\Psi_\mu(r)$, after splitting
$\Sigma_T$ into $\{r\leq\mu\}$, $\{\mu<r<r_0\}$, and
$\{r\geq r_0\}$, we obtain
\begin{equation*}
 \mathscr N_\mu(T)\leq C
 \begin{cases}
  \displaystyle\int_{\Sigma_T}W^{p-1}\Psi_\mu(r)^2\dd V,&3\leq N\leq6,\\[2mm]
  \displaystyle\int_{\Sigma_T}W\Psi_\mu(r)^p\dd V,&N\geq7
 \end{cases}.
\end{equation*}
By radial integration on the three regions, we obtain the concrete inequality
\begin{equation*}
 \mathscr N_\mu(T)\leq C
 \begin{cases}
  \mu^2,&N=3,\\
  \mu^4,&N=4,\\
  \mu^{2N-4},&5\leq N\leq6,\\
  \mu^{N+2},&N\geq7
 \end{cases}
 =o(\mu^{N-2}).
\end{equation*}
Consequently,
\[
 \left\|T\longmapsto\int_{\Sigma_T}
 \mathcal Q_W(\varphi_{\mu,T})Z_{\mu,T}\dd V
 \right\|_{C^1(I)}=o(\mu^{N-2}),
\]
and hence the last assertion of Proposition~\ref{prop:res-correction} follows.
\end{proof}

\subsection{Finite-dimensional reduction and construction}

It remains to remove the scalar multiplier by selecting the period.

\begin{theorem}\label{thm:res-family}
There exist $\mu_0>0$ and a $C^1$ function
$T:(0,\mu_0)\to I$ such that
\begin{equation}
 T(\mu)\longrightarrow T^*, \qquad \mu\downarrow0 ,
\end{equation}
and \eqref{eq:t-cylinder} has a positive solution
$v_\mu$ on $\Sigma_{T(\mu)}$.  After the fixed-domain identification,
$\mu\mapsto(T(\mu),v_\mu)$ is $C^1$.  As $\mu\downarrow0$, the measures
$v_\mu^{2^*}\dd V$ concentrate only at $P$ in each period,
\[
 \|v_\mu\|_{L^\infty}\longrightarrow\infty,\qquad
 E_{T(\mu)}(v_\mu)\longrightarrow
 \int_{\mathbb R^N}U_{\lambda_*}^{2^*}\dd y .
\]
\end{theorem}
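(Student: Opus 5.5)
The plan is to remove the multiplier $\lambda_{\mu,T}$ from Proposition~\ref{prop:res-correction} by choosing $T$ as a function of $\mu$. Set $v=W_{\mu,T}+\varphi_{\mu,T}$. Pairing \eqref{eq:res-projected-nonlinear-equation} with $Z_{\mu,T}$ gives
\[
 \lambda_{\mu,T}\|Z_{\mu,T}\|_T^2
 =\int_{\Sigma_T}\bigl[(-L_{g_{\mathrm{cyl}}})v-g(v)\bigr]Z_{\mu,T}\dd V .
\]
The right side splits as
\[
 \int R_{\mu,T}Z_{\mu,T}
 +\int\varphi_{\mu,T}\,(-L_{g_{\mathrm{cyl}}}-pW^{p-1})Z_{\mu,T}
 -\int\mathcal Q_W(\varphi_{\mu,T})Z_{\mu,T}.
\]
We use the self-adjointness of the linearized operator to move it onto $Z_{\mu,T}$ in the middle term.

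Each piece is then estimated separately.
\begin{itemize}
\item For the first piece, Lemma~\ref{lem:res-residual-estimates}(iii) gives $-\mu^{N-2}\frac{a\alpha_N^2}{c_N}\mathscr M_N(T)+o(\mu^{N-2})$ in $C^1(I)$.
\item For the middle piece, \eqref{eq:res-scale-residual-identity} rewrites it as $\int\varphi\,\mu\partial_\mu R_{\mu,T}$. Lemma~\ref{lem:res-residual-estimates}(i) and \eqref{eq:res-weight-identities-four} bound it by $\int\Theta_\mu\delta\Psi_\mu\le C\rho_N(\mu)^2=o(\mu^{N-2})$. The $T$-derivative is handled the same way using \eqref{eq:res-correction-weighted} and the Jacobian term, which is supported away from $P$.
\item The last piece is $o(\mu^{N-2})$ in $C^1(I)$ by \eqref{eq:res-nonlinear-pairing}.
\end{itemize}

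Since $\|Z_{\mu,T}\|_T^2\to\kappa_N>0$ uniformly, with controlled $T$-derivative by Lemma~\ref{lem:res-scale-mode}, the function $F_\mu(T):=\mu^{2-N}\lambda_{\mu,T}$ satisfies
\[
 F_\mu(T)=-\frac{a\alpha_N^2}{c_N\kappa_N}\mathscr M_N(T)+o(1)\quad\text{in }C^1(I).
\]

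Next we solve $F_\mu(T)=0$. By Proposition~\ref{prop:unique-regular-zero}, $\mathscr M_N(T^*)=0$, and the termwise-derivative formula there gives $\mathscr M_N'(T^*)<0$. The limit function therefore has a nondegenerate simple zero at $T^*$ and changes sign on $I$. For $\mu$ small, $F_\mu$ then has a unique zero $T(\mu)\in I$, with $\partial_TF_\mu$ bounded away from $0$ on $I$. The implicit function theorem, applied to the $C^1$ map $(\mu,T)\mapsto F_\mu(T)$ from Proposition~\ref{prop:res-correction}, shows that $T(\cdot)$ is $C^1$. Since the zero can be localized in arbitrarily small neighborhoods of $T^*$, we get $T(\mu)\to T^*$. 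With $\lambda=0$, $v_\mu:=W_{\mu,T(\mu)}+\varphi_{\mu,T(\mu)}$ solves $-L_{g_{\mathrm{cyl}}}v=g(v)$. By \eqref{eq:res-relative-correction-small} and the positivity of $W$ (Lemma~\ref{lem:res-projection}(i)), $v_\mu>0$ in the interior, so $g(v)=v^p$ and \eqref{eq:t-cylinder} holds. Composing $C^1$ maps gives the $C^1$ dependence of $\mu\mapsto(T(\mu),v_\mu)$.

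For the asymptotics, $v_\mu(P)\ge cW_{\mu,T}(P)\ge c\mu^{-a}\to\infty$. On $\{r\ge r_0\}$, $v_\mu\le C\mu^a$, so $v_\mu^{2^*}\dd V$ has vanishing mass away from $P$. Near $P$, the rescaled profile $\mu^a v_\mu(\exp_P(\mu y))$ converges to $U_{\lambda_*}$ locally: the correction $\varphi/W$ is small, and $W$ is close to the bubble by \eqref{eq:res-Green-expansion}. The neck contribution is controlled by the bound $W\le C\mu^a r^{2-N}$. Together these give $E_{T(\mu)}(v_\mu)\to\int U_{\lambda_*}^{2^*}$.

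The main obstacle is the $C^1(I)$ control of the middle pairing and of the normalization $\|Z\|_T^2$. The fixed-domain derivative $\partial_T$ hits $\varphi$, $Z$, the metric $g_T$ and the volume $J_T$, and each term must be shown to be $o(\mu^{N-2})$, not merely $O(\mu^{N-2})$. The plan here is to use that every $T$-derivative of the operator is supported outside $\mathcal B_{8r_0}(P)$, where all quantities are $O(\mu^a\delta)$ or $O(\Psi_\mu(r_0))$, giving products of size $O(\mu^a\Psi_\mu(r_0))=o(\mu^{N-2})$.
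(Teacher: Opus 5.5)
Your proposal is correct and follows the paper's proof essentially step for step: the same three-term splitting of the pairing with $Z_{\mu,T}$ via \eqref{eq:res-scale-residual-identity}, Lemma~\ref{lem:res-residual-estimates}(iii) for the leading term, \eqref{eq:res-nonlinear-pairing} for the nonlinear remainder, period selection from $\mathscr M_N'<0$ together with the implicit function theorem, and positivity from \eqref{eq:res-relative-correction-small}. The deviations are only cosmetic, and all are valid:
\begin{enumerate}
\item[(a)] You bound the middle pairing pointwise by $\int\Theta_\mu\delta\Psi_\mu\,\dd V\le C\rho_N(\mu)^2$ instead of by $H^1$--$H^{-1}$ duality.
\item[(b)] You solve $\mu^{2-N}\lambda_{\mu,T}=0$ rather than the unnormalized equation $\mu^{2-N}\lambda_{\mu,T}\|Z_{\mu,T}\|_T^2=0$. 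This is harmless, since $\partial_T\|Z_{\mu,T}\|_T^2=O(\mu^a)$ by Lemma~\ref{lem:res-scale-mode}.
\item[(c)] You get the energy limit from $\varphi/W=o(1)$ instead of from the Sobolev bound on $\varphi$.
\end{enumerate}
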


\begin{proof}
\textbf{Step I. The reduced equation.}
Set
\begin{equation*}
 \mathcal F(\mu,T)=
 \int_{\Sigma_T}\{(-L_{g_{\mathrm{cyl}}})(W_{\mu,T}+\varphi_{\mu,T})
 -g(W_{\mu,T}+\varphi_{\mu,T})\}Z_{\mu,T}\dd V.
\end{equation*}
For brevity, write $W=W_{\mu,T}$, $\varphi=\varphi_{\mu,T}$,
$Z=Z_{\mu,T}$, and $R=R_{\mu,T}$.  Since
\[
 (-L_{g_{\mathrm{cyl}}})(W+\varphi)-g(W+\varphi)
 =R+(-L_{g_{\mathrm{cyl}}}-pW^{p-1})\varphi
  -\{g(W+\varphi)-g(W)-pW^{p-1}\varphi\},
\]
by the self-adjointness of $-L_{g_{\mathrm{cyl}}}$ and
\eqref{eq:res-scale-residual-identity}, we have
\[
 \int_{\Sigma_T}(-L_{g_{\mathrm{cyl}}}-pW^{p-1})\varphi\,Z\dd V
 =\int_{\Sigma_T}\varphi\,(-L_{g_{\mathrm{cyl}}}-pW^{p-1})Z\dd V
 =\int_{\Sigma_T}\varphi\,\mu\partial_\mu R\dd V.
\]
Consequently,
\begin{align*}
 \mathcal F(\mu,T)
 =\int_{\Sigma_T}RZ\dd V+\int_{\Sigma_T}\varphi\,\mu\partial_\mu R\dd V-
 \int_{\Sigma_T}\{g(W+\varphi)-g(W)-pW^{p-1}\varphi\}Z\dd V.
\end{align*}
By the definition of $\|\cdot\|$,
Proposition~\ref{prop:res-correction}, and
the $H^{-1}$ estimate in Lemma~\ref{lem:res-residual-estimates},
{
after pull-back, the volume contribution in the $T$-derivative satisfies
\[
 \left|\int_{\Sigma_{T^*}}
 (\Phi_T^*\varphi)\,\Phi_T^*(\mu\partial_\mu R),
 \partial_TJ_T\dd t\dd\sigma\right|
 \leq C\|\varphi\|_{H^1}
 \|\mu\partial_\mu R\|_{H^{-1}}
 \leq C\rho_N(\mu)^2.
\]
Here multiplication by the fixed smooth function
$\partial_TJ_T=b$ is bounded on $H^1$.  Consequently,
}
\begin{align*}
 \left|\int_{\Sigma_T}\varphi\,\mu\partial_\mu R\dd V\right|+\left|\partial_T\int_{\Sigma_T}\varphi\,\mu\partial_\mu R\dd V\right|
 \leq C\sum_{j=0}^1\|\partial_T^j\varphi\|_T
          \sum_{j=0}^1
          \|\partial_T^j(\mu\partial_\mu R)\|_{H^{-1}}
 \leq C\rho_N(\mu)^2.
\end{align*}
Hence, by Lemma~\ref{lem:res-residual-estimates},
Proposition~\ref{prop:res-correction}, and \eqref{eq:res-rho}, we obtain
\begin{equation}\label{eq:res-reduced-expansion}
 \left\|T\longmapsto
 \mu^{2-N}\mathcal F(\mu,T)
 +\frac{a\alpha_N^2}{c_N}\mathscr M_N(T)
 \right\|_{C^1(I)}\longrightarrow0.
\end{equation}

\textbf{Step II. Selection of the period.}
As shown in the proof of Proposition~\ref{prop:unique-regular-zero},
\begin{equation}\label{eq:res-regular-transversality}
 \mathscr M_N'(T)<0\quad\text{if }T>0,
 \qquad \mathscr M_N'(T^*)\ne0.
\end{equation}
Choose $\eta>0$ so that
$J=[T^*-\eta,T^*+\eta]\subset I^\circ$ and
$-\mathscr M_N'\geq c>0$ on $J$.  Since
$\mathscr M_N(T^*)=0$,
\[
 \mathscr M_N(T^*-\eta)>0>
 \mathscr M_N(T^*+\eta).
\]
Put $G_\mu(T)=\mu^{2-N}\mathcal F(\mu,T)$.  By
\eqref{eq:res-reduced-expansion}, for all sufficiently small $\mu$,
\[
 G_\mu(T^*-\eta)<0<G_\mu(T^*+\eta),\qquad
 G_\mu'(T)\geq
 -\frac{a\alpha_N^2}{2c_N}\mathscr M_N'(T)>0,\qquad  \forall T\in J.
\]
Thus $G_\mu$ has exactly one zero $T(\mu)$ in $J$.  Repeating the argument
on every smaller interval centered at $T^*$ proves the stated convergence.
By Proposition~\ref{prop:res-correction},
$\mathcal F$ is $C^1$ for $\mu>0$, and
$\partial_T\mathcal F(\mu,T(\mu))\ne0$; hence, by the implicit function
theorem, $T\in C^1((0,\mu_0))$.

\textbf{Step III. Removal of the projected error.}
From \eqref{eq:res-projected-nonlinear-equation},
\[
 \mathcal F(\mu,T)
 =\lambda_{\mu,T}\int_{\Sigma_T}((-L_{g_{\mathrm{cyl}}})Z_{\mu,T})Z_{\mu,T}\dd V
 =\lambda_{\mu,T}\|Z_{\mu,T}\|_T^2.
\]
At $T=T(\mu)$, the left-hand side vanishes, whereas, by
Lemma~\ref{lem:res-scale-mode},
$\|Z_{\mu,T(\mu)}\|_{T(\mu)}^2\to\kappa_N>0$.  Therefore
$\lambda_{\mu,T(\mu)}=0$.  Setting
\[
 v_\mu:=W_{\mu,T(\mu)}+\varphi_{\mu,T(\mu)},
\]
we obtain $-L_{g_{\mathrm{cyl}}}v_\mu=g(v_\mu)$.  Since, by
\eqref{eq:res-relative-correction-small},
\[
 \left\|\frac{\varphi_{\mu,T(\mu)}}{W_{\mu,T(\mu)}}
 \right\|_{L^\infty}=o(1),
\]
by the positivity of $W_{\mu,T(\mu)}$, we have
$v_\mu>0$ in the interior of $\Sigma_{T(\mu)}$ for all sufficiently small
$\mu$.  Consequently $g(v_\mu)=v_\mu^p$, and $v_\mu$ solves
\eqref{eq:t-cylinder}.  Finally, by
Proposition~\ref{prop:res-correction}, the $C^1$ regularity of $T(\mu)$,
and the chain rule, we conclude that $\mu\mapsto v_\mu$ is $C^1$ after the
fixed-domain identification.

\textbf{Step IV. Concentration and energy.}
Fix $R>1$ and write $X=\exp_P(\mu y)$, $|y|\leq R$.
From \eqref{eq:res-Green-expansion}, we first obtain
\[
 \mu^aW_{\mu,T(\mu)}(\exp_P(\mu y))
 \longrightarrow U_{\lambda_*}(y)
 \qquad\text{in }C^1(B_R).
\]
In the $\mu$-rescaled normal coordinates, the metric coefficients
converge in $C^2(B_R)$ to the Euclidean coefficients and
$\mu^a\mathcal U_\mu(\exp_P(\mu\,\cdot))\to U_{\lambda_*}$ in $C^2(B_R)$.
Thus the equation $(-L_{g_{\mathrm{cyl}}})W_{\mu,T}=\chi\mathcal U_\mu^p$ and the
local Schauder estimate upgrade this $C^1$ convergence to
$C^2(B_R)$.  Moreover,
$r_\mu(\exp_P(\mu y))\asymp\mu$ and
$\Psi_\mu(r(\exp_P(\mu y)))\leq C_R\mu^a$.  Thus, by
Proposition~\ref{prop:res-correction}, for $j=0,1,2$, we have
\[
 \sup_{|y|\leq R}
 \mu^{a+j}\left|
 \nabla^j\varphi_{\mu,T(\mu)}(\exp_P(\mu y))\right|
 \leq C_R\mu^{2a}\longrightarrow0.
\]
By the corresponding scaled H\"older estimate for the second derivatives,
we therefore obtain
\[
 \mu^av_\mu(\exp_P(\mu\,\cdot))\longrightarrow U_{\lambda_*}
 \qquad\text{in }C^2(B_R).
\]
In particular,
\[
 \mu^av_\mu(P)\longrightarrow U_{\lambda_*}(0)=\alpha_N,
\]
and hence $\|v_\mu\|_{L^\infty}\to\infty$.
Since $v_\mu$ solves \eqref{eq:t-cylinder}, the energy identity
\eqref{eq:cylindrical-energy} reads
\[
 E_{T(\mu)}(v_\mu)=\int_{\Sigma_{T(\mu)}}v_\mu^{2^*}\dd V.
\]
Moreover, by the Sobolev inequality,
Proposition~\ref{prop:res-correction}, and \eqref{eq:res-rho}, we have
\[
 \|\varphi_{\mu,T(\mu)}\|_{L^{2^*}}^{2^*}
 \leq C\|\varphi_{\mu,T(\mu)}\|_{T(\mu)}^{2^*}
 \leq C\rho_N(\mu)^{2^*}=o(1).
\]
Since $v_\mu=W_{\mu,T(\mu)}+\varphi_{\mu,T(\mu)}$, by
Lemma~\ref{lem:res-projection}, we now obtain
\begin{equation}\label{eq:res-family-tail}
 \int_{\Sigma_{T(\mu)}\setminus \mathcal B_{R\mu}(P)}v_\mu^{2^*}\dd V
 \leq C\int_{|y|>R}(1+|y|^2)^{-N}\dd y
 +C\|\varphi_{\mu,T(\mu)}\|_{L^{2^*}}^{2^*}+C\mu^N
 \leq CR^{-N}+o_\mu(1).
\end{equation}
Since $a2^*=N$ and
$\dd V_{\exp_P(\mu y)}=\mu^N\{1+O(\mu^2|y|^2)\}\dd y$,
by the local $C^2$ convergence above, we have
\[
 \int_{\mathcal B_{R\mu}(P)}v_\mu^{2^*}\dd V
 =\int_{|y|<R}
 \{\mu^av_\mu(\exp_P(\mu y))\}^{2^*}
 \{1+O(\mu^2|y|^2)\}\dd y
 \longrightarrow\int_{|y|<R}U_{\lambda_*}^{2^*}\dd y.
\]
Letting first $\mu\downarrow0$ and then $R\to\infty$, we obtain the asserted
energy limit.  By the same tail estimate, the integral of $v_\mu^{2^*}$ over
$\Sigma_{T(\mu)}\setminus\mathcal B_{R\mu}(P)$ is uniformly small for large
$R$; hence $P$ is the only
concentration point in each period.
\end{proof}

\section{Local uniqueness and connection to the global branch}
\label{sec:local-uniqueness}

Finally, we prove local exhaustiveness near $T^*$ and attach the family
to the global branch of Section~\ref{sec:global-continuation}.
\begin{lemma}\label{lem:res-entry-modulation}
Let $T_n\to T^*$ and let $v_n$ be positive bumpy
solutions on $\Sigma_{T_n}$, centered at $P$, such that $v_n(P)\to\infty$.
There is a fixed $\sigma_0>0$ and, for all sufficiently large $n$, a
unique scale $\mu_n$ in
$[e^{-\sigma_0}\widetilde\mu_n,e^{\sigma_0}\widetilde\mu_n]$, where
$\mathcal U_{\widetilde\mu_n}(P)=v_n(P)$, for which
\begin{equation}\label{eq:res-modulation}
 \langle v_n-W_{\mu_n,T_n},Z_{\mu_n,T_n}\rangle_{T_n}=0,
 \qquad \|v_n-W_{\mu_n,T_n}\|_{T_n}=o(1).
\end{equation}
\end{lemma}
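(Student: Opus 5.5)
Our plan is to combine the one-bubble blow-up analysis of Lemma~\ref{lem:single-bubble-estimates} with the projected-bubble estimates of Lemma~\ref{lem:res-projection}. The first goal is to show that $v_n-W_{\mu,T_n}$ is small in energy uniformly for $\mu$ in a fixed logarithmic window around $\widetilde\mu_n$. Then we select $\mu_n$ in that window by an intermediate-value and monotonicity argument applied to the scalar function
\[
 f_n(\sigma):=\langle v_n-W_{e^\sigma\widetilde\mu_n,T_n},Z_{e^\sigma\widetilde\mu_n,T_n}\rangle_{T_n},\qquad |\sigma|\leq\sigma_0 .
\]

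\textbf{Step I: energy closeness.} Since $\mathcal U_{\widetilde\mu_n}(P)=\alpha_N\widetilde\mu_n^{-a}(1+o(1))$, the condition $v_n(P)=\mathcal U_{\widetilde\mu_n}(P)$ forces $\widetilde\mu_n\asymp M_n^{-2/(N-2)}$ and $\widetilde\mu_n\to0$. By Lemma~\ref{lem:single-bubble-estimates}(A), $\widetilde\mu_n^av_n(\exp_P(\widetilde\mu_n y))\to U_{\lambda_*}(y)$ in $C^2_{\mathrm{loc}}$, the normalization at $y=0$ fixing the scale exactly. By \eqref{eq:res-Green-expansion}, the same local convergence holds for $W_{\widetilde\mu_n,T_n}$. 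Hence, for every fixed $R$, the $H^1$ norm of $v_n-W_{\widetilde\mu_n,T_n}$ on $\mathcal B_{R\widetilde\mu_n}(P)$ tends to zero. Outside this core we control both functions separately. For $W$ we use Lemma~\ref{lem:res-projection}(i),(iv). For $v_n$ we use the two-sided bound (B) together with the Green profile (C), which together give $|\nabla v_n|\leq CM_n^{-1}r^{1-N}$ by scaled gradient estimates. The gradient tail $\int_{r>R\mu}r^{2-2N}\mu^{N-2}\,r^{N-1}\dd r\leq CR^{2-N}$ is therefore uniformly small, and the outer region $r\geq r_0$ contributes $O(M_n^{-2})$. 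Letting $n\to\infty$ and then $R\to\infty$ yields $\|v_n-W_{\widetilde\mu_n,T_n}\|_{T_n}=o(1)$. The same argument works at $\mu=e^\sigma\widetilde\mu_n$, except that there the core limit is $U_{\lambda_*}$ minus its rescaling by $e^{\sigma}$.

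\textbf{Step II: choice of the scale.} Write $W_\sigma=W_{e^\sigma\widetilde\mu_n,T_n}$ and $Z_\sigma$ for the corresponding dilation mode, and split $f_n(\sigma)=\langle v_n-W_0,Z_\sigma\rangle-\langle W_\sigma-W_0,Z_\sigma\rangle$. The first term is $o(1)$ by Step I together with $\|Z_\sigma\|_T\leq C$ from Lemma~\ref{lem:res-scale-mode}. For the second term, $\partial_\sigma W_\sigma=Z_\sigma$, so $\langle W_\sigma-W_0,Z_\sigma\rangle=\int_0^\sigma\langle Z_s,Z_\sigma\rangle\dd s$. Lemma~\ref{lem:res-scale-mode} gives $\langle Z_s,Z_\sigma\rangle=\kappa_N+O(|\sigma-s|)+o(1)$, using $\mu\partial_\mu$ bounds from Lemma~\ref{lem:res-projection}(iii) with $k=2$. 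Thus $f_n(\sigma)=-\kappa_N\sigma+O(\sigma^2)+o(1)$. Moreover $\partial_\sigma f_n=-\|Z_\sigma\|^2+\langle v_n-W_\sigma,\mu\partial_\mu Z\rangle=-\kappa_N+O(\sigma_0)+o(1)<0$.

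We now fix $\sigma_0$ small enough that the $O(\sigma^2)$ and $O(\sigma_0)$ errors are dominated. Then $f_n$ is strictly decreasing on $[-\sigma_0,\sigma_0]$ and changes sign there for all large $n$, so it has exactly one zero $\sigma_n$, and we set $\mu_n=e^{\sigma_n}\widetilde\mu_n$. Since $f_n(0)=o(1)$, monotonicity gives $\sigma_n=o(1)$. Consequently $\|v_n-W_{\mu_n,T_n}\|\leq o(1)+\|W_{\sigma_n}-W_0\|\leq o(1)+C|\sigma_n|=o(1)$, which proves \eqref{eq:res-modulation}.

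\textbf{Main obstacle.} The delicate point is Step I in the neck and outer regions. There we need energy (gradient) smallness for $v_n$, not merely pointwise smallness, and it must hold uniformly as $T_n$ varies. We would use the local forms of (B) near $P$, the Green barrier of Lemma~\ref{lem:long-cylinder-green} together with $T_n\in I$ away from $P$, and scaled interior and boundary Schauder estimates to convert the pointwise bounds into gradient bounds.
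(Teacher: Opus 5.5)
Your proposal is correct. Its second half is exactly the paper's argument: the scalar function $f_n(\sigma)$, the derivative $-\|Z_\sigma\|_{T_n}^2+\langle v_n-W_\sigma,\partial_\sigma Z_\sigma\rangle_{T_n}=-\kappa_N+o(1)+O(|\sigma|)$, the sign change on $[-\sigma_0,\sigma_0]$, and $\sigma_n=o(1)$.

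You take a different route only in Step I. The paper never estimates $\nabla v_n$ outside the core. Instead it polarizes,
\[
 \|v_n-W_{\widetilde\mu_n,T_n}\|_{T_n}^2
 =\|v_n\|_{T_n}^2+\|W_{\widetilde\mu_n,T_n}\|_{T_n}^2
 -2\langle v_n,W_{\widetilde\mu_n,T_n}\rangle_{T_n},
\]
and handles the three terms as follows.
\begin{itemize}
\item $\|v_n\|_{T_n}^2=E_{T_n}(v_n)\to S_N$, by the energy identity \eqref{eq:cylindrical-energy} and the one-bubble limit \eqref{eq:one-bubble-energy}.
\item By self-adjointness and $-L_{g_{\mathrm{cyl}}}W_{\mu,T}=\chi\mathcal U_\mu^p$, the other two terms become $\int\chi\mathcal U_{\widetilde\mu_n}^pW_{\widetilde\mu_n,T_n}\dd V$ and $\int\chi\mathcal U_{\widetilde\mu_n}^pv_n\dd V$.
\end{itemize}
These integrals need only the core convergence and zeroth-order neck and outer bounds, because the factor $\mathcal U_{\widetilde\mu_n}^p$ already supplies the decay. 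So the difficulty you flag as the main obstacle, gradient control of $v_n$ in the neck and outer region, never arises there.

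Your direct region-by-region route also works. By (B), $v_n\le CM_n^{-1}r^{2-N}$ in the neck. A scaled interior estimate on $\mathcal B_{r/2}(X)$ then gives $|\nabla v_n|\le CM_n^{-1}r^{1-N}$, since the nonlinear term contributes only a relative factor $M_n^{1-p}r^{-2}\asymp(\widetilde\mu_n/r)^2$. The outer region follows from boundary Schauder estimates together with $v_n\le CM_n^{-1}$. Lemma~\ref{lem:long-cylinder-green} is not needed, since $T_n\to T^*$ stays bounded. Your route costs these extra elliptic estimates. In exchange it controls the difference locally and does not rely on the global energy limit \eqref{eq:one-bubble-energy}.

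One small correction: $\|\partial_\sigma Z_\sigma\|_{T_n}\le C$ does not follow from the pointwise bound in Lemma~\ref{lem:res-projection}(iii) alone. Combine it with (iv), or test $-L_{g_{\mathrm{cyl}}}(\mu\partial_\mu)^2W_{\mu,T}=\chi(\mu\partial_\mu)^2\mathcal U_\mu^p$ against $(\mu\partial_\mu)^2W_{\mu,T}$, as the paper does.
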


\begin{proof}
The value
\[
 \mathcal U_\mu(P)=\alpha_N
 \left(\frac{1+\sqrt{1-\mu^2}}{2\mu}\right)^a
\]
is strictly decreasing for small $\mu$, so the height-matching scale
$\widetilde\mu_n$ is well defined.  By the one-bubble estimates of
Lemma~\ref{lem:single-bubble-estimates}, the one-bubble integral limit, and
\eqref{eq:res-Green-expansion}, for every fixed $R$,
{
the height identity first gives
\[
 \widetilde\mu_n^av_n(P)\longrightarrow\alpha_N,
 \qquad
 \frac{v_n(P)^{-2/(N-2)}}{\widetilde\mu_n}
 \longrightarrow\frac1{\lambda_*}.
\]
Consequently,
}
\[
 \widetilde\mu_n^av_n(\exp_P(\widetilde\mu_n\,\cdot)),\qquad
 \widetilde\mu_n^aW_{\widetilde\mu_n,T_n}
     (\exp_P(\widetilde\mu_n\,\cdot))
 \longrightarrow U_{\lambda_*}
 \quad\text{in }C^2(B_R).
\]
By the core--neck--outer decomposition used in
Lemma~\ref{lem:res-residual-estimates}, we obtain
the following energy comparison.  If
$S_N=\int_{\mathbb R^N}U_{\lambda_*}^{2^*}\dd y$, then
\[
 \|v_n\|_{T_n}^2\longrightarrow S_N,\qquad
 \|W_{\widetilde\mu_n,T_n}\|_{T_n}^2\longrightarrow S_N,\qquad
 \langle v_n,W_{\widetilde\mu_n,T_n}\rangle_{T_n}
 \longrightarrow S_N.
\]
{
The first limit follows from the one-bubble energy limit.  For the other
two, self-adjointness and the equation defining the projected bubble give
\[
 \begin{aligned}
  \|W_{\widetilde\mu_n,T_n}\|_{T_n}^2
  &=\int_{\Sigma_{T_n}}\chi\mathcal U_{\widetilde\mu_n}^p
       W_{\widetilde\mu_n,T_n}\dd V,\\
  \langle v_n,W_{\widetilde\mu_n,T_n}\rangle_{T_n}
  &=\int_{\Sigma_{T_n}}\chi\mathcal U_{\widetilde\mu_n}^pv_n\dd V.
 \end{aligned}
\]
On $\mathcal B_{R\widetilde\mu_n}(P)$, the rescaled convergence above
shows that both integrals tend to
$\int_{B_R}U_{\lambda_*}^{p+1}\dd y$.  On the remaining regions, the
one-bubble estimates and Lemma~\ref{lem:res-projection} give
\[
 \begin{aligned}
 &\int_{\mathcal B_{r_0}(P)\setminus
 \mathcal B_{R\widetilde\mu_n}(P)}
 \mathcal U_{\widetilde\mu_n}^p
 \bigl(v_n+W_{\widetilde\mu_n,T_n}\bigr)\dd V
 \leq CR^{-N},\\
 &\int_{\Sigma_{T_n}\setminus\mathcal B_{r_0}(P)}
 \chi\mathcal U_{\widetilde\mu_n}^p
 \bigl(v_n+W_{\widetilde\mu_n,T_n}\bigr)\dd V
 \leq C\widetilde\mu_n^N.
 \end{aligned}
\]
Letting first $n\to\infty$ and then $R\to\infty$ proves the last two
limits.  Hence
}
\begin{equation}\label{eq:res-entry-H1}
 \|v_n-W_{\widetilde\mu_n,T_n}\|_{T_n}^2
 =\|v_n\|_{T_n}^2+\|W_{\widetilde\mu_n,T_n}\|_{T_n}^2
 -2\langle v_n,W_{\widetilde\mu_n,T_n}\rangle_{T_n}
 \longrightarrow0.
\end{equation}

Set
\[
 \mathfrak F_n(\sigma)=
 \langle v_n-W_{e^\sigma\widetilde\mu_n,T_n},
 Z_{e^\sigma\widetilde\mu_n,T_n}\rangle_{T_n}.
\]
By the preceding convergence,
$\mathfrak F_n(0)=o(1)$.  Uniformly for bounded $\sigma$, by
Lemma~\ref{lem:res-scale-mode} and the equation for
$(\mu\partial_\mu)^2W_{\mu,T}$, we have
\[
 \|Z_{e^\sigma\widetilde\mu_n,T_n}\|_{T_n}^2
 =\kappa_N+o(1),\qquad
 \|\partial_\sigma Z_{e^\sigma\widetilde\mu_n,T_n}\|_{T_n}\leq C.
\]
Writing $\mu_{n,\sigma}=e^\sigma\widetilde\mu_n$ and differentiating,
we obtain
\[
 \begin{aligned}
 \mathfrak F_n'(\sigma)
 &=-\|Z_{\mu_{n,\sigma},T_n}\|_{T_n}^2
 +\langle v_n-W_{\mu_{n,\sigma},T_n},
 \partial_\sigma Z_{\mu_{n,\sigma},T_n}\rangle_{T_n}\\
 &=-\kappa_N+o(1)+O(|\sigma|).
 \end{aligned}
\]
Here we used
$v_n-W_{\mu_{n,\sigma},T_n}
=(v_n-W_{\widetilde\mu_n,T_n})
-\int_0^\sigma Z_{\mu_{n,\tau},T_n}\dd\tau$.
Choose $\sigma_0>0$ fixed and small.  For all large $n$,
$\mathfrak F_n$ is strictly decreasing on
$[-\sigma_0,\sigma_0]$ and has opposite signs at the endpoints.
Its unique zero $\sigma_n$ satisfies $\sigma_n=o(1)$.  Taking
$\mu_n=e^{\sigma_n}\widetilde\mu_n$ and using
the same convergence, we obtain the conclusion of
Lemma~\ref{lem:res-entry-modulation} and the asserted uniqueness.
\end{proof}

\begin{proposition}\label{prop:res-local-exhaustiveness}
There exist $\varepsilon>0$, $M<\infty$, and $\bar\mu>0$ such that every
positive bumpy solution $v$ on $\Sigma_T$, centered at $P$, satisfying
$|T-T^*|<\varepsilon$ and $v(P)>M$, is of the form
$(T,v)=(T(\mu),v_\mu)$ for a unique $\mu\in(0,\bar\mu)$.
\end{proposition}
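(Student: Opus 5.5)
We argue by contradiction, combining Lemma~\ref{lem:res-entry-modulation} with the uniqueness part of Proposition~\ref{prop:res-correction} and the one-dimensional reduction in Theorem~\ref{thm:res-family}. Suppose the statement fails. Then there are positive bumpy solutions $v_n$ on $\Sigma_{T_n}$, centered at $P$, with $T_n\to T^*$ and $v_n(P)\to\infty$, none of which is of the form $(T(\mu),v_\mu)$ with $\mu$ small. By Lemma~\ref{lem:res-entry-modulation} there are scales $\mu_n\to0$ such that
\[
 \varphi_n:=v_n-W_{\mu_n,T_n},\qquad
 \langle\varphi_n,Z_{\mu_n,T_n}\rangle_{T_n}=0,\qquad
 \|\varphi_n\|_{T_n}=o(1).
\]
Since $v_n$ is an exact positive solution, $(\varphi_n,0)$ solves the projected equation \eqref{eq:res-projected-nonlinear-equation} with multiplier zero. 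To identify it with $\varphi_{\mu_n,T_n}$ we must show $\|\varphi_n\|\le K$ in the weighted norm. Once this holds, uniqueness in Proposition~\ref{prop:res-correction} gives $\varphi_n=\varphi_{\mu_n,T_n}$ and $\lambda_{\mu_n,T_n}=0$. Then $\mathcal F(\mu_n,T_n)=0$, and Step~II of Theorem~\ref{thm:res-family} (unique zero of $G_\mu$ in $J$) forces $T_n=T(\mu_n)$, hence $v_n=v_{\mu_n}$, a contradiction. Uniqueness of $\mu$ comes from the uniqueness of the modulation scale in Lemma~\ref{lem:res-entry-modulation}. One also checks that two parameters cannot give the same solution, since $\mu^a v_\mu(P)\to\alpha_N$ is strictly monotone up to $o(1)$; an implicit-function argument in $\mu$ makes this precise.

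\textbf{Main obstacle: the weighted bound on $\varphi_n$.} The energy smallness gives only the $\rho_N^{-1}\|\cdot\|_T$ part up to a large factor, and the $H^1$ part of the norm even requires $\|\varphi_n\|_T\lesssim\rho_N(\mu_n)$. We plan to get this from the linear theory. Write the equation as
\[
 (-L_{g_{\mathrm{cyl}}}-pW^{p-1})\varphi_n=-R_{\mu_n,T_n}+\mathcal Q_W(\varphi_n).
\]
First obtain pointwise control $|\varphi_n|\le o(1)W_{\mu_n,T_n}$. This follows from Lemma~\ref{lem:single-bubble-estimates}(A)--(C), which compare $v_n$ with the bubble in the core and with $\gamma_N G_{T_n}$ outside, together with Lemma~\ref{lem:res-projection}(i); one must check that the constants match, $\gamma_N\mu^{-a}$ versus $(\alpha_N/c_N)\mu^a$ via $\widetilde\mu_n$. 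Hence $|\mathcal Q_W(\varphi_n)|\le o(1)W^{p-1}|\varphi_n|$, which can be absorbed into the linear operator. Lemma~\ref{lem:res-inverse} then yields $\|\varphi_n\|_T\le C\|R\|_{H^{-1}}+o(1)\|\varphi_n\|_T$, so $\|\varphi_n\|_T\le C\rho_N(\mu_n)$.

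\textbf{Upgrading to the full weighted norm.} Next rerun the barrier and scaled-Schauder argument behind Lemma~\ref{lem:res-weighted-inverse} to upgrade to the pointwise and H\"older parts. The data $h=-R+\mathcal Q_W(\varphi_n)$ satisfies \eqref{eq:res-weighted-data} with $A=C+o(1)\|\varphi_n\|$, and absorbing gives $\|\varphi_n\|\le C$. Taking $K\ge C$ from the start closes the argument. The delicate point is that the pointwise relative smallness $\varphi_n/W\to0$ must hold uniformly up to the lateral boundary. We plan to get it from the Hopf-type two-sided bounds \eqref{eq:finite-period-green-comparison} and \eqref{eq:uniform-periodic-green-separated}, together with $C^1$ convergence of $M_nv_n$ to $\gamma_N G_{T^*}$ near $\partial_{\mathrm{lat}}\Sigma$.
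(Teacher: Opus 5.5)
Your argument is correct in outline, but it identifies $v_n$ with the constructed family by a different mechanism from the paper's.

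\textbf{How the two routes differ.} You place the modulated remainder $\psi_n=v_n-W_{\mu_n,T_n}$ in the weighted ball $\{\|\cdot\|\le K\}$ and invoke the uniqueness clause of Proposition~\ref{prop:res-correction}. The paper never leaves the energy space.
\begin{itemize}
\item First, Lemma~\ref{lem:res-inverse} and the superlinear bound $\|\mathcal Q_n(\eta)\|_{H^{-1}}\le C\|\eta\|_{T_n}^{1+\min\{1,p-1\}}$ give $\|\psi_n\|_{T_n}+\|\varphi_{\mu_n,T_n}\|_{T_n}\le C\rho_N(\mu_n)$. The absorption works because Lemma~\ref{lem:res-entry-modulation} already gives $\|\psi_n\|_{T_n}=o(1)$.
\item Then the exact and projected equations are subtracted. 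The difference $\eta_n=\psi_n-\varphi_{\mu_n,T_n}$ solves the augmented problem with data $\mathcal Q_n(\psi_n)-\mathcal Q_n(\varphi_{\mu_n,T_n})$ and multiplier $-\lambda_{\mu_n,T_n}$. This data has $H^{-1}$ norm at most $C\rho_N(\mu_n)^{\min\{1,p-1\}}\|\eta_n\|_{T_n}$, so Lemma~\ref{lem:res-inverse} forces $\eta_n=0$ and $\lambda_{\mu_n,T_n}=0$.
\end{itemize}
This is an $H^1$-level local uniqueness and needs no pointwise information on $v_n$.

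Your route gives weighted pointwise control of $v_n-W_{\mu_n,T_n}$ and reuses the contraction. Its price is that everything hinges on the uniform relative smallness $\sup|\psi_n|/W_{\mu_n,T_n}\to0$. Without it, the weighted absorption is superlinear in $\|\psi_n\|$ and does not close.

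Once you have that smallness, no rerun of the barrier argument is needed. The pair $(\psi_n,0)$ is the unique solution of the augmented problem with $h=-R_{\mu_n,T_n}+\mathcal Q_W(\psi_n)$ and $\gamma=0$, so Lemma~\ref{lem:res-weighted-inverse} applies directly. The absorption is legitimate because $\|\psi_n\|<\infty$ for each fixed $n$. You must still make the H\"older part of the data bound linear in $\|\psi_n\|$ with a small coefficient, and the resulting constant must not exceed $K$; this is compatible with how $K$ is chosen for the contraction.

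\textbf{The relative smallness step.} Your justification is too quick, and the lateral boundary is not the main difficulty. In Lemma~\ref{lem:single-bubble-estimates}, part (A) covers only $r\le R\mu_n$ and part (C) only compact sets at fixed distance from $P$. Part (B) is merely a two-sided comparison with a fixed constant. So nothing there gives an $o(1)$ relative error on the neck $R\mu_n\le r\le r_1$.

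You can close this with the Green representation $\psi_n(X)=\int_{\Sigma_{T_n}}G_{T_n}(X,Y)\bigl(v_n^p-\chi\mathcal U_{\mu_n}^p\bigr)(Y)\dd V_Y$.
\begin{itemize}
\item On $\mathcal B_{R\mu_n}(P)$ the source is $o(1)\mathcal U_{\mu_n}^p$ by (A), using $\mu_n/\widetilde\mu_n\to1$. This contributes $o(1)W_{\mu_n,T_n}(X)$.
\item Elsewhere, (B) gives $v_n^p\le C\mathcal U_{\mu_n}^p$ near $P$ and $v_n\le CM_n^{-1}\delta$ away from it. The kernel bounds of Lemma~\ref{lem:local-periodic-green} then bound this part by $C(R^{-2}+o(1))W_{\mu_n,T_n}(X)$, uniformly up to the lateral boundary.
\end{itemize}
The constants do match, since $\gamma_N=\alpha_N^2/c_N$.

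\textbf{Parameter uniqueness.} The remark that ``$\mu^av_\mu(P)\to\alpha_N$ is strictly monotone up to $o(1)$'' proves nothing by itself. Argue as in your first sentence, which is the paper's Step~III. Since $v_\mu(P)=\mathcal U_\mu(P)(1+o(1))$, any two parameters producing the same solution lie in the modulation window of its height-matching scale. Both satisfy the orthogonality condition, so by uniqueness in Lemma~\ref{lem:res-entry-modulation} they coincide.
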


\begin{proof}
\textbf{Step I. A priori control.}
Suppose first that $T_n\to T^*$ and $v_n(P)\to\infty$.
By Lemma~\ref{lem:res-entry-modulation}, we can write
\begin{equation*}
 v_n=W_{\mu_n,T_n}+\psi_n,\qquad
 \langle\psi_n,Z_{\mu_n,T_n}\rangle_{T_n}=0,\qquad
 \|\psi_n\|_{T_n}=o(1).
\end{equation*}
Set
\[
 \mathscr L_n=-L_{g_{\mathrm{cyl}}}-pW_{\mu_n,T_n}^{p-1},\qquad
 \mathcal Q_n(\eta)
 =g(W_{\mu_n,T_n}+\eta)-g(W_{\mu_n,T_n})
  -pW_{\mu_n,T_n}^{p-1}\eta .
\]
The exact and projected equations are
\begin{align}
 \mathscr L_n\psi_n
 &=-R_{\mu_n,T_n}+\mathcal Q_n(\psi_n),                         \label{eq:res-exhaustive-exact}\\
 \mathscr L_n\varphi_{\mu_n,T_n}
 &=-R_{\mu_n,T_n}+\mathcal Q_n(\varphi_{\mu_n,T_n})
   +\lambda_{\mu_n,T_n}(-L_{g_{\mathrm{cyl}}})Z_{\mu_n,T_n}.                \label{eq:res-exhaustive-projected}
\end{align}
Both corrections are orthogonal to $Z_{\mu_n,T_n}$.  For
$\|\eta\|_{T_n}\leq1$, by Sobolev duality and
\eqref{eq:res-Nemytskii-remainder}, we have
\[
 \|\mathcal Q_n(\eta)\|_{H^{-1}}
 \leq C\|\eta\|_{T_n}^{1+\min\{1,p-1\}}.
\]
For $p\geq2$, the only additional product satisfies
$\|W^{p-2}\eta^2\|_{L^{(2^*)'}}
\leq\|W\|_{L^{2^*}}^{p-2}\|\eta\|_{L^{2^*}}^2$; for $1<p<2$, one
uses $|\mathcal Q_n(\eta)|\leq C|\eta|^p$.  Hence, by
Lemma~\ref{lem:res-inverse} and \eqref{eq:res-exhaustive-exact}, we have
\[
 \|\psi_n\|_{T_n}
 \leq C\{\rho_N(\mu_n)
 +\|\psi_n\|_{T_n}^{1+\min\{1,p-1\}}\}.
\]
Since $\|\psi_n\|_{T_n}=o(1)$, the last term can be absorbed.  By the
fixed-point estimate in Proposition~\ref{prop:res-correction}, we have
$|\mathcal Q_n(\varphi_{\mu_n,T_n})|\leq o(1)\Theta_{\mu_n}$.
Hence, by \eqref{eq:res-weight-preliminaries} and the augmented estimate
applied to \eqref{eq:res-exhaustive-projected}, we have
\begin{equation}\label{eq:res-exhaustive-energy-small}
 \|\psi_n\|_{T_n}
 +\|\varphi_{\mu_n,T_n}\|_{T_n}
 \leq C\rho_N(\mu_n).
\end{equation}

\textbf{Step II. Identification with the reduced
family.}
Let $\eta_n=\psi_n-\varphi_{\mu_n,T_n}$.
Subtracting \eqref{eq:res-exhaustive-projected} from
\eqref{eq:res-exhaustive-exact}, we obtain
\begin{equation}\label{eq:res-exhaustive-difference-equation}
 \mathscr L_n\eta_n
 =\mathcal Q_n(\psi_n)-\mathcal Q_n(\varphi_{\mu_n,T_n})
  -\lambda_{\mu_n,T_n}(-L_{g_{\mathrm{cyl}}})Z_{\mu_n,T_n},
 \qquad\langle\eta_n,Z_{\mu_n,T_n}\rangle_{T_n}=0.
\end{equation}
By the mean-value formula for $p\geq2$ and the
$(p-1)$-Hölder continuity of $s\mapsto |s|^{p-1}$ for $1<p<2$, we obtain
\[
 \|\mathcal Q_n(\psi_n)-\mathcal Q_n(\varphi_{\mu_n,T_n})\|_{H^{-1}}
 \leq C\bigl(\|\psi_n\|_{T_n}
 +\|\varphi_{\mu_n,T_n}\|_{T_n}\bigr)^{\min\{1,p-1\}}
 \|\eta_n\|_{T_n}.
\]
By Lemma~\ref{lem:res-inverse} and
\eqref{eq:res-exhaustive-energy-small}, we now have
\[
 \|\eta_n\|_{T_n}+|\lambda_{\mu_n,T_n}|
 \leq C\rho_N(\mu_n)^{\min\{1,p-1\}}\|\eta_n\|_{T_n}.
\]
The coefficient on the right tends to zero; hence
\[
 \psi_n=\varphi_{\mu_n,T_n},\qquad
 \lambda_{\mu_n,T_n}=0.
\]
Since
$\mathcal F(\mu,T)=\lambda_{\mu,T}\|Z_{\mu,T}\|_T^2$,
$\mathcal F(\mu_n,T_n)=0$.  By the strict monotonicity used in Step II
of Theorem~\ref{thm:res-family}, we therefore obtain
\[
 T_n=T(\mu_n),\qquad v_n=v_{\mu_n}.
\]

\textbf{Step III. Uniformity and parameter
uniqueness.}
Fix first $\bar\mu>0$ sufficiently small that the modulation window and
the reduced family are both defined on $(0,\bar\mu)$.  If no uniform
$\varepsilon,M$ existed for this $\bar\mu$, choose a solution with
$|T-T^*|<1/n$ and $v(P)>n$ which is not represented by a parameter in
$(0,\bar\mu)$.  The sequential conclusion above produces parameters
$\mu_n\to0$, hence eventually $\mu_n<\bar\mu$, a contradiction.

If parameter uniqueness failed after every further reduction of
$\bar\mu$, there would be
$\mu_n\ne\mu_n'$, with $\mu_n,\mu_n'\downarrow0$, such that
$(T(\mu_n),v_{\mu_n})=(T(\mu_n'),v_{\mu_n'})$.  By
\eqref{eq:res-Green-expansion} and
\eqref{eq:res-relative-correction-small}, we have
\[
 v_{\mu_n}(P)=\mathcal U_{\mu_n}(P)(1+o(1))
 =\mathcal U_{\mu_n'}(P)(1+o(1)).
\]
Let $\widehat\mu_n$ be the height-matching scale of this common
solution, so that
$\mathcal U_{\widehat\mu_n}(P)=v_{\mu_n}(P)$.  Since
$\mathcal U_\mu(P)\sim C\mu^{-a}$,
\[
 \frac{\mu_n}{\widehat\mu_n}\longrightarrow1,\qquad
 \frac{\mu_n'}{\widehat\mu_n}\longrightarrow1.
\]
Both parameters therefore lie in the logarithmic window of
Lemma~\ref{lem:res-entry-modulation}.  Their corrections satisfy the
orthogonality condition in that lemma, so, by
uniqueness in that lemma, $\mu_n=\mu_n'$, a contradiction.
Shrinking $\bar\mu$ completes the proof.
\end{proof}

\begin{proposition}\label{prop:res-bumpy}
For all sufficiently small $\mu$, the solution in
Theorem~\ref{thm:res-family} satisfies
\begin{equation}\label{eq:res-strict-bumpy}
 -\partial_tv_\mu>0
 \qquad\text{in }(0,T(\mu)/2)\times\mathbb S^{N-1}_+.
\end{equation}
In particular, $T(\mu)$ is its least period in the $t$-variable.
\end{proposition}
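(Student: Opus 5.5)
We first reduce the strict bumpy property to evenness and sign information, and then use the Jacobi-field argument of Lemma~\ref{lem:nodal-preservation}. The solution $v_\mu$ is even in $t$. The building blocks $\mathcal U_\mu$, $\chi$ and $G_T(\cdot,P)$ are even, so $W_{\mu,T}$ is even. The correction $\varphi_{\mu,T}$ is constructed in the axial-even class and is unique by the contraction argument, so it is even as well. Hence $\xi:=-\partial_tv_\mu$ vanishes on $\{t=0\}$ and, by periodicity, on $\{t=T/2\}$. It also vanishes on the lateral boundary. Consequently $\xi$ is a Dirichlet solution of
\[
(-L_{g_{\mathrm{cyl}}}-pv_\mu^{p-1})\xi=0
\quad\text{in }\mathscr H_\mu:=(0,T(\mu)/2)\times\mathbb S^{N-1}_+ .
\]
It therefore suffices to show $\xi\geq0$ and $\xi\not\equiv0$; the strong maximum principle then gives $\xi>0$ in $\mathscr H_\mu$.

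The sign of $\xi$ will be obtained region by region.

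\emph{Core} $\{r\le R\mu\}$. In the rescaled variable $X=\exp_P(\mu y)$, Step~IV of Theorem~\ref{thm:res-family} gives $\mu^a v_\mu(\exp_P(\mu\,\cdot))\to U_{\lambda_*}$ in $C^2(B_R)$. Along the $t$-direction, $\mu^{a+1}\xi$ is therefore close to $-\partial_{y_1}U_{\lambda_*}$, up to the sign convention identifying $t$ with a Euclidean coordinate. This function is positive on $\{y_1>0\}$.

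\emph{Neck} $\{R\mu<r<r_0\}$. We use Lemma~\ref{lem:res-projection}(iv) together with \eqref{eq:res-Green-expansion}. The gradient of $W$ is dominated by $\partial_t\mathcal U_\mu$, which is of size $\mu^a r^{1-N}\cdot(t/r)$. The Green correction contributes only $O(\mu^{a}\,t)$ by evenness. The correction satisfies $|\nabla\varphi|\le C\Psi_\mu/r_\mu$, which is smaller by a factor $o(1)$.

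\emph{Outer region.} Write $\mu^{-a}v_\mu\to\alpha_N c_N^{-1}G_{T^*}(\cdot,P)$ in $C^1$ away from $P$. It then suffices to know $-\partial_t G_{T^*}(\cdot,P)>0$ on $(0,T^*/2)\times\mathbb S^{N-1}_+$, with nonvanishing normal derivatives at the boundary.

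The main obstacle is this sign statement for $G_T$, together with the degeneracy of every estimate near $t=0$, $t=T/2$ and the lateral boundary, where $\xi$ vanishes. For the Green function itself, I would use the eigen-expansion of Lemma~\ref{lem:long-cylinder-green} in the periodized form. Each mode is $\cosh\bigl(\beta_j(T/2-|t|)\bigr)/\sinh(\beta_jT/2)$, which is strictly decreasing in $|t|$ on $(0,T/2)$. The difficulty is that the coefficients $\varphi_j(z)\varphi_j(e_N)$ are not all positive, so monotonicity cannot be read off termwise. Instead I would argue directly by the maximum principle: $-\partial_tG_T$ solves $-L_{g_{\mathrm{cyl}}}(\cdot)=0$ in $\mathscr H\setminus\{P\}$, vanishes on $t=0,T/2$ and on the lateral boundary, and is positive near $P$ because of the $\Gamma$-singularity. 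Hence it is positive.

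To handle the degeneracy uniformly, we do not compare values but work with quotients such as $\xi/(t(T/2-t)\,z_N)$ near the zero sets. Their $C^0$ convergence follows from the $C^{2,\alpha}$ convergence, exactly as in Lemma~\ref{lem:global-bootstrap}, and the Hopf lemma applied to the limit profile gives a positive lower bound. An alternative, cleaner route is the argument of Lemma~\ref{lem:nodal-preservation}. One shows that the first Dirichlet eigenvalue of the linearized operator on $\mathscr H_\mu$ is $\ge0$ and the second is $>0$, using the Jacobi fields of the bubble: only translations, which are excluded by the symmetry, and dilations, which are even in $t$. Then $\xi$, being a zero mode, must have one sign, and that sign is fixed by its positivity in the core. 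I would try the blow-up convergence route first and keep the spectral route as a fallback.

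Least period: as in Step~I of Proposition~\ref{prop:global-continuum}, within one period $\partial_tv_\mu(\cdot,z)\equiv0$ occurs only at $t\in\{0,T/2\}$. A smaller period $\tau$ would therefore have to equal $T/2$, which contradicts strict monotonicity on $(0,T/2)$.
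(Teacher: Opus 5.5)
Your core step and your least-period argument coincide with the paper's. The $C^1$ convergence of $\mu^{a+1}\xi_\mu(\exp_P(\mu y))$ to $-\partial_{y_1}U_{\lambda_*}=2a\alpha_Ny_1(1+|y|^2)^{-a-1}$, divided by $y_1$, gives $\xi_\mu>0$ on $\mathcal B_{R\mu}(P)\cap\{0<t<T(\mu)/2\}$, including the inner sphere $r=R\mu$.

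\textbf{The neck and outer region.} The gap is in propagating this positivity to the rest of the half-period; your pointwise, region-by-region comparison does not close as stated.
\begin{enumerate}
\item In the neck, the bound $|\nabla\varphi_{\mu,T}|\le C\Psi_\mu/r_\mu$ carries no factor $t$, whereas $-\partial_t\mathcal U_\mu\asymp\mu^a t\,r^{-N}$. For $N\ge5$ the ratio is of order $\mu^2r^{N-4}(r/t)$, so the claimed $o(1)$ domination fails in a layer along $\{t=0\}$. Repairing it needs the second-derivative part of $\|\cdot\|$, a path argument, and an $\ell=2$ version of \eqref{eq:res-Green-expansion}.
\item In the outer region, the quotient $\xi_\mu/(t(T/2-t)z_N)$ at the edges $\{t=0\}\cap\{z_N=0\}$ and $\{t=T/2\}\cap\{z_N=0\}$ involves $\partial_t^2\partial_{z_N}v_\mu$. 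Its uniform convergence therefore needs $C^3$ convergence of $\mu^{-a}v_\mu$, which $C^{2,\alpha}$ control does not give. Lemma~\ref{lem:global-bootstrap} only divides the solution itself by one vanishing factor. A positive lower bound for the limiting quotient at such an edge also needs Serrin's corner lemma, not Hopf's.
\end{enumerate}

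\textbf{The spectral fallback.} It misidentifies the Jacobi fields. The field $\partial_{y_1}U_{\lambda_*}$ is odd in $t$, satisfies the Dirichlet condition on $\{t=0\}$, and is not excluded by the symmetry; it is precisely the blow-up limit of $\xi_\mu$. What actually yields $\lambda_2>0$ is that the unique negative direction of $-\Delta-pU_{\lambda_*}^{p-1}$ is radial, hence even in $y_1$. You would also need a no-loss-of-mass argument outside the core.

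\textbf{The missing idea.} Once $\xi_\mu>0$ on the core and its boundary sphere, no pointwise information is needed elsewhere. Set $\Omega_{\mu,R}=\bigl((0,T(\mu)/2)\times\mathbb S^{N-1}_+\bigr)\setminus\overline{\mathcal B_{R\mu}(P)}$. In the neck the potential satisfies $pv_\mu^{p-1}\asymp\mu^2r^{-4}$, which is not pointwise small near $r=R\mu$. Its scale-invariant $L^{N/2}$ norm is small, however. Since $(p-1)N/2=2^*$, the tail estimate \eqref{eq:res-family-tail} gives $\|v_\mu^{p-1}\|_{L^{N/2}(\Omega_{\mu,R})}\le CR^{-2}+o(1)$.

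Combine this with H\"older's inequality and a Sobolev inequality on $\Omega_{\mu,R}$ with uniform constant (extend by zero to the half-period and pull back by $\Phi_{T(\mu)}$). Then the quadratic form of $-L_{g_{\mathrm{cyl}}}-pv_\mu^{p-1}$ is coercive on $H^1_0(\Omega_{\mu,R})$ for $R$ large. The function $(\xi_\mu)_-$ vanishes on $\{t=0\}$, on $\{t=T(\mu)/2\}$, on the lateral boundary, and on the inner sphere. Testing the equation for $\xi_\mu$ against it gives $(\xi_\mu)_-=0$, and the strong maximum principle yields $\xi_\mu>0$.

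This single step covers the neck, the outer region, the lateral boundary and the edges at once; it is what the paper does. It is the nonlinear analogue of your correct maximum-principle argument for $-\partial_tG_T$, with $L^{N/2}$-smallness of the potential replacing its absence.
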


\begin{proof}
Put $\xi_\mu=-\partial_tv_\mu$.  By evenness, periodicity, and the lateral
Dirichlet condition, we have
\begin{equation}\label{eq:res-derivative-equation}
 \begin{cases}
 (-L_{g_{\mathrm{cyl}}}-pv_\mu^{p-1})\xi_\mu=0
 &\text{in }(0,T(\mu)/2)\times\mathbb S^{N-1}_+,\\
 \xi_\mu=0&\text{on the boundary}.
 \end{cases}
\end{equation}
In product Fermi coordinates centered at $P$, take $y_1>0$ in the
positive $t$ direction.  By \eqref{eq:res-Green-expansion},
Proposition~\ref{prop:res-correction}, and the scaled Schauder estimates,
for every fixed $R$, we have
\begin{equation*}
 \mu^{a+1}\xi_\mu(\exp_P(\mu y))
 \longrightarrow
 2a\alpha_Ny_1(1+|y|^2)^{-a-1}
 \quad\text{in }C^1(\overline{B_{2R}^+}).
\end{equation*}
Both sides vanish on $y_1=0$, and
\begin{align*}
 \frac{\mu^{a+1}\xi_\mu(\exp_P(\mu y))}{y_1}
 =\int_0^1
 \partial_1\bigl\{\mu^{a+1}\xi_\mu(\exp_P(\mu\,\cdot))\bigr\}
 (\tau y_1,y')\dd\tau
 \longrightarrow2a\alpha_N(1+|y|^2)^{-a-1}>0
\end{align*}
uniformly on $\overline{B_R^+}$.
Hence
\begin{equation}\label{eq:res-core-derivative-positive}
 \xi_\mu>0\quad\text{in }\mathcal B_{R\mu}(P)\cap
 \{0<t<T(\mu)/2\},
\end{equation}
including the inner boundary $r=R\mu$.
Let
\begin{equation*}
 \Omega_{\mu,R}=
 \bigl((0,T(\mu)/2)\times\mathbb S^{N-1}_+\bigr)
 \setminus\overline{\mathcal B_{R\mu}(P)}.
\end{equation*}
The Sobolev constant on $\Omega_{\mu,R}$ is uniform.  Indeed, every
$f\in H^1_0(\Omega_{\mu,R})$, extended by zero across the removed
half-ball, belongs to
\[
 H^1_0\bigl((0,T(\mu)/2)\times\mathbb S^{N-1}_+\bigr).
\]
Since $T(\mu)\in I$, after pull-back by the restriction of $\Phi_{T(\mu)}$
to $(0,T^*/2)\times\mathbb S^{N-1}_+$, we obtain a uniformly equivalent
family of metrics.
Hence
\begin{equation}\label{eq:res-uniform-punctured-Sobolev}
 \|f\|_{L^{2^*}(\Omega_{\mu,R})}^2
 \leq C_S\int_{\Omega_{\mu,R}}
 (|\nabla f|^2+a^2f^2)\dd V,
\end{equation}
where $C_S$ is independent of $\mu$, $R$, and $T(\mu)$.
By the tail estimate established in the proof of
Theorem~\ref{thm:res-family}, we have
\begin{equation*}
 \limsup_{\mu\downarrow0}
 \int_{\Omega_{\mu,R}}v_\mu^{2^*}\dd V
 \leq CR^{-N}.
\end{equation*}
For $f\in H^1_0(\Omega_{\mu,R})$, by H\"older's inequality and
\eqref{eq:res-uniform-punctured-Sobolev}, we have
\begin{align*}
 p\int_{\Omega_{\mu,R}}v_\mu^{p-1}f^2\dd V
 \leq p\,\Bigl(\int_{\Omega_{\mu,R}}v_\mu^{2^*}\dd V\Bigr)^{2/N}
 \|f\|_{L^{2^*}}^2
 \leq (CR^{-2}+o(1))
 \int_{\Omega_{\mu,R}}(|\nabla f|^2+a^2f^2)\dd V.
\end{align*}
Choose $R$ large.  The quadratic form of
$-L_{g_{\mathrm{cyl}}}-pv_\mu^{p-1}$ is then coercive on $\Omega_{\mu,R}$.
Testing \eqref{eq:res-derivative-equation} against $(\xi_\mu)_-$, whose
trace is zero by \eqref{eq:res-core-derivative-positive}, we obtain
\begin{equation*}
 0=-\int_{\Omega_{\mu,R}}
 \bigl(|\nabla(\xi_\mu)_-|^2+a^2(\xi_\mu)_-^2
 -pv_\mu^{p-1}(\xi_\mu)_-^2\bigr)\dd V.
\end{equation*}
By coercivity, $(\xi_\mu)_-=0$, and by the strong maximum principle,
we have $\xi_\mu>0$.
We have thus proved the monotonicity assertion in
Proposition~\ref{prop:res-bumpy}.  The periods of a nonconstant
continuous periodic function form $\ell\mathbb Z$ for its least period
$\ell>0$.  If $T(\mu)$ were not least, then
$T(\mu)=m\ell$ for some $m\geq2$, so $0<\ell\leq T(\mu)/2$.  If
$\ell<T(\mu)/2$, then
$\partial_tv_\mu(\ell,\cdot)=\partial_tv_\mu(0,\cdot)=0$, contrary to
the strict monotonicity just proved; if $\ell=T(\mu)/2$, by strict monotonicity,
$v_\mu(0,\cdot)\ne v_\mu(T(\mu)/2,\cdot)$.  Thus no smaller period
exists.
\end{proof}

\begin{proof}[Proof of Theorem~\ref{thm:main-resonant-family}]
By Theorem~\ref{thm:res-family}, we obtain the family in
\eqref{eq:intro-resonant-family}.  By
Proposition~\ref{prop:res-bumpy}, the family is bumpy and has the stated
least period; by Proposition~\ref{prop:res-local-exhaustiveness}, we
obtain local exhaustiveness and uniqueness of the parameter.
It remains to attach the family to $\mathscr C$.  Let
$\Gamma_*=\{(k(\mu),w_\mu):0<\mu<\bar\mu\}$.  The parameter map is
continuous in the fixed H\"older space, so $\Gamma_*$ is connected;
moreover, by Proposition~\ref{prop:res-bumpy},
$\Gamma_*\subset\mathcal M_+$.  The half-period translation
$\mathcal T_\pi(k,w)=(k,w(\,\cdot+\pi,\cdot))$ is a homeomorphic
involution of the even solution set, interchanges $\mathcal M_+$ and
$\mathcal M_-$, and fixes $(k_*,\phi)$.  Since $\mathscr C$ is the
component of $\overline{\mathcal M_+\cup\mathcal M_-}$ containing
$(k_*,\phi)$, one has $\mathcal T_\pi(\mathscr C)=\mathscr C$.
Choose an escaping sequence in $\mathscr C$.
{
After passing to a subsequence contained in one of the two phase classes
$\mathcal M_\pm$ and applying $\mathcal T_\pi$ if necessary, by
Theorem~\ref{thm:main-global}(iii), we may center the corresponding profiles
at $P$ so that
}
\[
 T_n\longrightarrow T^*,\qquad v_n(P)\longrightarrow\infty.
\]
Therefore, by Proposition~\ref{prop:res-local-exhaustiveness},
the tail of this sequence lies in $\mathscr C\cap\Gamma_*$.  Hence this
intersection is nonempty, and $\mathscr C\cup\Gamma_*$ is a connected
subset of $\overline{\mathcal M_+\cup\mathcal M_-}$.  By the maximality of
$\mathscr C$, we conclude that $\Gamma_*\subset\mathscr C$.
\end{proof}

\appendix
\section{Technical estimates}
\label{sec:res-technical-appendix}

This appendix contains the technical estimates deferred from the
construction: the projection estimates in Lemma~\ref{lem:res-projection}
and the weighted Green-operator bound underlying
Lemma~\ref{lem:res-weighted-inverse}.

\begin{proof}[Proof of Lemma~\ref{lem:res-projection}]
\mbox{}\par\smallskip
\noindent\emph{The key estimate \eqref{eq:res-Green-expansion}.}
Put
\[
 h_{\mu,k}:=(\mu\partial_\mu)^k\mathcal U_\mu^p,
 \qquad k=0,1,2.
\]
We first work in $\mathcal B_{r_0}(P)$.  By
\eqref{eq:res-projected-bubble}, the full-cylinder Newton representation,
and $G_T=\Gamma+\mathcal H_T$, we have
\[
 (\mu\partial_\mu)^k(W_{\mu,T}-\mathcal U_\mu)(X)
 =\int_{\Sigma_T}\mathcal H_T(X,Y)\chi(Y)h_{\mu,k}(Y)\dd V_Y
  -\mathcal E_{\mu,k}(X),
\]
where
\[
 \mathcal E_{\mu,k}(X):=
 \int_{\mathbb R\times\mathbb S^{N-1}}
   \Gamma(X,Y)h_{\mu,k}(Y)\dd V_Y
 -\int_{\Sigma_T}\Gamma(X,Y)\chi(Y)h_{\mu,k}(Y)\dd V_Y.
\]
The cut-off is supported where the fixed-domain identification is the
identity.  Hence $\mathcal E_{\mu,k}$ is independent of $T$.  By the tail
estimate in Lemma~\ref{lem:res-bubble-estimates}, the local integrability
of $\Gamma$ and $\nabla_X\Gamma$, and the decay of the full-cylinder
kernel, we obtain
\[
 \sum_{\ell=0}^1
 \|\nabla_X^\ell\mathcal E_{\mu,k}\|_
 {L^\infty(\mathcal B_{r_0}(P))}
 \leq C\mu^{a+2}.
\]

For $j=0,1$, the regularity in
Lemma~\ref{lem:local-periodic-green} gives, uniformly for
$X\in\mathcal B_{r_0}(P)$,
\[
 \begin{aligned}
 &\nabla_X^\ell
 \int_{\Sigma_T}\partial_T^j\mathcal H_T(X,Y)
       \chi(Y)h_{\mu,k}(Y)\dd V_Y\\
 &\quad=\nabla_X^\ell\partial_T^j\mathcal H_T(X,P)
       \int_{\Sigma_T}\chi h_{\mu,k}\dd V
 +O\left(\int_{\Sigma_T}r(Y)|h_{\mu,k}(Y)|\dd V_Y\right).
 \end{aligned}
\]
By parts (iv) and (v) of Lemma~\ref{lem:res-bubble-estimates}, the last
display equals
\[
 \frac{\alpha_N}{c_N}a^k\mu^a
 \nabla_X^\ell\partial_T^j\mathcal H_T(X,P)
 +O(\mu^{a+1}).
\]
Together with the estimate for $\mathcal E_{\mu,k}$, this proves
\eqref{eq:res-Green-expansion}.

\smallskip\noindent\emph{(i).}
Taking $j=k=\ell=0$ in that expansion, we obtain
\[
 |W_{\mu,T}(X)-\mathcal U_\mu(X)|\leq C\mu^a,
 \qquad X\in\mathcal B_{r_0}(P).
\]
Since
$\mathcal U_\mu(X)\asymp
 \mu^a(\mu^2+r(X)^2)^{-a}$, after decreasing $r_0$ if necessary, the
error is absorbed by the bubble term.  This proves part (i) for
$r(X)<r_0$.  For $r(X)\geq r_0$,
Lemma~\ref{lem:res-bubble-estimates}
gives
\[
 \int_{\mathcal B_{r_0/2}(P)}\mathcal U_\mu^p\dd V\asymp\mu^a,
 \qquad
 \int_{\Sigma_T}|h_{\mu,k}|\dd V\leq C\mu^a,
 \qquad
 |h_{\mu,k}|\leq C\mu^{a+2}
 \quad\text{outside }\mathcal B_{r_0/2}(P).
\]
If $r(X)\geq4r_0$, the separated Green estimates in
Lemma~\ref{lem:local-periodic-green} yield
\[
 c\mu^a\delta(X)\leq W_{\mu,T}(X)\leq C\mu^a\delta(X).
\]
On the annulus $r_0\leq r(X)\leq4r_0$, we have
$\delta(X)=1$.  The points
$X$ and $Y\in\mathcal B_{r_0/2}(P)$ are uniformly separated, so
positivity and compactness give $c\leq G_T(X,Y)\leq C$.  Moreover,
\[
 \int_{\Sigma_T\setminus\mathcal B_{r_0/2}(P)}
 G_T(X,Y)\chi(Y)\mathcal U_\mu^p(Y)\dd V_Y
 \leq C\mu^{a+2}\int_{\Sigma_T}G_T(X,Y)\dd V_Y
 \leq C\mu^{a+2}.
\]
It follows that $W_{\mu,T}(X)\asymp\mu^a$ on this annulus, which
completes the proof of (i).

\smallskip\noindent\emph{(ii).}
Taking $(j,k)=(1,0)$ and $(1,1)$ in
\eqref{eq:res-Green-expansion}, and recalling that $\mathcal U_\mu$ is
independent of $T$ on the fixed neighborhood of $P$, gives
\[
 |\partial_TW_{\mu,T}(X)|
 +|\partial_T(\mu\partial_\mu W_{\mu,T})(X)|
 \leq C\mu^a
 \quad\text{if }r(X)<r_0.
\]
If $r(X)\geq4r_0$, the separated Green estimates give, for $k=0,1$,
\[
 \begin{aligned}
 |\partial_T(\mu\partial_\mu)^kW_{\mu,T}(X)|
 &\leq\int_{\Sigma_T}|\partial_TG_T(X,Y)|
      \chi(Y)|h_{\mu,k}(Y)|\dd V_Y\\
 &\leq C\mu^a\delta(X).
 \end{aligned}
\]
On $r_0\leq r(X)\leq4r_0$, the
singular part $\Gamma$ is independent of $T$ and
$\partial_TL_{g_T}$ is supported outside $\mathcal B_{8r_0}(P)$,
$\partial_TG_T(X,Y)$ is uniformly bounded for
$r_0\leq r(X)\leq4r_0$ and $Y\in\operatorname{supp}\chi$.  Therefore
\[
 |\partial_T(\mu\partial_\mu)^kW_{\mu,T}(X)|
 \leq C\int_{\Sigma_T}|h_{\mu,k}|\dd V
 \leq C\mu^a,
 \qquad k=0,1.
\]
This proves (ii).

\smallskip\noindent\emph{(iii).}
The same Green representations, now with $k=0,1,2$, together with
part (i) of Lemma~\ref{lem:res-bubble-estimates}, give (iii).

\smallskip\noindent\emph{(iv).}
Set
\[
 w_{j,k}:=\partial_T^j(\mu\partial_\mu)^kW_{\mu,T}.
\]
For $j=0$ we have
\[
 -L_{g_T}w_{0,k}=\chi h_{\mu,k}.
\]
The scaled interior and zero-Dirichlet boundary Schauder estimates
\cite[Theorems~6.2 and 6.6]{GT2001}, applied on balls of radius
$2cr_\mu(X)$, give
\[
 \begin{aligned}
 &\sum_{\ell=1}^2r_\mu(X)^\ell|\nabla^\ell w_{0,k}(X)|
 +r_\mu(X)^{2+\alpha}[\nabla^2w_{0,k}]_
 {C^\alpha(\mathcal B_{cr_\mu(X)}(X)\cap\Sigma_T)}\\
 &\quad\leq C\biggl(
 \|w_{0,k}\|_{L^\infty(\mathcal B_{2cr_\mu(X)}(X)\cap\Sigma_T)}
 +r_\mu(X)^2\|\chi h_{\mu,k}\|_{L^\infty}
 +r_\mu(X)^{2+\alpha}[\chi h_{\mu,k}]_{C^\alpha}
 \biggr),
 \end{aligned}
\]
where the norms on the last line are taken over the same ball.  By
(i), (iii), and parts (i)--(iii) of
Lemma~\ref{lem:res-bubble-estimates}, the right-hand side is bounded by
$C\mu^ar_\mu(X)^{2-N}$.  This proves (iv) for $j=0$.

For $j=1$, differentiation on the fixed cylinder gives
\[
 -L_{g_T}w_{1,k}=(\partial_TL_{g_T})w_{0,k}.
\]
The coefficients of $\partial_TL_{g_T}$ are uniformly bounded and
supported outside $\mathcal B_{8r_0}(P)$.  On this support,
$r_\mu$ is bounded from below, and the estimate just proved for
$w_{0,k}$ gives
\[
 r_\mu^2\|(\partial_TL_{g_T})w_{0,k}\|_{L^\infty}
 +r_\mu^{2+\alpha}[(\partial_TL_{g_T})w_{0,k}]_{C^\alpha}
 \leq C\mu^a.
\]
Using the zeroth-order bound in (iii), we may apply the same interior or
boundary Schauder estimate to $w_{1,k}$.  We obtain
\[
 \sum_{\ell=1}^2r_\mu^\ell|\nabla^\ell w_{1,k}|
 +r_\mu^{2+\alpha}[\nabla^2w_{1,k}]_{C^\alpha}
 \leq C\mu^ar_\mu^{2-N},
\]
which proves (iv) and completes the proof.
\end{proof}

We now turn to the weighted projected inverse.

\begin{proof}[Proof of Lemma~\ref{lem:res-weighted-inverse}]
\textbf{Step I. The Green mapping estimate.}
We first prove that the Green operator of $-L_{g_{\mathrm{cyl}}}$ maps the source
weight $\Theta_\mu$ into the solution weight $\delta\Psi_\mu$.
We use the global and integral estimates from
Lemma~\ref{lem:local-periodic-green}.  Recall that $r_0$ lies below the
uniform injectivity radius of $\Sigma_T$, $T\in I$.

If $r=r(X)\leq\mu<r_0/2$, then $\delta(X)=1$.  
By \eqref{eq:res-source-weight-regions} and
Lemma~\ref{lem:local-periodic-green}, we obtain
\begin{equation*}
\begin{aligned}
  \int_{\Sigma_T}G_T(X,Y)\Theta_\mu(Y)\dd V_Y
  = & \,\Bigl( \int_{r(Y)<r_0/2} + \int_{\Sigma_T \cap \{r(Y)\geq r_0/2\}} \Bigr)\,G_T(X,Y)\Theta_\mu(Y)\dd V_Y \\
  \leq &\, C\mu^a\int_{|y|<r_0/(2\mu)}|x-y|^{2-N}(1+|y|^2)^{-2}\dd y
  + C\Psi_\mu(r_0)\int_{\Sigma_T}G_T(X,Y)\dd V_Y \\
  \leq &\, C\mu^a\int_{\mathbb R^N}|x-y|^{2-N}(1+|y|^2)^{-2}\dd y
  + C\Psi_\mu(r_0)
  \leq C \mu^a = C\Psi_\mu(r),
\end{aligned}
\end{equation*}
Here the first integral is written in the $\mu$-rescaled geodesic
normal coordinates centered at $P$.  More precisely, put
$F_\mu(y)=\exp_P(\mu y)$ and write $X=F_\mu(x)$ and $Y=F_\mu(y)$,
where $|x|\leq1$ and $|y|<r_0/(2\mu)$.  By the normal-coordinate
expansion, we have
\[
 \mu^{-2}F_\mu^*g
 =\bigl\{\delta_{ij}+O(\mu^2|y|^2)\bigr\}\dd y^i\dd y^j,
 \qquad
 \dd V_{F_\mu(y)}\asymp\mu^N\dd y,\qquad
 d_T(F_\mu(x),F_\mu(y)) \asymp \mu|x-y|.
\]
Since $r(F_\mu(y))=\mu|y|$, one has $\Psi_\mu(r)=\mu^a$ and
$\Theta_\mu(F_\mu(y))=\mu^{a-2}(1+|y|^2)^{-2}$.

If $\mu<r<r_0/2$, then $\delta(X)=1$.  Write
$\exp_P^{-1}(Y)=s\omega$, where $s=r(Y)$ and
$\omega\in\mathbb S^{N-1}\subset T_P\Sigma_T$.  By the normal-coordinate
volume expansion, uniform for $T\in I$, we have
\[
 \dd V_Y=(1+O(s^2))s^{N-1}\dd s\dd\omega
 \asymp s^{N-1}\dd s\dd\omega.
\]
We estimate the desired integral in four regions separately.
If $s<r/2$, then, by the triangle inequality, $d_T(X,Y)\geq r-s\geq r/2$, and hence, by Lemma~\ref{lem:local-periodic-green}, \eqref{eq:res-source-weight-regions}, and \eqref{eq:res-solution-weight},
\begin{align*}
 \int_{s<r/2}G_T(X,Y)\Theta_\mu(Y)\dd V_Y
 &\leq C\int_{s<r/2}d_T(X,Y)^{2-N}\Theta_\mu(Y)\dd V_Y
 \leq Cr^{2-N}\mu^{a+2}\int_0^{r/2}
       \frac{s^{N-1}}{(\mu^2+s^2)^2}\dd s\\
 &\leq C\begin{cases}
  \mu^{a+1}r^{-1},&N=3,\\
  \mu^{a+2}r^{-2}\{1+\log(r/\mu)\},&N=4,\\
  \mu^{a+2}r^{-2},&N\geq5,
 \end{cases}
 \leq C\Psi_\mu(r).
\end{align*}
If $2r<s<r_0$, then $d_T(X,Y)\geq s-r\geq s/2$, whence by Lemma~\ref{lem:local-periodic-green}, \eqref{eq:res-source-weight-regions}, and \eqref{eq:res-solution-weight},
\begin{align*}
 \int_{2r<s<r_0}G_T(X,Y)\Theta_\mu(Y)\dd V_Y
 &\leq C\int_{2r<s<r_0}s^{2-N}\Theta_\mu(Y)\dd V_Y
 \leq C\int_{2r}^{r_0}\biggl[\frac{s\mu^{a+2}}{(\mu^2+s^2)^2} +s\Psi_\mu(r_0)\biggr]\dd s \\
 & \leq C\mu^{a+2}\int_{2r}^{r_0}s^{-3}\dd s +Cr_0^2\Psi_\mu(r_0) 
 \leq C\bigl(\,\mu^{a+2}r^{-2}+\Psi_\mu(r_0)\bigr)
 \leq C\Psi_\mu(r).
\end{align*}
In the last step,
\[
 \mu^{a+2}r^{-2}\leq C\Psi_\mu(r),
 \qquad \Psi_\mu(r_0)\leq C\Psi_\mu(r),
 \qquad \mu<r<r_0/2;
\]
for $N=4$ the logarithmic factor in $\Psi_\mu$ is at least one.
If $r/2\leq s\leq2r$, the region is contained in $\mathcal B_{3r}(X)$.
By the local integrability of the Green singularity, we have
\begin{align*}
 \int_{r/2\leq s\leq2r}\!G_T(X,Y)\Theta_\mu(Y)\dd V_Y
 &\leq \!\sup_{r/2<s<2r}\!\Theta_\mu(s)\!
       \int_{\mathcal B_{3r}(X)}\!G_T(X,Y)\dd V_Y
 \leq C\!\sup_{r/2<s<2r}\!\Theta_\mu(s)\!
       \int_0^{3r}\rho\dd\rho\\
 &\leq Cr^2\{\mu^{a+2}r^{-4}+\Psi_\mu(r_0)\}
 \leq C\{\mu^{a+2}r^{-2}+\Psi_\mu(r_0)\}
 \leq C\Psi_\mu(r).
\end{align*}
Finally, by \eqref{eq:res-source-weight-regions} and the preceding
resolvent estimate, we have
\begin{align*}
 \int_{s\geq r_0}G_T(X,Y)\Theta_\mu(Y)\dd V_Y
 \leq C\Psi_\mu(r_0)\int_{\Sigma_T}G_T(X,Y)\dd V_Y
 \leq C\Psi_\mu(r_0)\leq C\Psi_\mu(r).
\end{align*}
Adding the four estimates, we obtain
\[
 \int_{\Sigma_T}G_T(X,Y)\Theta_\mu(Y)\dd V_Y
 \leq C\Psi_\mu(r).
\]

If $r=r(X)\geq r_0/2$, then
$\Psi_\mu(r)\asymp\Psi_\mu(r_0)$.  We split the integral into
$s=r(Y)<r_0/4$ and $s\geq r_0/4$.  On the first region,
$d_T(X,Y)\geq r_0/4$, so, by Lemma~\ref{lem:local-periodic-green}, we have
$G_T(X,Y)\leq C\delta(X)$.  Hence, in geodesic polar coordinates,
\begin{align*}
 \int_{s<r_0/4}G_T(X,Y)\Theta_\mu(Y)\dd V_Y
 &\leq C\delta(X)\mu^{a+2}
       \int_0^{r_0/4}\frac{s^{N-1}}{(\mu^2+s^2)^2}\dd s\\
 &\leq C\delta(X)\begin{cases}
  \mu^{a+1},&N=3,\\
  \mu^{a+2}\{1+|\log\mu|\},&N=4,\\
  \mu^{a+2},&N\geq5,
 \end{cases}\\
 &\leq C\delta(X)\Psi_\mu(r_0).
\end{align*}
On the second region, by the definition of $\Theta_\mu$, we have
$\Theta_\mu(Y)\leq C\Psi_\mu(r_0)$; therefore
\begin{align*}
 \int_{s\geq r_0/4}G_T(X,Y)\Theta_\mu(Y)\dd V_Y
 \leq C\Psi_\mu(r_0)\int_{\Sigma_T}G_T(X,Y)\dd V_Y
 \leq C\delta(X)\Psi_\mu(r_0).
\end{align*}
Adding the two estimates, we obtain
\[
 \int_{\Sigma_T}G_T(X,Y)\Theta_\mu(Y)\dd V_Y
 \leq C\delta(X)\Psi_\mu(r).
\]
The three cases therefore prove
\begin{equation}
 \int_{\Sigma_T}G_T(X,Y)\Theta_\mu(Y)\dd V_Y
 \leq C\delta(X)\Psi_\mu(r(X)).                                \label{eq:res-free-Green-map}
\end{equation}

\textbf{Step II. Energy and Schauder reduction.}
We next reduce the lemma to a bound for the weighted zeroth-order
quantity
\[
 S(\varphi):=\sup_{\Sigma_T^\circ}
 \frac{|\varphi|}{\delta\Psi_\mu}.
\]
By the hypotheses of Lemma~\ref{lem:res-weighted-inverse},
\eqref{eq:res-weight-preliminaries},
\eqref{eq:res-rho}, and Lemma~\ref{lem:res-inverse},
\[
 \|\varphi\|_T+|b|
 \leq C\{\|h\|_{H^{-1}}+|\gamma|\}
 \leq CA\{\rho_N(\mu)+\mu^{N-2}\}
 \leq CA\rho_N(\mu).
\]
Pairing \eqref{eq:res-augmented-linear-problem} with $Z_{\mu,T}$, we obtain
\begin{equation}\label{eq:res-b-formula}
 b\|Z_{\mu,T}\|_T^2
 =\int_{\Sigma_T}\varphi\,\mu\partial_\mu R_{\mu,T}\dd V
  -\int_{\Sigma_T}hZ_{\mu,T}\dd V.
\end{equation}
Since $\|Z_{\mu,T}\|_T^2\to\kappa_N>0$,
by the $H^{-1}$ estimate in Lemma~\ref{lem:res-residual-estimates},
\eqref{eq:res-weight-identities-two}, and
\eqref{eq:res-rho}, we have
\[
 |b|\|Z_{\mu,T}\|_T^2
 \leq C\|\varphi\|_T\rho_N(\mu)
      +A\int_{\Sigma_T}\Theta_\mu|Z_{\mu,T}|\dd V
 \leq CA\{\rho_N(\mu)^2+\mu^{N-2}\}
 \leq CA\mu^{N-2}.
\]
Therefore
\begin{equation}\label{eq:res-preliminary-weighted-estimates}
 \|\varphi\|_T\leq CA\rho_N(\mu),
 \qquad |b|\leq CA\mu^{N-2}.
\end{equation}

By Lemmas~\ref{lem:res-bubble-estimates} and
\ref{lem:res-projection},
\[
 W_{\mu,T}^{p-1}\delta\Psi_\mu\leq C\Theta_\mu,\qquad
 \mu^{N-2}|(-L_{g_{\mathrm{cyl}}})Z_{\mu,T}|\leq C\Theta_\mu.
\]
Using the equation and \eqref{eq:res-preliminary-weighted-estimates}, we therefore obtain
\begin{align*}
 |pW_{\mu,T}^{p-1}\varphi+h+b(-L_{g_{\mathrm{cyl}}})Z_{\mu,T}|
 &\leq C\{S(\varphi)+A\}\Theta_\mu,\\
 r_\mu(X)^2\|W_{\mu,T}^{p-1}\|_
 {L^\infty(\mathcal B_{2cr_\mu(X)}(X)\cap \Sigma_T)}
 &+r_\mu(X)^{2+\alpha}
 [W_{\mu,T}^{p-1}]_
 {C^\alpha(\mathcal B_{2cr_\mu(X)}(X)\cap \Sigma_T)}\leq C.
\end{align*}
The last estimate follows from Lemma~\ref{lem:res-projection}; near the
lateral boundary one also uses
$|s^{p-1}-t^{p-1}|\leq C|s-t|^{p-1}$ and $\alpha<p-1$.
Applying interior or boundary Schauder estimates to
\eqref{eq:res-augmented-linear-problem} on balls of radius
$cr_\mu(X)$ and using \eqref{eq:res-preliminary-weighted-estimates},
we therefore obtain
\begin{equation}\label{eq:res-weighted-zero-order-reduction}
 \|\varphi\|+\mu^{2-N}|b|
 \leq C\{A+S(\varphi)\}.
\end{equation}
Indeed, after rescaling, the displayed inequalities control the
derivative and H\"older terms by $C\{S(\varphi)+A\}$.  On a boundary
ball, by the zero trace and the boundary $C^1$ estimate, we have
$|\varphi(X)|\leq C\delta(X)\|\nabla\varphi\|_{L^\infty}$,
which justifies the factor $\delta$ in the zeroth-order norm.
It therefore remains to prove
\begin{equation}\label{eq:res-zero-order-target}
 S(\varphi)\leq CA.
\end{equation}

\textbf{Step III. Normalization and the blow-up limit.}
Suppose that \eqref{eq:res-zero-order-target} is false.  Then, by
\eqref{eq:res-weighted-zero-order-reduction}, we could choose a sequence for
which $S(\varphi_n)/A_n\to\infty$.  Dividing the augmented problem by
$S(\varphi_n)$, we may therefore suppose that
\begin{equation}\label{eq:res-weighted-countersequence}
 \sup_{\Sigma_{T_n}}\frac{|\varphi_n|}
 {\delta\Psi_{\mu_n}}=1,\qquad
 |h_n|\leq o(1)\Theta_{\mu_n},\qquad
 |\gamma_n|\leq o(1)\mu_n^{N-2}.
\end{equation}
From \eqref{eq:res-b-formula} and
\eqref{eq:res-weight-identities-four}, we obtain
$b_n=o(\mu_n^{N-2})$.  Subtracting
$\gamma_n\|Z_{\mu_n,T_n}\|_{T_n}^{-2}Z_{\mu_n,T_n}$ reduces to exact
orthogonality and changes the normalized weighted norm by $o(1)$, by
$\mu^{N-2}\|Z_{\mu,T}\|\leq C$ in
Lemma~\ref{lem:res-scale-mode}.
After absorbing this change into the notation, the equation has the form
\begin{equation}\label{eq:res-normalized-orthogonal-equation}
 (-L_{g_{\mathrm{cyl}}}-pW_{\mu_n,T_n}^{p-1})\varphi_n
 =\widehat h_n+\widehat b_n(-L_{g_{\mathrm{cyl}}})Z_{\mu_n,T_n},
 \quad \langle\varphi_n,Z_{\mu_n,T_n}\rangle_{T_n}=0,
 \quad |\widehat h_n|=o(\Theta_{\mu_n}),
 \quad \widehat b_n=o(\mu_n^{N-2}).
\end{equation}
{
After the core rescaling, the coefficient of the multiplier term is
$\mu_n^{2-N}\widehat b_n=o(1)$; hence this term disappears from the
limiting Euclidean equation.
}
The rescaled functions
$\widetilde\varphi_n(y)=\mu_n^{-a}\varphi_n(\exp_P(\mu_n y))$
converge, after passing to a subsequence, in
$C^{2,\beta}_{\mathrm{loc}}(\mathbb R^N)$, $0<\beta<\alpha$, to a
solution of
$(-\Delta-pU_{\lambda_*}^{p-1})\widetilde\varphi=0$ in $\mathbb R^N$
which satisfies
\begin{equation*}
 |\widetilde\varphi(y)|\leq C
 \begin{cases}
  (1+|y|)^{-1},&N=3,\\
  (1+|y|)^{-2}\{1+\log(2+|y|)\},&N=4,\\
 (1+|y|)^{-2},&N\geq5.
 \end{cases}
\end{equation*}
On each annulus $B_{|y|/4}(y)$ with $|y|\geq2$, by the rescaled equation
and the interior gradient estimate, we have
\begin{equation*}
 |\nabla\widetilde\varphi(y)|
 \leq C|y|^{-1}
 \sup_{B_{|y|/4}(y)}|\widetilde\varphi|.
\end{equation*}
By the preceding decay, we have
$\nabla\widetilde\varphi\in L^2(\mathbb R^N)$ for $3\leq N\leq5$.
For $N\geq6$, by Kato's inequality, $pU_{\lambda_*}^{p-1}=O(r^{-4})$, and the
exterior Green representation, we have,
whenever $\widetilde\varphi=O(r^{-\beta})$ and $|x|\geq2R$,
\begin{align*}
 |\widetilde\varphi(x)|
 &\leq C|x|^{2-N}
 +C\int_{|y|>R}|x-y|^{2-N}|y|^{-\beta-4}\dd y\\
 &\leq C|x|^{-\min\{\beta+2,N-2\}}
 \bigl(1+\mathbf1_{\{\beta=N-4\}}\log|x|\bigr).
\end{align*}
Here the first term is the decaying harmonic extension of the boundary
values on $\partial B_R$.  Starting with $\beta=2$ and iterating this
estimate and then applying the same annular gradient estimate, we obtain
$\widetilde\varphi=O(r^{2-N}(1+\log r))$ and
$\nabla\widetilde\varphi=O(r^{1-N}(1+\log r))$, so
$\widetilde\varphi\in\mathcal D^{1,2}(\mathbb R^N)$ in every dimension.
We next pass the exact orthogonality to the limit.  For fixed $R>1$,
by the core convergence, we have
\[
 \mu_n^{2-N}\int_{r<R\mu_n}\varphi_n\,
 \mu_n\partial_{\mu_n}(\mathcal U_{\mu_n}^p)\dd V
 \longrightarrow
 \int_{|y|<R}pU_{\lambda_*}^{p-1}Z_0\widetilde\varphi\dd y.
\]
On the neck, by \eqref{eq:res-weighted-countersequence},
Lemma~\ref{lem:res-bubble-estimates}, and radial integration, we obtain
\[
 \limsup_{n\to\infty}\mu_n^{2-N}
 \int_{R\mu_n<r<r_0}
 \bigl|\varphi_n\,
 \mu_n\partial_{\mu_n}(\mathcal U_{\mu_n}^p)\bigr|\dd V
 \leq C\begin{cases}
 R^{-3},&N=3,\\
 (1+\log R)R^{-4},&N=4,\\
 R^{-4},&N\geq5.
 \end{cases}
\]
The fixed transition region contributes $o(1)$ by the tail estimate in
Lemma~\ref{lem:res-bubble-estimates}.  Using
\[
 \langle\varphi_n,Z_{\mu_n,T_n}\rangle_{T_n}=0,
\]
and letting first $n\to\infty$ and then $R\to\infty$, we obtain
\[
 \int_{\mathbb R^N}pU_{\lambda_*}^{p-1}Z_0\widetilde\varphi\dd y=0.
\]
By bubble nondegeneracy and the symmetry restrictions, the translation
modes vanish, as in Lemma~\ref{lem:res-inverse}.  The displayed
orthogonality eliminates the dilation mode.  Hence
$\widetilde\varphi=0$.

Consequently, for every fixed $R>1$,
\begin{equation}\label{eq:res-inner-boundary-vanishing}
 \sup_{\partial \mathcal B_{R\mu_n}(P)}
 \frac{|\varphi_n|}{\Psi_{\mu_n}(R\mu_n)}\longrightarrow0.
\end{equation}

\textbf{Step IV. Exterior absorption and conclusion.}
It remains to propagate this core vanishing through the neck and the
fixed outer region.
On $\Sigma_T\setminus \mathcal B_{R\mu}(P)$, let $G_{R,\mu,T}$ be the Dirichlet Green
kernel and let $H_{R,\mu,T}$ be the $-L_{g_{\mathrm{cyl}}}$-harmonic function which is
one on $\partial \mathcal B_{R\mu}(P)$ and zero on the lateral boundary.  From
the local Green expansion,
$G_T(X,P)\geq c(R\mu)^{2-N}$ on $\partial \mathcal B_{R\mu}(P)$.  Comparing
$H_{R,\mu,T}$ with
$C(R\mu)^{N-2}G_T(\cdot,P)$ and using the maximum principle, we obtain
\begin{equation}\label{eq:res-harmonic-measure}
 0\leq G_{R,\mu,T}\leq G_T,\qquad
 H_{R,\mu,T}(X)\leq C
 \begin{cases}
  (R\mu/r)^{N-2},&R\mu\leq r\leq r_0,\\
  \delta(X)(R\mu)^{N-2},&r\geq r_0.
 \end{cases}
\end{equation}
By the definitions in \eqref{eq:res-solution-weight} and
\eqref{eq:res-harmonic-measure}, separately on
$R\mu\leq r\leq r_0$ and $r\geq r_0$,
\begin{equation}\label{eq:res-harmonic-weight-map}
 \Psi_\mu(R\mu)H_{R,\mu,T}(X)
 \leq C\delta(X)\Psi_\mu(r(X)).
\end{equation}
Since $0\leq G_{R,\mu,T}\leq G_T$, it is enough to estimate
\[
 \mathcal K_{\mu,R}(X)
 =\int_{\Sigma_T\setminus \mathcal B_{R\mu}(P)}
 G_T(X,Y)W_{\mu,T}^{p-1}(Y)
 \delta(Y)\Psi_\mu(r(Y))\dd V_Y .
\]
For $R\mu<r(X)<r_0$, the lateral boundary is absent and
$W_{\mu,T}^{p-1}(Y)\leq C\mu^2r(Y)^{-4}$.
The same three radial regions give
\begin{align*}
 \frac{\mathcal K_{\mu,R}(X)}{\Psi_\mu(r(X))}
 &\leq C\frac{r^{2-N}}{\Psi_\mu(r)}
       \int_{R\mu}^{r/2}\mu^2s^{N-5}\Psi_\mu(s)\dd s
   +C\mu^2r^{-2}
 +\frac{C}{\Psi_\mu(r)}
       \int_{2r}^{r_0}\mu^2s^{-3}\Psi_\mu(s)\dd s
   +o_\mu(1)\\
 &\leq o_\mu(1)+C
 \begin{cases}
  R^{-2},&N=3,\\
  (1+\log R)R^{-2},&N=4,\\
  R^{-2},&N\geq5,
 \end{cases}.
\end{align*}
Set $\varepsilon_R=C(1+\log R)R^{-2}$ for $N=4$ and
$\varepsilon_R=CR^{-2}$ otherwise; then
$\varepsilon_R\to0$ as $R\to\infty$.
The same computation applies when $r(X)$ is comparable with $R\mu$,
with the first integral omitted.  If $r(X)\geq r_0$, then
$\Psi_\mu(r(X))=\Psi_\mu(r_0)$.  We split at $r(Y)=r_0/2$.
On $R\mu<r(Y)<r_0/2$, the variables are uniformly separated, so
$G_T(X,Y)\leq C\delta(X)$ and
\[
\begin{aligned}
 &\int_{R\mu<r(Y)<r_0/2}G_T(X,Y)W_{\mu,T}^{p-1}(Y)
   \delta(Y)\Psi_\mu(r(Y))\dd V_Y\\
 &\qquad\leq C\delta(X)\mu^2
   \int_{R\mu}^{r_0/2}s^{N-5}\Psi_\mu(s)\dd s\\
 &\qquad\leq\{\varepsilon_R+o_\mu(1)\}
   \delta(X)\Psi_\mu(r_0),
\end{aligned}
\]
where the last inequality follows from
\[
 \frac{\mu^2}{\Psi_\mu(r_0)}
 \int_{R\mu}^{r_0/2}s^{N-5}\Psi_\mu(s)\dd s
 \leq C\begin{cases}
  R^{-2},&N=3,\\
  \displaystyle\frac{1+\log R}
  {R^2\{1+\log(r_0/\mu)\}},&N=4,\\
  o_\mu(1),&N\geq5.
 \end{cases}
\]
On $r(Y)\geq r_0/2$, by Lemma~\ref{lem:res-projection} and the
comparability $\Psi_\mu(r(Y))\asymp\Psi_\mu(r_0)$, we have
\[
 W_{\mu,T}^{p-1}(Y)\delta(Y)\Psi_\mu(r(Y))
 \leq C\mu^2\delta(Y)\Psi_\mu(r_0).
\]
By the resolvent estimate from Step~I, we therefore have
\[
 \int_{r(Y)\geq r_0/2}G_T(X,Y)W_{\mu,T}^{p-1}(Y)
 \delta(Y)\Psi_\mu(r(Y))\dd V_Y
 \leq C\mu^2\delta(X)\Psi_\mu(r_0).
\]
Together with the separated-region estimate, this handles the Green
singularity across the artificial interface $r=r_0$.  Hence, uniformly in all
inner-boundary, neck, outer, lateral-boundary, and mixed regions,
\begin{equation}\label{eq:res-potential-absorption}
 \sup_{X\in \Sigma_T\setminus \mathcal B_{R\mu}(P)}
 \frac{\mathcal K_{\mu,R}(X)}
 {\delta(X)\Psi_\mu(r(X))}
 \leq\varepsilon_R+o_\mu(1),
 \qquad \varepsilon_R\longrightarrow0.
\end{equation}
For completeness, the Kato comparison used next is valid at the energy
level.  Indeed,
$W_{\mu,T}^{p-1}\in L^{N/2}$ and $\varphi_n\in H^1_0$, so
$W_{\mu,T}^{p-1}\varphi_n\in H^{-1}$.  For every nonnegative smooth
$\zeta$, test the equation with
$\zeta\varphi_n(\varphi_n^2+\varepsilon^2)^{-1/2}$.  Letting
$\varepsilon\downarrow0$, we obtain, in distributions on
$\Sigma_{T_n}\setminus \mathcal B_{R\mu_n}(P)$,
\begin{equation*}
 -L_{g_{\mathrm{cyl}}}|\varphi_n|
 \leq pW_{\mu_n,T_n}^{p-1}|\varphi_n|
 +|\widehat h_n|+|\widehat b_n|\,|(-L_{g_{\mathrm{cyl}}})Z_{\mu_n,T_n}|.
\end{equation*}
Approximation by smooth data and monotone convergence justify the
Dirichlet Green representation for this weak subsolution.  Its inner
boundary trace is controlled by
\eqref{eq:res-inner-boundary-vanishing}, and its lateral trace is zero.
Therefore, by Kato's inequality, \eqref{eq:res-free-Green-map},
\eqref{eq:res-inner-boundary-vanishing}--
\eqref{eq:res-harmonic-weight-map}, and
\eqref{eq:res-potential-absorption}, we obtain
\begin{equation*}
 \sup_{\Sigma_{T_n}\setminus \mathcal B_{R\mu_n}(P)}
 \frac{|\varphi_n|}{\delta\Psi_{\mu_n}}
 \leq o(1)+(\varepsilon_R+o(1))
 \sup_{\Sigma_{T_n}\setminus \mathcal B_{R\mu_n}(P)}
 \frac{|\varphi_n|}{\delta\Psi_{\mu_n}}.
\end{equation*}
Choose $R$ with $\varepsilon_R<1/2$.  Together with the core limit this
contradicts \eqref{eq:res-weighted-countersequence}.  Hence
$S(\varphi)\leq CA$.  Combining this bound with
\eqref{eq:res-weighted-zero-order-reduction}, we obtain
the estimate asserted in Lemma~\ref{lem:res-weighted-inverse}.
\end{proof}

\par\medskip
\noindent H.-Y. Wang

\noindent School of Mathematical Sciences, Laboratory of Mathematics and
Complex Systems, MOE, Beijing Normal University,
Beijing 100875, China\\
Email: \textsf{wanghuayang@amss.ac.cn}

\medskip
\noindent J. Xiong

\noindent School of Mathematical Sciences, Laboratory of Mathematics and
Complex Systems, MOE, Beijing Normal University,
Beijing 100875, China\\
Center for Basic Mathematics, Institute for Advanced Study,
Beijing Normal University, Beijing 100875, China\\
Email: \textsf{jx@bnu.edu.cn}


\begin{thebibliography}{43}
\normalsize
\providecommand{\url}[1]{\texttt{#1}}
\providecommand{\href}[2]{#2}
\providecommand{\path}[1]{#1}

\bibitem{Aviles1987}
P.~Aviles,
Local behavior of solutions of some elliptic equations,
Comm. Math. Phys. 108 (1987), 177--192.

\bibitem{AxlerBourdonRamey2001}
S.~Axler, P.~Bourdon, W.~Ramey,
\textit{Harmonic Function Theory}, 2nd ed., Graduate Texts in Mathematics,
vol.~137, Springer, New York, 2001.

\bibitem{BVPV2007}
M.-F. Bidaut-V\'{e}ron, A.~C. Ponce, L.~V\'{e}ron,
Boundary singularities of positive solutions of some nonlinear elliptic
equations, C. R. Math. Acad. Sci. Paris 344 (2007), 83--88.

\bibitem{BVPV2011}
M.-F. Bidaut-V\'{e}ron, A.~C. Ponce, L.~V\'{e}ron,
Isolated boundary singularities of semilinear elliptic equations,
Calc. Var. Partial Differential Equations 40 (2011), 183--221.

\bibitem{BidautVeronVivier2000}
M.-F. Bidaut-V\'{e}ron, L.~Vivier,
An elliptic semilinear equation with source term involving boundary
measures: the subcritical case,
Rev. Mat. Iberoam. 16 (2000), 477--513.

\bibitem{BrezisKato1979}
H.~Brezis, T.~Kato,
Remarks on the Schr\"odinger operator with singular complex potentials,
J. Math. Pures Appl. (9) 58 (1979), 137--151.

\bibitem{BrezisTurner1977}
H.~Br\'ezis, R.~E.~L. Turner,
On a class of superlinear elliptic problems,
Comm. Partial Differential Equations 2 (1977), 601--614.

\bibitem{CGS1989}
L.~A. Caffarelli, B.~Gidas, J.~Spruck,
Asymptotic symmetry and local behavior of semilinear elliptic equations
with critical Sobolev growth, Comm. Pure Appl. Math. 42 (1989), 271--297.

\bibitem{ChenLin1995}
C.-C.~Chen, C.-S.~Lin,
Local behavior of singular positive solutions of semilinear elliptic
equations with Sobolev exponent,
Duke Math. J. 78 (1995), 315--334.

\bibitem{ChenLin1997}
C.-C.~Chen, C.-S.~Lin,
Estimates of the conformal scalar curvature equation via the method of
moving planes,
Comm. Pure Appl. Math. 50 (1997), 971--1017.

\bibitem{CR1971}
M.~G. Crandall, P.~H. Rabinowitz,
Bifurcation from simple eigenvalues, J. Funct. Anal. 8 (1971), 321--340.

\bibitem{Dancer1992}
E.~N. Dancer,
Some notes on the method of moving planes,
Bull. Aust. Math. Soc. 46 (1992), 425--434.

\bibitem{Dancer2001}
E.~N. Dancer,
New solutions of equations on ${\mathbb R}^n$,
Ann. Scuola Norm. Sup. Pisa Cl. Sci. (4) 30 (2001), 535--563.

\bibitem{Davies1989}
E.~B. Davies,
\textit{Heat Kernels and Spectral Theory}, Cambridge Tracts in Mathematics,
vol.~92, Cambridge University Press, Cambridge, 1989.

\bibitem{delPinoDolbeaultMusso2004}
M.~del Pino, J.~Dolbeault, M.~Musso,
The Brezis--Nirenberg problem near criticality in dimension 3,
J. Math. Pures Appl. (9) 83 (2004), 1405--1456.

\bibitem{DMP2007}
M.~del Pino, M.~Musso, F.~Pacard,
Boundary singularities for weak solutions of semilinear elliptic problems,
J. Funct. Anal. 253 (2007), 241--272.

\bibitem{DynkinKuznetsov1996}
E.~B. Dynkin, S.~E. Kuznetsov,
Superdiffusions and removable singularities for quasilinear partial
differential equations,
Comm. Pure Appl. Math. 49 (1996), 125--176.

\bibitem{Engelking1989}
R.~Engelking,
\textit{General Topology}, Sigma Series in Pure Mathematics, vol.~6,
Heldermann Verlag, Berlin, 1989.

\bibitem{GidasNiNirenberg1979}
B.~Gidas, W.-M. Ni, L.~Nirenberg,
Symmetry and related properties via the maximum principle,
Comm. Math. Phys. 68 (1979), 209--243.

\bibitem{GidasSpruck1981Apriori}
B.~Gidas, J.~Spruck,
A priori bounds for positive solutions of nonlinear elliptic equations,
Comm. Partial Differential Equations 6 (1981), 883--901.

\bibitem{GidasSpruck1981}
B.~Gidas, J.~Spruck,
Global and local behavior of positive solutions of nonlinear elliptic
equations,
Comm. Pure Appl. Math. 34 (1981), 525--598.

\bibitem{GT2001}
D.~Gilbarg, N.~S. Trudinger,
\textit{Elliptic Partial Differential Equations of Second Order},
2nd ed., Classics in Mathematics, Springer, Berlin, 2001.

\bibitem{GmiraVeron1991}
A.~Gmira, L.~V\'{e}ron,
Boundary singularities of solutions of some nonlinear elliptic equations,
Duke Math. J. 64 (1991), 271--324.

\bibitem{GruterWidman1982}
M.~Gr\"uter, K.-O. Widman,
The Green's function for uniformly elliptic equations,
Manuscripta Math. 37 (1982), 303--342.

\bibitem{Hormander1968}
L.~H\"ormander,
The spectral function of an elliptic operator,
Acta Math. 121 (1968), 193--218.

\bibitem{Han1991}
Z.-C.~Han,
Asymptotic approach to singular solutions for nonlinear elliptic equations
involving critical Sobolev exponent,
Ann. Inst. H. Poincar\'e Anal. Non Lin\'eaire 8 (1991), 159--174.

\bibitem{KMPS1999}
N.~Korevaar, R.~Mazzeo, F.~Pacard, R.~Schoen,
Refined asymptotics for constant scalar curvature metrics with isolated
singularities,
Invent. Math. 135 (1999), 233--272.

\bibitem{LeGall1995}
J.-F. Le Gall,
The Brownian snake and solutions of $\Delta u=u^2$ in a domain,
Probab. Theory Related Fields 102 (1995), 393--432.

\bibitem{Li1995}
Y.~Y. Li,
Prescribing scalar curvature on $S^n$ and related problems, Part I,
J. Differential Equations 120 (1995), 319--410.

\bibitem{LierlSaloffCoste2014}
J.~Lierl, L.~Saloff-Coste,
The Dirichlet heat kernel in inner uniform domains: local results,
compact domains and non-symmetric forms,
J. Funct. Anal. 266 (2014), 4189--4235.

\bibitem{Lions1980}
P.-L. Lions,
Isolated singularities in semilinear problems,
J. Differential Equations 38 (1980), 441--450.

\bibitem{MarcusVeron1998}
M.~Marcus, L.~V\'{e}ron,
The boundary trace of positive solutions of semilinear elliptic equations:
the subcritical case,
Arch. Ration. Mech. Anal. 144 (1998), 201--231.

\bibitem{Nirenberg2001}
L.~Nirenberg,
\textit{Topics in Nonlinear Functional Analysis}, Courant Lecture Notes in
Mathematics, vol.~6, Courant Institute of Mathematical Sciences, New York,
and American Mathematical Society, Providence, RI, 2001.

\bibitem{Obata1971}
M.~Obata,
The conjectures on conformal transformations of Riemannian manifolds,
J. Differential Geom. 6 (1971), 247--258.

\bibitem{Pohozaev1965}
S.~I. Pohozaev,
On the eigenfunctions of the equation $\Delta u+\lambda f(u)=0$,
Soviet Math. Dokl. 6 (1965), 1408--1411;
translated from Dokl. Akad. Nauk SSSR 165 (1965), 36--39.

\bibitem{Rabinowitz1971}
P.~H. Rabinowitz,
Some global results for nonlinear eigenvalue problems,
J. Funct. Anal. 7 (1971), 487--513.

\bibitem{Rey1990}
O.~Rey,
The role of the Green's function in a non-linear elliptic equation involving
the critical Sobolev exponent, J. Funct. Anal. 89 (1990), 1--52.

\bibitem{Schoen1991}
R.~M. Schoen,
On the number of constant scalar curvature metrics in a conformal class,
in H.~B. Lawson, Jr. and K.~Tenenblat (Eds.),
\textit{Differential Geometry: A Symposium in Honor of Manfredo do Carmo},
Pitman Monographs and Surveys in Pure and Applied Mathematics, vol.~52,
Longman Scientific \& Technical, Harlow, 1991, pp.~311--320.

\bibitem{SW2016}
N.~Shioji, K.~Watanabe,
Uniqueness and nondegeneracy of positive radial solutions of
${\rm div}(\rho\nabla u)+\rho(-gu+hu^p)=0$,
Calc. Var. Partial Differential Equations 55 (2016), Art.~32.

\bibitem{WeiYan2010ScalarCurvature}
J.~Wei, S.~Yan,
Infinitely many solutions for the prescribed scalar curvature problem on
$S^N$,
J. Funct. Anal. 258 (2010), 3048--3081.

\bibitem{WeiYan2011CriticalGrowth}
J.~Wei, S.~Yan,
Infinitely many positive solutions for an elliptic problem with critical or
supercritical growth,
J. Math. Pures Appl. (9) 96 (2011), 307--333.

\bibitem{Xiong2017}
J.~Xiong,
The critical semilinear elliptic equation with isolated boundary
singularities, J. Differential Equations 263 (2017), 1907--1930.

\bibitem{Zettl2005}
A.~Zettl,
\textit{Sturm--Liouville Theory},
Mathematical Surveys and Monographs, vol.~121,
American Mathematical Society, Providence, RI, 2005.

\end{thebibliography}
\end{document}